\documentclass{article}
\usepackage{inputenc}

\usepackage{amsmath,amsbsy,amssymb,amsthm,bbm,bm}

\usepackage{multirow,multicol,tabularx,booktabs}
\usepackage{placeins,color,url}
\usepackage{bbm,bm}
\usepackage[ruled]{algorithm2e}
\usepackage[shortlabels]{enumitem}
\usepackage{wrapfig}
\usepackage{relsize}
\usepackage{subfigure}
\usepackage{caption}
\usepackage{mathtools}
\usepackage[draft]{fixme}
\fxsetup{layout=margin}
\fxusetheme{color}
\usepackage{ragged2e}

\usepackage{mathrsfs}
\usepackage{graphicx}
\usepackage[top=2.5cm,bottom=2.5cm,right=2.5cm,left=2.5cm]{geometry}
\usepackage[hidelinks]{hyperref}

\allowdisplaybreaks

\DeclareMathOperator*{\argmax}{argmax}
\DeclareMathOperator*{\argmin}{argmin}

\newtheorem{conj}{Conjecture}[section]
\newtheorem{conj2}{Conjecture}[]
\newtheorem{thm}[conj2]{\bf Theorem}
\newtheorem{defi}[conj]{\bf Definition}
\newtheorem{cor}[conj2]{\bf Corollary}

\newtheorem{lemma}[conj2]{\bf Lemma}
\newtheorem{rem}{\bf Remark}

\newtheorem{assumpt}{\bf Assumption}

\newtheorem{innercustomgeneric}{\customgenericname}
\providecommand{\customgenericname}{}
\newcommand{\newcustomtheorem}[2]{%
  \newenvironment{#1}[1]
  {%
   \renewcommand\customgenericname{#2}%
   \renewcommand\theinnercustomgeneric{##1}%
   \innercustomgeneric
  }
  {\endinnercustomgeneric}
}

\newcustomtheorem{customthm}{Assumption}

\newcommand{\seven}{\bgroup
  \sbox0{7}\usebox0\llap{\rule[.5\ht0]{.4\wd0}{.05\ht0}\rule{.24\wd0}{0pt}}
\egroup}

\newcommand{\E}{\mathbb{E}}

\newcommand{\Var}{\operatorname{Var}}

\def\sign{{\; \text{sign}}}

\def\to{\rightarrow}

\def\Var{\operatorname{Var}}

\def\Bin{\operatorname{Bin}}

\newcommand{\bxi}{\bm{\xi}}
\newcommand{\btheta}{\bm{\theta}}

\def\TV{\operatorname{TV}}

\def\EE{ {\rm I} \kern-.15em {\rm E} }
\def\PP{ {\rm I} \kern-.15em {\rm P} }
\def\Var{{\mathbb{V}\text{ar}}}

\newcommand{\ba}{\mathbf{a}}

\newcommand{\bb}{\mathbf{b}}

\newcommand{\be}{\mathbf{e}}
\newcommand{\bbf}{\mathbf{f}}
\newcommand{\bg}{\mathbf{g}}
\newcommand{\bh}{\mathbf{h}}

\newcommand{\bk}{\mathbf{k}}
\newcommand{\bell}{\bm{\ell}}
\newcommand{\bbm}{\mathbf{m}}

\newcommand{\bp}{\mathbf{p}}
\newcommand{\bq}{\mathbf{q}}
\newcommand{\bbs}{\mathbf{s}}

\newcommand{\bT}{\mathbf{T}}
\newcommand{\bu}{\mathbf{u}}
\newcommand{\bv}{\mathbf{v}}

\newcommand{\bx}{\mathbf{x}}
\newcommand{\bX}{\mathbf{X}}
\newcommand{\by}{\mathbf{y}}
\newcommand{\bY}{\mathbf{Y}}
\newcommand{\bz}{\mathbf{z}}
\newcommand{\bZ}{\mathbf{Z}}

\newcommand{\mE}{\mathcal{E}}
\newcommand{\mF}{\mathcal{F}}

\newcommand{\mM}{\mathcal{M}}
\newcommand{\mN}{\mathcal{N}}

\newcommand{\stack}{\operatorname{stack}}
\renewcommand{\P}{\mathbb{P}}

\definecolor{dex}{RGB}{0, 102, 51}

\def\1{\mathbbm{1}}

\def\wh{\widehat}

\def\ol{\overline}

\usepackage{import}
\usepackage{xifthen}
\usepackage{pdfpages}
\usepackage{transparent}

\title{Generalization Bounds for Transformer-Based \\Next-Token Prediction in a Language Model}
\author{Insung Kong, Niklas Dexheimer, Johannes Schmidt-Hieber \\[0.15cm]
{\em Department of Applied Mathematics, University of Twente}}

\date{}

\begin{document}

\listoffixmes
\clearpage

\maketitle

\begin{abstract}
A refined statistical understanding of LLM pre-training requires the analysis of the transformer architecture for data distributions that encapsulate key characteristics of text data. To address this, we propose a text data distribution based on an extension of the log-bilinear language model from the natural language processing literature. For this data generating process, we derive generalization bounds for deep transformer architectures, highlighting the dependence on  the network architecture, the vocabulary size, the number of documents and the document length.
\end{abstract}

\section{Introduction}
Recent progress in statistical learning theory shows that the success of different neural network architectures can be explained by their respective inductive biases. Deep (feedforward) neural networks (DNNs) are naturally studied within the classical supervised learning paradigm and a well-studied inductive bias is the ability of DNNs to learn compositional structure in the target function \cite{zbMATH07082286, 10.1214/19-AOS1875, kohler2021rate, zbMATH07864496, ICLR2025_b9523d48, zbMATH08078908}. Similarly, convolutional neural networks (CNNs) originate from image classification problems. Statistical theory can reveal distinctive properties of CNNs if studied for a statistical image classification model \cite{zbMATH07604562, zbMATH07883300, chen2022novel, zbMATH08062197, zbMATH07923796}. 

Transformer architectures are at the heart of the current AI revolution. Because they are far more complex than standard feedforward networks, their generalization guarantees remain less understood. There is, however, an impressive line of very recent work studying transformers within the standard supervised learning framework and relating them to conventional models, such as kernel-based estimators or DNNs \cite{gurevych2022rate, kohler2023ratetransformer, takakura2023approximation, havrilla2024understanding, shen2025transformers, jiao2025approximation, 2026arXiv260220555L, shi2026learning, shi2025approximation}.

In this work, we argue that in order to unravel the specific inductive biases of LLM pre-training, a natural next step is to base the transformer analysis on statistical models for text data. Text data occur as a collection of documents, where each document is a sequence of words or tokens. During pre-training, the task is to learn predicting the next word/token in the sequence. This means to learn a conditional probability mass function on a discrete space. The specific structure of text induces constraints on the conditional probabilities. Inspired by the log-bilinear language model proposed in Mnih and Hinton \cite{Mnih2007}, we introduce a new statistical model for text data that we call the nonparametric two-factor (N2F) language model. The N2F language model incorporates key characteristics of text by assuming  that the conditional probabilities are a product of two (unobserved) factors, where the first factor is universal, and the second factor depends on a latent context function that changes from document to document. Both factors involve unknown functions, which makes the next word/token prediction problem nonparametric. Learning the first function requires aggregating information across all available documents, whereas learning the second factor only relies on the data in an individual document and bears some resemblance with the in-context learning task.

We quantify moreover to which extent the inductive bias of the transformer architecture allows learning next word/token prediction in the proposed N2F language model. To this end, we consider fitting a transformer architecture to data generated from the N2F language model, closely following the transformer architectures applied in practice. In particular, the analyzed architecture incorporates word encoding, positional encoding, the standard multi-head self-attention layers with the softmax function, and the possibility to stack different transformer blocks on top of each other. Studying minimization of the cross-entropy loss, our main result (Theorem \ref{theorem_main}) is a risk bound (generalization guarantee) for the transformer applied to the proposed statistical model for next-word/token prediction. It reveals the dependence of the risk on the various size parameters of the transformer architecture, the hardness of the underlying learning problem, and two quantities related to the uniformity of the word embeddings. By optimizing the size parameters of the transformer architecture, nonparametric rates for estimating the two functions in the N2F model occur (Corollary \ref{cor.main}). As the vocabulary size $W$ is large, we argue that one has to include it into the asymptotics, revealing an intricate interplay between $W$ and the convergence rates associated to the function learning problems. 

The proof of the generalization bound relies on an oracle inequality that decomposes the overall risk in an approximation error and a stochastic term. While the connection between the self-attention layer and the Nadaraya--Watson estimator has been made earlier for the in-context regression problem \cite{shen2026understanding}, a new technical difficulty of our construction is to 
identify a Nadaraya--Watson estimator which can be used to estimate 
 the second factor in the proposed N2F language model and to prove that the transformer can approximate it. The analysis requires to derive convergence rates for the Nadaraya--Watson estimator under dependent data generated from a Markov chain supported on a discrete state space and under a non-standard smoothness constraint, as we can only guarantee H\"older smoothness of the $\ell^2$-normalized function, see Section \ref{sec_approximation_sketch} for more details.

The article is structured as follows. Section \ref{sec_general_next_token} discusses relevant aspects of statistical text modeling and introduces the new statistical model for next-word/token prediction. In Section \ref{sec_def_trans}, the transformer class is formally introduced. The assumptions and the risk bounds are given in Section \ref{sec.risk_bd} together with an outline of the main steps in the proof. The main contribution is the control of the approximation error using transformers and key steps of the construction are explained in Section \ref{sec_approximation_sketch}. All related work is discussed in Section \ref{sec.rel_work}. Detailed proofs are deferred to the appendices.

\subsection{Notation}
We use standard notation for sets, $\mathbb{N} \coloneqq \{1,2,\ldots\}$, $\mathbb{N}_0 \coloneqq \{0,1,\ldots\}$, $[n] \coloneqq \{1,2,\ldots,n\}, \mathbb{B}^{m} \coloneq \{\bx \in \mathbb{R}^{m}, \|\bx\|_2 \leq 1 \}$,
$\mathbb{S}^{m-1} \coloneq \{\bx \in \mathbb{R}^{m}, \|\bx\|_2 = 1 \}$, and $\Delta^{m-1} \coloneq \{\bx \in [0,1]^{m}, \|\bx\|_1 = 1 \}$.
We write $\log(\cdot)$ for the natural logarithm and define $x \vee y \coloneqq \max(x,y)$ as well as $x \land y \coloneqq \min(x,y)$.
Vectors, matrix-valued and vector-valued functions are denoted by bold letters. 
We adopt the notation $\bm{0}_d = (0,\ldots,0)^\top \in \mathbb{R}^d$ and $\bm{1}_d = (1,\ldots,1)^\top \in \mathbb{R}^d$.
Unless otherwise specified, 
$\be_j \coloneq (0,\ldots,1,0,\ldots,0)^\top \in \mathbb{R}^W$ 
denotes the $j$-th standard basis vector, and we define $\mathcal{E}_W \coloneq \{\be_1, \ldots, \be_W\} \subset \mathbb{R}^W$.
For $v_1, \ldots, v_d \in \mathbb{R}$, we write $(v_i)_{i \in [d]} \coloneqq (v_1,\ldots,v_d)^{\top}$ and $\bv_{(i)}$ refers to the $i$-th entry of a vector $\bv$.
For column vectors $\bv_1, \ldots, \bv_m$, the stacked column vector is $\stack(\bv_1, \ldots, \bv_m) 
\coloneqq (\bv_1^{\top}, \ldots, \bv_m^{\top})^{\top}$. Its length is the sum of the lengths of $\bv_1,\ldots, \bv_m.$ 
Column-wise concatenation yields the matrix
$(\bv_i)_{i \in [m]} \coloneqq [\bv_1, \bv_2, \ldots, \bv_m]$ whose columns are $\bv_1, \ldots, \bv_m$ (assuming they are all of the same length).
For a matrix $A$, we denote its $t$-th column by $A_{:,t}$.
For vector or matrix-valued functions $\bbf$, we define $\|\bbf\|_{L^\infty(D)}\coloneq\sup_{\bx\in D} \|\bbf(\bx)\|_\infty$ with  $\|\cdot\|_\infty$ the maximum entry vector/matrix norm.
We write $f(n) \lesssim g(n)$ if $f(n) \leq C g(n)$ for some constant $C>0$, and $f(n) \asymp g(n)$ if $f(n) \lesssim g(n)$ and $f(n) \gtrsim g(n)$.
In addition, we write $f(n) = \tilde{O}(g(n))$ if $f(n) \leq C g(n) \log^k g(n)$ for some constant $C>0$ and $k > 0$.
Similarly, we write $f(n) = \tilde{\Omega}(g(n))$ if $f(n) \geq C g(n) \log^{-k} g(n)$ for some constant $C>0$ and $k > 0$. 
If $f(n)/g(n) \to 0$, we write $f(n) \ll g(n)$.
For real numbers $a<b$, we write
$\sum_{t \in [a,b]} \coloneqq \sum_{t \in ([a,b] \cap \mathbb{Z})}$.
We denote by $\bullet_m$ an $m$-dimensional vector with unspecified entries.

\section{Statistical modeling of next-token prediction}
\label{sec_general_next_token}

\subsection{Background}

Statistical next word prediction models the conditional probability distribution of the next word given a sequence of words. The string \texttt{`Yesterday, the weather'} should very likely be continued with \texttt{'was'}. In this case, the last words contain all the information for predicting the next word. However, to continue \texttt{`Yesterday, the weather was'} might be continued by \texttt{`nice'} or \texttt{`bad'}. Without any further context, the next word has to be sampled from a distribution of all possible continuations. The string \texttt{`We went to a nearby bar. We met some friends that we didn't see in a long time and ordered a \ldots'} should be continued with \texttt{`beer', `cocktail', `glass of wine', \ldots}. If one only includes the last words for the prediction of the next word, one would have forgotten that the scene happens in a bar, and the string could also be continued by \texttt{`three course menu'}. From a statistical modeling perspective,  one can either assume that there is a form of long-range dependence in documents that causes certain words (e.g., \texttt{`bar'}) to become relevant for the next word prediction much later in documents. Alternatively, one can assume that each document is associated with a \textit{latent context} that induces a different conditional probability distribution for the next word given the previous words. In the example above, the context could be `friends in a bar', which would then put more mass on words such as \texttt{`beer', `wine', \ldots } in the next word prediction problem above. While long-memory can be incorporated via a frequentist model, we could also imagine a document to be generated hierarchically by first drawing a latent context and then sampling tokens conditionally on the context. The randomly sampled context gives this modeling approach a Bayesian flavor.

In this paper, we adopt the latent context modeling approach  \cite{edelman2024evolution, varre2025learning}. 
Let $$\mathcal{E}_W \coloneqq \{\be_1, \ldots, \be_W\}$$ denote the set of token candidates, where each token is represented by a $W$-dimensional standard basis vector.
To generate a document, a context function $\bbf : (\mathcal{E}_W)^r \to \mathbb{R}^{W}$ is first drawn from some distribution.
Given $\bbf$, the probability that $w$ is the next token $\bX_{t+1}$ following the tokens $(\bX_{1}, \ldots, \bX_t)$ is
\begin{equation} 
\begin{aligned}
  \mathbb{P}\Big(\bX_{t+1} = \be_w \big| \bbf, \bX_1, \ldots, \bX_t \Big) 
  \propto \exp\Big( \bbf(\bX_{t-r+1}, \ldots, \bX_{t})\Big), \qquad w \in [W]. 
\end{aligned} \label{random_f_prob_model}
\end{equation} 

The goal is to estimate the conditional distribution of $\bX_{T+1} | \bX_1, \ldots, \bX_T$ given a collection of documents.
This requires estimating the transition probabilities of a discrete Markov chain of order $r$.
Classical methods include the $r$-gram estimator
\begin{equation}\label{eq: r-gram}
    \wh \bp\big(w|\bX_1, \ldots, \bX_T \big) \coloneq \frac{\Big|\Big\{t \in \{r, \ldots, T-1\} : (\bX_{t-r+1}, \ldots, \bX_{t}, \bX_{t+1}) = (\bX_{T-r+1}, \ldots, \bX_{T}, \be_w)\Big\}\Big|}{\Big|\Big\{t \in \{r, \ldots, T-1\} : (\bX_{t-r+1}, \ldots, \bX_{t}) = (\bX_{T-r+1}, \ldots, \bX_{T})\Big\}\Big|}
\end{equation}
(with tuning parameter $r=1,2,\ldots$) and Kneser--Ney smoothing \cite{KneserNey}.
Each r-gram $(\be_{w_1}, \ldots, \be_{w_r}) \in (\mathcal{E}_W)^r$ is expected to appear $T / W^{r}$ times on average in a document of length $T$ such that, without imposing any structural constraint, the convergence rate is $\gtrsim W^{r}/T$ (see Theorem 5 of \cite{han2023optimal} for the stronger lower bound $\tilde{\Omega}(W^{r+1}/T)$). The vocabulary size $W$ in large language models is typically in the range $3 \times 10^4$ to $10^5$, and cannot be regarded as negligible. However, these arguments ignore that the transition probabilities are highly structured objects.

Without assumption on the context function $\bbf$, we cannot achieve a better convergence rate than a $r$-gram estimator,
even when learning a model (e.g., a transformer) from a training dataset composed of multiple documents, as each document is associated with a different context function $\bbf$.
For this reason, we introduce assumptions on $\bbf$ through (i) token embeddings and (ii) a decomposition into a linguistic function and a context function.

The embedding of tokens into the $m$-dimensional Euclidean space should map semantically similar words to nearby points in the embedding space, causing
the conditional probabilities in the language model to take similar values.  
Although words form a discrete subset of $\mathbb{R}^m,$ this representation allows us to impose smoothness. A key example is the log-bilinear language model by Mnih and Hinton \cite{Mnih2007}, assuming
\begin{align*}
    \mathbb{P}\big(\bX_{t+1} = \be_w \big| \bX_1, \bX_2, \ldots, \bX_t \big) \propto \exp\Bigg( \bell_0 \Big( \bm{\phi}(\bX_{1}) , \ldots, \bm{\phi}(\bX_{t}) \Big)^{\top} \bm{\phi}(\be_w) \Bigg).
\end{align*}
Here, $\bm{\phi}: \mathcal{E}_W \to \mathbb{R}^m$ is an embedding, and $\bell_0: \mathbb{R}^{m \times t} \to \mathbb{R}^m$ is an affine map shared across all documents.

Although we model latent context via a random score function $\bbf$ in \eqref{random_f_prob_model}, one may expect that score functions across different documents share common basic linguistic structure, since their short-range dependencies are largely invariant across documents. 
For example, \texttt{`Yesterday, the weather was'} is likely to be followed by \texttt{`nice'} or \texttt{`bad'} but not by \texttt{`cat'} or \texttt{`attacked'}, regardless of the overall context of a document. 
Motivated by this intuition, we decompose the score function into a linguistic function and a context function,
which capture short-range dependencies and 
latent context, respectively.

\subsection{Data generating process}

As transformers are commonly applied to tokens, we refer to the states as tokens. Tokens are coded as standard basis vectors and are therefore elements of $\mathcal{E}_W \coloneq \{\be_1, \ldots, \be_W\}.$
For fixed but unknown $m_1 \in \mathbb{N}$ and $m_2 \in \{2,3,\ldots,\}$, 
we assume that there exist unknown token embeddings 
\begin{align}
\bm{\phi}_1 : \mathcal{E}_W \to [-1,1]^{m_1} \quad \text{and} \quad     \bm{\phi}_2 : \mathcal{E}_W \to \mathbb{S}^{m_2-1}, \label{def_phi12}
\end{align}
mapping the tokens $\be_1, \ldots, \be_W$ to the hypercube $[-1,1]^{m_1}$ and 
the unit sphere $\mathbb{S}^{m_2-1}\subset \mathbb{R}^{m_2}.$ We call 
$\bm{\phi}_1$ the \textit{linguistic embedding}, 
and $\bm{\phi}_2$ the \textit{context embedding}.
To facilitate the analysis, we will impose in Definition \ref{def_pos_assumption} regularity conditions ensuring that the context embedding vectors $\bm{\phi}_2(\be_1), \ldots, \bm{\phi}_2(\be_W)$ are (almost) uniformly located on the unit sphere $\mathbb{S}^{m_2-1}$.

Furthermore, we assume that the information of the last $r_1$ tokens can be mapped to a cube through a 
\begin{align}
\text{\textit{universal} linguistic function} \quad \ \bg_0: [-1,1]^{m_1 r_1} \to [-\delta_1, \delta_1]^{m_1}, \quad \text{for some} \ \ \delta_1>0.
 \label{def_uni_g0}
\end{align}   
To account for varying contexts across texts, we further assume that each token is associated with 
a
\begin{align}
    \text{\textit{local} context function} \quad \ \bh : [-1,1]^{m_2 r_2} \to \delta_2 \mathbb{S}^{m_2-1} , \quad \text{for some} \ \ \delta_2>0. 
    \label{def_loc_h}
\end{align}
Throughout the paper, we do not track the dependence on $(m_1, m_2, r_1, r_2, \delta_1, \delta_2)$ and treat them as constants.

To generate a document, a context function $\bh$ is first drawn from a distribution supported on a function class $\mathcal{H} \subset \{ \bh : \mathbb{R}^{m_2 r_2} \to \delta_2 \mathbb{S}^{m_2-1} \}$.
Given $\bh$, we assume that the probability that $w$ is the next token $\bX_{t+1}$ following the tokens $(\bX_{1}, \ldots, \bX_t)$ is
\begin{equation} \label{eq.next_token_prob}
\begin{aligned}
  &\mathbb{P}\big(\bX_{t+1} = \be_w \big| \bh, \bX_1, \ldots, \bX_t \big) \\
  &\propto \exp\bigg( \bg_0 \Big( \bm{\phi}_1(\bX_{t-r_1+1}) , \ldots, \bm{\phi}_1(\bX_{t}) \Big)^{\top} \bm{\phi}_1(\be_w)
  + \bh \Big( \bm{\phi}_2(\bX_{t-r_2+1}) , \ldots, \bm{\phi}_2(\bX_{t}) \Big)^{\top}  \bm{\phi}_2(\be_w) \bigg), 
\end{aligned}
\end{equation} 
setting 
$\bm{\phi}_1(\bX_s) \coloneq\bm{0}_{m_1}$ and $\bm{\phi}_2(\bX_s) \coloneq\bm{0}_{m_2}$ whenever $s\leq 0$. We refer to this as the \textit{nonparametric two-factor (N2F) language model.}   

The function representation is, in general, non-unique, as there could be different pairs $(\bg_0, \bh)$ leading to the same probability distribution. While the model captures characteristics of text data, it is not intended as a model to generate text. For instance, the next-token distribution is typically supported on a sparse subset of all tokens, giving rise to negligible probabilities for most tokens. To incorporate this in the N2F model would require that the right hand side in \eqref{eq.next_token_prob} can change by orders of magnitude, 
violating the boundedness constraints $\|\bg_0\|_\infty\leq\delta_1$ and $\|\bh\|_2\leq\delta_2.$
Instead, the N2F language model should be considered as a starting point for a statistical learning type analysis of text data focusing on the ability of transformers to quickly learn the context from individual documents.

The training data consist of $n$  independent training documents of length $T+1$,
    \begin{align*}        \big(\bX_1^{(i)},\ldots,\bX_{T+1}^{(i)}\big), \quad i=1,\ldots,n.
    \end{align*}
 In particular, each of the $n$ documents is generated by first independently sampling a context function $\bh \in \mathcal{H}$, and then recursively generating tokens from the probabilistic model \eqref{eq.next_token_prob}. In practice, the task is to always use the previous tokens to predict the next token for the whole document. For the theory, we only do this for the last token. Specifically, the considered task is to predict the $(T+1)$-st token distribution given the first $T$ tokens, where the predictor $\wh \bp$ belongs to $$\mathcal{P}_0 \coloneq \{\bp: (\mathcal{E}_W)^{T} \to \Delta^{W-1}\}, \ \  \text{with} \ \ (\mathcal{E}_W)^{T}\coloneq\underbrace{\mathcal{E}_W\times \ldots \times \mathcal{E}_W}_{T \ \text{times}}.$$
For any (new) document $(\bX_1,\ldots,\bX_{T+1})$ generated from the same distribution as the training samples, 
\[
    \wh \bp(\bX_{1}, \ldots, \bX_{T})
\]
is an approximation of the true conditional distribution
\begin{align}
    \bp_0(\bh, \bX_{1}, \ldots, \bX_{T})\coloneq\bigg(\mathbb{P}\big(\bX_{T+1} = \be_w \big| \bh, \bX_1, \bX_2, \ldots, \bX_T \big)\bigg)_{w\in [W]}.
    \label{eq.fh_def}
\end{align}
Although $\wh \bp(\bX_{1}, \ldots, \bX_{T})$ does not get any direct information of the latent function $\bh$ as input, the estimator can learn an approximation $\wh \bh$ of $\bh$ from the token sequence $(\bX_1,\ldots,\bX_T)$.

Given $N$ independently generated test documents \[\big(\bX_1^{(i)},\ldots,\bX_{T+1}^{(i)}\big), \quad i=n+1,\ldots,n+N,\]
drawn from the same distribution as the training data, the statistical task it to evaluate how well, in average, the estimator $\wh{\bp}$ allows us to predict the $(T+1)$-st token from the first $T$ tokens.
The corresponding test error (also referred to as \textit{log-perplexity} in the NLP literature) is given by 
\[
    - \frac 1N \sum_{i=n+1}^{n+N}  \bX^{(i) \top}_{T+1} \log\big( \wh{\bp}(\bX^{(i)}_{1}, \ldots, \bX^{(i)}_{T})\big).
\]
With $\bp_0(\bh,\bX_{1}, \ldots, \bX_{T})$ as defined in \eqref{eq.fh_def} and using that $\bX_{T+1}$ is one-hot encoded, the associated population risk is
\begin{equation} \label{eq_CE_KL}
    \begin{aligned} 
        - \E_{\mathrm{test}}
        \Big[ \bX_{T+1}^\top \log\big( \wh \bp(\bX_{1}, \ldots, \bX_{T})\big)
    \Big]
    &= \E_{\mathrm{test}} \bigg[\log\frac{ \bX_{T+1}^\top  \bp_0(\bh, \bX_{1}, \ldots, \bX_{T})}{ \bX_{T+1}^\top  \wh \bp(\bX_{1}, \ldots, \bX_{T})}
    \bigg]\\
    & \quad - \E_{\mathrm{test}} \Big[\bX_{T+1}^\top  \log\big(\bp_0(\bh, \bX_{1}, \ldots, \bX_{T})\big)
    \Big]
\end{aligned}
\end{equation}
for an independently sampled 
$(\bh, \bX_1,\ldots,\bX_{T+1})$
that has the same distribution as the training documents. 
The first term on the right hand side is the expected Kullback--Leibler divergence between the conditional distributions of $\bX_{T+1}$ given $(\bh, \bX_1,\ldots,\bX_{T})$, and the second term does not depend on the estimator. 
Hence, the first term serves as the excess (population) risk. 

\section{The transformer function class} \label{sec_def_trans}

We provide a mathematical definition of the individual components of the transformer architecture, which then leads to the mathematical definition of the transformer function class at the end of the section.  

\begin{defi}[Softmax] \label{def_softmax}
For a vector $\bx = (x_1,\ldots,x_d)^{\top}$, the softmax function is defined as
$$\operatorname{softmax}(\bx) \coloneq \Big(\frac{\exp(x_1)}{\sum_{i=1}^d \exp(x_{i})}, \ldots, \frac{\exp(x_{d})}{\sum_{i=1}^d \exp(x_{i})}\Big)^{\top}.$$
If the input is a $d\times T$ matrix, the softmax function is applied row-wise,
    $$\operatorname{softmax}\big((x_{i,t})_{i,t}\big) \coloneq
    \bigg(\frac{\exp(x_{i,t})}{\sum_{t=1}^T \exp(x_{i,t})}
    \bigg)_{i,t}.$$
\end{defi}
\begin{defi}[Self-attention] \label{def_sa}
    Given a dimension $N \in \mathbb{N}$ and learnable matrices $U, V \in \mathbb{R}^{N \times N}$,
    the output of a self-attention layer with input  
    $X \in \mathbb{R}^{N \times T}$ is given by
    \begin{align*}
        \operatorname{SA}_{U,V}(X) \coloneq VX \operatorname{softmax}\big(X^{\top} U X\big) \in \mathbb{R}^{N \times T}.
    \end{align*}
    For $B>0$, we define the class of self-attention layers with bounded parameters
    $$\mathcal{SA}(N,B) \coloneq \Big\{\operatorname{SA}_{U,V} : U, V \in [-B,B]^{N \times N}\Big\}.$$ 
\end{defi}

For a matrix input $X \coloneq (\bx_1, \ldots, \bx_T) \in \mathbb{R}^{N \times T}$,
the $t$-th column of $\operatorname{SA}_{U,V}(X)$ can be written as
\begin{align}
    \operatorname{SA}_{U,V}(X)_{:,t} = \sum_{s=1}^T \frac{\exp(\bx^\top_s U \bx_{t})}{\sum_{r=1}^T \exp(\bx^{\top}_r U \bx_t)} V \bx_s. \label{eq_sa_t}
\end{align}
The original definition of the self-attention layer further factorizes the matrix $U$ as $K^{\top} Q$ with $K$ the so-called key matrix and $Q$ the so-called query matrix. For notational simplicity, we directly work here with the matrix product.

\begin{defi}[Multi-head self-attention]
    Given a dimension $N\in \mathbb{N}$, number of heads $H \in \mathbb{N}$, $B>0$, $W \in [-B,B]^{N \times N}$ and $f_1, \ldots, f_H \in \mathcal{SA}(N,B)$, the output of a multi-head self-attention layer with input  
    $X \in \mathbb{R}^{N \times T}$ is given by     $$\operatorname{MA}_{f_1,\ldots,f_H}(X) \coloneq X + \sum_{h=1}^H f_h(X) \in \mathbb{R}^{N \times T}.$$
    The class of multi-head self-attention layers with bounded parameters is
    $$\mathcal{MA}(N,H,B) \coloneq \Big\{\operatorname{MA}_{f_1,\ldots,f_H} : f_1,\ldots,f_H \in \mathcal{SA}(N,B) \Big\}.$$ 
\end{defi}

The function class $\mathcal{MA}(N,H,B)$ has $H(N^2 + N^2)=2H N^2$ many parameters.

\begin{defi}[Multilayer perceptron]    
    Given input and output dimensions $d_{\mathrm{in}}, d_{\mathrm{out}} \in \mathbb{N}$, depth $L_{\mathrm{mlp}} \in \mathbb{N}$ and width $N_{\mathrm{mlp}} \in \mathbb{N}$, 
    let $\operatorname{MLP}_{\bm{\theta}}: \mathbb{R}^{d_{\mathrm{in}}} \to \mathbb{R}^{d_{\mathrm{out}}}$ be a multilayer perceptron parameterized by $\bm{\theta}$, with $L_{\mathrm{mlp}}$ hidden layers, $N_{\mathrm{mlp}}$ neurons in each hidden layer and the ReLU activation function $x \mapsto x \vee 0$.
    For $B>0$, we define 
    $$\mathcal{MLP}\big(d_{\mathrm{in}}, d_{\mathrm{out}}, L_{\mathrm{mlp}},N_{\mathrm{mlp}},B\big) \coloneq \Big\{ \operatorname{MLP}_{\bm{\theta}}: \mathbb{R}^{d_{\mathrm{in}}} \to \mathbb{R}^{d_{\mathrm{out}}} \Big|
\operatorname{MLP}_{\boldsymbol{\theta}} \text{has depth } L_{\mathrm{mlp}}, \text{width } N_{\mathrm{mlp}}, \text{and } \|\boldsymbol{\theta}\|_\infty \leq B \Big\}.$$  
    For sequential input $X \coloneq (\bx_1, \ldots, \bx_T) \in \mathbb{R}^{N \times T}$, the output of the multilayer perceptron is given by $$\operatorname{MLP}_{\bm{\theta}}(X) \coloneq \Big(\operatorname{MLP}_{\bm{\theta}}(\bx_1),\ldots,\operatorname{MLP}_{\bm{\theta}}(\bx_T)\Big).$$
\end{defi}
The formal definition of multilayer perceptrons is deferred to Appendix~\ref{sec_construct_firstmlp}.
For simplicity, we omit skip connections in the MLP layers. However, our results can be readily extended to models with skip connections (e.g., see Lemma B.3 of \cite{ma2025provable}).

\begin{defi}[Transformer block]
    For $N, H, L_{\mathrm{mlp}}, N_{\mathrm{mlp}} \in \mathbb{N}$ and $B>0$, we define the class of transformer blocks
    $$\mathcal{TB}(N,H,L_{\mathrm{mlp}},N_{\mathrm{mlp}}, B) \coloneq \Big\{ \bh \circ \bg: \bg  \in \mathcal{MA}(N,H,B), \, \bh \in \mathcal{MLP}(N,N,L_{\mathrm{mlp}},N_{\mathrm{mlp}},B) \Big\}.$$
\end{defi}

The multi-head self-attention has $2H N^2$ learnable parameters and the multilayer perceptron has $(N+1)N_{\mathrm{mlp}}+(L_{\mathrm{mlp}}-1)(N_{\mathrm{mlp}}+1)N_{\mathrm{mlp}} + (N_{\mathrm{mlp}}+1)N$ many learnable parameters.
Hence, the number of parameters of the transformer block is 
\begin{align}
    2H N^2+ 2NN_{\mathrm{mlp}}+(L_{\mathrm{mlp}}-1)N_{\mathrm{mlp}}^2+L_{\mathrm{mlp}}N_{\mathrm{mlp}} + N.
    \label{eq.nr_params_TB}
\end{align}

For an input matrix 
$$X \coloneq \big[ \bx_1, \bx_2, \ldots, \bx_T \big] \in \mathbb{R}^{W\times T},$$ 
we consider a transformer model $\bT$ with $L$ transformer blocks, model dimension $N$, attention layers with $H$ heads, multilayer perceptrons with depth $L_{\mathrm{mlp}}$ and width $N_{\mathrm{mlp}}$, and parameter range $B$.
The output of this model can be represented as 
\begin{align} \label{eq_def_transformer}
    \bbf(X) =\Big( D \cdot \operatorname{TB}_L \circ \cdots \circ \operatorname{TB}_1  (P + E X)\Big)_{:,T} \in \mathbb{R}^{W}.
\end{align}
For each $\ell \in [L]$,
$\operatorname{TB}_\ell \in \mathcal{TB}(N,H,L_{\mathrm{mlp}},N_{\mathrm{mlp}}, B)$ is a transformer block, composed with a multi-head self-attention with $H$ heads and a multilayer perceptron with depth $L_{\mathrm{mlp}}$ and width $N_{\mathrm{mlp}}$. 
Here, $E \in [-B,B]^{N \times W}$ and 
$D \in [-B,B]^{W \times N}$ are learnable encoding and decoding matrices, respectively. The positional encoding matrix
$P \in \mathbb{R}^{N \times T}$ is not learnable.
Following the original transformer paper \cite{vaswani2017attention}, 
we use the following multi-scale sinusoidal positional encoding.

\begin{defi}[Positional encoding] \label{def_PE}
    For $A \leq N/2$, we define the positional encoding matrix $P \in \mathbb{R}^{N \times T}$ as 
    $$P \coloneq \left[\begin{matrix}
        \bm{0}_{N-2A}& \ldots & \bm{0}_{N-2A} \\
        \bq_{1}& \ldots& \bq_{T} 
    \end{matrix}\right], \quad \bq_{t} \coloneq \begin{pmatrix}
        \bq_{1,t}\\
        \bq_{2,t}\\
        \vdots\\
        \bq_{A,t}
    \end{pmatrix},  \quad 
    \bq_{a,t} \coloneq 
    \left(
    \begin{matrix}
        \sin\left(\frac{t}{T^{a/A}}\right) \\ \cos\left(\frac{t}{T^{a/A}}\right)
    \end{matrix}
    \right).
   $$
   Throughout the paper, we assume that $N$ is a multiple of $4$ and set $A \coloneq N/4$.
\end{defi}

\begin{figure}[t]
  \centering  \includegraphics[width=\textwidth]{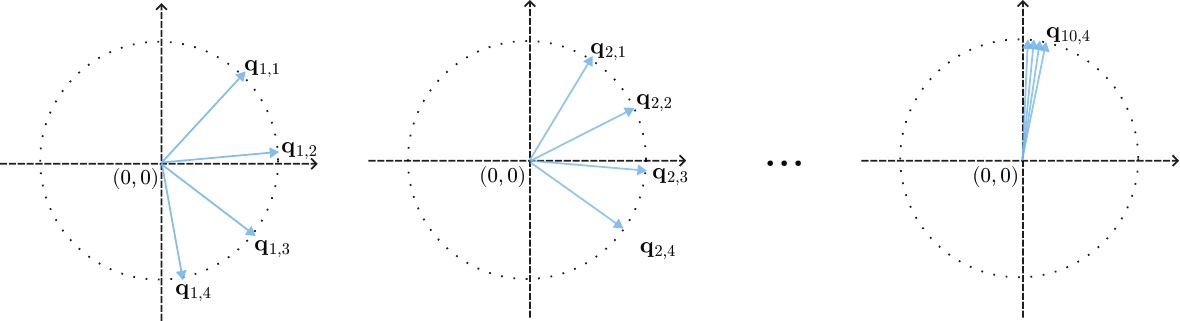}
  \caption{Illustration of the multi-scale sinusoidal positional encoding defined in Definition~\ref{def_PE} for $A=10$ and $T=20$. 
  For each $a \in [A]$,
  the vectors $\bq_{a,t} = (\sin(t / T^{a/A}), \cos(t / T^{a/A}))^{\top}$, $t = 1,\ldots, T$, lie on the unit circle, with smaller $a$ corresponding to faster rotation.
    } \label{fig_pos_encode}
\end{figure}

Figure~\ref{fig_pos_encode} shows an illustration of the positional encoding defined above.
The positional encoding is necessary in the transformer architecture because all other operators (multi-head self-attention, multilayer perceptron, encoder and decoder) are permutation invariant\footnote{$f : \mathbb{R}^{d \times T} \to \mathbb{R}^{d' \times T}$ is permutation invariant if $f(\Pi X) = \Pi f(X)$ for any column-wise permutation operator $\Pi$ and $X \in \mathbb{R}^{d \times T}$.}. 
Most existing works on the statistical analysis of transformers employ the single-scale sinusoidal positional encoding (i.e., $A=1$). For our analysis, it is crucial to use instead the multi-scale sinusoidal positional encoding with a sufficiently large $A$. In particular, this reduces the magnitude of the parameters from polynomial order in $T$ to logarithmic order when approximating the hardmax function by the softmax function in the self-attention layer, see Remark~\ref{remark_benefit_multi_sinusodial}.

In the definition of $\bq_{q,t}$, $T$ can be replaced by any $T_{\max}$ with $T_{\max} \geq T$.
Although this formulation is more natural, we take $T_{\max}= T$ for  notational simplicity. We now give the final definition of the transformer function class used in this paper.

\begin{defi}[Transformer function class]
\label{defi.transfomer_class}
We define 
\begin{align*}
\mathcal{T} \big(W,T,L,N,H,L_{\mathrm{mlp}},N_{\mathrm{mlp}},B\big) \coloneq \Big\{ \bbf :  (\mathbb{R}^{W})^T \to \mathbb{R}^W \text{ has the form \eqref{eq_def_transformer} with } E, D^\top \in [-B,B]^{N \times W}& \\
\text{ and } \operatorname{TB}_1, \ldots, \operatorname{TB}_L \in \mathcal{TB}(N,H,L_{\mathrm{mlp}},N_{\mathrm{mlp}},B)
 \, & \Big\}.
\end{align*}
In addition, we define the class of clipped transformer
networks for $M>0$ as
\[
\mathcal{T}_M \big(W,T,L,N,H,L_{\mathrm{mlp}},N_{\mathrm{mlp}},B\big) \coloneq \Big\{ \operatorname{clip}_{M} \circ \, \bbf : \bbf \in   \mathcal{T} \big(W,T,L,N,H,L_{\mathrm{mlp}},N_{\mathrm{mlp}},B\big) \Big\},
\]
where $\operatorname{clip}_{M} : x \mapsto (-M \vee x) \land M$
is applied element-wise.
\end{defi}
By \eqref{eq.nr_params_TB}, the number of learnable parameters in the (clipped) transformer class is 
\begin{align}
    2WN+ L\Big(2H N^2+ 2NN_{\mathrm{mlp}}+(L_{\mathrm{mlp}}-1)N_{\mathrm{mlp}}^2+L_{\mathrm{mlp}}N_{\mathrm{mlp}} + N\Big).
    \label{eq.nr_params_TC}
\end{align}

\section{Risk bound}
\label{sec.risk_bd}

Regularity conditions in fixed design settings are commonly imposed in nonparametric statistics (e.g., see \cite{zbMATH05358920}). As mentioned in Section~\ref{sec_general_next_token}, we require the context embedding vectors $\bm{\phi}_2(\be_1), \ldots, \bm{\phi}_2(\be_W)$ to be regularly spaced on the unit sphere.
The regularity is measured by the function $\varphi_W$ and the quantity $\kappa_W.$ 

\begin{defi} \label{def_pos_assumption}
Let $\varphi_W: (0,\infty) \to (0,1]$ and $\kappa_W > 0$  be given by 
\[
\varphi_W(\epsilon) \coloneqq \inf_{w \in [W]} \frac{1}{W} \Big| \big\{ {w'} \in [W]: \| \bm{\phi}_2(\be_w) - \bm{\phi}_2(\be_{w'})\|_2 \leq \epsilon \big\}\Big|
\]
and
$$\kappa_W \coloneq\sup_{\bv \in \delta_2 \mathbb{S}^{m_2-1}}\left\|\frac{\sum_{w=1}^W \exp\big(\bv^{\top}\bm{\phi}_2(w)\big) \bm{\phi}_2(w)}{\sum_{w=1}^W \exp\big(\bv^{\top}\bm{\phi}_2(w)\big)}  - 
\frac{\int_{\bz \in \mathbb{S}^{m_2 - 1}} \exp(\bv^{\top} \bz) \bz d\bz }{\int_{\bz \in \mathbb{S}^{m_2 - 1}} \exp(\bv^{\top} \bz) d\bz  }
\right\|_{2}.$$
\end{defi}

By definition, $\varphi_W(\epsilon) \geq 1/W$ holds for any $\epsilon \in (0,\infty)$. 
In addition, if $\bm{\phi}_2(\be_1), \ldots, \bm{\phi}_2(\be_W)$ are  independent and identically distributed samples from the uniform distribution on $\mathbb{S}^{m_2-1}$ with $m_2 \geq 3$, then a covering number argument yields the following assumption with high probability; see Appendix \ref{app_discuss_emb} for detail.
\begin{assumpt} \label{assumption_word_embedding}
    For any $\epsilon>0$ with
$W^{-\frac{1}{m_2-1}} \log W \leq \epsilon \ll 1$,
\begin{align*}
    \varphi_W(\epsilon) = \tilde{\Omega}(\epsilon^{m_2 - 1}) \quad \text{ and } \quad
    \kappa_W = \tilde{O}(W^{-1/2}),
\end{align*}
where $\tilde{\Omega}, \tilde{O}$ denote the lower/upper bounds for the rate up to log-factors (see section on notation for precise definitions).
\end{assumpt}

The token probabilities following the sequences \texttt{`I am very glad to'} and \texttt{`I feel really happy to'} should be similar. 
To this end, we impose the H\"older smoothness assumption on the linguistic function $\bg_0$ and the context function $\bh$. 
This guarantees that if $\bm{\phi}_1(\bx_s) \approx \bm{\phi}_1(\bx'_s)$ for $t-r_1+1 \leq s \leq t$, then 
\[
\bg_0\big(\bm{\phi}_1(\bx_{t-r_1+1}), \ldots, \bm{\phi}_1(\bx_{t})\big) 
\approx 
\bg_0\big(\bm{\phi}_1(\bx'_{t-r_1+1}), \ldots, \bm{\phi}_1(\bx'_{t})\big),
\]
and the same holds for $\bm{\phi}_2$ and $\bh$.

\begin{defi}[$\beta$-H\"older smooth functions]
Let $\beta = q+s$ for some $q \in \mathbb{N}_0$ and $s \in (0,1]$.
The $\beta$-H\"older norm of a function $\bbf = (f_1, \ldots, f_{d})^{\top} : [-1,1]^d \to \mathbb{R}^{d'}$ is defined as
\begin{align*}
\left\| \bbf \right\|_{\mathcal{C}^{\beta}} \coloneqq
  \max_{j \in [d']} \bigg( \sum_{\substack{\bm{\alpha} \in \mathbb{N}_0^d \\ \bm{\alpha}:\|\bm{\alpha}\|_1 \leq q}} \left\|\partial^{\bm{\alpha}} f_j \right\|_{\infty} 
+\sum_{\substack{\bm{\alpha} \in \mathbb{N}_0^d \\ \bm{\alpha}:\|\bm{\alpha}\|_1 =q}} \sup _{\underset{\bx_1 \ne \bx_2}{\bx_1, \bx_2 \in [-1,1]^d}} 
\frac{\left|\partial^{\bm{\alpha}} f_j(\bx_1)-\partial^{\bm{\alpha}} f_j(\bx_2)\right|}{\|\bx_1-\bx_2\|_{\infty}^{s}} \bigg).
\end{align*} 
We say that a function $\bbf$ is $\beta$-H\"older smooth if $\| \bbf \|_{\mathcal{C}^{\beta}}$ exists and is finite.
\end{defi}

\begin{assumpt} \label{assumption_gh_smoothness}
    (i) $\|\bg_0\|_{\mathcal{C}^{\beta_1}} < \infty$ for some $\beta_1 > 0$, and
    (ii)  $\sup_{\bh \in \mathcal{H}} \|\bh\|_{\mathcal{C}^{\beta_2}} < \infty$ for some $\beta_2 \in (0,1]$. 
\end{assumpt}

Any function on a finite discrete domain can be extended to a Hölder-smooth function on a continuous domain with finite Hölder norm.
Therefore, the Hölder smoothness assumption is only meaningful on a discrete domain if one considers joint asymptotics where also the vocabulary size $W$ grows.

For a class of candidate functions $\mathcal{P} \subset \mathcal{P}_0$, we consider the empirical risk minimizer $\wh{\bp}$ with respect to the cross-entropy loss, that is,
\begin{align} \label{eq_estimator_definition}
    \wh{\bp} \in  \argmin_{\bp \in \mathcal{P}} \,  - \frac{1}{n} \sum_{i=1}^n \bX^{(i) \top}_{T+1} \log\big( \bp(\bX^{(i)}_{1}, \ldots, \bX^{(i)}_{T})\big),
\end{align}
 where the logarithm is applied componentwise. As we control the stochastic fluctuation using the multiclass classification oracle inequality in \cite{zbMATH07524984}, one could moreover incorporate the optimization error in the analysis. For notational simplicity, we only consider the empirical risk minimizer. 

In particular, we consider 
\begin{align}
    \mathcal{P} = \Big\{\operatorname{softmax} \circ \, \bbf : \bbf \in \mathcal{T}_M \big(W,T,L,N,H,L_{\mathrm{mlp}},N_{\mathrm{mlp}},B\big)\Big\} \label{candidate_functions_transformer}
\end{align}
as the class of candidate functions in \eqref{eq_estimator_definition}.
We further assume that the transformer class is sufficiently large as follows.
\begin{assumpt} \label{assumption_transformer_size_lower}
$(L, N, H, L_{\mathrm{mlp}},N_{\mathrm{mlp}},B, M)$ satisfies 
\begin{align*}
    &L \geq 2, \quad N \geq 8 \log T, \quad H \geq r_1 + r_2, \quad
    N_{\mathrm{mlp}} \geq N \vee T^{\alpha}, \quad L_{\mathrm{mlp}} \geq \log^3(N_{\mathrm{mlp}} \vee T),
    \\
    &B \geq 9\log^2 T, \quad M \geq \delta_1 m_1 + \delta_2
\end{align*}
for some $\alpha>0$.
\end{assumpt}

We now present the main result of this paper. It provides convergence rates for the expected excess risk \eqref{eq_CE_KL} of transformer-based next-token prediction models.

\begin{thm} \label{theorem_main}
    Suppose that Assumptions~\ref{assumption_gh_smoothness} and    \ref{assumption_transformer_size_lower} hold and that
    $\kappa_W \leq (1+m_2/\delta_2)^{-1}/2$.
    For sufficiently large $n, W,$ and $T$,
    the empirical risk minimizer \eqref{eq_estimator_definition} with the candidate class \eqref{candidate_functions_transformer} 
    satisfies
    \begin{align*}
        \E \bigg[\log\frac{ \bX_{T+1}^\top  \bp_0(\bh, \bX_{1}, \ldots, \bX_{T})}{ \bX_{T+1}^\top  \wh \bp(\bX_{1}, \ldots, \bX_{T})}
    \bigg]
    &= \tilde{O}\bigg( \frac{M (WN + HLN^2 + L L_{\mathrm{mlp}} N_{\mathrm{mlp}}^2 + L^2 L_{\mathrm{mlp}}  + \log B)}{n} \\
    & \qquad \qquad \, \, +
    N_{\mathrm{mlp}}^{-\frac{4 \beta_1}{m_1 r_1}} + \inf_{\lambda \in [T^{-1},\log^{-1} (W \land  T)]}\Big( \lambda^{2 \beta_2} + \frac{M^2}{T \varphi_W(\lambda)^{r_2}} \Big)  + \kappa_W^2\bigg),
    \end{align*} 
    where the expectation is taken over all randomness (training data and $(\bh, \bX_1, \ldots, \bX_T)$).
    The implicit    
    constant depends only on $(m_1, m_2, r_1, r_2, \delta_1, \delta_2, \beta_1, \beta_2, \| \bg_0 \|_{\mathcal{C}^{\beta_1}}, \sup_{\bh \in \mathcal{H}} \| \bh \|_{\mathcal{C}^{\beta_2}})$.
\end{thm}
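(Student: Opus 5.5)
The plan is to split the excess risk into an approximation error and a stochastic error via an oracle inequality for cross-entropy ERM over a bounded-logit class. Because every candidate is $\operatorname{softmax}\circ\operatorname{clip}_M\circ\bbf$, all predicted probabilities are at least $e^{-2M}/W$. We therefore invoke the multiclass oracle inequality of \cite{zbMATH07524984}. It bounds the expected KL excess risk by a constant times
\begin{align*}
\inf_{\bp\in\mathcal{P}}\E\Big[\log\frac{\bX_{T+1}^\top\bp_0}{\bX_{T+1}^\top\bp}\Big]
+\frac{M\log\mathcal{N}(\epsilon,\mathcal{P},\|\cdot\|_\infty)}{n}+\epsilon,
\end{align*}
up to log factors. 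The documents are i.i.d., so the within-document Markov dependence plays no role at this stage.

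For the covering number, we show that the map from parameters to $\bbf$ is Lipschitz in sup norm on the input set $\mathcal{E}_W^T$. Softmax attention with entries bounded by $B$ is Lipschitz with a constant like $(NB)^{O(1)}$ per layer, and ReLU MLPs of depth $L_{\mathrm{mlp}}$ contribute $(N_{\mathrm{mlp}}B)^{L_{\mathrm{mlp}}}$. Composing $L$ blocks therefore gives a log-Lipschitz constant of order $L\,L_{\mathrm{mlp}}\log(N_{\mathrm{mlp}}B)$. The log covering number is then (number of parameters) $\times$ that log-constant. By \eqref{eq.nr_params_TC}, this yields the term $(WN+HLN^2+LL_{\mathrm{mlp}}N_{\mathrm{mlp}}^2+L^2L_{\mathrm{mlp}}+\log B)$ up to logs, where $\epsilon=1/n$. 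A slight refinement is needed to obtain the additive $\log B$ instead of a product; we would separate the depth factor carefully.

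For the approximation error, the KL divergence between softmaxes of logits is bounded by a constant times the squared sup-distance of the logits, once they are centered, since the logits are bounded by $M$. It therefore suffices to build a transformer whose $T$-th output column approximates the logits
\[
\Phi_1\bg_0(\bm{\phi}_1(\bX_{T-r_1+1}),\ldots)+\Phi_2\bh(\bm{\phi}_2(\bX_{T-r_2+1}),\ldots),
\]
where $\Phi_j$ denotes the matrix with rows $\bm{\phi}_j(\be_w)^\top$, taken as rows of $D$. We proceed in five steps. (a) Set $E$ to embed each token as $(\bm{\phi}_1,\bm{\phi}_2)$ in reserved coordinates. (b) In block one, use $r_1+r_2$ heads with near-hardmax attention on the multi-scale positional encoding, so that position $t$ copies the embeddings of positions $t-1,\ldots,t-r$. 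Here $B\gtrsim\log^2T$ and $N\gtrsim\log T$ suffice thanks to the multi-scale encoding, with error polynomially small in $T$. (c) Use the MLP of the final block to approximate $\bg_0$ at rate $N_{\mathrm{mlp}}^{-2\beta_1/(m_1r_1)}$ in sup norm, using deep ReLU approximation with depth $\log^3$. Squaring gives the stated term. (d) For the context part, in block two use a head whose query is the current $r_2$-window and whose keys are the past windows. Its scores are $-\|\text{window}_s-\text{window}_T\|^2/\lambda^2$-type, and its value is $\bm{\phi}_2(\bX_{s+1})$. This produces a Nadaraya--Watson estimate $\wh\bm{\mu}$ of $\E[\bm{\phi}_2(\bX_{T+1})\mid\text{window}]$. (e) The final MLP inverts the map $\bv\mapsto$ (exponentially tilted mean of $\bm{\phi}_2$), with the $\bg_0$ tilt removed, to recover $\bh$. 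Under $\kappa_W\le(1+m_2/\delta_2)^{-1}/2$, the discrete tilted mean is within $\kappa_W$ of the continuous von Mises–Fisher mean map. That map is invertible with Lipschitz inverse on $\delta_2\mathbb{S}^{m_2-1}$, which yields the $\kappa_W^2$ term.

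The main obstacle is step (d): controlling the Nadaraya--Watson error under Markov dependence within a single document. The bias is $\lambda^{2\beta_2}$ by Hölder continuity of $\bh$; the catch is that only the normalized direction is smooth, which is why the inversion uses the sphere structure. The variance is $M^2/(T\varphi_W(\lambda)^{r_2})$, because the expected number of past windows within distance $\lambda$ is at least $T\varphi_W(\lambda)^{r_2}e^{-O(M)}$. The transition probabilities are bounded below by $e^{-2M}/W$ relative to uniform, which gives Doeblin-type mixing. We would bound the variance with a martingale decomposition of $\sum_s K_s(\bm{\phi}_2(\bX_{s+1})-\E[\cdot\mid\mathcal{F}_s])$ and a concentration bound from below on the kernel mass. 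Optimizing over $\lambda\in[T^{-1},\log^{-1}(W\wedge T)]$ then completes the proof.
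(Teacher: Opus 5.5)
The gap is in steps (d)--(e). Your kernel head uses the wrong values, and the final MLP cannot repair this.

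With values $\bm{\phi}_2(\bX_{s+1})$, the head targets $\E[\bm{\phi}_2(\bX_{T+1})\mid \bh,\bZ_{2,T+1}=\bz]$. This is the conditional average over $\bZ_{1,T+1}$ of $\sum_w \pi_w\bm{\phi}_2(\be_w)$, with $\pi_w\propto \exp\big(\bg_0(\bZ_{1,T+1})^\top\bm{\phi}_1(\be_w)+\bh(\bz)^\top\bm{\phi}_2(\be_w)\big)$.

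\begin{itemize}
\item Its length, and in general its direction, depend on $\bg_0(\bZ_{1,T+1})$ through the reweighting $e^{\bg_0^\top\bm{\phi}_1(\be_w)}$. Nothing in the model decouples $\bm{\phi}_1$ from $\bm{\phi}_2$, and $\kappa_W$ only controls the $\bm{\phi}_2$-tilt.
\item Windows that are $\lambda$-close in $\bm{\phi}_2$ may have unrelated $\bm{\phi}_1$-windows.
\end{itemize}

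So the target is not H\"older in $\bz$, not even in direction, and the bias is not $O(\lambda^{\beta_2})$. Moreover, the kernel average mixes $\bg_0$-tilts belonging to different samples $s$. An MLP acting on column $T$ sees only the aggregate and $\bg_0(\bZ_{1,T+1})$, so it cannot undo them.

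The missing idea is to remove the tilt \emph{per sample, before aggregation}. The first block's MLP computes, at every position, $\bY_s=\bm{\phi}_2(\bX_s)\exp\big(-\bg_0(\bZ_{1,s})^\top\bm{\phi}_1(\bX_s)\big)$ (Lemma~\ref{lemma_construct_firstmlp}). The second attention layer uses $\bY_s$ as values. Then, as in \eqref{tmp_33}, the conditional mean of $\bY_{T+1}$ equals the $\bh$-only tilted mean $\sum_w e^{\bh(\bz)^\top\bm{\phi}_2(\be_w)}\bm{\phi}_2(\be_w)/\sum_w e^{\bh(\bz)^\top\bm{\phi}_2(\be_w)}$ times a positive scalar. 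That scalar carries all the $\bg_0$-dependence and is not smooth. The direction, however, is $\beta_2$-H\"older in $\bz$ and is preserved by averaging over $\bZ_{1,T+1}$.

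For the same reason, the last MLP should not invert a tilted-mean map, since the length is not identified. It should normalize, $\bk_2\mapsto\delta_2\bk_2/\|\bk_2\|_2$. This works because the direction of the discrete $\bh$-tilted mean is within $O(\kappa_W)$ of the von Mises--Fisher direction $\bh/\delta_2$, and $\|\bh\|_2=\delta_2$ is known.

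Your variance step also needs two repairs.

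\begin{itemize}
\item \textbf{Lower bound on the estimate.} Normalization is stable only if $\|\wh{\bbm}_\lambda(\bZ_{2,T+1})\|_2$ is bounded below with high probability, and a lower bound on the kernel mass does not give this. The paper shows that the component of $\wh{\bbm}_\lambda$ along the target direction is bounded below by a constant, except on an event of probability $O(1/(T\varphi_W(\lambda)^{r_2}))$. The reason is that nearby windows have aligned directions, so there is no cancellation (Lemma~\ref{lemma_application_keylemma_properties}(v)).
\item \textbf{Origin of the $M^2$ term.} The term $M^2/(T\varphi_W(\lambda)^{r_2})$ is the clipped squared error $\lesssim M^2$ times the probability of that failure event. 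It is not an $e^{-O(M)}$ loss in kernel mass. The transition probabilities lie in $[e^{-2(\delta_1m_1+\delta_2)}/W,\,e^{2(\delta_1m_1+\delta_2)}/W]$, independent of $M$, and an $e^{O(M)}$ factor would not give the stated dependence on $M$.
\item \textbf{Dependence on the query.} $\sum_s K_s(\cdots)$ is not a martingale transform, because $K_s=K_\lambda(\bZ_{2,s},\bZ_{2,T+1})$ depends on the future query window. The paper discards indices outside $[\log^2T,T-\log^2T]$ through the $T^3$ indicator in the attention scores. It then replaces the query state by an independent copy via Berbee's coupling and applies a variance bound for stationary, uniformly ergodic chains (Lemma~\ref{lemma_key_NW}).
\end{itemize}

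Finally, you are right that the covering argument yields a product of the parameter count and $L^2L_{\mathrm{mlp}}\log(BN_{\mathrm{mlp}}H)$. The paper performs no refinement; it simply passes from Theorem~\ref{thm.covering_bd_TC} to the additive form inside $\tilde O$. That step is only justified when $L$, $L_{\mathrm{mlp}}$ and $\log B$ are polylogarithmic, as in Corollary~\ref{cor.main}.
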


The first line on the right-hand side represents the stochastic error. 
This term increases with the model size but can be controlled by taking a sufficiently large sample size $n$, i.e., by using more training documents.
The second line corresponds to the approximation error.
In particular, the first term $N_{\mathrm{mlp}}^{-4 \beta_1/(m_1 r_1)}$ corresponds to approximating the universal linguistic function $\bg_0$, which can be controlled by increasing the width of the transformer.

The remaining terms of the error bound originate from the learning of the context function $\bh$.
Since $\bh$ is random, it cannot be directly used in constructing the transformer for approximation and must instead be estimated from the data.  
In the proof, we construct a Nadaraya--Watson estimator with bandwidth $\lambda$ whose direction converges to that of $\bh$. 
The term involving the infimum arises from the bias-variance trade-off of the estimator and decreases as $T$ increases. 
Finally, the $\kappa_W$ term appears due to the discreteness of the token embeddings and is typically negligible under mild assumptions (e.g., Assumption~\ref{assumption_word_embedding}). We want to stress again, that as we apply the risk bound in Theorem 3.5 of \cite{zbMATH07524984}, one can go beyond the analysis of the empirical risk minimizer at the cost of an additional optimization error term in the rate.

The following corollary follows from Theorem~\ref{theorem_main} by additionally imposing Assumption~\ref{assumption_word_embedding} and selecting an appropriate range of transformer architectures.

\begin{cor}
\label{cor.main}
    Suppose Assumptions~\ref{assumption_word_embedding}, \ref{assumption_gh_smoothness} and \ref{assumption_transformer_size_lower}, and further assume that there exists $k>0$ such that
\begin{align}
    \max(L, N, H, L_{\mathrm{mlp}}, B, M) \lesssim \log^k(n \vee W \vee T) \quad \text{and} \quad
N_{\mathrm{mlp}} \asymp n^{\frac{m_1 r_1}{2(2 \beta_1 + m_1 r_1)}}. \label{network_size_upper}
\end{align}
For sufficiently large $n, W,$ and $T$ with $T \geq \log W$,
    the empirical risk minimizer \eqref{eq_estimator_definition} with the candidate class \eqref{candidate_functions_transformer} 
    satisfies
\begin{align*}
        \E \bigg[\log\frac{ \bX_{T+1}^\top  \bp_0(\bh, \bX_{1}, \ldots, \bX_{T})}{ \bX_{T+1}^\top  \wh \bp(\bX_{1}, \ldots, \bX_{T})}
    \bigg]
    & = \tilde{O} \Big( \frac{W}{n} + n^{-\frac{2 \beta_1}{2 \beta_1 + m_1 r_1}} +  
     T^{-\frac{2 \beta_2}{2 \beta_2 + (m_2-1)r_2 }}  + W^{-\frac{2 \beta_2}{m_2 - 1}} \Big),
    \end{align*} 
where the expectation is taken over all randomness (training data and $(\bh, \bX_1, \ldots, \bX_T)$).
    The implicit    
    constant depends only on $(m_1, m_2, r_1, r_2, \delta_1, \delta_2, \beta_1, \beta_2, \| \bg_0 \|_{\mathcal{C}^{\beta_1}}, \sup_{\bh \in \mathcal{H}} \| \bh \|_{\mathcal{C}^{\beta_2}})$.
\end{cor}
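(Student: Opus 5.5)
The corollary follows from Theorem~\ref{theorem_main} by plugging in the architecture sizes \eqref{network_size_upper} and bounding each term with Assumption~\ref{assumption_word_embedding}. First, $\kappa_W=\tilde O(W^{-1/2})$ gives $\kappa_W\le (1+m_2/\delta_2)^{-1}/2$ for $W$ large, so the theorem applies. It also gives $\kappa_W^2=\tilde O(W^{-1})$. Since $m_2-1\ge 1$ and $\beta_2\le 1$, we have $W^{-1}\le W^{-2\beta_2/(m_2-1)}$ whenever $2\beta_2\le m_2-1$. In the remaining case ($m_2=2$, $\beta_2>1/2$) I would absorb $W^{-1}$ into $W/n$, or else note that $W^{-1}$ is dominated anyway; I would check this small case explicitly.

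\textbf{Stochastic and MLP terms.} All of $L,N,H,L_{\mathrm{mlp}},B,M$ are polylogarithmic, so the stochastic term is
\[
\tilde O\Big(\frac{W+N_{\mathrm{mlp}}^2}{n}\Big).
\]
With $N_{\mathrm{mlp}}^2\asymp n^{m_1r_1/(2\beta_1+m_1r_1)}$ we get $N_{\mathrm{mlp}}^2/n=n^{-2\beta_1/(2\beta_1+m_1r_1)}$. The approximation term gives
\[
N_{\mathrm{mlp}}^{-4\beta_1/(m_1r_1)}=n^{-2\beta_1/(2\beta_1+m_1r_1)},
\]
so the two terms match. Assumption~\ref{assumption_transformer_size_lower} requires $N_{\mathrm{mlp}}\ge T^\alpha$ and $L_{\mathrm{mlp}}\ge\log^3(N_{\mathrm{mlp}}\vee T)$. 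Both hold because $N_{\mathrm{mlp}}$ is polynomial in $n$ and $\log N_{\mathrm{mlp}}$ is polylogarithmic; I take these as part of the hypotheses of the corollary.

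\textbf{Context term.} This is the main step. For $\lambda$ in the allowed range with $\lambda\ge W^{-1/(m_2-1)}\log W$, Assumption~\ref{assumption_word_embedding} gives $\varphi_W(\lambda)^{r_2}=\tilde\Omega(\lambda^{(m_2-1)r_2})$. Hence
\[
\lambda^{2\beta_2}+\frac{M^2}{T\varphi_W(\lambda)^{r_2}}=\tilde O\big(\lambda^{2\beta_2}+T^{-1}\lambda^{-(m_2-1)r_2}\big).
\]
I distinguish two cases.

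\emph{Case $T^{-1/(2\beta_2+(m_2-1)r_2)}\ge W^{-1/(m_2-1)}\log W$.} Take $\lambda=T^{-1/(2\beta_2+(m_2-1)r_2)}$. This yields $\tilde O(T^{-2\beta_2/(2\beta_2+(m_2-1)r_2)})$.

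\emph{Otherwise.} Take $\lambda=W^{-1/(m_2-1)}\log W$. Then the bias is $\tilde O(W^{-2\beta_2/(m_2-1)})$. The variance term is at most the variance at the balancing bandwidth, since $\lambda$ is larger and the variance is monotone in $\lambda$. So it is $\tilde O(T^{-2\beta_2/(2\beta_2+(m_2-1)r_2)})$.

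It remains to check that the chosen $\lambda$ lies in $[T^{-1},\log^{-1}(W\wedge T)]$ and satisfies $\lambda\ll1$. The balancing $\lambda$ exceeds $T^{-1}$ and tends to zero, hence is below $\log^{-1}T$ for $T$ large. The $W$-choice is at least the balancing value, hence at least $T^{-1}$, and is below $\log^{-1}W$ for large $W$.

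The main obstacle is the interplay of the range constraint with $\log^{-1}(W\wedge T)$. If $W\ll T$, the balancing $\lambda$ must also be at most $\log^{-1}W$; this holds since it tends to zero polynomially in $T\ge\log W$. I would also verify that the hypothesis $T\ge\log W$ suffices in the second case. Concretely, I need $W^{-1/(m_2-1)}\log W\ge T^{-1}$, which follows from $T\ge\log W$ up to logarithmic slack absorbed in $\tilde O$. Summing the four contributions gives the corollary.
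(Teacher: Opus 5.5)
Your route is the paper's route. The paper's one-line proof plugs \eqref{network_size_upper} and Assumption~\ref{assumption_word_embedding} into Theorem~\ref{theorem_main} with $\lambda=\lambda_T\vee\lambda_W$, where $\lambda_T\coloneqq T^{-1/(2\beta_2+(m_2-1)r_2)}$ and $\lambda_W\coloneqq W^{-1/(m_2-1)}\log W$. This is exactly your two-case choice. Your treatment of the stochastic and MLP terms and of the bias--variance trade-off in $\lambda$ is correct. You also verify the hypothesis $\kappa_W\le(1+m_2/\delta_2)^{-1}/2$ of Theorem~\ref{theorem_main}, which the paper leaves implicit.

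The step that does not go through is the edge case you flagged, and neither of your proposed fixes works. When $m_2=2$ and $\beta_2\in(1/2,1]$, the bound $\kappa_W^2=\tilde O(W^{-1})$ is not controlled by $W^{-2\beta_2/(m_2-1)}=W^{-2\beta_2}$.
\begin{itemize}
\item Absorbing $W^{-1}$ into $W/n$ requires $n\le W^2$.
\item $W^{-1}$ is also not dominated by the other terms. Take $n$ and $T$ to be sufficiently large powers of $W$, which is compatible with \eqref{network_size_upper} and Assumption~\ref{assumption_transformer_size_lower}. Then $W/n$, $n^{-2\beta_1/(2\beta_1+m_1r_1)}$ and $T^{-2\beta_2/(2\beta_2+r_2)}$ are all $\tilde O(W^{-2\beta_2})$, which is much smaller than $W^{-1}$.
\end{itemize}
Since Assumption~\ref{assumption_word_embedding} only gives an upper bound on $\kappa_W$, Theorem~\ref{theorem_main} yields the stated rate only when $2\beta_2\le m_2-1$. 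This covers all $m_2\ge 3$, which is also the only setting in which Appendix~\ref{app_discuss_emb} justifies the assumption. Otherwise an additional $\tilde O(W^{-1})$ term must be kept, or a stronger bound on $\kappa_W$ must be assumed. The paper's proof passes over this silently, so the gap is in the corollary's derivation itself, not only in your write-up.

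A smaller point: your last paragraph claims that $W^{-1/(m_2-1)}\log W\ge T^{-1}$ follows from $T\ge\log W$. This is false; take $T=\log W$. It is also unnecessary. In the second case $\lambda_W>\lambda_T\ge T^{-1}$, as you had already observed. The upper constraint holds because $\log^{-1}(W\wedge T)=\max(\log^{-1}W,\log^{-1}T)$ dominates both $\lambda_T$ and $\lambda_W$ once $W$ and $T$ are large. Your remark about the case $W\ll T$ is settled the same way. So the range check never uses $T\ge\log W$.
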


\begin{proof}
The result follows from Theorem~\ref{theorem_main} for the choices in \eqref{network_size_upper}, using Assumption \ref{assumption_word_embedding},
    and by choosing 
    \[\lambda = T^{-\frac{1}{2 \beta_2 + r_2(m_2 - 1)}} \vee W^{-\frac{1}{m_2 - 1}} \log W.\]
\end{proof}

The error term $W/n$ corresponds to learning the word embeddings $\bm{\phi}_1 : \mathcal{E}_W \to \mathbb{R}^{m_1}$ and $\bm{\phi}_2 : \mathcal{E}_W \to \mathbb{R}^{m_2}$. 
It scales linearly in the vocabulary size $W$ and can be effectively controlled by using a sufficiently large training set.
The error term $n^{-\frac{2 \beta_1}{2 \beta_1 + m_1 r_1}}$ relates to learning the $\beta_1$-H\"older smooth and $(m_1r_1)$-variate universal linguistic function $\bg_0.$ This is the standard nonparametric convergence rate for sample size $n.$ The error term $T^{-\frac{2 \beta_2}{2 \beta_2 + (m_2 - 1) r_2}}$ corresponds to learning the $\beta_2$-H\"older smooth local context function $\bh$ on the domain $(\mathbb{S}^{m_2 - 1})^{r_2}.$ The convergence rate in the document length $T$ is surprising as the training loss $- \tfrac{1}{n} \sum_{i=1}^n \bX^{(i) \top}_{T+1} \log\big( \bp(\bX^{(i)}_{1}, \ldots, \bX^{(i)}_{T})\big)$ considers $n$ prediction problems. It remains an open problem, whether the function $\bg_0$ can be estimated with a faster rate also involving $T.$ As $n\gg T$ in applications, we view this problem, however, as less relevant. The term $W^{-\frac{2 \beta_2}{m_2 - 1}}$ arises in the convergence rate from the discrete structure of the domain, and becomes negligible when $W \gtrsim T^{1/r_2}$.
We emphasize again that for small (and fixed) vocabulary size $W$, classical methods such as the
$r$-gram estimator or Kneser–Ney smoothing remain optimal, without smoothness assumptions.

To circumvent the curse of dimensionality in learning $\bg_0$, one can additionally assume that $\bg_0$ is compositionally sparse or is supported on a low-dimensional manifold $\mM\subset [-1,1]^{m_1r_1}.$ In the proof, one has to replace then the approximation theoretic result for Hölder functions in Lemma \ref{lemma_approx_smooth} by the respective results for compositionally sparse functions in \cite{10.1214/19-AOS1875, kong2025posterior} or functions supported on low-dimensional manifolds in \cite{JMLR:v21:20-002, zbMATH07714177}.

Similarly, it is of interest whether faster rates can be achieved for the estimation of the context function $\bh.$ A key step in the proof is to exploit that the transformer architecture can represent a Nadaraya--Watson estimator. The H\"older smoothness $\beta$ of the regression function improves the convergence rate of the Nadaraya--Watson estimator for $\beta\leq 1$ (e.g.\ Chapter 1 in \cite{zbMATH05358920}). For the context function $\bh$, the smoothness index is denoted by $\beta_2$ and $\beta_2\leq 1$ is imposed in Assumption \ref{assumption_gh_smoothness} (ii). While the transformer can represent the Nadaraya--Watson estimator for a specific kernel, it remains unclear whether the transformer can also represent higher-order local polynomial estimators. If possible, this would allow to exploit higher-order smoothness of $\bh$ beyond Lipschitz continuity ($\beta_2=1$). If the data are supported on a low-dimensional manifold $\mM$, the convergence rate of the Nadaraya--Watson estimator can adapt to the dimension of $\mM$ \cite{BickelBo07}. We conjecture that this phenomenon allows the transformer to circumvent the curse of dimensionality.

We derive the generalization bound for specific transformer architectures satisfying the constraints in Assumption \ref{assumption_transformer_size_lower}. In particular, we assume that the number of layers in the MLPs satisfies $L_{\mathrm{mlp}} \geq \log^3(N_{\mathrm{mlp}} \vee T).$ One can also obtain a similar generalization bound for shallow networks $L_{\mathrm{mlp}}=1$ by increasing the number of transformer blocks $L$ instead. To see this, observe that 
if the multi-head self-attention represents the identity map, the transformer block coincides with the MLP. If each MLP is shallow, then, stacking $L$ transformer blocks represents an MLP of depth $L.$ 

Alternatively, one can derive generalization bounds for models with only self-attention layers by following the recipe for translating ReLU approximation results to the softmax attention mechanism in \cite{hu2026transformer}.

\subsection{Proof of the risk bound}

We now provide a proof of Theorem \ref{theorem_main}.
Notice that the learning task is an i.i.d.\ multiclass classification problem with inputs $(\bX_1^{(i)},\ldots,\bX_T^{(i)}) \in \mathcal{E}_W^T$ and corresponding output $\bX_{T+1}^{(i)} \in \mathcal{E}_W,$ for $i=1,\ldots,n$. 
Excess risk bounds for binary classification have been studied in \cite{kim2021fast,  zbMATH07923796, 10.1214/23-EJS2187, yang2025rates, JMLR:v25:22-0049} and for multiclass classification in \cite{zbMATH07524984, 10.3150/25-BEJ1887}.
For the proof, we use the oracle inequality in \cite{zbMATH07524984}.

It is known that the VC dimension of self-attention layers involving exponential operations is difficult to control, which leads to suboptimal rates in regression problems (see, e.g., the discussion following Theorem~3 in \cite{jiao2025approximation}). 
For this reason, we instead bound the parameter magnitudes and derive an upper bound for the metric entropy of the transformer class  $\mathcal{T}(W,T,L,N,H,L_{\mathrm{mlp}},N_{\mathrm{mlp}},B)$ as follows. 
The proof is deferred to Appendix \ref{section_proof_covering_number}. 
Similar results on the covering numbers of transformers can be found in \cite{ching2026efficient} for linear attention, \cite{bai2023transformers, havrilla2024understanding} for ReLU attention, and \cite{takakura2023approximation, 2026arXiv260220555L} for softmax attention.

\begin{thm}\label{thm.covering_bd_TC}
Let $B \geq 1$ and $N_{\mathrm{mlp}} \geq N$. For any $\delta>0,$
    \begin{align*}
    &\log \, \mathcal{N}\Big(\delta, \mathcal{T}\big(W,T,L,N,H,L_{\mathrm{mlp}},N_{\mathrm{mlp}},B\big), \|\cdot\|_{L^\infty((\mE_W)^T)}\Big) \\
    &\leq \Big(2WN+ 4HLN^2+3LL_{\mathrm{mlp}}N_{\mathrm{mlp}}^2\Big)  \log\bigg(3+\frac{(6H)^{10 L^2} \big(B(N_{\mathrm{mlp}}+1)\big)^{38L^2 L_{\mathrm{mlp}}}B}{\delta}\bigg).
\end{align*}
\end{thm}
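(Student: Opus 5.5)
The plan is a standard parameter-perturbation argument. We show that the map from the parameter vector $\btheta \in [-B,B]^{P}$ to $\bbf_{\btheta}\in\mathcal{T}$ is Lipschitz in $\|\cdot\|_\infty$, uniformly over inputs $X\in(\mE_W)^T$, with a Lipschitz constant $\Lambda$ that is at most the quotient inside the logarithm times $\delta$. Covering the parameter cube by a grid of mesh $\delta/\Lambda$ then gives
\[
\log\mathcal{N}\le P\log\big(3+B\Lambda/\delta\big).
\]
Here $P$ is the parameter count \eqref{eq.nr_params_TC}. Using $N_{\mathrm{mlp}}\ge N$, it is bounded by $2WN+4HLN^2+3LL_{\mathrm{mlp}}N_{\mathrm{mlp}}^2$: the terms $2NN_{\mathrm{mlp}}+N+L_{\mathrm{mlp}}N_{\mathrm{mlp}}$ are absorbed into the $N_{\mathrm{mlp}}^2$ part.

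The first step is an a priori bound on activations. The input $P+EX$ has entries at most $B+1\le 2B$, since $X$ consists of one-hot columns and the positional entries are at most $1$. For a self-attention head, the softmax output is a column-stochastic mixture, so $\|VX\operatorname{softmax}(\cdot)\|_\infty\le NB\|X\|_\infty$. Hence a multi-head layer multiplies the sup-norm by at most $1+HNB\le 2HNB$. A ReLU MLP layer multiplies it by at most $(N_{\mathrm{mlp}}+1)B$ (the $+1$ accounts for the bias), so an MLP increases the norm by at most $(B(N_{\mathrm{mlp}}+1))^{L_{\mathrm{mlp}}+1}$. By induction over the $L$ blocks, the activations after block $\ell$ are bounded by $R_\ell\le (2B)\big(2HNB\,(B(N_{\mathrm{mlp}}+1))^{L_{\mathrm{mlp}}+1}\big)^{\ell}$.

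The second step bounds the Lipschitz constants, both in the input (for fixed parameters) and in the parameters (for bounded input). For the MLP this is standard: the input-Lipschitz constant is at most $(BN_{\mathrm{mlp}})^{L_{\mathrm{mlp}}+1}$, and the parameter-sensitivity is of order $L_{\mathrm{mlp}}(B(N_{\mathrm{mlp}}+1))^{L_{\mathrm{mlp}}}(R+1)$.

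The self-attention layer is the main obstacle, because the logits $\bx_s^\top U\bx_t$ are quadratic in the activations. We use that softmax is $1$-Lipschitz from $\ell^\infty$ logits to $\ell^1$ probabilities (up to a factor $2$). A perturbation of $X$ by $\eta$ changes each logit by at most $2N^2BR\eta$, and a perturbation of $U$ by $\eta$ changes each logit by at most $N^2R^2\eta$. Consequently the output changes by at most $NBR\cdot 2\cdot(\text{logit change}) + NB\eta + NR\eta$. The Lipschitz constant of the layer is therefore polynomial in $N,B,R$, roughly $4N^3B^2R^2$. Because it depends on $R^2$ and $R_\ell$ is itself exponential in $\ell$, the sensitivities compound. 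This compounding is what produces the $L^2$ in the exponent of the bound.

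Finally, we combine the steps by a telescoping argument. Perturbing the parameters of block $\ell$ creates an error that is propagated through blocks $\ell+1,\dots,L$ by their input-Lipschitz constants and then through $D$, whose input-Lipschitz factor is $NB$. The perturbations of $E$ and $D$ are handled in the same way. Summing over the $L$ blocks and the $L_{\mathrm{mlp}}$ layers per block gives $\Lambda \le (6H)^{10L^2}(B(N_{\mathrm{mlp}}+1))^{38L^2L_{\mathrm{mlp}}}$. To reach these exponents, we bound $N\le N_{\mathrm{mlp}}$, $2\le 3\le H$-type constants generously, and $L_{\mathrm{mlp}}\le N_{\mathrm{mlp}}$. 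The bookkeeping is routine but tedious. The quadratic dependence on $R_\ell$ is the only genuinely delicate point, since it forces an exponent quadratic in $L$ rather than linear.
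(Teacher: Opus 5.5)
Your proposal follows essentially the same route as the paper: a grid covering of the parameter cube from a uniform parameter-Lipschitz bound, a priori activation bounds through the blocks, input- and parameter-Lipschitz constants for the MLP and multi-head attention layers (where the quadratic-in-activation logits produce the $L^2$ exponent), and the telescoping bound $K_q+\sum_{\ell} Q_q\cdots Q_{\ell+1}K_\ell$ over the composition $P+EX,\operatorname{TB}_1,\ldots,\operatorname{TB}_L,D$. One small point is that $L_{\mathrm{mlp}}\le N_{\mathrm{mlp}}$ is not a hypothesis of the theorem, but you do not need it, since the factor $L_{\mathrm{mlp}}$ is absorbed by $\sum_{r=0}^{L_{\mathrm{mlp}}} y^r\le 2y^{L_{\mathrm{mlp}}}$ for $y=B(N_{\mathrm{mlp}}+1)\ge 2$, as the paper does.
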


Interestingly, the bound is independent of the document length $T.$ 
For polynomially small $\delta$ and ignoring logarithmic factors, the upper bound in the theorem scales as $(WN + HLN^2 + L L_{\mathrm{mlp}} N_{\mathrm{mlp}}^2) L^2 L_{\mathrm{mlp}} \log B.$  
The bound in Theorem~5.3 of
\cite{takakura2023approximation} is similar but derived
for finite $W$ and the log covering number derived in Lemma~4 of \cite{2026arXiv260220555L}
includes an additional $\log T$ factor.

In the proposed statistical model \eqref{eq.next_token_prob} and by \eqref{eq.fh_def}, the probabilities in the vector $\bp_0(\bh, \bX_{1}, \ldots, \bX_{T})$ are up to normalization 
\[
\propto \exp\bigg( \underbrace{ \bg_0 \Big( \bm{\phi}_1(\bX_{T-r_1+1}) , \ldots, \bm{\phi}_1(\bX_{T}) \Big)^{\top} \bm{\phi}_1(\be_w)
  + \bh \Big( \bm{\phi}_2(\bX_{T-r_2+1}) , \ldots, \bm{\phi}_2(\bX_{T}) \Big)^{\top}  \bm{\phi}_2(\be_w)}_{\displaystyle \eqcolon \bbf_0(\bh, \bX_1,\ldots,\bX_T)_{(w)}} \bigg), \quad w\in[W].
\]
The following theorem provides a bound on the approximation error, and serves as a key contribution of this paper. 
We provide a proof sketch in Section \ref{sec_approximation_sketch} and the formal proof is stated in Appendix~\ref{section_proof_theorem_upper_app}.

\begin{thm} \label{theorem_upper_app}
    Suppose Assumptions~\ref{assumption_gh_smoothness} and \ref{assumption_transformer_size_lower} hold. For sufficiently large $W$ and $T$ and any $\lambda \in [T^{-1},\log^{-1} (W \land T)]$, there exists a transformer $\bbf^* \in \mathcal{T}_M (W,T,L,N,H,L_{\mathrm{mlp}},N_{\mathrm{mlp}},B)$ such that
    \[
    \sup_{\bh \in \mathcal{H}} \E \Big[ \|\bbf^{*}(\bX_{1}, \ldots, \bX_{T}) - \bbf_0(\bh, \bX_{1}, \ldots, \bX_{T})\|_{\infty}^2
    \big | \bh \Big] 
    = \tilde{O} \bigg(N_{\mathrm{mlp}}^{-\frac{4 \beta_1}{m_1 r_1}} +  \lambda^{2 \beta_2} + \frac{M^2}{T \varphi_W(\lambda)^{r_2}} + \kappa_W^2\bigg),
    \]
    where the implicit    
    constant depends only on $(m_1, m_2, r_1, r_2, \delta_1, \delta_2, \beta_1, \beta_2, \| \bg_0 \|_{\mathcal{C}^{\beta_1}}, \sup_{\bh \in \mathcal{H}} \| \bh \|_{\mathcal{C}^{\beta_2}})$.
\end{thm}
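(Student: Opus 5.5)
The plan is to build $\bbf^*$ explicitly so that its last column, after the decoder, computes
\[
\wh\bg^{\top}\bm{\phi}_1(\be_w)+\wh\bh^{\top}\bm{\phi}_2(\be_w),\qquad w\in[W].
\]
Here $\wh\bg$ approximates $\bg_0$ evaluated at the last $r_1$ linguistic embeddings, and $\wh\bh$ is a normalized Nadaraya--Watson estimate of $\bh$ evaluated at the last $r_2$ context embeddings. The error then splits by the triangle inequality into a $\bg_0$-part, a $\bh$-part and a discreteness part, which I bound separately and square.

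\textbf{Architecture.} I take the encoder $E$ so that $E\be_w$ stores $\stack(\bm{\phi}_1(\be_w),\bm{\phi}_2(\be_w))$ in dedicated coordinates, with zeros elsewhere; the positional encoding $P$ occupies its own block. I take the decoder row $w$ of $D$ to be $(\bm{\phi}_1(\be_w)^\top,\bm{\phi}_2(\be_w)^\top)$ on the coordinates where $\wh\bg$ and $\wh\bh$ are finally written. Entries are bounded, so $B$ is only needed to be polylogarithmic.

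\textbf{First block: copying and approximating $\bg_0$.} In the first block, $r_1+r_2$ heads (Assumption~\ref{assumption_transformer_size_lower} gives $H\ge r_1+r_2$) implement shifts $s\mapsto s-j$. I choose $U$ acting on the positional block so that $\bq_s^\top U\bq_t$ is maximized at $s=t-j$. With the multi-scale sinusoidal encoding and $A=N/4\ge 2\log T$, a gap of order one between the best and second-best scores is achieved with entries of size $O(\log^2 T)$. The softmax is then within $T^{-c}$ of the hardmax, and each head copies the embedding of $\bX_{t-j}$ into column $t$. This is the content of Remark~\ref{remark_benefit_multi_sinusodial}, and it explains the condition $B\ge 9\log^2T$.

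The MLP of this block then does two things. It applies Lemma~\ref{lemma_approx_smooth} to approximate $\bg_0$ in sup-norm with error $N_{\mathrm{mlp}}^{-2\beta_1/(m_1r_1)}$, using depth $\log^3$. It also passes through the context vector $\bz_t=\stack(\bm{\phi}_2(\bX_{t-r_2+1}),\ldots,\bm{\phi}_2(\bX_t))$ and $\bm{\phi}_2(\bX_t)$ unchanged. Squaring the sup-norm error gives the first term of the bound.

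\textbf{Second block: the Nadaraya--Watson step.} A single head uses query $\bz_T/(r_2\lambda^2)$ and key $\bz_s$, with value equal to the next-token context embedding. Since the components lie on spheres, $\exp(\bz_s^\top\bz_T/\lambda^2)\propto\exp(-\|\bz_s-\bz_T\|_2^2/(2\lambda^2))$, which is a Gaussian-type kernel with bandwidth $\lambda$. The index alignment (attending from $\bz_s$ to $\bm{\phi}_2(\bX_{s+1})$) is arranged by having the first block also store $\bz_{s-1}$ in column $s$. The output then approximates the conditional mean of $\bm{\phi}_2(\bX_{s+1})$ given $\bz_s\approx\bz_T$.

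I expect the main obstacle to be that this conditional mean is the mean of an exponential family tilted by both $\bh(\bz_s)^\top\bm{\phi}_2$ and the linguistic term $\bg_0^\top\bm{\phi}_1$. It is therefore not aligned with $\bh$. My first attempt is to subtract the already computed $\wh\bg_s^\top\bm{\phi}_1(\bX_{s+1})$ inside the attention score (via extra coordinates carried into the key), which importance-reweights away the linguistic tilt. I would then compare the resulting discrete weighted average with the sphere integral $\bm{m}(\bv)=\int e^{\bv^\top\bz}\bz\,d\bz/\int e^{\bv^\top\bz}d\bz$. By Definition~\ref{def_pos_assumption}, $\kappa_W$ controls this comparison, which gives the $\kappa_W^2$ term. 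If the reweighting fails, I would fall back to restricting to a subfamily where the cross term averages out.

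Rotational symmetry gives $\bm{m}(\bv)=A(\|\bv\|)\bv/\|\bv\|$ with $A(\delta_2)$ bounded below. The condition $\kappa_W\le(1+m_2/\delta_2)^{-1}/2$ keeps the estimate away from $0$, so the map $\bu\mapsto\delta_2\bu/\|\bu\|$ is Lipschitz there. This map can be realized by the final MLP, with clipping at $M$, and it recovers $\bh(\bz_T)$ because $\|\bh\|_2=\delta_2$ is known.

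\textbf{Error analysis of the estimator.} The bias is of order $\lambda^{\beta_2}$ by Assumption~\ref{assumption_gh_smoothness}(ii), plus kernel tails that are negligible since $\lambda\le\log^{-1}(W\wedge T)$. For the variance, every transition probability is bounded below by $e^{-2(\delta_1m_1+\delta_2)}/W$. The chain therefore has the correct occupation measure: each $r_2$-context ball of radius $\lambda$ is visited about $T\varphi_W(\lambda)^{r_2}$ times. I would establish this with a martingale-difference decomposition of $\sum_s K_s(\bm{\phi}_2(\bX_{s+1})-\E[\cdot\mid\text{past}])$ together with a lower bound on the kernel denominator via a Markov-chain concentration argument. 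Outputs are clipped at $M$, and the outcome is a second-moment bound of $M^2/(T\varphi_W(\lambda)^{r_2})$. Collecting the $\bg_0$, bias, variance and $\kappa_W$ contributions and squaring yields the stated bound, uniformly over $\bh\in\mathcal H$.
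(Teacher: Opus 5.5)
Your skeleton matches the paper's: shift heads built from the multi-scale encoding, an MLP for $\bg_0$, one kernel head, a normalizing MLP, $D=E^\top$, the $\kappa_W$/von Mises--Fisher comparison, and an $M^2\times$ bad-event term. The central step, however, is genuinely different.

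\textbf{Where the linguistic correction goes.} The paper puts it into the \emph{value}, $\bY_s=e^{-a_s}\bm{\phi}_2(\bX_s)$ with $a_s\coloneqq\bg_0(\bZ_{1,s})^{\top}\bm{\phi}_1(\bX_s)$, and averages with plain kernel weights. One has $\E[\bY_s\mid\bh,\bX_1,\ldots,\bX_{s-1}]=c_s\tilde{\btheta}(\bZ_{2,s})$, where
\[
\tilde{\btheta}(\bz)\coloneqq\frac{\sum_w e^{\bh(\bz)^\top\bm{\phi}_2(\be_w)}\bm{\phi}_2(\be_w)}{\sum_w e^{\bh(\bz)^\top\bm{\phi}_2(\be_w)}}
\]
and the scalar $c_s$ depends on $\bZ_{1,s}$ and is not smooth in $\bZ_{2,s}$. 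The paper therefore only proves \emph{directional} consistency (Lemma~\ref{lemma_key_NW}, via the $P_{\btheta}/P^{\perp}_{\btheta}$ split). It also needs a separate lower bound on the projected mean, Lemma~\ref{lemma_application_keylemma_properties}(v), which is where $\lambda\le\log^{-1}W$ is used.

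You put $-a_s$ into the logits instead. With $K_s=K_\lambda(\bZ_{2,s},\bZ_{2,T+1})$ (your $\bz_{s-1}$ is the paper's $\bZ_{2,s}$), this gives the self-normalized estimator $\sum_sK_se^{-a_s}\bm{\phi}_2(\bX_s)/\sum_sK_se^{-a_s}$. This is the paper's $\wh{\bbm}_\lambda$ times the positive scalar $\sum_sK_s/\sum_sK_se^{-a_s}$. After normalization you therefore output the same vector, up to approximation error.

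\textbf{What your route buys.} The gain is in the analysis, and you should state it rather than hedge. One has $\E[e^{-a_s}\bm{\phi}_2(\bX_s)\mid\text{past}]=c_s\tilde{\btheta}(\bZ_{2,s})$ and $\E[e^{-a_s}\mid\text{past}]=c_s$, with $c_s\in[e^{-\delta_1m_1},e^{\delta_1m_1}]$. So numerator and denominator are $\sum_sK_sc_s\tilde{\btheta}(\bZ_{2,s})$ and $\sum_sK_sc_s$ up to martingale terms. The non-smooth factor becomes a bounded positive covariate weight, and you get consistency of the whole vector for $\tilde{\btheta}(\bZ_{2,T+1})$. This target is H\"older by Lemma~\ref{lemma_softmax_lipschitz}, and its norm is at least $(2+2m_2/\delta_2)^{-1}$ by Lemma~\ref{lemma_VMF_property} and the $\kappa_W$ condition.

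Consequently, the norm lower bound comes for free and the projection machinery is unnecessary. The $\bg_0$ error in the logits costs $O(N_{\mathrm{mlp}}^{-2\beta_1/(m_1r_1)})$ by Lemma~\ref{lemma_softmax_lip}. The MLP also needs only a product network, not an approximation of $\exp$. Drop the ``subfamily'' fallback: it would not give a bound uniform over $\mathcal H$.

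\textbf{First repair: the bandwidth scaling.} The factor $1/\lambda^2$, which can be as large as $T^2$, cannot sit in the attention matrix. Entries of $U$ are bounded by $B\asymp\log^2T$ and the embeddings have unit norm. With $\bz_t$ passed through unchanged, the effective bandwidth is therefore stuck at $\lambda\gtrsim B^{-1/2}$. The first MLP must output $\bZ_{2,s}/\lambda$ and a constant $\sqrt{r_2}/\lambda$; its depth allows this with bounded weights (Lemma~\ref{lemma_mlp_scaleup}). The correction $-a_s$ is then added through a $U$-entry $-\lambda/\sqrt{r_2}$ acting against that constant.

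\textbf{Second repair: decoupling the query.} $\sum_sK_s(\cdots)$ is not a martingale, because the weights depend on the query at the end of the chain. For $s>T-r_2$ the query even contains the value $\bm{\phi}_2(\bX_s)$. Before running fixed-query martingale or Chebyshev bounds, you need a decoupling step:
\begin{itemize}
\item Suppress the last $\log^2T$ indices, and the first $\log^2T$ ones (burn-in, zero-padded or imperfectly shifted columns). The paper does this with a large additive logit on a positional indicator ($T^3\,\mathbb{I}(\cdot)$).
\item Use uniform ergodicity (Lemma~\ref{lemma: markov}) to replace the query by an independent $\rho$-distributed one, at cost $O(c_{\mathbf{M}}^{\log^2T})$.
\end{itemize}
With these two repairs, your bias/variance bookkeeping gives the stated rate.
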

The main difficulty in the construction underlying Theorem \ref{theorem_upper_app} is that $\bh$ has to be approximated by the data $\bX_1, \ldots, \bX_T$. 
Compared to in-context nonparametric regression \cite{kim2024transformers, ma2025provable, shen2026understanding, ching2026efficient}, the considered statistical model for next-token prediction is significantly more challenging to analyze and requires novel techniques.
In addition, the error in estimating $\bh$ should not deteriorate due to the presence of $\bg_0$, which further complicates the analysis.
See Remark~\ref{remark_advantage_mhat} for further discussion.

\begin{proof}[Proof of Theorem~\ref{theorem_main}]
By assumption $M \geq \delta_1 m_1 + \delta_2.$ Since
\[
    \bp_0(\bh, \bX_{1}, \ldots, \bX_{T}) =
    \operatorname{softmax} \circ \,  \bbf_0(\bh, \bX_1,\ldots,\bX_T) 
\]
and the $w$-th component of $\bbf_0$ has the form,
\[
    \bbf_0(\bh, \bX_1,\ldots,\bX_T)_{(w)} \coloneqq \bg_0 \Big( \bm{\phi}_1(\bX_{T-r_1+1}) , \ldots, \bm{\phi}_1(\bX_{T}) \Big)^{\top} \bm{\phi}_1(\be_w)
  + \bh \Big( \bm{\phi}_2(\bX_{T-r_2+1}) , \ldots, \bm{\phi}_2(\bX_{T}) \Big)^{\top}  \bm{\phi}_2(\be_w),
\]
we obtain $\|\bbf_0(\bh, \bX_{1}, \ldots, \bX_{T})\|_{\infty} \leq \delta_1 m_1 + \delta_2\leq M$ and
\[
    \bp_0(\bh, \bX_{1}, \ldots, \bX_{T}) 
    < \frac{\exp(2M)}{W}. 
\]
Because of the clipping operation in the transformer function class (Definition \ref{defi.transfomer_class}), every $\wh{\bp} \in \mathcal{P}$ satisfies
$
  \exp(-2M)/W  
  < \wh{\bp}(\bX_{1}, \ldots, \bX_{T}),
$
and hence
\[
\log\bigg(\frac{\bp_0(\bh, \bX_{1}, \ldots, \bX_{T})}{\wh{\bp}(\bh, \bX_{1}, \ldots, \bX_{T})}\bigg) \leq 4M.
\]
By using the oracle inequality for multiclass classification
(Theorem 3.5 of \cite{zbMATH07524984}), 
\begin{align*}
    \E \bigg[\log\frac{ \bX_{T+1}^\top  \bp_0(\bh, \bX_{1}, \ldots, \bX_{T})}{ \bX_{T+1}^\top  \wh \bp(\bX_{1}, \ldots, \bX_{T})}
    \bigg] 
    &= \frac{M}{n} \log \, \mathcal{N}\Big(\frac{1}{n},  \log(\mathcal{P}), \|\cdot\|_{L^\infty((\mE_W)^T)}\Big)
    && \text{(stochastic error)}
 \\
    &\quad +  \tilde{O}\bigg( \inf_{\bp \in \mathcal{P}} \E \bigg[\log\frac{ \bX_{T+1}^\top  \bp_0(\bh, \bX_{1}, \ldots, \bX_{T})}{ \bX_{T+1}^\top  \bp(\bX_{1}, \ldots, \bX_{T})}
    \bigg]\bigg)  && \text{(approximation error)},
\end{align*}
where
$\log(\mathcal{P}) \coloneqq \{\bbf = \log(\bp) : \bp \in \mathcal{P}\}$ and $\|\bbf\|_{L^\infty((\mE_W)^T)} \coloneqq
\max \{ \|\bbf(\bX_{1}, \ldots, \bX_{T})\|_{\infty} : \bX_{1}, \ldots, \bX_{T} \in \mE_W\}.$

Define
\[\mathcal{F} \coloneqq \mathcal{T}_M (W,T,L,N,H,L_{\mathrm{mlp}},N_{\mathrm{mlp}},B).\]
Combining $\log(\mathcal{P}) = \{\log \circ \operatorname{softmax} \circ \, \bbf : \bbf \in \mathcal{F}\}$, the inequality 
\[
\big\|\log \circ \operatorname{softmax} \circ \operatorname{clip}_{M}(\bx) - \log \circ \operatorname{softmax} \circ \operatorname{clip}_{M}(\bx') \big\|_{\infty} \leq 2 \big\| \bx - \bx' \big\|_{\infty} 
\]
and Theorem~\ref{thm.covering_bd_TC} yield
\[
\frac{M}{n} \log \, \mathcal{N}\Big(\frac{1}{n},  \log(\mathcal{P}), \|\cdot\|_{L^\infty((\mE_W)^T)}\Big)\bigg) = \tilde{O}\Big(\frac{M (WN + HLN^2 + L L_{\mathrm{mlp}} N_{\mathrm{mlp}}^2 + L^2 L_{\mathrm{mlp}} + \log B)}{n}\Big).
\]
Using Lemma \ref{lemma_KL_MSE}, 
the approximation error can be further bounded by
\begin{align*}
    \inf_{\bp \in \mathcal{P}} \E \bigg[\log\frac{ \bX_{T+1}^\top  \bp_0(\bh, \bX_{1}, \ldots, \bX_{T})}{ \bX_{T+1}^\top  \bp(\bX_{1}, \ldots, \bX_{T})}
    \bigg]
    &=
    \inf_{\bp \in \mathcal{P}} \E \bigg[\sum_{w=1}^W \bp_0(\bh, \bX_{1}, \ldots, \bX_{T})_{(w)} \log\frac{  \bp_0(\bh, \bX_{1}, \ldots, \bX_{T})_{(w)}}{ \bp(\bX_{1}, \ldots, \bX_{T})_{(w)}}
    \bigg]\\
    &\leq 4 \inf_{\bbf \in \mathcal{F}} \, \E \Big[ \big\|\bbf_0(\bh, \bX_{1}, \ldots, \bX_{T}) - \bbf(\bX_{1}, \ldots, \bX_{T})\big\|_{\infty}^2
    \Big]\\
    &\leq 4 \sup_{\bh \in \mathcal{H}} \, \inf_{\bbf \in \mathcal{F}} \, \E\Big[ \big\|\bbf_0(\bh, \bX_{1}, \ldots, \bX_{T}) - \bbf(\bX_{1}, \ldots, \bX_{T})\big\|_{\infty}^2
    \, \Big| \, \bh\Big].
\end{align*}
Together with Theorem \ref{theorem_upper_app}, we get
\[
 \inf_{\bp \in \mathcal{P}} \E \bigg[\log\frac{ \bX_{T+1}^\top  \bp_0(\bh, \bX_{1}, \ldots, \bX_{T})}{ \bX_{T+1}^\top  \bp(\bX_{1}, \ldots, \bX_{T})}
    \bigg] = \tilde{O}\bigg(N_{\mathrm{mlp}}^{-\frac{4 \beta_1}{m_1 r_1}} + \inf_{\lambda \in [T^{-1},\log^{-1} (W \land T)]} \, \Big( \lambda^{2 \beta_2} + \frac{M^2}{T \varphi_W(\lambda)^{r_2}} \Big)  + \kappa_W^2\bigg).
\]
\end{proof}

\section{Proof Sketch of the Upper Bound for the Approximation Error}
\label{sec_approximation_sketch}

We now outline the construction of a transformer $\bbf^* \in \mathcal{T}(W,T,L,N,H,L_{\mathrm{mlp}},N_{\mathrm{mlp}},B)$ satisfying the approximation error bound in  Theorem~\ref{theorem_upper_app}. Recall that the input tokens $\bX_1,\ldots, \bX_T$ are coded as $W$-dimensional standard basis vectors. The input of the transformer is the $W \times T$ matrix 
    $$X \coloneq
    \left[\begin{matrix}
        \bX_1 & \bX_2 & \ldots & \bX_T
    \end{matrix}
    \right].$$
It suffices to describe the construction of an encoder $E \in \mathbb{R}^{N \times W}$, a decoder $D \in \mathbb{R}^{W \times N}$, multi-head self-attentions $\bm{\mu}_1, \bm{\mu}_2 \in \mathcal{MA}(N,H,B),$ and multilayer perceptrons $\bm{\nu}_1, \bm{\nu}_2 \in \mathcal{MLP}(N, L_{\mathrm{mlp}},N_{\mathrm{mlp}}, B)$, such that the $T$-th output column of the transformer is
\begin{align*}
    \bbf^*(X) & = \operatorname{clip}_{M} \circ \, \Big(D \cdot \bm{\nu}_2 \circ \bm{\mu}_2 \circ \bm{\nu}_1 \circ \bm{\mu}_1 (P + E X)\Big)_{:,T}\\ &\approx \bbf_0(\bh, X)     
    \coloneqq \left( \begin{matrix}
    \bg_0 \big( \bm{\phi}_1(\bX_{T-r_1+1}) , \ldots, \bm{\phi}_1(\bX_{T}) \big)^{\top} \bm{\phi}_1(\be_1)
  + \bh \big( \bm{\phi}_2(\bX_{T-r_2+1}) , \ldots, \bm{\phi}_2(\bX_{T}) \big)^{\top}  \bm{\phi}_2(\be_1)\\
\bg_0 \big( \bm{\phi}_1(\bX_{T-r_1+1}) , \ldots, \bm{\phi}_1(\bX_{T}) \big)^{\top} \bm{\phi}_1(\be_2)
  + \bh \big( \bm{\phi}_2(\bX_{T-r_2+1}) , \ldots, \bm{\phi}_2(\bX_{T}) \big)^{\top}  \bm{\phi}_2(\be_2)\\
  \vdots\\
\bg_0 \big( \bm{\phi}_1(\bX_{T-r_1+1}) , \ldots, \bm{\phi}_1(\bX_{T}) \big)^{\top} \bm{\phi}_1(\be_W)
  + \bh \big( \bm{\phi}_2(\bX_{T-r_2+1}) , \ldots, \bm{\phi}_2(\bX_{T}) \big)^{\top}  \bm{\phi}_2(\be_W)
    \end{matrix} \right).
\end{align*}
See Figure~\ref{fig_overall_structure} for an illustration.
The main challenge is that we cannot use the latent local context function $\bh$ to construct $\bbf^*$. Instead, $\bh$ has itself to be approximated by the available input data.
\begin{figure}[t]
  \centering  \includegraphics[width=.8\textwidth]{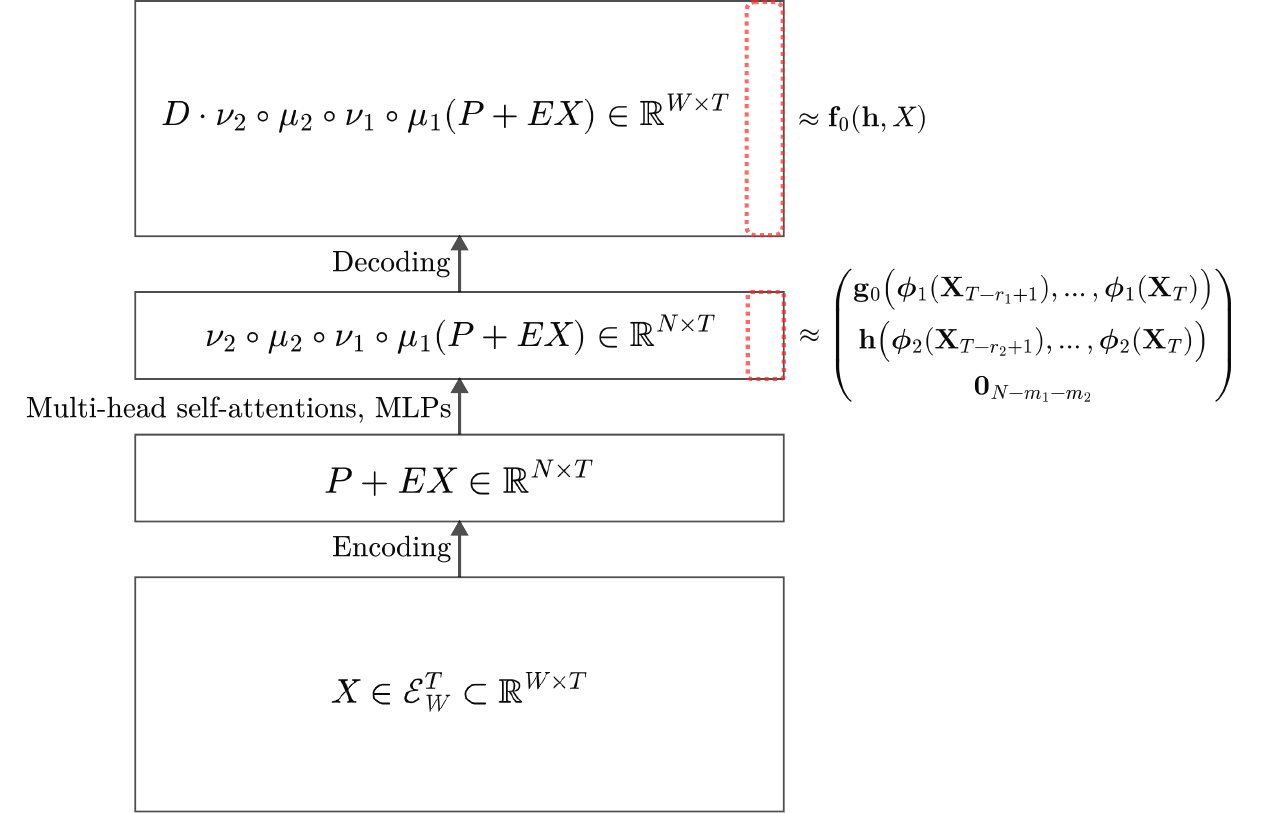}
\caption{{Main building blocks of the transformer construction to control the approximation error.} 
    } \label{fig_overall_structure}
\end{figure}
For the encoder operation, we use the $N \times W$ matrix 
\begin{align}
    E \coloneq \left[\begin{matrix}
\bm{\phi}_1(\be_1) & \bm{\phi}_1(\be_2) & \cdots & \bm{\phi}_1(\be_W) \\
\bm{\phi}_2(\be_1) & \bm{\phi}_2(\be_2) & \cdots & \bm{\phi}_2(\be_W) \\
\bm{0}_{N-m_1 - m_2} & \bm{0}_{N-m_1 - m_2} & \cdots & \bm{0}_{N-m_1 - m_2} 
\end{matrix}\right]. \label{def_E}
\end{align}
For every token $w \in [W]$, we have $E \be_w = \stack(\bm{\phi}_{1}(\be_w), \bm{\phi}_{2}(\be_w), \bm{0}_{N-m_1-m_2}),$ consequently,
$$EX = E \big[\bX_1, \bX_2, \ldots, \bX_T\big] =
\left[\begin{matrix}
\bm{\phi}_1(\bX_1) & \bm{\phi}_1(\bX_2) & \cdots & \bm{\phi}_1(\bX_T) \\
\bm{\phi}_2(\bX_1) & \bm{\phi}_2(\bX_2) & \cdots & \bm{\phi}_2(\bX_T) \\
\bm{0}_{N-m_1-m_2} & \bm{0}_{N-m_1-m_2} & \cdots & \bm{0}_{N-m_1-m_2}
\end{matrix}\right],$$
and combined with the definition of the positional encoding matrix $P$ (Definition \ref{def_PE}),
$$P + E X = \left[\begin{matrix}
\bm{\phi}_1(\bX_1) & \bm{\phi}_1(\bX_2) & \cdots & \bm{\phi}_1(\bX_T) \\
\bm{\phi}_2(\bX_1) & \bm{\phi}_2(\bX_2) & \cdots & \bm{\phi}_2(\bX_T) \\
\bm{0}_{\tfrac{N}{2} -m_1-m_2} & \bm{0}_{\tfrac{N}{2} -m_1-m_2} & \cdots & \bm{0}_{\tfrac{N}{2} -m_1-m_2} \\
\bq_1 & \bq_2 & \cdots & \bq_T 
\end{matrix}\right] = 
\left( 
\begin{matrix}
    \bm{\phi}_1(\bX_t)\\
    \bm{\phi}_2(\bX_t)\\
    \bm{0}_{\tfrac{N}{2} -m_1-m_2}\\
    \bq_t
\end{matrix}
\right)_{t \in [T]}.$$

The projected input $P + E X$ is propagated to the first multi-head self attention layer $\bm{\mu}_1 \in \mathcal{MA}(N,H,B)$.
For $t \in [T]$, we define $\bZ_{1,t} \in \mathbb{R}^{r_1 m_1}$ and $\bZ_{2,t} \in \mathbb{R}^{r_2 m_2}$ as the linguistic and context embeddings of the most recent $r_1$ and $r_2$ tokens up to time $t-1$, respectively, defined as
\begin{align}
    \bZ_{1,t}
\coloneqq
\begin{pmatrix}
\bm{\phi}_1(\bX_{t-r_1}) \\
\bm{\phi}_1(\bX_{t-r_1+1}) \\
\vdots \\
\bm{\phi}_1(\bX_{t-1})
\end{pmatrix},
\qquad
\bZ_{2,t}
\coloneqq
\begin{pmatrix}
\bm{\phi}_2(\bX_{t-r_2}) \\
\bm{\phi}_2(\bX_{t-r_2+1}) \\
\vdots \\
\bm{\phi}_2(\bX_{t-1})
\end{pmatrix}.
\label{def_Z}
\end{align}
By exploiting the positional encoding, one can construct a multi-head self-attention layer $\bm{\mu}_1 \in \mathcal{MA}(N,H,B)$ such that
$$
    \bm{\mu}_1\left(\left( 
\begin{matrix}
    \bm{\phi}_1(\bX_t)\\
    \bm{\phi}_2(\bX_t)\\
    \bm{0}_{\tfrac{N}{2} -m_1-m_2}\\
    \bq_t
\end{matrix}
\right)_{t \in [T]}
\right) \approx \left(\begin{matrix}
    \bm{\phi}_1(\bX_t)\\
    \bm{\phi}_2(\bX_t)\\
    \bZ_{1,t} \\
    \bZ_{2,t} \\
    \bm{0}_{\tfrac{N}{2}- (r_1+1)m_1 -  (r_2+1)m_2}\\
    \bq_t
    \end{matrix}\right)_{t \in [T]}
    \begin{array}{l}
        \in \mathbb{R}^{m_1}\\
        \in \mathbb{R}^{m_2}\\
        \in \mathbb{R}^{r_1 m_1}\\
        \in \mathbb{R}^{r_2 m_2}\\
        \left. \right.\\        
        \in \mathbb{R}^{N/2}\\
    \end{array}
    .
$$ 
The approximation is valid for the $t$-th column for $t \geq (r_1 \vee r_2) + 1$.
See Lemma \ref{lemma_approx_rgram} for a bound on the approximation error.

We now employ a multilayer perceptron to approximate two key quantities:  $\bg_0(\bZ_{1,t+1})$, corresponding to the linguistic information, and 
$$\bY_{t} \coloneqq \frac{\bm{\phi}_2(\bX_t)}{\exp\big(\bg_0(\bZ_{1,t} )^{\top} \bm{\phi}_1(\bX_t)\big)}  \in \mathbb{R}^{m_2},$$
which will later be used to estimate the local context function $\bh$.
Specifically, we construct a multi-layer perceptron $\bm{\nu}_1 \in \mathcal{MLP}(N, L_{\mathrm{mlp}},N_{\mathrm{mlp}}, B)$ such that
\begin{align}
    \bm{\nu}_1 \left( \left( \begin{matrix}
    \bm{\phi}_1(\bX_t)\\
    \bm{\phi}_2(\bX_t)\\
    \bZ_{1,t} \\
    \bZ_{2,t} \\
    \bm{0}_{\tfrac{N}{2}- (r_1+1)m_1 -  (r_2+1)m_2}\\
    \bq_t
    \end{matrix}\right)_{t \in [T]} \right) \approx 
    \left(\begin{matrix}
    \bg_0(\bZ_{1,t+1}) \\
    \bY_{t}\\
    \frac{1}{\lambda} \bZ_{2, t}\\
    \frac{1}{\lambda} \bm{\phi}_2(\bX_t)\\
    \frac{\sqrt{r_2}}{\lambda} \\
    T^3 \, \mathbb{I}(t \in [\log^2 T, T - \log^2 T])\\
        \bm{0}_{N - m_1 - (r_2+2)m_2-2}
    \end{matrix}\right)_{t \in [T]}
        \begin{array}{l}
            \in \mathbb{R}^{m_1}\\
            \in \mathbb{R}^{m_2}\\
            \in \mathbb{R}^{r_2 m_2}\\
            \in \mathbb{R}^{m_2}\\
            \in \mathbb{R}\\
            \in \mathbb{R}\\
            \left. \right.        
        \end{array}.
    \label{tmp_9}
\end{align}
The approximation error is controlled in Lemma \ref{lemma_construct_firstmlp}.

For $\bz \in \mathbb{R}^{r_2 m_2}$, define 
\begin{align}
    \wh{\bbm}_{\lambda}(\bz) \coloneqq
\sum_{s \in [\log^2 T, T - \log^2 T]}
    \frac{K_\lambda\big(\bZ_{2,s}, \bz\big)}{\sum_{i \in [\log^2 T, T - \log^2 T]} K_\lambda\big(\bZ_{2,i}, \bz\big) } \bY_{s}.
    \label{def_mhat_m}
\end{align}
with $K_\lambda(\bz,\bz')\coloneq \exp(-\Vert \bz - \bz' \Vert^2/(2\lambda^2)).$ The second multi-head self-attention layer $\bm{\mu}_2 \in \mathcal{MA}(N, H, B)$ is constructed to map the right-hand side of \eqref{tmp_9} to a $N \times T$ matrix.
The last column of this matrix is given by
$\stack(\bg_0(\bZ_{1,T+1}), \bullet_{N-m_1-m_2}, \bv) \in \mathbb{R}^N$, where the vector $\bv \in \mathbb{R}^{m_2}$ is defined as
    \begin{align}
    \bv 
    \coloneqq & \sum_{s=1}^T  \frac{\exp \Big(\frac{-r_2+\bZ_{2,s}^{\top} \bZ_{2,T+1}}{\lambda^2} + T^3 \, \mathbb{I}(s \in [\log^2 T, T - \log^2 T])\Big)}{\sum_{i=1}^T \exp \Big(\frac{-r_2+\bZ_{2,i}^{\top} \bZ_{2,T+1}}{\lambda^2} + T^3 \, \mathbb{I}(i \in [\log^2 T, T - \log^2 T])\Big)} \bY_{s} \approx \wh{\bbm}_{\lambda}(\bZ_{2,T+1}).
    \label{tmp_5}
    \end{align}
    This approximation follows from the fact that $\|\bZ_{2,t}\|_2^2 = r_2$ for every $t \geq r_2 + 1$.
    The following lemma characterizes the approximation error of the construction up to this point.
    In  Appendix~\ref{prove_lemma_approx_firstpart}, we restate the result as Lemma~\ref{lemma_approx_firstpart_rewrite} and provide the formal proof.
\begin{lemma} \label{lemma_approx_firstpart}
    Suppose Assumptions \ref{assumption_gh_smoothness} and \ref{assumption_transformer_size_lower} hold. 
    For sufficiently large $T$ and 
    for any $\lambda \geq T^{-1}$, there exist multi-head self-attentions $\bm{\mu}_1, \bm{\mu}_2 \in \mathcal{MA}(N,H,B)$ and
    a multilayer perceptron $\bm{\nu}_1 \in \mathcal{MLP}(N, L_{\mathrm{mlp}}, N_{\mathrm{mlp}}, B)$ such that
    \begin{align*}
        \Big( \bm{\mu}_2 \circ \bm{\nu}_1 \circ \bm{\mu}_1 \big(P + E X \big) \Big)_{:,T}
        =
            \left( \begin{matrix} 
        \bk_1\\
        \bullet_{N-m_1-m_2}\\
        \bk_2
        \end{matrix}\right)
            \begin{array}{l}
        \in \mathbb{R}^{m_1}\\
        \left. \right.\\   
        \in \mathbb{R}^{m_2}
    \end{array}
    \end{align*}
    with
    \begin{align*}
        \Big\| \bk_1 - \bg_0(\bZ_{1,T+1}) \big\|_{\infty} \vee \,
        \big\| \bk_2 - \wh{\bbm}_{\lambda}(\bZ_{2,T+1})\big\|_{\infty}
    \lesssim (N_{\mathrm{mlp}})^{-\frac{2\beta_1}{m_1 r_1}} + \frac{1}{T},
    \end{align*}
     where the implicit    
    constant depends only on $(m_1, m_2, r_1, r_2, \delta_1, \delta_2, \beta_1, \| \bg_0 \|_{\mathcal{C}^{\beta_1}})$.

\end{lemma}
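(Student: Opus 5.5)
The plan is to build the three layers explicitly, following the sketch around \eqref{tmp_9} and \eqref{tmp_5}, and to track the error through each stage in $\|\cdot\|_\infty$.

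\textbf{Step 1: the first attention layer.} I take $\bm{\mu}_1$ with $r_1+r_2$ heads, one for each lag $k\in[r_1]$ for $\bm{\phi}_1$ and each $k\in[r_2]$ for $\bm{\phi}_2$. The head for lag $k$ uses $U$ acting only on the positional coordinates. It is chosen so that $\bq_s^\top U\bq_t$ is maximized uniquely at $s=t-k$: for each frequency $a$, take $U$ to be a scaled rotation by angle $k/T^{a/A}$ in the $(\sin,\cos)$ block, so that $\bq_{a,s}^\top R\,\bq_{a,t}=\cos((t-k-s)/T^{a/A})$. Summing over the $A=N/4\ge 2\log T$ scales separates $s=t-k$ from every other $s$ by a gap of order $1$. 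This is the benefit of the multi-scale encoding noted in Remark~\ref{remark_benefit_multi_sinusodial}. A scale factor of order $\log^2 T\le B$ then makes the softmax weight on all other $s$ of order $T^{-2}$. The matrix $V$ copies $\bm{\phi}_1$ or $\bm{\phi}_2$ into the slot reserved for lag $k$. Because the embeddings are bounded, the error is $\lesssim T\cdot T^{-2}=T^{-1}$. This step amounts to Lemma~\ref{lemma_approx_rgram}. For $t\le r$, the target vector $\bm 0$ is handled by the convention $\bm{\phi}(\bX_s)=\bm 0$ for $s\le 0$; only column $T$ matters in the end.

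\textbf{Step 2: the MLP $\bm{\nu}_1$.} This is Lemma~\ref{lemma_construct_firstmlp}. The term $\bg_0(\bZ_{1,t+1})$ needs $\bZ_{1,t+1}$, i.e.\ $\bZ_{1,t}$ shifted by one position with $\bm{\phi}_1(\bX_t)$ appended. All of these are available in column $t$. Lemma~\ref{lemma_approx_smooth} then gives an approximation of $\bg_0$ with sup-error $N_{\mathrm{mlp}}^{-2\beta_1/(m_1r_1)}$ within depth $L_{\mathrm{mlp}}$. For $\bY_t$, I approximate $\bg_0(\bZ_{1,t})$ in the same way and then form the inner product with $\bm{\phi}_1(\bX_t)$, $x\mapsto e^{-x}$, and the product with $\bm{\phi}_2(\bX_t)$. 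Each of these is a smooth operation on a compact set and can be realized by ReLU subnetworks with error $T^{-1}$, using depth $\log^3$ and width $T^\alpha$. Since the operations are Lipschitz, the $\bg_0$ error propagates at most linearly. The linear maps $\bz\mapsto\bz/\lambda$ and the constant $\sqrt{r_2}/\lambda$ are exact, with weights bounded by $\lambda^{-1}\le T$. The parameter bound $B$ is respected by splitting large weights across layers. The indicator $T^3\,\mathbb{I}(t\in[\log^2T,T-\log^2T])$ is a function of $\bq_t$. I read off $t$ from the slowest frequency $a=A$, where $\bq_{A,t}$ is injective, and implement the indicator by a steep ReLU ramp. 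This is exact on the integer grid of positions.

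\textbf{Step 3: the second attention layer $\bm{\mu}_2$.} I use one head whose $U$ produces the score
\[\lambda^{-2}\bZ_{2,s}^\top\bZ_{2,t+1} \;-\; \lambda^{-2}r_2 \;+\; T^3\,\mathbb{I}(s\in\cdots).\]
The query side in column $t$ is built from $\bZ_{2,t}/\lambda$, shifted with $\bm{\phi}_2(\bX_t)/\lambda$ appended, i.e.\ $\bZ_{2,t+1}/\lambda$. The $-r_2/\lambda^2$ term is the product of the constant coordinate $\sqrt{r_2}/\lambda$ with itself, and the $T^3$ coordinate is paired with a constant entry; all entries of $U$ lie in $\{0,\pm1\}$. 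The matrix $V$ copies $\bY_s$ into the last $m_2$ coordinates, and a second head (or the residual together with zeroed entries) keeps $\bg_0(\bZ_{1,T+1})$ in the first $m_1$ coordinates. Since $\|\bZ_{2,s}\|^2=r_2$ for $s\ge r_2+1$, the score equals $-\|\bZ_{2,s}-\bZ_{2,T+1}\|^2/(2\lambda^2)$ plus constants. Moreover, the $T^3$ gap dominates the bounded Gaussian exponent range $\le 4r_2\lambda^{-2}\le 4r_2T^2$. Hence positions outside $[\log^2T,T-\log^2T]$ receive weight at most $Te^{-T^3/2}$, and $\bv=\wh{\bbm}_\lambda(\bZ_{2,T+1})+O(e^{-T})$.

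\textbf{Main obstacle.} The hard part is the stability of Step 3 with respect to the Step 1–2 errors. The inputs carry errors of size $\varepsilon=N_{\mathrm{mlp}}^{-2\beta_1/(m_1r_1)}+T^{-1}$, but the scores are multiplied by $\lambda^{-2}$ and $T^3$, so a naive Lipschitz bound for softmax blows up. My plan is to make the quantities that enter the scores exact or nearly exact. The $\bZ_2$, $\bm{\phi}_2$ and constant coordinates in $\bm{\nu}_1$ are linear and hence exact. For Step 1, I drive the error down to $T^{-c}$ by choosing the scale $\log^2T$ so that the score error, of order $T^{2}\cdot T^{-c}$, is $o(1)$. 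The indicator is exact. The remaining $\varepsilon$-error then enters only through the values $\bY_s$, and it passes through the convex combination without amplification. Under softmax, an $\ell^\infty$ perturbation $\eta$ of the scores changes the weights by a factor $e^{\pm2\eta}$, which gives a relative error $O(\eta)$. Collecting terms yields the bound $\lesssim\varepsilon$ stated in the lemma.
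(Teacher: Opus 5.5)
Your plan follows the paper's proof essentially step for step: delay heads built from rotated multi-scale encodings (Lemma~\ref{lemma_positional_one}), the MLP of Lemma~\ref{lemma_construct_firstmlp}, and a single masked Nadaraya--Watson head (Lemma~\ref{lemma_represent_NW_similar}). The error bookkeeping is also the same: masking as in Lemma~\ref{lemma_ignorefirstlast}, the MLP error entering only through the values, and the attention error entering the scores with a factor $\lambda^{-2}$. The only real difference is the indicator: you threshold the slowest frequency with a steep ramp, whereas Lemma~\ref{lemma_pos_mlp} detects the $O(\log^2 T)$ boundary positions exactly via $(\bq_s^\top\bq_t - N/4 + 1/B)_+$.

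One correction concerns precision. Since $\lambda^{-2}\le T^2$, the score perturbation must be $O(1/T)$, not merely $o(1)$, so the Step~1 error must be $O(T^{-3})$; your first estimate $T\cdot T^{-2}=T^{-1}$ would therefore be fatal. What the order-one gap $1/(3e)$ together with $B\ge 9\log^2 T$ actually gives is off-target weight $T^{-\log T}$ and error $2T^{1-\log T}$, and this is what makes the score term negligible.
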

We now study the properties of \eqref{tmp_5}.
The estimator
$\wh{\bbm}_{\lambda}$ can be identified as a Nadaraya--Watson estimator applied to dependent samples with discrete support.  
From the classical nonparametric regression theory (see e.g.\ Section 1.5 in \cite{zbMATH05358920}), one would expect that $\wh{\bbm}_{\lambda}(\bZ_{2,T+1})$ converges to the conditional expectation $\mathbb{E}[ \bY_{T+1} | \bh, \bZ_{2,T+1}]$ under standard smoothness assumptions on the function $\bh$.
In the present setting, however, such convergence cannot be rigorously established, since 
$\mathbb{E}[ \bY_{T+1} | \bh, \bZ_{2,T+1} = \bz]$ can vary considerably as a function of $\bz$.
To see this, recall that for $t \in [T]$, the distribution of the tokens is described by the imposed statistical model
\begin{align}    \mathbb{P}\big(\bX_{t+1} = \be_w \big| \bh, \bX_1, \ldots, \bX_{t} \big) 
\propto
\exp\Big(\bg_0(\bZ_{1,t+1})^{\top} \bm{\phi}_1(\be_w) + \bh(\bZ_{2,t+1})^{\top} \bm{\phi}_2(\be_w)\Big), \quad w \in [W].
\label{tmp_57}
\end{align}
One can factorize the conditional expectation in this model via
    \begin{align}
    &\mathbb{E}[ \bY_{T+1} |\bh,  \bZ_{2,T+1} = \bz] = \mathbb{E}\left[ \frac{\bm{\phi}_2(\bX_{T+1})}{\exp\big(\bg_0(\bZ_{1,T+1} )^{\top} \bm{\phi}_1(\bX_{T+1})\big) } \middle| \bh, \bZ_{2,T+1} = \bz \right] \nonumber \\
    & = \mathbb{E}\left[\frac{\sum_{w=1}^W \exp\big(\bg_0(\bZ_{1,T+1})^{\top} \bm{\phi}_1(\be_w) + \bh(\bz)^{\top} \bm{\phi}_2(\be_w)\big) \frac{\bm{\phi}_2(\be_{w})}{\exp\big(\bg_0(\bZ_{1,T+1})^{\top} \bm{\phi}_1(\be_w)\big)}}{\sum_{w=1}^W \exp\big(\bg_0(\bZ_{1,T+1})^{\top} \bm{\phi}_1(\be_w) + \bh(\bz)^{\top} \bm{\phi}_2(\be_w)\big)} \middle|\bh, \bZ_{2,T+1} = \bz \right] \nonumber \\
    & = \mathbb{E}\left[\frac{\sum_{w=1}^W \exp\big( \bh(\bz)^{\top} \bm{\phi}_2(\be_w)\big)
    \bm{\phi}_2(\be_{w})}{\sum_{w=1}^W \exp\big(\bg_0(\bZ_{1,T+1})^{\top} \bm{\phi}_1(\be_w) + \bh(\bz)^{\top} \bm{\phi}_2(\be_w)\big)} \middle|\bh, \bZ_{2,T+1} = \bz \right] \nonumber \\
    & = \underbrace{\frac{\sum_{w=1}^W \exp\big( \bh(\bz)^{\top} \bm{\phi}_2(\be_w) \big) \bm{\phi}_2(\be_{w})}{\sum_{w=1}^W \exp\big( \bh(\bz)^{\top} \bm{\phi}_2(\be_w) \big)}}_{\displaystyle \in \, \mathbb{R}^{m_2}} \, \underbrace{\mathbb{E}\left[\frac{\sum_{w=1}^W \exp\big(\bh(\bz)^{\top} \bm{\phi}_2(\be_w)\big)}{\sum_{w=1}^W \exp\big(\bg_0(\bZ_{1,T+1})^{\top} \bm{\phi}_1(\be_w) + \bh(\bz)^{\top} \bm{\phi}_2(\be_w)\big)} \middle|\bh, \bZ_{2,T+1} = \bz \right]}_{\displaystyle \in \, \mathbb{R}}. \label{tmp_33} 
    \end{align} 
    The two vectors $\bZ_{1,T+1}$ and $\bZ_{2,T+1}$ are the values with respect to the embeddings $\bm{\phi}_1$ and $\bm{\phi}_2.$ Points that are mapped to nearby points with respect to $\bm{\phi}_2$ could be mapped to far away points with respect to $\bm{\phi}_1.$ Thus, small changes in $\bZ_{2,T+1}=\bz$ can lead to a large change in the conditional expectation of $\bZ_{1,T+1}|\bZ_{2,T+1}=\bz$ such that we cannot guarantee any H\"older smoothness of the second factor in \eqref{tmp_33}. The first factor of \eqref{tmp_33} does, however, not depend on the interplay between $\bm{\phi}_1$ and $\bm{\phi}_2.$ For the
    direction of $\bz\mapsto \mathbb{E}[ \bY_{T+1} | \bh, \bZ_{2,T+1} = \bz]$ it is therefore possible to establish H\"older smoothness.
    Consequently, one can show that the direction of $\wh{\bbm}_{\lambda}(\bz)$ converges to that of $\mathbb{E}[ \bY_{T+1} \mid \bh, \bZ_{2,T+1} = \bz]$, which is close to $\bh/\|\bh\|_2$.
    With this intuition 
    and properties of the von Mises--Fisher distribution, we present the following key lemma that in particular allows us later to construct a high-probability lower bound of $\| \wh{\bbm}_{\lambda}(\bZ_{2,T+1})  \|_2$. The proof of the lemma is deferred to Appendix~\ref{section_proof_lemma_key_NW}.

\begin{lemma}\label{lemma_key_NW_version2}
Suppose Assumption \ref{assumption_gh_smoothness} holds. For sufficiently large $W$ and $T$ and 
    for any $\lambda \leq \log^{-1} (W \land T)$,
\begin{align*}
    \mathbb{E}\left[\left\|\frac{\delta_2}{\|\wh{\bbm}_{\lambda}(\bZ_{2,T+1})\|_2} \wh{\bbm}_{\lambda}(\bZ_{2,T+1})
- \bh(\bZ_{2,T}) \right\|^2_2 \, \bigg| \, \bh \right] = \tilde{O}\Big(\lambda^{2 \beta_2} + \tfrac{1}{T \varphi_W(\lambda)^{r_2}} + \kappa_W^2\Big).
\end{align*}   
Moreover, there exists a constant $c>0$  such that
    \[
        \mathbb{P}\big[ \| \wh{\bbm}_{\lambda}(\bZ_{2,T+1})  \|_2 < c \, \big| \, \bh \big] 
    \lesssim \frac{1}{T \varphi_W(\lambda)^{r_2}}.
    \]
All implicit constants and the constant $c$ depend only on $(m_1, m_2, r_1, r_2, \delta_1, \delta_2, \beta_2, \|\bh\|_{\mathcal{C}^{\beta_2}})$.
\end{lemma}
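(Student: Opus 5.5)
We would analyse the Nadaraya--Watson estimator $\wh{\bbm}_\lambda(\bz)$ at $\bz=\bZ_{2,T+1}$ through the usual bias--variance split. The comparison object is not the conditional mean $\mathbb{E}[\bY_{T+1}\mid\bh,\bZ_{2,T+1}=\bz]$. Instead we use the factorization \eqref{tmp_33}, writing
\[
\bY_{s}= \bm{\rho}(\bh(\bZ_{2,s}))\, a_s + \bm{\xi}_s ,\qquad \bm{\rho}(\bv)\coloneqq \frac{\sum_{w} e^{\bv^\top\bm{\phi}_2(\be_w)}\bm{\phi}_2(\be_w)}{\sum_w e^{\bv^\top\bm{\phi}_2(\be_w)}},
\]
where $a_s>0$ is the scalar conditional factor given the past ($a_s$ is bounded above and below by constants depending on $\delta_1,m_1$) and $\bm{\xi}_s$ is a bounded martingale difference with respect to the filtration $\mathcal{F}_s=\sigma(\bh,\bX_1,\ldots,\bX_s)$. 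Then
\[
\wh{\bbm}_\lambda(\bz)=\sum_s \omega_s(\bz)a_s\bm{\rho}(\bh(\bZ_{2,s}))+\sum_s\omega_s(\bz)\bm{\xi}_s .
\]
By Definition~\ref{def_pos_assumption}, $\bm{\rho}(\bv)$ is within $\kappa_W$ of the von Mises--Fisher mean $A_{m_2}(\delta_2)\,\bv/\delta_2$, with $A_{m_2}(\delta_2)>0$. Since the kernel weights are effectively supported on $\|\bZ_{2,s}-\bz\|\lesssim\lambda\sqrt{\log T}$, Hölder continuity of $\bh$ gives that the first sum equals $\big(\sum_s\omega_s a_s\big)A_{m_2}(\delta_2)\bh(\bz)/\delta_2$ up to an error $\tilde O(\lambda^{\beta_2}+\kappa_W)$. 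The scalar prefactor $\sum_s\omega_s a_s$ is bounded below by a constant, so the unknown, non-smooth scalar factor only rescales the estimate and does not change its direction.

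Normalization is Lipschitz away from the origin, that is, $\|\bu/\|\bu\|-\bu'/\|\bu'\|\|\le 2\|\bu-\bu'\|/\|\bu'\|$. Applied with $\bu'$ equal to the main term (norm bounded below by a constant) and using $\|\bh\|_2=\delta_2$, this reduces the first claim to a mean-square bound of order $\lambda^{2\beta_2}+\kappa_W^2$ plus a bound on $\mathbb{E}\|\sum_s\omega_s\bm{\xi}_s\|^2$. We handle the event where $\|\wh{\bbm}_\lambda\|<c$ with the trivial bound $4\delta_2^2$ times its probability, which is exactly the second claim. One caveat: the target $\bh(\bZ_{2,T})$ should presumably be read as $\bh(\bZ_{2,T+1})$, that is, $\bh$ evaluated at the last $r_2$ tokens.

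For the variance term, write $\omega_s=K_s/\sum_i K_i$. The martingale structure gives $\mathbb{E}\|\sum_s K_s\bm{\xi}_s\|^2\lesssim\mathbb{E}\sum_s K_s^2$, but the query $\bz=\bZ_{2,T+1}$ itself depends on the chain. We would handle this with a union or peeling argument over the finitely many lattice values $\bz\in\bm{\phi}_2(\mathcal{E}_W)^{r_2}$, or by conditioning on the last $\log^2T$ tokens and using that the weights vanish for $s>T-\log^2T$, which gives near-independence. A Freedman-type inequality then yields $\|\sum_s\omega_s\bm{\xi}_s\|\lesssim\sqrt{\log T/\sum_iK_i}$ with high probability. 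It therefore suffices to show $\sum_i K_i\gtrsim T\varphi_W(\lambda)^{r_2}$ with probability at least $1-O(1/(T\varphi_W(\lambda)^{r_2}))$.

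The key, and in my view hardest, step is this lower bound on the effective sample size. Given the past, the next-token probabilities are uniformly comparable to $1/W$, with ratio in $[e^{-2M},e^{2M}]$, because the score is bounded by $M$. So each token lands in the $\lambda$-ball around the corresponding coordinate of $\bz$ with probability at least $c\,\varphi_W(\lambda)$, conditionally on the past. Treating blocks of $r_2$ consecutive tokens, each $\bZ_{2,s}$ therefore falls within $\sqrt{r_2}\lambda$ of $\bz$ with conditional probability at least $c^{r_2}\varphi_W(\lambda)^{r_2}$. A martingale Chebyshev (or Bernstein) bound over $s\in[\log^2T,T-\log^2T]$ then gives $\sum_iK_i\ge c'T\varphi_W(\lambda)^{r_2}$ except with probability $\lesssim 1/(T\varphi_W(\lambda)^{r_2})$. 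On this event the main term has norm bounded below, which proves the second claim.

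Combining the pieces, the bias contributes $\lambda^{2\beta_2}$ (the condition $\lambda\le\log^{-1}(W\wedge T)$ absorbs the $\sqrt{\log}$ factors from the kernel tails), the variance contributes $1/(T\varphi_W(\lambda)^{r_2})$, and the discreteness of the embedding contributes $\kappa_W^2$.
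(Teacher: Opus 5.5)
Your argument is sound in outline, and it reaches the lemma by a genuinely different route from the paper.

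\textbf{How the paper proceeds.} The paper embeds the tokens in the order-$r$ Markov chain $\bm{\xi}_t=(\bX_{t+1},\ldots,\bX_{t+r})$ and proves a general directional Nadaraya--Watson result (Lemma~\ref{lemma_key_NW}). It reduces to a stationary start, decouples the query from the sample block by Berbee's coupling, and bounds the variance with Paulin's pseudo-spectral-gap inequality for stationary chains. The bias is controlled through H\"older continuity of the direction $\bm{\theta}$ of $\bbm(\bx)=\mathbb{E}[\bbf(\bm{\xi}_1)\mid \bm{\xi}_0=\bx]$. It then bounds the ratio $\|P^{\perp}_{\bm{\alpha}}\wh{\bbm}_\lambda\|_2/\|P_{\bm{\alpha}}\wh{\bbm}_\lambda\|_2$, and only at the end compares $\bm{\alpha}$ with $\bh/\delta_2$ via $\kappa_W$ and the von Mises--Fisher lemma.

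\textbf{What your route does differently.} Your Doob decomposition along the token filtration makes the signal $\sum_s\omega_s a_s\bm{\rho}(\bh(\bZ_{2,s}))$ a pathwise object. It is a positive combination of vectors, each within $\kappa_W+O(\|\bZ_{2,s}-\bz\|^{\beta_2})$ of a multiple of $\bh(\bz)$. So only the noise needs a probabilistic bound, and a self-normalized martingale inequality supplies it from the actual initial condition, without stationarity or a spectral gap. This dispenses with the projection geometry and with part (v) of Lemma~\ref{lemma_application_keylemma_properties}. Moreover, your reference vector $(\sum_s\omega_s a_s)A_{m_2}(\delta_2)\bh(\bz)/\delta_2$ has norm at least $e^{-\delta_1 m_1}A_{m_2}(\delta_2)$ deterministically. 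Hence your first claim needs no smallness of $\kappa_W$, whereas the paper's uses it through parts (ii)--(iii).

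The paper's route buys a modular lemma valid for any uniformly ergodic chain on a countable space. Yours is more elementary and exploits most directly the fact that the non-smooth scalar factor only rescales. Your caveat about the target is correct: the paper's $\bm{\alpha}$ is the direction at the state $(\bX_{T-r+1},\ldots,\bX_T)$, so the target is $\bh(\bZ_{2,T+1})$.

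\textbf{Query dependence.} Only your second option works.
\begin{itemize}
\item A union bound over the $W^{r_2}$ lattice values makes the failure probability of the effective-sample-size event of order $W^{r_2}e^{-cT\varphi_W(\lambda)^{r_2}}$. This is not $\lesssim 1/(T\varphi_W(\lambda)^{r_2})$ once $T\varphi_W(\lambda)^{r_2}\lesssim\log W$, and it also adds a $\log W$ factor to the noise term.
\item For the second option, do not condition on the final tokens: that destroys the forward martingale structure of your noise terms.
\end{itemize}
Instead, condition on the $\sigma$-field of the tokens up to time $T-\log^2 T$, which determines every sample $(\bZ_{2,s},\bY_s)$. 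The Markov property with the Doeblin bound of Lemma~\ref{lemma: markov} then shows that the conditional law of $(\bX_{T-r_2+1},\ldots,\bX_T)$ is within $e^{-c\log^2 T}$ in total variation of a fixed law on lattice points. Because the losses in both claims are bounded, you may then treat $\bz$ as a deterministic lattice point, uniformly. This is the only place where you still need mixing, and it replaces the paper's Berbee step.

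\textbf{Condition on $\kappa_W$ for the second claim.} To conclude $\|\wh{\bbm}_\lambda(\bZ_{2,T+1})\|_2\ge c$ on the good event, the $\kappa_W$ part of the error must be dominated by the main term. This requires a condition such as $\kappa_W\le(1+m_2/\delta_2)^{-1}/2$, as assumed in Theorem~\ref{theorem_main}. The paper uses this condition implicitly as well, and the lemma as stated omits it.
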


\begin{rem} \label{remark_advantage_mhat}
The convergence rate in Lemma~\ref{lemma_key_NW_version2} does not depend on $\bg_0$ although the true probability model depends on $\bg_0$. 
In an additive regression setting $Y_t = \bg_0(\bX_{1,t}) + \bh(\bX_{2,t}) + \epsilon_t$, $\E(\epsilon_t)=0,$ $t=1,\ldots,T,$ such a result can be easily obtained by considering the adjusted samples $\{\bX_{2,t},\, Y_t - \bg_0(\bX_{1,t})\}_{t=1}^T$. 
However, we argue that this approach cannot be extended to the
statistical model \eqref{tmp_57} even when $\bg_0$, $\bm{\phi}_1$, and $\bm{\phi}_2$ are known.
\end{rem}

By Lemma \ref{lemma_approx_firstpart} and Lemma \ref{lemma_key_NW_version2}, 
    \begin{align*}
        \Big( \bm{\mu}_2 \circ \bm{\nu}_1 \circ \bm{\mu}_1 \big(P + E X \big) \Big)_{:,T}
        \eqcolon
            \left( \begin{matrix} 
        \bk_1\\
        \bullet_{N-m_1-m_2}\\
        \bk_2
        \end{matrix}\right)
            \begin{array}{l}
        \in \mathbb{R}^{m_1}\\
        \left. \right.\\   
        \in \mathbb{R}^{m_2}
    \end{array}
    \end{align*}
    satisfies $\bk_1 \approx \bg_0(\bZ_{1,T+1})$ 
    and $\delta_2 \bk_2 / \|\bk_2\|_2 \approx \bh(\bZ_{2,T+1})$. 
    In addition, $\|\bk_2\|_2 \geq c/2$ holds with high probability.
Using these results, we construct the second MLP layer $\bm{\nu}_2 \in \mathcal{MLP}(N, L_{\mathrm{mlp}},N_{\mathrm{mlp}}, B)$ such that
\[
        \bm{\nu}_2  \left( \begin{matrix}
    \bk_1 \\
        \bullet_{N-m_1 - m_2}\\
    \bk_2
    \end{matrix} \right)
    \approx
    \left( \begin{matrix}
    \bg_0(\bZ_{1,T+1}) \\
    \bh(\bZ_{2,T+1})   \\
    \bm{0}_{N-m_1 - m_2}\\
    \end{matrix} \right)
   \begin{array}{l}
        \in \mathbb{R}^{m_1}\\
        \in \mathbb{R}^{m_2}\\
        \left. \right.   
    \end{array}.
\]
See Lemma~\ref{lemma_normalize} for the detailed approximation error.
By considering the decoder matrix $D \coloneq E^{\top}$,
we finally get
\begin{align*}
    D \left(\begin{matrix}
    \bg_0(\bZ_{1,T+1}) \\
    \bh(\bZ_{2,T+1}) \\
    \bm{0}_{N-m_1 - m_2}
    \end{matrix}\right) = \left(
\begin{array}{c}
\bm{\phi}_1(\be_1)^{\top} \bg_0(\bZ_{1,T+1})  + \bm{\phi}_2(\be_1)^{\top} \bh(\bZ_{2,T+1})     \\
\bm{\phi}_1(\be_2)^{\top} \bg_0(\bZ_{1,T+1})  + \bm{\phi}_2(\be_2)^{\top} \bh(\bZ_{2,T+1})     \\
\vdots\\
\bm{\phi}_1(\be_W)^{\top} \bg_0(\bZ_{1,T+1})  + \bm{\phi}_2(\be_W)^{\top} \bh(\bZ_{2,T+1})     \\
\end{array}
\right) = \bbf_0(\bh, X).
\end{align*}
See Section~\ref{section_proof_theorem_upper_app} for a formal proof of Theorem~\ref{theorem_upper_app}.

\section{Related work}
\label{sec.rel_work}

There is by now a substantial literature on the statistical learning theory of neural networks. 
For DNNs, theoretical results have been developed for a wide range of statistical tasks, including nonparametric regression and classification, with convergence rates depending on smoothness, (intrinsic) dimension, or compositional structure \cite{zbMATH07082286, suzuki2018adaptivity, 10.1214/19-AOS1875, kohler2021rate, lu2021deep, zbMATH07714177, yang2024nonparametric}.
Statistical guarantees have also been obtained for other architectures.
CNNs have been analyzed under structural assumptions motivated by the translation-invariance of image data \cite{zbMATH07604562, zbMATH07883300, chen2022novel, zbMATH08062197, zbMATH07923796}. 
The statistical analysis of recurrent neural networks (RNNs) focuses on sequential data with temporal dependence \cite{kohler2023rate, jiao2024approximation, yu2026nonparametric}.

As transformers have become central to modern machine learning, recent studies started to investigate their approximation and representation capabilities.
Among them, \cite{Yun2020Are} proved universal approximation results for continuous sequence-to-sequence functions and analyzed the roles of self-attention and feed-forward layers.
\cite{pmlr-v162-edelman22a} showed that transformers can efficiently represent sparse functions of long input sequences, while
\cite{sanford2023representational}
identified tasks where transformers are more efficient than DNNs and RNNs.
\cite{hu2026transformer} showed that self-attention can approximate generalized ReLU-type operations, providing another route to universal approximation.

Recent statistical papers provided a statistical analysis of transformers within the standard supervised learning framework. 
\cite{gurevych2022rate} analyzed nonparametric binary classification with transformers under hierarchical composition structure, and \cite{kohler2023ratetransformer} extended this result to overparametrized transformer classifiers trained by gradient descent.
\cite{takakura2023approximation} studied nonparametric regression with transformers and established convergence rates for sequence-to-sequence functions with infinite-dimensional inputs under suitable assumptions on the target function.
\cite{havrilla2024understanding} analyzed transformers with ReLU self-attention for nonparametric regression and input data supported on a manifold, and \cite{shen2025transformers} extended this analysis to noisy manifold settings. 
Most recently, \cite{jiao2025approximation, 2026arXiv260220555L, shi2026learning} analyzed
transformers for nonparametric regression under H\"older smoothness, while \cite{shi2025approximation} studied vision transformers with hierarchically compositional target functions.

Another line of statistical research studies the theoretical properties of transformers for in-context learning in regression settings, showing that they can generalize to unseen tasks.
For instance,
\cite{bai2023transformers, von2023transformers} showed that linear-attention transformers can implement standard learning algorithms, including gradient descent, in context.
\cite{kim2024transformers} analyzed transformers for in-context nonparametric regression and showed that pretraining can improve in-context estimation by encoding basis representations relevant to the target function class.
\cite{ma2025provable, wakayama2025context} studied in-context learning with transformers pretrained on mixtures of nonparametric regression tasks, establishing adaptation to task difficulty and robustness to certain forms of test-time distribution shift.
Recently, \cite{shen2026understanding} and \cite{ching2026efficient} connected transformers with the Nadaraya--Watson and local polynomial estimators for in-context nonparametric regression.

The proposed N2F language model generates the next token from a latent context function and a finite token history.
Topic models have a rich statistical theory and can also be viewed as latent context models. Specifically, in the probabilistic latent semantic indexing (pLSI) model \cite{Hofmann1999} one assumes that for each observed document there is a latent weight vector that determines the mixture weights over $K$ topics (e.g.\ sport, politics, news, \ldots ). The word frequency in the document is then modeled as weighted sum over the word frequencies of the individual topics. The pLSI model is considerably simpler, as it only models the word frequencies but not the text as a sequence. The latent object is moreover a weight vector and not a function as in the proposed model in Section \ref{sec_general_next_token}. There is an impressive line of existing statistical theory \cite{6375276, pmlr-v28-arora13, NEURIPS2018_731c83db, zbMATH07193942, JMLR:v21:20-079, zbMATH07820394, zbMATH07751813, zbMATH07641127, zbMATH07783529, 2023arXiv231006730T}, but the assumptions, the mathematical analysis, and the theoretical results differ significantly from our work.

A growing line of recent research applies Markov chain theory to study language models that generate token from a finite token history.
In particular, \cite{lotfi2024unlocking, zekri2024large, yuksel2025sample} analyzed estimation errors under the assumption that the training and test documents all share the same transition probabilities.
In contrast, in the proposed N2F language model each document has its own transition probabilities.
\cite{varre2025learning} proved that transformers can represent the $r$-gram estimator, and that the corresponding parameters form a stationary point of the population cross-entropy loss.
This analysis is different as it considers a single document, does not introduce an estimator, and relies on the population cross-entropy loss.

Estimating Markov transition matrices is a classical problem \cite{anderson1957statistical, billingsley1961statistical, morvai1996nonparametric}.
If the Markov chain of interest has only finitely many states, a natural estimator of the next-token distribution is given by the empirical frequency. The $r$-gram estimator is a special case of the empirical frequency under the implicit assumption that each document is generated according to a Markov chain of order $r$. 
If applied to the language model setting, \cite{han2023optimal} proved that if $\text{(document length)} \geq \text{(vocabulary size)}^{r+1}$, then an averaged version of an additively smoothed empirical frequency estimator is minimax optimal. This suggests that the $r$-gram estimator is minimax rate-optimal. However, for text data the transition matrices are highly structured and should allow for faster estimation rates. Moreover, 
$\text{(document length)} \ll \text{(vocabulary size)}^{r+1}$ is the more relevant regime.

The derived theory is also related to nonparametric estimation for Markov processes. Past research has for example focused on invariant density estimation \cite{dexheimer22,strauch18} and drift estimation for diffusion processes \cite{strauch16,hoffmann25,gobet04}, with Nadaraya--Watson type estimators studied in e.g.\ \cite{dexheimer22kindiff,dexheimer25,aeck22}.

\section*{Acknowledgments}
We are extremely grateful to Richard Samworth and Tianyi Ma for sharing their insights on the topic. All authors acknowledge support from ERC grant A2B (grant agreement no. 101124751).  Parts of the research were carried out while the authors visited the Simons Institute in
Berkeley.

\bibliographystyle{acm}
\bibliography{references}

\clearpage

\appendix

\section{Additional Notation}
\label{app_new_notation}
We denote by $O_{d_1\times d_2}$ the $d_1\times d_2$ zero matrix and by
$I_d$ the $d\times d$ identity matrix.
We use $x_+ \coloneqq x \vee 0$.
We denote the inner product by $\langle \bu, \bv \rangle \coloneqq \bu^{\top}\bv$.
For notational simplicity, given vectors $\bv_1 \in \mathbb{R}^{d_1}$ and $\bv_2 \in \mathbb{R}^{d_2}$ and a function $\bbf : \mathbb{R}^{d_1+d_2} \to \mathbb{R}^m$, we write $\bbf(\bv_1,\bv_2)$ for $\bbf(\operatorname{stack}(\bv_1,\bv_2))$.
For vectors $\{\bm{\psi}_j\}_{j \in I}$ and nonnegative scalars $\{a_j > 0\}_{j \in I}$,
we introduce
\begin{align}
    \ol{\sum}_{j\in I} (a_j; \bm{\psi}_j)\coloneq\sum_{j\in I} \frac{a_j}{\sum_{i\in I} a_i} \bm{\psi}_j \label{def_weighted_average}
\end{align}
a more compact notation for the weighted average of $\{\bm{\psi}_j\}_{j\in I}$ with normalized weights.

\section{Discussion about Assumption \ref{assumption_word_embedding}} \label{app_discuss_emb}

In this section, we analyze the function $\varphi_W(\cdot)$ and the quantity $\kappa_W$ from  Definition~\ref{def_pos_assumption}.
For notational simplicity, let $m \coloneqq m_2 \geq 3$ and $\delta \coloneqq \delta_2>0$.
Suppose that $\bm{\phi}_2(\be_1), \ldots, \bm{\phi}_2(\be_W)$ are independent and identically distributed according to the uniform distribution on $\mathbb{S}^{m-1}$.
Then, letting $\bZ, \bZ_1, \ldots, \bZ_W \in \mathbb{S}^{m-1}$ be samples from the same distribution, we can rewrite $\varphi_W(\epsilon)$ and $\kappa_W$ as
\[
\varphi_W(\epsilon) = 
\inf_{w \in [W]} \frac{1}{W} \Big| \big\{ {w'} \in [W]: \| \bZ_w - \bZ_{w'} \|_2 \leq \epsilon \big\}\Big| 
\]
and
\[
\kappa_W = \sup_{\bv \in \delta \mathbb{S}^{m-1}} \left\|\frac{\sum_{w=1}^W  \exp(\bv^{\top} \bZ_w) \bZ_w}{\sum_{w=1}^W  \exp(\bv^{\top} \bZ_w)} - \frac{\mathbb{E}\left[ \exp(\bv^{\top} \bZ) \bZ \right]}{\mathbb{E}\left[ \exp(\bv^{\top} \bZ) \right]}\right\|_2.
\]
It is sufficient to show that 
\begin{align}
    \mathbb{P}\Big(\varphi_W(\epsilon) \geq \epsilon^{m - 1} \log^{-1}(\epsilon^{-1})\Big) \xrightarrow[W \to \infty]{} 1
    \label{tmp_60}
\end{align}
for $W^{-\frac{1}{m-1}} \log W \leq \epsilon \ll 1$
and
\begin{align}
    \mathbb{P}\Big(\kappa_W \leq \log W / \sqrt{W}\Big) \xrightarrow[W \to \infty]{} 1.
    \label{tmp_61}
\end{align}

Because of 
\begin{align}
    \varphi_W(\epsilon) \geq 
\inf_{\bz \in \mathbb{S}^{m-1}} \frac{1}{W} \Big| \big\{ {w'} \in [W]: \| \bz - \bZ_{w'} \|_2 \leq \epsilon \big\}\Big|.
\label{tmp_32} 
\end{align} 
\eqref{tmp_60} follows if we can establish
\[
 \mathbb{P}\left( \inf_{\bz \in \mathbb{S}^{m-1}}  \Big| \big\{ {w} \in [W]: \| \bZ_{w} - \bz \|_2 \leq \epsilon \big\}\Big| \leq W \epsilon^{m - 1}  \log^{-1}(\epsilon^{-1})  \right) \xrightarrow[W \to \infty]{} 0.
\]
Let $c_m>0$ be a constant that depends only on $m$ such that
\begin{align*}
    p\coloneq\mathbb{P}( \| \bZ - \bz \|_2  \leq \epsilon/2 ) \geq c_m \epsilon^{m-1}
\end{align*} 
holds for all $\epsilon \in [0,1]$ and all
$\bz \in \mathbb{S}^{m-1}$. 
See Lemma~\ref{lemma_existence_constant} for the existence of such a constant.
In addition, 
let $\mathcal{U}$ be an $(\epsilon/2)$-net of $\mathbb{S}^{m-1}$ with respect to the metric $\|\cdot\|_2$, whose cardinality satisfies
$|\mathcal{U}| \lesssim \epsilon^{-m}.$  
Then, for any $\bz \in \mathbb{S}^{m-1}$, there exists $\bu \in \mathcal{U}$ such that $\{ {w} \in [W]: \| \bZ_{w} - \bz \|_2 \leq \epsilon\} \supseteq \{ {w} \in [W]: \| \bZ_{w} - \bu \|_2 \leq \epsilon/2\}$. The multiplicative form of Chernoff's inequality for Bernoulli random variables states that if $X\sim \Bin(W,\rho)$ and $0\leq \delta \leq 1,$ then, $\mathbb{P}(X\leq (1-\delta)W\rho)\leq e^{-\delta^2W\rho /2}.$ It follows that
\begin{align*}
    &\mathbb{P}\left( \inf_{\bz \in \mathbb{S}^{m-1}}  \Big| \big\{ {w} \in [W]: \| \bZ_{w} - \bz \|_2 \leq \epsilon \big\}\Big| \leq W \epsilon^{m - 1} \log^{-1}(\epsilon^{-1})  \right)\\
    &\leq 
    \mathbb{P}\left( \min_{\bu \in \mathcal{U}}  \Big| \big\{ {w} \in [W]: \| \bZ_{w} - \bu \|_2 \leq \epsilon/2 \big\}\Big| \leq W p c_m^{-1}\log^{-1}(\epsilon^{-1})  \right)\\
    &\leq 
    \sum_{\bu \in \mathcal{U}} \mathbb{P}\bigg( \sum_{w=1}^W  \mathbb{I}\big( \| \bZ_{w} - \bu \|_2  \leq \epsilon/2  \big) \leq \Big(1-\big(\underbrace{1 -c_m^{-1}\log^{-1}(\epsilon^{-1})}_{\eqcolon\delta} \big)\Big) W p \bigg)\\
    &\leq |\mathcal{U}| e^{-\delta^2 Wp/2}\\
    & \lesssim \epsilon^{-m} \exp\left( - \delta^2 \frac{c_m \epsilon^{m-1} W}{2}  \right) \xrightarrow[W \to \infty]{} 0,
\end{align*}
provided that $\epsilon \ll 1$, $\epsilon^{m-1} \geq W^{-1} \log^{m-1} W$ and $m \geq 3$. Combining this with \eqref{tmp_32} yields \eqref{tmp_60}.

For the second claim,
let $\mathcal{V}$ be an $W^{-1/2}$-net of $\delta \mathbb{S}^{m-1}$ with respect to the Euclidean metric, whose cardinality satisfies
$|\mathcal{V}| \lesssim (\delta \sqrt{W})^{m} \lesssim W^{m/2}.$  
Observe that for any $\bz \in \mathbb{S}^{m-1}$ and $\bv, \bu \in \delta \mathbb{S}^{m-1}$,
$$\left\|\exp(\bv^{\top} \bz) \bz - \exp(\bu^{\top} \bz) \bz \right\|_2
\leq 
\left|\exp(\bv^{\top} \bz) - \exp(\bu^{\top} \bz)\right| \leq \sup_{t \in [-\delta,\delta]} e^{t} \left|(\bv-\bu)^{\top} \bz \right| \leq e^{\delta} \|\bv-\bu\|_2.$$
Hence, for any $\bv \in \delta \mathbb{S}^{m-1}$, there exists $\tilde{\bv} \in \mathcal{V}$ such that 
$\sup_{\bz \in \mathbb{S}^{m-1}}|\exp(\bv^{\top} \bz) - \exp(\tilde{\bv}^{\top} \bz)| \leq e^{\delta}W^{-1/2}$
and
$\sup_{\bz \in \mathbb{S}^{m-1}}\|\exp(\bv^{\top} \bz)\bz - \exp(\tilde{\bv}^{\top} \bz)\bz\|_2 \leq e^{\delta} W^{-1/2}$.
It follows that for $t > 2 e^{\delta} /\sqrt{W}$,
\begin{align*}
    &\mathbb{P}\left( \sup_{\bv \in \delta \mathbb{S}^{m-1}} \left|\frac{1}{W}\sum_{w=1}^W  \exp(\bv^{\top} \bZ_w) - \mathbb{E}\left[ \exp(\bv^{\top} \bZ) \right] \right| > t \right) \\
    &\leq
    \mathbb{P}\left( \max_{\bv \in \mathcal{V}} \left|\frac{1}{W}\sum_{w=1}^W  \exp(\bv^{\top} \bZ_w) - \mathbb{E}\left[ \exp(\bv^{\top} \bZ) \right] \right| > t-2 e^{\delta} W^{-1/2} \right)\\
    &\leq \sum_{v \in \mathcal{V}} \mathbb{P}\left( \left|\frac{1}{W}\sum_{w=1}^W  \exp(\bv^{\top} \bZ_w) - \mathbb{E}\left[ \exp(\bv^{\top} \bZ) \right] \right| > t-2 e^{\delta} W^{-1/2} \right)\\
    & \leq 2 |\mathcal{V}| \exp\left(-\frac{2W(t-2e^{\delta} W^{-1/2})^2}{ e^{\delta}} \right)\\
    & \lesssim W^{m/2} \exp\left(-\frac{2W(t-2e^{\delta}/\sqrt{W})^2}{ e^{\delta}} \right).
\end{align*} 
For the third inequality, we use Hoeffding's inequality with 
$\exp(\bv^{\top}\bZ_w) \le e^{\delta}$.
For sufficiently large $W$, and by choosing $t \coloneq e^{\delta} \log W /\sqrt{W}$, we obtain
\begin{align}
    \sup_{\bv \in \delta \mathbb{S}^{m-1}} \left|\frac{1}{W}\sum_{w=1}^W  \exp(\bv^{\top} \bZ_w) - \mathbb{E}\left[ \exp(\bv^{\top} \bZ) \right]\right| = O_p\Big(\frac{\log W}{\sqrt{W}}\Big). \label{tmp_2}
\end{align}
Similarly, we derive
\begin{align}
    \sup_{\bv \in \delta \mathbb{S}^{m-1}} \left\|\frac{1}{W}\sum_{w=1}^W  \exp(\bv^{\top} \bZ_w) \bZ_w - \mathbb{E}\left[ \exp(\bv^{\top} \bZ) \bZ \right]\right\|_2 = O_p\Big(\frac{\log W}{\sqrt{W}}\Big). \label{tmp_3}
\end{align}
By Jensen’s inequality, $\mathbb{E}[ \exp(\bv^{\top} \bZ)] \geq \exp(\mathbb{E}[ \bv^{\top} \bZ]) =1$. 
Using \eqref{tmp_2} and \eqref{tmp_3}, we get the desired result
\begin{align*}
   \kappa_W 
   &= \sup_{\bv \in \delta \mathbb{S}^{m-1}} \left\|\frac{\sum_{w=1}^W  \exp(\bv^{\top} \bZ_w) \bZ_w}{\sum_{w=1}^W  \exp(\bv^{\top} \bZ_w)} - \frac{\mathbb{E}\left[ \exp(\bv^{\top} \bZ) \bZ \right]}{\mathbb{E}\left[ \exp(\bv^{\top} \bZ) \right]}\right\|_2 \\
   &\leq \sup_{\bv \in \delta \mathbb{S}^{m-1}} \left\|
   \left(\frac{1}{\frac{1}{W}\sum_{w=1}^W  \exp(\bv^{\top} \bZ_w)} - \frac{1}{\mathbb{E}\left[ \exp(\bv^{\top} \bZ) \right]} \right) \frac{1}{W} \sum_{w=1}^W  \exp(\bv^{\top} \bZ_w) \bZ_w \right\|_2 \\
   & \quad + 
   \sup_{\bv \in \delta \mathbb{S}^{m-1}} \left\|\frac{\frac{1}{W}\sum_{w=1}^W  \exp(\bv^{\top} \bZ_w) \bZ_w - \mathbb{E}\left[ \exp(\bv^{\top} \bZ) \bZ \right]}{\mathbb{E}\left[ \exp(\bv^{\top} \bZ) \right]}\right\|_2\\
   & = O_p\Big(\frac{\log W}{\sqrt{W}}\Big).
\end{align*}

\section{Proofs for the Stochastic Error}
\label{section_proof_covering_number}

In this section, we prove Theorem~\ref{thm.covering_bd_TC}.
We first establish some basic lemmas that allow us to derive a covering bound by controlling the variation of the model in the individual layers. 
The next lemma shows that a bound on the gradient or a uniform bound on the Lipschitz constant with respect to the parameters implies a bound on the covering number. Recall that $\|\bbf\|_{L^\infty(D)}\coloneq\sup_{\bx\in D} \|\bbf(\bx)\|_\infty$ with  $\|\cdot\|_\infty$ the maximum entry vector/matrix norm.

\begin{lemma}
\label{lem.gen_metric_entropy_bound}
Let $\bbf_{\btheta}=(f_{\btheta}^{(1)},\ldots,f_{\btheta}^{(q)})^\top:D\subset \mathbb{R}^d\to \mathbb{R}^q$ for all $\btheta \in [-B,B]^r.$ If 
\begin{align*}
    K\coloneq 
    \sup_{\btheta, \btheta' \in [-B,B]^r, \btheta \neq \btheta'} \, \frac{ \|\bbf_{\btheta}-\bbf_{\btheta'}\|_{L^\infty(D)}}{\|\btheta-\btheta'\|_\infty}
\end{align*}
exists and is finite, then, 
\begin{align*}
    \log \mN\Big(\delta, \big\{\bbf_{\btheta}: \btheta\in [-B,B]^r \big\}, \|\cdot\|_{L^\infty(D)}\Big)
    \leq r \log\Big(3+\frac{KB}{\delta}\Big). 
\end{align*}
Moreover, if the gradients $\nabla_{\btheta} f_{\btheta}^{(j)}(\bx)$ of the component functions $f_{\btheta}^{(j)}$ exist for all $\btheta \in [-B,B]^r, \, \bx \in D$ and are continuous in $\btheta,$ then, 
\begin{align}
    K= \sup_{\btheta \in [-B,B]^r, \, \bx \in D} \, \, \max_{j=1,\ldots,q} \, \big\|\nabla_{\btheta} f_{\btheta}^{(j)}(\bx)\big\|_1. 
    \label{eq.K_equality}
\end{align}
\end{lemma}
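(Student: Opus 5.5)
The plan is to prove the two claims separately: first the covering bound through a grid discretization of the parameter cube, then the formula \eqref{eq.K_equality} for $K$ as a supremum of gradient norms.

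\emph{Covering bound.} Fix $\delta>0$ and set $\eta \coloneqq \delta/K$; if $K=0$ the class is a single function and there is nothing to prove. Cover $[-B,B]$ by a one-dimensional grid of mesh $\eta$, i.e.\ points $-B+(2k+1)\eta$ for $k=0,1,\ldots$, clipped to $B$. Every $x\in[-B,B]$ is then within $\eta$ of some grid point. The number of grid points is at most $\lceil 2B/(2\eta)\rceil \le 1+B/\eta \le 3+KB/\delta$. Taking the product grid in $[-B,B]^r$ gives at most $(3+KB/\delta)^r$ points $\btheta'$. For each $\btheta$ some grid point satisfies $\|\btheta-\btheta'\|_\infty\le\eta$, and hence, by the definition of $K$,
\[
\|\bbf_{\btheta}-\bbf_{\btheta'}\|_{L^\infty(D)}\le K\eta=\delta .
\]
Therefore $\{\bbf_{\btheta'}\}$ is a $\delta$-cover, and taking logarithms gives the claim. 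The constant $3$ leaves slack for rounding.

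\emph{Gradient formula, upper bound.} For fixed $\bx\in D$ and $j$, the map $s\mapsto f^{(j)}_{\btheta'+s(\btheta-\btheta')}(\bx)$ is $C^1$ on $[0,1]$, since the segment stays in the convex cube and the gradient is continuous in $\btheta$. The fundamental theorem of calculus together with H\"older's inequality ($\ell^1$ against $\ell^\infty$) gives
\[
|f^{(j)}_{\btheta}(\bx)-f^{(j)}_{\btheta'}(\bx)|\le \sup_{\tilde\btheta}\|\nabla f^{(j)}_{\tilde\btheta}(\bx)\|_1\,\|\btheta-\btheta'\|_\infty .
\]
Taking suprema over $\bx$ and $j$ yields $K\le G$, where $G$ denotes the right-hand side of \eqref{eq.K_equality}.

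\emph{Gradient formula, lower bound.} This is the main obstacle, because the $\ell^1$ norm is attained only in a sign direction. Fix $\btheta$, $\bx$, $j$ and let $\bm{s}\coloneqq\sign(\nabla f^{(j)}_{\btheta}(\bx))$, modified at boundary coordinates so that $\btheta\pm h\bm{s}$ stays in the cube for one choice of sign. If $\btheta$ lies on a face, use the one-sided direction: replace the sign in that coordinate by the inward sign, or flip the overall direction to $-\bm{s}$, which gives the same absolute value. Then the difference quotient
\[
\frac{|f^{(j)}_{\btheta+h\bm{s}}(\bx)-f^{(j)}_{\btheta}(\bx)|}{h}
\]
tends to $|\langle\nabla f^{(j)}_{\btheta}(\bx),\bm{s}\rangle|$, which is at least $\|\nabla f^{(j)}_{\btheta}(\bx)\|_1$ for interior $\btheta$. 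Each such quotient is at most $K$. For boundary points, I would instead use continuity of the gradient in $\btheta$: approximate a boundary $\btheta$ by interior points, so the supremum over interior points already equals $G$. This gives $K\ge G$ and completes the proof.
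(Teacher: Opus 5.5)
Your proposal is correct and follows essentially the same route as the paper. The covering bound comes from a product grid of spacing $2\delta/K$ in the parameter cube. The bound $K\le G$ comes from a mean-value (your FTC) argument with $\ell^1$--$\ell^\infty$ duality, and $K\ge G$ from sign-direction difference quotients at interior $\btheta$, extended to the boundary by continuity of the gradient. The tentative coordinate-wise sign modification at face points is the only weak spot: it would not preserve $\langle\nabla f^{(j)}_{\btheta}(\bx),\bm{s}\rangle=\|\nabla f^{(j)}_{\btheta}(\bx)\|_1$, and it is unnecessary, since the continuity step you fall back on (as the paper does) already handles boundary points.
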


\begin{proof}
Whenever $\btheta, \btheta' \in [-B,B]^r, \btheta \neq \btheta',$ we have $\|\bbf_{\btheta}-\bbf_{\btheta'}\|_{L^\infty(D)}\leq K \|\btheta-\btheta'\|_\infty.$ To construct a covering, take parameters in the discrete set $\Theta\coloneq\{-B,B\}\cup\{\tfrac{2\delta}K \mathbb{Z}\cap (-B,B)\}^r.$ The length of the interval is $2B$ and thus the cardinality of this set is $\leq (3+KB/\delta)^r.$ This set provides the centers of a $\delta$-covering of the function class. 

To prove $\leq$ in \eqref{eq.K_equality}, Taylor expansion with respect to the parameter vector shows that for any $\btheta, \btheta'\in [-B,B]^r,$ there exists a $\bxi\in [-B,B]^r$ such that $f_{\btheta}^{(j)}(\bx)-f_{\btheta'}^{(j)}(\bx)=\big(\nabla_{\btheta} f_{\bxi}^{(j)}(\bx)\big)^\top \big(\btheta-\btheta')$ and therefore
$\max_{j=1,\ldots,q} \, |f_{\btheta}^{(j)}(\bx)-f_{\btheta'}^{(j)}(\bx)|\leq \sup_{\btheta \in [-B,B]^r} \, \max_{j=1,\ldots,q} \,  \|\nabla_{\btheta} f_{\btheta}^{(j)}(\bx)\|_1 \|\btheta-\btheta'\|_\infty.$

To show $\geq$ in \eqref{eq.K_equality}, consider a $\btheta\in (-B,B)^r.$ By choosing $\alpha>0$ small enough, we can ensure that $\btheta_\alpha'(j, \bx)\coloneq\btheta-\alpha \sign(\nabla_{\btheta} f_{\btheta}^{(j)}(\bx)) \in (-B,B)^r,$ where the sign  function $\sign()$ is applied componentwise. Using the definition of the gradient and $\alpha=\|\btheta-\btheta_\alpha'(j, \bx)\|_\infty,$ we have for $\alpha \downarrow 0,$
\begin{align*}
    \big\|\nabla_{\btheta} f_{\btheta}^{(j)}(\bx)\big\|_1
    = \frac 1{\alpha} \nabla_{\btheta} f_{\btheta}^{(j)}(\bx)^\top \big(\btheta-\btheta_\alpha'(j, \bx)\big)
    = \frac{f_{\btheta}^{(j)}(\bx)-f_{\btheta_\alpha'(j, \bx)}^{(j)}(\bx)}{\|\btheta-\btheta_\alpha'(j, \bx)\|_\infty}+o(1).
\end{align*}
Due to the assumed continuity of the gradients in $\btheta,$ we obtain
\begin{align*}
    \sup_{\btheta \in [-B,B]^r, \, \bx \in D} \, \, \max_{j=1,\ldots,q} \, \big\|\nabla_{\btheta} f_{\btheta}^{(j)}(\bx)\big\|_1
    = \sup_{\btheta \in (-B,B)^r, \, \bx \in D} \, \, \max_{j=1,\ldots,q} \, \big\|\nabla_{\btheta} f_{\btheta}^{(j)}(\bx)\big\|_1
    \leq K.
\end{align*}
As we already established the other inequality, we must have equality and \eqref{eq.K_equality} holds. 
\end{proof}

The next lemma provides a generic tool to control the covering number of function compositions. Similarly as in the backpropagation algorithm, it shows that the overall parameter variation of a hierarchical model can be controlled by the layerwise variation with respect to the inputs and the parameters. 
\begin{lemma}
\label{lem.comp_of_entropy}
For $q$ a positive integer, consider dimensions $r_1,\ldots, r_q,$ domains $D_0,\ldots, D_q,$ and functions $\bbf_{1, \btheta_1}, \ldots, \bbf_{q, \btheta_q}$  such that for any $\btheta_j\in [-B,B]^{r_j},$ $\bbf_{j, \btheta_j}: D_{j-1}\to D_j.$ Write
\begin{align*}
    K_j\coloneq 
    \sup_{\btheta_j, \btheta_j' \in [-B,B]^{r_j}, \btheta_j \neq \btheta'_j} \, \frac{ \|\bbf_{j,\btheta_j}-\bbf_{j,\btheta'_j}\|_{L^\infty(D_{j-1})}}{\|\btheta_j-\btheta'_j\|_\infty},
    \quad 
    Q_j\coloneq 
    \sup_{\btheta_j\in [-B,B]^{r_j}, \bx, \bx'\in D_{j-1}, \bx\neq \bx'} \, \frac{\|\bbf_{j,\btheta_j}(\bx)-\bbf_{j,\btheta_j}(\bx')\|_\infty}{\|\bx-\bx'\|_\infty},
\end{align*}
for the Lipschitz constants of the individual functions with respect to the parameters and the inputs. Then, for $\btheta=(\btheta_1,\ldots,\btheta_q)$ and $\btheta'=(\btheta'_1,\ldots,\btheta'_q),$
\begin{align*}
    \sup_{\btheta, \btheta'\in [-B,B]^{r_1+\ldots+r_q}, \, \btheta\neq \btheta'}\frac{\| \bbf_{q,\btheta_q}\circ \ldots \circ \bbf_{1,\btheta_1}
    - \bbf_{q,\btheta_q'}\circ \ldots \circ \bbf_{1,\btheta_1'}\|_{L^\infty(D_0)}}{\|\btheta-\btheta'\|_\infty}
    \leq K_q+\sum_{\ell=1}^{q-1} Q_q\ldots Q_{\ell+1}K_\ell.
\end{align*}
\end{lemma}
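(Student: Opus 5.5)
The approach is a telescoping (hybrid) argument over the layers, the same bookkeeping as in backpropagation. Fix $\btheta=(\btheta_1,\ldots,\btheta_q)$ and $\btheta'=(\btheta_1',\ldots,\btheta_q')$. For $\ell=0,\ldots,q$ we introduce the hybrid maps
\[
\bF_\ell \coloneqq \bbf_{q,\btheta_q}\circ\cdots\circ\bbf_{\ell+1,\btheta_{\ell+1}}\circ\bbf_{\ell,\btheta_\ell'}\circ\cdots\circ\bbf_{1,\btheta_1'} ,
\]
so that the first $\ell$ layers use the primed parameters and the remaining $q-\ell$ layers use the unprimed ones. Then $\bF_0=\bbf_{q,\btheta_q}\circ\cdots\circ\bbf_{1,\btheta_1}$ and $\bF_q=\bbf_{q,\btheta_q'}\circ\cdots\circ\bbf_{1,\btheta_1'}$. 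Each $\bF_\ell$ is well defined on $D_0$, because every $\bbf_{j,\btheta_j}$ maps $D_{j-1}$ into $D_j$ for every admissible parameter. By the triangle inequality in $\|\cdot\|_{L^\infty(D_0)}$,
\[
\|\bF_0-\bF_q\|_{L^\infty(D_0)}\le\sum_{\ell=1}^q\|\bF_{\ell-1}-\bF_\ell\|_{L^\infty(D_0)}.
\]

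We bound the $\ell$-th summand pointwise. Fix $\bx\in D_0$ and set $\by\coloneqq \bbf_{\ell-1,\btheta_{\ell-1}'}\circ\cdots\circ\bbf_{1,\btheta_1'}(\bx)\in D_{\ell-1}$. The two maps $\bF_{\ell-1}$ and $\bF_\ell$ share this common inner part and differ only in layer $\ell$. So they evaluate $\bG_\ell\coloneqq\bbf_{q,\btheta_q}\circ\cdots\circ\bbf_{\ell+1,\btheta_{\ell+1}}$ at the two points $\bu=\bbf_{\ell,\btheta_\ell}(\by)$ and $\bu'=\bbf_{\ell,\btheta_\ell'}(\by)$, both of which lie in $D_\ell$.

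By definition of $K_\ell$, we have $\|\bu-\bu'\|_\infty\le K_\ell\|\btheta_\ell-\btheta_\ell'\|_\infty$. The map $\bG_\ell$ is a composition of layers $\ell+1,\ldots,q$, each of which is $Q_j$-Lipschitz in its input with respect to $\|\cdot\|_\infty$ on its domain. Applying these bounds successively (the intermediate points stay in the correct domains $D_j$) gives
\[
\|\bG_\ell(\bu)-\bG_\ell(\bu')\|_\infty\le Q_q\cdots Q_{\ell+1}\,K_\ell\,\|\btheta_\ell-\btheta_\ell'\|_\infty .
\]
For $\ell=q$ the product is empty and the bound reduces to $K_q\|\btheta_q-\btheta_q'\|_\infty$.

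Taking the supremum over $\bx$ and using $\|\btheta_\ell-\btheta_\ell'\|_\infty\le\|\btheta-\btheta'\|_\infty$, then summing over $\ell$ and dividing by $\|\btheta-\btheta'\|_\infty$, yields the claimed bound $K_q+\sum_{\ell=1}^{q-1}Q_q\cdots Q_{\ell+1}K_\ell$.

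There is no real obstacle. The only point needing care is the domain bookkeeping: the constants $K_\ell$ and $Q_j$ are suprema over $D_{\ell-1}$ and $D_{j-1}$ respectively, so we must check that every hybrid intermediate point lies in the appropriate domain. This holds because each layer maps its domain into the next one regardless of which parameter is used.
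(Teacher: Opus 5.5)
Your telescoping argument is correct and is essentially the paper's proof. The paper proceeds by induction on $q$, peeling off the outermost layer with the triangle inequality through the intermediate map $\bbf_{q,\btheta_q}\circ\bbf_{q-1,\btheta_{q-1}'}\circ\cdots\circ\bbf_{1,\btheta_1'}$, which is exactly your hybrid $\mathbf{F}_{q-1}$; unrolling that induction gives your sum over hybrids, and your explicit domain bookkeeping makes precise a point the paper leaves implicit.
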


Combined with Lemma \ref{lem.gen_metric_entropy_bound} this gives for any $\delta>0,$
\begin{align}
\begin{split}
    &\log \, \mN\bigg(\delta, \Big\{\bbf_{q,\btheta_q}\circ \ldots \circ \bbf_{1,\btheta_1}: \btheta_q\in [-B,B]^{r_q}, \ldots, \btheta_1\in [-B,B]^{r_1}\Big\}, \|\cdot\|_{L^\infty(D_0)}\bigg) \\
    &\leq \sum_{j=1}^q r_j
    \log \bigg(3+\frac{B}{\delta}\Big(K_q+\sum_{\ell=1}^{q-1} Q_q\ldots Q_{\ell+1}K_\ell\Big) \bigg).
\end{split}
\label{eq.entropy_comp}
\end{align}
If the gradients with respect to $\btheta=(\btheta_1,\ldots, \btheta_q)$ exist and are continuous, applying the second claim of Lemma \ref{lem.gen_metric_entropy_bound} and the previous lemma provides a bound on the $\|\cdot\|_1$-norm of the gradients.

\begin{proof}
We prove the inequality by induction on $q.$ For $q=1,$ the inequality holds. The induction step $(q-1) \to q$ follows by applying the triangle inequality from
\begin{align*}
    &\big\| \bbf_{q,\btheta_q}\circ \ldots \circ \bbf_{1,\btheta_1}
    - \bbf_{q,\btheta_q'}\circ \ldots \circ \bbf_{1,\btheta_1'}\big\|_{L^\infty(D_0)} \\
    &\leq 
    \big\| \bbf_{q,\btheta_q}\circ \bbf_{q-1,\btheta_{q-1}} \circ \ldots \circ \bbf_{1,\btheta_1}
    - \bbf_{q,\btheta_q}\circ \bbf_{q-1,\btheta_{q-1}'} \circ \ldots \circ \bbf_{1,\btheta_1'} \big\|_{L^\infty(D_0)}\\
    &\quad \quad + \big\|\bbf_{q,\btheta_q}\circ \bbf_{q-1,\btheta_{q-1}'} \circ \ldots \circ \bbf_{1,\btheta_1'}- \bbf_{q,\btheta_q'}\circ \bbf_{q-1,\btheta_{q-1}'} \circ \ldots \circ \bbf_{1,\btheta_1'}\big\|_{L^\infty(D_0)} \\
    &\leq Q_q \big\| \bbf_{q-1,\btheta_{q-1}} \circ \ldots \circ \bbf_{1,\btheta_1}
    - \bbf_{q-1,\btheta_{q-1}'} \circ \ldots \circ \bbf_{1,\btheta_1'} \big\|_{L^\infty(D_0)}
    + \big\|\bbf_{q,\btheta_q}-\bbf_{q,\btheta_q'}\big\|_{L^\infty(D_{q-1})}
\end{align*}
and the induction hypothesis.
\end{proof}

\begin{lemma}
\begin{itemize}
    \item[(i)] For $A,B\geq 1$ and $d,q \leq N$,
    \begin{align}
        \sup_{\bbf_{\btheta}\in \mathcal{MLP}(d , q, L,N,B)} \, \|\bbf_{\btheta}\|_{L^\infty([-A,A]^d)}&\leq A\big(B(N+1)\big)^{L+1}, \label{eq.MLP_Linfty}\\\sup_{\bbf_{\btheta}\in \mathcal{MLP}(d , q, L,N,B), \, \bx, \bx'\in [-A,A]^d, \, \bx\neq \bx'} \,  \frac{\|\bbf_{\btheta}(\bx)-\bbf_{\btheta}(\bx')\|_\infty}{\|\bx-\bx'\|_\infty} &\leq (BN)^{L+1}, \label{eq.MLP_Lipschitz_x} \\ \sup_{\bbf_{\btheta}, \bbf_{\btheta'}\in \mathcal{MLP}(d , q, L,N,B), \, \btheta \neq \btheta'} \, \frac{ \|\bbf_{\btheta}-\bbf_{\btheta'}\|_{L^\infty([-A,A]^d)}}{\|\btheta-\btheta'\|_\infty}
        &\leq 6A \big(B(N+1)\big)^{2L+1}.
        \label{eq.MLP_Lipschitz_theta}
    \end{align}

    \item[(ii)] For $A, B \geq 1,$
    \begin{align}
        \sup_{\bbf_{\btheta}\in \mathcal{MA}(N,H,B)} \, \|\bbf_{\btheta}\|_{L^\infty([-A,A]^{N\times T}])}&\leq 2HABN, \label{eq.multihead_Linfty}\\
        \sup_{\bbf_{\btheta}\in \mathcal{MA}(N,H,B), \, X, X'\in [-A,A]^{N \times T}, \, X \neq X'} \,  \frac{\|\bbf_{\btheta}(X)-\bbf_{\btheta}(X')\|_\infty}{\|X-X'\|_\infty} &\leq 6HA^2 B^2 N^3, \label{eq.multihead_Lipschitz_x} \\ \sup_{\bbf_{\btheta}, \bbf_{\btheta'}\in \mathcal{MA}(N,H,B), \, \btheta \neq \btheta'} \, \frac{ \|\bbf_{\btheta}-\bbf_{\btheta'}\|_{L^\infty([-A,A]^{N\times T}])}}{\|\btheta-\btheta'\|_\infty}
        &\leq 3HA^3BN^3.
        \label{eq.multihead_Lipschitz_theta}
    \end{align}

    \item[(iii)] For $A,B, L_{\mathrm{mlp}}\geq 1$ and $N \leq N_{\mathrm{mlp}}$,
    \begin{align}
        \sup_{\bbf_{\btheta}\in \mathcal{TB}(N,H,L_{\mathrm{mlp}},N_{\mathrm{mlp}}, B)} \, \|\bbf_{\btheta}\|_{L^\infty([-A,A]^{N\times T}])}&\leq 2HA \big(B(N_{\mathrm{mlp}}+1)\big)^{L_{\mathrm{mlp}}+2}, \label{eq.TB_Linfty}\\\sup_{\bbf_{\btheta}\in \mathcal{TB}(N,H,L_{\mathrm{mlp}},N_{\mathrm{mlp}}, B), \, \bx, \bx'\in [-A,A]^{N\times T}, \, \bx\neq \bx'} \,  \frac{\|\bbf_{\btheta}(\bx)-\bbf_{\btheta}(\bx')\|_\infty}{\|\bx-\bx'\|_\infty} &\leq 6HA^2  \big(BN_{\mathrm{mlp}}\big)^{L_{\mathrm{mlp}}+3}, \label{eq.TB_Lipschitz_x} \\ \sup_{\bbf_{\btheta}, \bbf_{\btheta'}\in \mathcal{TB}(N,H,L_{\mathrm{mlp}},N_{\mathrm{mlp}}, B), \, \btheta \neq \btheta'} \, \frac{ \|\bbf_{\btheta}-\bbf_{\btheta'}\|_{L^\infty([-A,A]^{N\times T}])}}{\|\btheta-\btheta'\|_\infty}
        &\leq 30 H^2 A^5 \big(B(N_{\mathrm{mlp}}+1)\big)^{2L_{\mathrm{mlp}}+4}.
        \label{eq.TB_Lipschitz_theta}
    \end{align}
\end{itemize}    
\end{lemma}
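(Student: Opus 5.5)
The plan is to prove the three parts in order: (i) by induction over the MLP layers, (ii) by a direct computation on a single head via \eqref{eq_sa_t}, and (iii) by composing (i) and (ii) using the triangle-inequality argument of Lemma~\ref{lem.comp_of_entropy}. Throughout I will work with the maximum entry norm. The elementary fact I will use repeatedly is that multiplying a vector by a matrix with entries in $[-B,B]$ and $n$ columns inflates the $\|\cdot\|_\infty$ norm by at most a factor $Bn$. Adding a bias costs at most $B$, which is absorbed by writing $B(n+1)$ when $A\ge 1$.

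\textbf{Part (i).} Write the MLP as affine maps $\bx\mapsto W_\ell\bx+\bb_\ell$ interlaced with ReLU, which is $1$-Lipschitz and satisfies $|x_+|\le|x|$.
\begin{itemize}
\item Sup bound: induction over layers gives $\|\text{layer}_\ell\|_\infty\le A(B(N+1))^\ell$, and with $L+1$ affine maps this yields \eqref{eq.MLP_Linfty}.
\item Input Lipschitz constant: biases cancel in differences, so each affine map contributes a factor $BN$ (the first map has $d\le N$ columns). This gives \eqref{eq.MLP_Lipschitz_x}.
\item Parameter Lipschitz constant: apply Lemma~\ref{lem.comp_of_entropy} to the layer maps. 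Layer $\ell$ has parameter Lipschitz constant $K_\ell\le (N+1)\cdot A(B(N+1))^{\ell-1}$, since this is the size of its input plus $1$ for the bias. Its input Lipschitz constant is $Q_\ell\le BN$.
\item Summing the geometric-type series $\sum_\ell (BN)^{L+1-\ell}A(N+1)(B(N+1))^{\ell-1}$ gives at most $(L+1)A(B(N+1))^{L+1}$. Using $L+1\le 2^{L}\le (B(N+1))^{L}$ crudely bounds this by $6A(B(N+1))^{2L+1}$.
\end{itemize}

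\textbf{Part (ii).} By \eqref{eq_sa_t}, each column of $\operatorname{SA}_{U,V}(X)$ is a convex combination of the vectors $V\bx_s$, so its norm is at most $BNA$. Adding the skip connection and the $H$ heads gives $A+HABN\le 2HABN$.

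For the Lipschitz bounds, I treat the softmax weights $\pi_{s}(X)=\operatorname{softmax}_s(\bx_s^\top U\bx_t)$ separately.
\begin{itemize}
\item The scores $\bx_s^\top U\bx_t$ are Lipschitz in $X$ with constant $\le 2ABN^2$ in sup norm, and Lipschitz in $U$ with constant $\le A^2N^2$.
\item The softmax map from scores to weights is $1$-Lipschitz from $\|\cdot\|_\infty$ to $\|\cdot\|_1$, because its Jacobian $\mathrm{diag}(\pi)-\pi\pi^\top$ has $\ell_\infty\to\ell_1$ norm at most $2$; the factor $2$ is kept in the constants.
\item Decompose the difference as $\sum_s(\pi_s-\pi'_s)V\bx_s+\sum_s\pi'_s(V\bx_s-V'\bx'_s)$. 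The first sum is bounded by $\|\pi-\pi'\|_1\cdot BNA$; the second by $BN\|X-X'\|_\infty$ or $NA\|V-V'\|_\infty$.
\end{itemize}
Collecting terms with $A,B\ge1$ and summing over heads, plus $1$ for the skip connection, gives $6HA^2B^2N^3$ in $X$ and $3HA^3BN^3$ in the parameters.

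\textbf{Part (iii).} A transformer block is the MLP composed with the multi-head attention.
\begin{itemize}
\item Sup bound: the attention output lies in $[-A',A']^{N\times T}$ with $A'=2HABN$. Feeding this into \eqref{eq.MLP_Linfty} with $N\le N_{\mathrm{mlp}}$ gives \eqref{eq.TB_Linfty}.
\item Input Lipschitz constant: the block constant is the product of \eqref{eq.MLP_Lipschitz_x} and \eqref{eq.multihead_Lipschitz_x}.
\item Parameter Lipschitz constant: Lemma~\ref{lem.comp_of_entropy} with $q=2$ gives $K_2+Q_2K_1$. Here $K_2$ is \eqref{eq.MLP_Lipschitz_theta} evaluated at radius $A'$, $Q_2=(BN_{\mathrm{mlp}})^{L_{\mathrm{mlp}}+1}$, and $K_1=3HA^3BN^3$.
\end{itemize}
Bounding $N\le N_{\mathrm{mlp}}+1$ and absorbing constants yields \eqref{eq.TB_Lipschitz_theta}.

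The main obstacle is bookkeeping: making the exponents and numerical constants match exactly in \eqref{eq.MLP_Lipschitz_theta} and \eqref{eq.TB_Lipschitz_theta}. The most delicate analytic step is the $\ell_1$-Lipschitz bound for softmax in part (ii). If the constants come out too tight, I would first loosen them by using $L+1\le(B(N+1))^L$ and $N^3\le(N_{\mathrm{mlp}}+1)^3$.
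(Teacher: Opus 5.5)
Your arguments for parts (i) and (ii) and for \eqref{eq.TB_Linfty} and \eqref{eq.TB_Lipschitz_theta} follow the paper's proof and are correct: layerwise induction, the softmax $\ell_\infty\to\ell_1$ bound of Lemma~\ref{lemma_softmax_lip}, and Lemma~\ref{lem.comp_of_entropy}. Your constant in (i) is even slightly better than the paper's. In (iii) you correctly use $K_2+Q_2K_1$, with $Q_2$ the input-Lipschitz constant of the MLP, whereas the paper writes $Q_1$; both land below $30H^2A^5(B(N_{\mathrm{mlp}}+1))^{2L_{\mathrm{mlp}}+4}$.

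\textbf{The gap is at \eqref{eq.TB_Lipschitz_x}.} Multiplying \eqref{eq.MLP_Lipschitz_x} (width $N_{\mathrm{mlp}}$, depth $L_{\mathrm{mlp}}$) by \eqref{eq.multihead_Lipschitz_x} gives
\[
6HA^2B^2N^3\big(BN_{\mathrm{mlp}}\big)^{L_{\mathrm{mlp}}+1}=6HA^2B^{L_{\mathrm{mlp}}+3}N^3N_{\mathrm{mlp}}^{L_{\mathrm{mlp}}+1}.
\]
This is larger than the claimed $6HA^2(BN_{\mathrm{mlp}})^{L_{\mathrm{mlp}}+3}$ by the factor $N^3/N_{\mathrm{mlp}}^2$, which equals $N_{\mathrm{mlp}}$ when $N=N_{\mathrm{mlp}}$. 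Your fallback of loosening $N^3\le (N_{\mathrm{mlp}}+1)^3$ goes in the wrong direction. Replacing the MLP constant by its exact worst case $BN(BN_{\mathrm{mlp}})^{L_{\mathrm{mlp}}}$ still leaves a factor $N^4/N_{\mathrm{mlp}}^3$.

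\textbf{The step cannot be repaired, because \eqref{eq.TB_Lipschitz_x} is false as stated.} Take $H=A=B=L_{\mathrm{mlp}}=1$, $T=3$ and $N=N_{\mathrm{mlp}}$. Use one head with $U=V=\bm{1}_N\bm{1}_N^\top$ and columns $\bx_1=\bm{1}_N$, $\bx_2=-\bm{1}_N$, $\bx_3=\bm{0}_N$. Take an MLP with all weight entries equal to $1$, a first-layer shift making the hidden preactivations $\bm{1}^\top\by+1>0$ near the base point, and zero output shift.
\begin{itemize}
\item At the base point the third column of the attention output is $\bm{0}_N$.
\item Moving $\bx_3$ to $\epsilon\bm{1}_N$ changes that column by $\big(1+\tfrac{2N^3+N}{3}\big)\epsilon\bm{1}_N+o(\epsilon)$.
\item The MLP multiplies changes along $\bm{1}_N$ by $N\cdot N_{\mathrm{mlp}}=N^2$.
\item Hence the block's Lipschitz constant is at least $\tfrac23N^5$, which exceeds $6N^4$ for $N\ge 10$.
\end{itemize}

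The paper's proof reaches the stated exponent only by citing the MLP's input-Lipschitz constant as $(BN_{\mathrm{mlp}})^{L_{\mathrm{mlp}}}$, which contradicts its own \eqref{eq.MLP_Lipschitz_x}. What your product argument actually proves is the corrected bound $6HA^2(BN_{\mathrm{mlp}})^{L_{\mathrm{mlp}}+4}$. This only changes numerical exponents inside the logarithm of Theorem~\ref{thm.covering_bd_TC}.

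A minor point: you call softmax $1$-Lipschitz from $\|\cdot\|_\infty$ to $\|\cdot\|_1$ ``because'' its Jacobian has norm at most $2$. That reasoning only gives the constant $2$. This is harmless, since $2$ is the constant you then use. The sharp constant $1$ follows from $\sum_i\pi_i|v_i-\bar v|\le(\sum_i\pi_iv_i^2)^{1/2}\le 1$ for $\|\bv\|_\infty\le 1$.
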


\begin{proof}
\textit{(i):} We apply Lemma \ref{lem.comp_of_entropy}. Let $\btheta_1\coloneq W_0 \in [-B,B]^{N\times d},$ $\bbf_{1, \btheta_1}(\bx)\coloneq W_0\bx,$ for $j=2,\ldots, L,$ $\btheta_j\coloneq (W_{j-1}, \bv_{j-1})\in [-B,B]^{N\times (N+1)},$ $f_{j,\btheta_j}(\bx)\coloneq W_{j-1}\sigma(\bx-\bv_{j-1}),$ $\btheta_{L+1}=(W_L,\bv_L)\in [-B,B]^{q\times N + N},$ and $f_{L+1,\btheta_{L+1}}(\bx)\coloneq W_L\sigma(\bx-\bv_L).$

If $D_0=[-A,A],$ we show that to apply Lemma \ref{lem.comp_of_entropy}, one can take as domains $D_j=[-AB^j (N+1)^{j-1}N,AB^j (N+1)^{j-1}N]$ for $j=1,\ldots,L+1.$  

Since for any $\bx\in [-A,A]^d$ and $W_0\in [-B,B]^{N\times d},$ one has $\|\bbf_{1,\btheta_1}(\bx)\|_\infty=\|W_0\bx\|_\infty\leq ABd,$ the formula holds for $j=1.$ We now show the induction step from $(j-1)$ to $j.$ Suppose that $D_{j-1}=[-AB^{j-1}(N+1)^{j-2}N, AB^{j-1}(N+1)^{j-2} N].$ Using that for $j\geq 2,$ we find
\begin{align*}
    \|\bbf_{j,\btheta_j}(\bx)\|_\infty 
    = \big\|W_{j-1}\sigma(\bx-\bv_{j-1})\big\|_\infty 
    \leq BN \big\|\sigma(\bx-\bv_{j-1})\big\|_\infty
    \leq BN\big(\|\bx\|_\infty+B\big).
\end{align*}
Whenever $\|\bx\|_\infty\leq AB^{j-1}(N+1)^{j-2} N,$ using $A,B, N \geq 1,$ we therefore have $$\big\|\bbf_{j,\btheta_j}(\bx)\big\|_\infty\leq BN\big(AB^{j-1}(N+1)^{j-2}N+B\big)\leq AB^{j}N\big((N+1)^{j-2}N+B\big)\leq AB^j (N+1)^{j-1}N,$$ that is, $\bbf_{j,\btheta_j}(\bx)\in D_j.$ This establishes the induction step and proves \eqref{eq.MLP_Linfty}. 

To show that the Lipschitz constants of each layer function with respect to the inputs are bounded by $BN$, observe that whenever $W_0\in [-B,B]^{N\times d},$ $\|\bbf_{1,\btheta_1}(\bx)-\bbf_{1,\btheta_1}(\bx')\|_\infty\leq \|W_0(\bx-\bx')\|_\infty\leq Bd\|\bx-\bx'\|_\infty.$ Similarly, if all parameters are bounded by $B,$ for any $j=2,\ldots, L+1,$
\begin{align}
    \big\|\bbf_{j,\btheta_j}(\bx)-\bbf_{j,\btheta_j}(\bx')\big\|_\infty
    \leq \big\|W_{j-1}\big(\sigma(\bx-\bv_j)-\sigma(\bx'-\bv_j)\big)\big\|_\infty \leq BN \big\|\sigma(\bx-\bv_j)-\sigma(\bx'-\bv_j)\big\|_\infty \notag\\
    \leq BN\|\bx-\bx'\|_\infty. 
    \label{eq.bdsci}
\end{align}
This shows that the Lipschitz constant of the composed function is bounded by the product of the layerwise Lipschitz constants, thus, $\leq (BN)^{L+1},$ proving \eqref{eq.MLP_Lipschitz_x}. 

We finally derive a bound for the Lipschitz constants of the functions $f_{1,\btheta_1},\ldots, f_{L+1, \btheta_{L+1}}$ with respect to the parameters. Since for matrices $W, W' \in [-B,B]^{n \times m}$ and $\bv, \bv'\in [-B,B]^m,$ we have (for matrices $\|\cdot\|_\infty$ is the entrywise maximum norm)
\begin{align*}
    \big\|W\sigma(\bx-\bv)-W'\sigma(\bx-\bv')\|_\infty
    &\leq \big\|(W-W')\sigma(\bx-\bv)\big\|_\infty
    +\big\| W'\big(\sigma(\bx-\bv)-\sigma(\bx-\bv')\big)\big\|_\infty \\ &\leq m \|W-W'\|_\infty \big(\|\bx\|_\infty+B\big)
    + mB \big\| \sigma(\bx-\bv)-\sigma(\bx-\bv')\big\|_\infty \\
    &\leq m \|W-W'\|_\infty \big(\|\bx\|_\infty+B\big)
    + mB \big\|\bv-\bv'\big\|_\infty.
\end{align*}
Hence, we obtain for any $j=1,\ldots,L+1$ and any $
\bx\in [-AB^{j-1} (N+1)^{j-1},AB^{j-1} (N+1)^{j-1}],$
\begin{align*}
    \big\|\bbf_{j,\btheta_j}(\bx)-\bbf_{j,\btheta_j}(\bx')\big\|_\infty
    &\leq \Big(N\big( AB^{j-1} (N+1)^{j-1} +B\big)+ NB\Big) \big\|\btheta_j-\btheta'_j\big\|_\infty \\
    &\leq 3 A B^j (N+1)^j \big\|\btheta_j-\btheta'_j\big\|_\infty \\
    &\leq 3 A B^{L+1} (N+1)^{L+1} \big\|\btheta_j-\btheta'_j\big\|_\infty.
\end{align*}
We now apply Lemma \ref{lem.comp_of_entropy} with $q=L+1$. \eqref{eq.bdsci} shows that one can take $Q_1, \ldots, Q_{L+1}\leq BN\leq B(N+1)$ to apply Lemma \ref{lem.comp_of_entropy}. For any $y\geq 2,$ $\sum_{r=0}^L y^r\leq 2y^L.$ Since $B(N+1)\geq 2,$ this proves 
\begin{align*}
    \sup_{\bbf_{\btheta}, \bbf_{\btheta'}\in \mathcal{MLP}(d , q, L,N,B), \, \btheta \neq \btheta'} \, \frac{ \|\bbf_{\btheta}-\bbf_{\btheta'}\|_{L^\infty([-A,A]^d)}}{\|\btheta-\btheta'\|_\infty}
        &\leq 6 A \big(B(N+1)\big)^{2L+1},
\end{align*}
establishing \eqref{eq.MLP_Lipschitz_theta}.

\textit{(ii):} 
Consider 
\begin{align*}
        \operatorname{SA}_{U,V}^{(t)}(X)
        \coloneqq \operatorname{SA}_{U,V}\big(\bx_1,\ldots,\bx_T\big)_{:,t} =\sum_{s=1}^T \frac{\exp(\bx^\top_s U \bx_{t})}{\sum_{r=1}^T \exp(\bx^{\top}_r U \bx_t)} V \bx_s
\end{align*}
as a map $D=[-A,A]^{N\times T}\to \mathbb{R}^N.$ 

Since $\|V\bx_s\|_\infty\leq NB\|\bx_s\|_\infty\leq A B N,$ we obtain $\|\operatorname{SA}_{U,V}^{(t)}\|_\infty \leq A B N.$ This shows that any multi-head self-attention function $\operatorname{MA}_{f_1,\ldots,f_H}(X) = X + \sum_{h=1}^H f_h(X)$ in the class is bounded by $A + HABN,$ establishing 
\eqref{eq.multihead_Linfty}. 

We now derive \eqref{eq.multihead_Lipschitz_x}.
For $X = (\bx_t)_{t \in [T]} \in [-A,A]^{N \times T}$ and 
$X' = (\bx'_t)_{t \in [T]} \in [-A,A]^{N \times T}$, we have
\begin{align*}
    \big\|\operatorname{SA}_{U,V}^{(t)}(X) - \operatorname{SA}_{U,V}^{(t)}(X') \big\|_{\infty} 
    &\leq \Big\| \sum_{s=1}^T \frac{\exp(\bx^\top_s U \bx_{t})}{\sum_{r=1}^T \exp(\bx^{\top}_r U \bx_t)} V \bx_s - \sum_{s=1}^T \frac{\exp(\bx^\top_s U \bx_{t})}{\sum_{r=1}^T \exp(\bx^{\top}_r U \bx_t)} V \bx'_s \Big\|_{\infty}\\
& \quad +
\Big\| \sum_{s=1}^T \frac{\exp(\bx^\top_s U \bx_{t})}{\sum_{r=1}^T \exp(\bx^{\top}_r U \bx_t)} V \bx'_s 
-
\sum_{s=1}^T \frac{\exp(\bx'^\top_s U \bx'_{t})}{\sum_{r=1}^T \exp(\bx'^{\top}_r U \bx'_t)} V \bx'_s
\Big\|_{\infty}\\
& \leq \max_{s \in [T]} 
\big\| V \bx_s - V \bx'_s \big\|_{\infty}\\
& \quad + \sum_{s=1}^T \bigg|
\frac{\exp(\bx^\top_s U \bx_{t})}{\sum_{r=1}^T \exp(\bx^{\top}_r U \bx_t)}
-
\frac{\exp(\bx'^\top_s U \bx'_{t})}{\sum_{r=1}^T \exp(\bx'^{\top}_r U \bx'_t)}
\bigg| \, \max_{s \in [T]} \big\| V \bx'_s \big\|_{\infty}.
\end{align*}
Assume that $U,V \in [-B,B]^{N \times N}$ and  $A, B \geq 1$. Using 
the Lipschitz property
$\|\operatorname{softmax}(\bz)-\operatorname{softmax}(\bz')\|_{1}
\leq 2\|\bz-\bz'\|_{\infty}$ that holds for any real-valued vectors $\bz$ and $\bz'$ of the same dimension (see Lemma \ref{lemma_softmax_lip}), we obtain
\begin{align*}
    \big\|\operatorname{SA}_{U,V}^{(t)}(X) - \operatorname{SA}_{U,V}^{(t)}(X') \big\|_{\infty}
& \leq BN \big\|X - X' \big\|_{\infty}
+ 2 ABN \max_{s \in [T]} \big| \bx^\top_s U \bx_{t} - \bx'^\top_s U \bx'_{t} \big|\\
& \leq BN \big\|X - X' \big\|_{\infty}
+ 2 ABN \max_{s \in [T]} \Big(\big| \bx^\top_s U \bx_{t} - \bx'^\top_s U \bx_{t} \big|
+ \big| \bx'^\top_s U \bx_{t} - \bx'^\top_s U \bx'_{t} \big|\Big)\\
& \leq BN \big\|X - X' \big\|_{\infty} + 4ABN \big(ABN \max_{s \in [T]} \|\bx_s - \bx_s'\|_1 \big) \\
& \leq 5 A^2 B^2 N^3 \big\|X - X' \big\|_{\infty}.
\end{align*}
This shows that each of the $H$ self-attention functions has Lipschitz constant bounded by $5A^2B^2 N^3.$ 
Since $X$ has Lipschitz constant $1$ and by assumption $A, B \geq 1$, this shows that any multi-head self-attention function $\operatorname{MA}_{f_1,\ldots,f_H}(X) = X + \sum_{h=1}^H f_h(X)$ in the class has Lipschitz constant bounded by $6HA^2 B^2 N^3,$ proving \eqref{eq.multihead_Lipschitz_x}.

We now show \eqref{eq.multihead_Lipschitz_theta}. We have
\begin{align*}
    \partial_{V_{ij}}\operatorname{SA}_{U,V}^{(t)}(X)
    = \sum_{s=1}^T \frac{\exp(\bx^\top_s U \bx_{t})}{\sum_{r=1}^T \exp(\bx^{\top}_r U \bx_t)} \be_i \be_j^\top \bx_s,
\end{align*}
and thus on $X \in [-A,A]^{N\times T},$ $\|\partial_{V_{ij}}\operatorname{SA}_{U,V}(X)\|_\infty\leq A.$ Moreover,
\begin{align*}
    \partial_{U_{ij}}\operatorname{SA}_{U,V}^{(t)}(X)
    = \sum_{s=1}^T \bigg[\frac{\bx^\top_s \be_i \be_j^\top \bx_{t} \exp(\bx^\top_s U \bx_{t})}{\sum_{r=1}^T \exp(\bx^{\top}_r U \bx_t)} - \frac{\exp(\bx^\top_s U \bx_{t}) \sum_{r=1}^T \bx^\top_r \be_i \be_j^\top \bx_{t} \exp(\bx^{\top}_r U \bx_t)}{\big(\sum_{r=1}^T \exp(\bx^{\top}_r U \bx_t)\big)^2}\bigg]V \bx_s.
\end{align*}
Using that every entry of $X$ is bounded in absolute value by $A$ and any entry of $V$ is bounded in absolute value by $B,$ we find
\begin{align*}
    \big\|\partial_{U_{ij}}\operatorname{SA}_{U,V}(X)\big\|_\infty
    \leq 2A^2 \max_{s=1,\ldots,T}\big\|V\bx_s\big\|_\infty\leq 2A^2 N B A=2A^3 NB.
\end{align*}

The multi-head self-attention class consists of functions of the form $\operatorname{MA}_{f_1,\ldots,f_H}(X) = X + \sum_{h=1}^H f_h(X) \in \mathbb{R}^{N \times T}$ with self-attention layers $f_1,\ldots,f_H$. 
Let $\btheta=(V_1,U_1,\ldots,V_H,U_H)$ and denote the $(n,t)$-th entry of $\operatorname{MA}_{f_1,\ldots,f_H}(X)$ by $\operatorname{MA}_{f_1,\ldots,f_H}(X)_{n,t}.$ Then, for any $X\in [A,A]^{N\times T},$
\begin{align*}
    \big\| \nabla_{\btheta} \operatorname{MA}_{W, f_1,\ldots,f_H}(X)_{n,t}\big\|_1
    \leq \underbrace{HN^2A}_{\text{due to } V_1,\ldots,V_H}+ \underbrace{HN^2 (2A^3 NB)}_{\text{due to } U_1,\ldots,U_H}\leq 3HA^3N^3 B,
\end{align*}
proving \eqref{eq.multihead_Lipschitz_theta}.

\textit{(iii):} Recall that $\mathcal{TB}(N,H,L_{\mathrm{mlp}},N_{\mathrm{mlp}}, B) = \{ \bh_{\btheta'} \circ \bg_{\btheta}: \bg_{\btheta}  \in \mathcal{MA}(N,H,B), \, \bh_{\btheta'} \in \mathcal{MLP}(N,N,L_{\mathrm{mlp}},N_{\mathrm{mlp}},B) \}.$ 
By \eqref{eq.multihead_Linfty}, we find $$\sup_{\bg_{\btheta}\in \mathcal{MA}(N,H,B)} \, \|\bg_{\btheta}\|_{L^\infty([-A, A]^{N\times T})} \leq 2H ABN\eqcolon \ol A.$$ 
By \eqref{eq.MLP_Linfty} and using $N \leq N_{\mathrm{mlp}},$
$$\sup_{\bh_{\btheta'}\in \mathcal{MLP}(N ,N, L_{\mathrm{mlp}},N_{\mathrm{mlp}},B)} \, \|\bh_{\btheta'}\|_{L^\infty([-\ol A,\ol A]^N])}\leq \ol A \big(B(N_{\mathrm{mlp}}+1)\big)^{L_{\mathrm{mlp}}+1}\leq  2HA\big(B(N_{\mathrm{mlp}}+1)\big)^{L_{\mathrm{mlp}}+2} ,$$  establishing \eqref{eq.TB_Linfty}.

We now prove \eqref{eq.TB_Lipschitz_x}. The Lipschitz constant with respect to the input of the composition is the product of the Lipschitz constants with respect to the inputs of the inner function $\bg_{\btheta}$ and the outer function $\bh_{\btheta'}$. By \eqref{eq.multihead_Lipschitz_x}, the Lipschitz constant of the inner function is $6HA^2B^2 N^3$ and by \eqref{eq.MLP_Lipschitz_x}, the Lipschitz of the outer function is $(BN_{\mathrm{mlp}})^{L_{\mathrm{mlp}}}.$ The product is the upper bound in \eqref{eq.TB_Lipschitz_x}.

To derive \eqref{eq.TB_Lipschitz_theta}, we apply Lemma \ref{lem.comp_of_entropy} with $q=2,$ $K_1\leq 3HA^3BN^3$ (by \eqref{eq.multihead_Lipschitz_theta}), $$K_2\leq 6\ol A \big(B(N_{\mathrm{mlp}}+1)\big)^{2L_{\mathrm{mlp}}+1}\leq  12 H A \big(B(N_{\mathrm{mlp}}+1)\big)^{2L_{\mathrm{mlp}}+2}$$ (using that by assumption $N\leq N_{\mathrm{mlp}}$), and $Q_1\leq 6HA^2B^2N^3$ (by \eqref{eq.multihead_Lipschitz_x}). Because of $L_{\mathrm{mlp}}\geq 1,$ the bound \eqref{eq.TB_Lipschitz_theta} follows now from 
\begin{align*}
    K_2+ Q_1K_1
    &\leq  12 H A \big(B(N_{\mathrm{mlp}}+1)\big)^{2L_{\mathrm{mlp}}+2}
    + (6HA^2B^2N^3) (3HA^3BN^3) \\
    &\leq 30 H^2 A^5 \big(B(N_{\mathrm{mlp}}+1)\big)^{2L_{\mathrm{mlp}}+4}.
\end{align*}

\end{proof}

The class of all maps $\mF_0\coloneq\{\bbf: (\mathcal{E}_W)^{T} \to \Delta^{W-1}\}$ has $W^T$ possible inputs and maps inputs to a $W$-dimensional probability vector. If $\mN(\delta)$ denotes the covering number of $\Delta^{W-1}$ with sup-norm balls of radius $\delta,$ we have that $\mN(\delta,\mF_0,\|\cdot \|_{L^\infty((\mathcal{E}_W)^{T})})=\mN(\delta)^{W^T}$ and $\log \mN(\delta,\mF_0,\|\cdot \|_{L^\infty((\mathcal{E}_W)^{T})})=W^T\log \mN(\delta).$ 

Compared to this bound, the following log covering bound for the transformer class depends only linearly on the vocabulary size $W$ and is independent of the
document length $T$.
While transformers are still a flexible class, the comparison shows that they are a small subset within all possible maps from the $T$ tokens to probability vectors.

\begin{thm}[Theorem \ref{thm.covering_bd_TC}]
Let $A,B, L, L_{\mathrm{mlp}}\geq 1$ and $N \leq N_{\mathrm{mlp}}$. For the transformer class  introduced in Definition \ref{defi.transfomer_class} and any $\delta>0,$
    \begin{align*}
    &\log \, \mathcal{N}\Big(\delta, \mathcal{T}\big(W,T,L,N,H,L_{\mathrm{mlp}},N_{\mathrm{mlp}},B\big), \|\cdot\|_{L^\infty((\mE_W)^T)}\Big) \\
    &\leq \Big(2WN+ 4HLN^2+3LL_{\mathrm{mlp}}N_{\mathrm{mlp}}^2\Big)  \log\bigg(3+\frac{(6H)^{10 L^2} \big(B(N_{\mathrm{mlp}}+1)\big)^{38L^2 L_{\mathrm{mlp}}}B}{\delta}\bigg).
\end{align*}
\end{thm}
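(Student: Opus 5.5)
The plan is to write the transformer as a composition of $q = L+2$ parameterized maps and apply \eqref{eq.entropy_comp}, that is, Lemma~\ref{lem.comp_of_entropy} combined with Lemma~\ref{lem.gen_metric_entropy_bound}. Concretely, set $\bbf_{1,\btheta_1}(X) \coloneq P + EX$ with $\btheta_1 = E$. Let $\bbf_{\ell+1,\btheta_{\ell+1}} = \operatorname{TB}_\ell$ for $\ell \in [L]$. Finally, let $\bbf_{L+2,\btheta_{L+2}}(Y) \coloneq (DY)_{:,T}$ with $\btheta_{L+2} = D$.

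\textbf{Parameter count.} The number of parameters is $2WN$ for $E$ and $D$, plus $L$ times the block count \eqref{eq.nr_params_TB}. Using $N \le N_{\mathrm{mlp}}$, the block count is at most $2HN^2 + 3L_{\mathrm{mlp}}N_{\mathrm{mlp}}^2$, since $2NN_{\mathrm{mlp}} + (L_{\mathrm{mlp}}-1)N_{\mathrm{mlp}}^2 + L_{\mathrm{mlp}}N_{\mathrm{mlp}} + N \le 3L_{\mathrm{mlp}}N_{\mathrm{mlp}}^2$. This is dominated by the prefactor $2WN + 4HLN^2 + 3LL_{\mathrm{mlp}}N_{\mathrm{mlp}}^2$.

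\textbf{Domains.} Since inputs are one-hot vectors and $|P| \le 1$, the first layer maps into $[-A_1, A_1]^{N\times T}$ with $A_1 = B+1 \le 2B$. Its parameter Lipschitz constant is $K_1 \le 1$, since $E$ acts on one-hot columns. Iterating \eqref{eq.TB_Linfty}, the input of block $\ell$ lies in $[-A_\ell, A_\ell]$ with
\[
A_{\ell+1} = 2H A_\ell \big(B(N_{\mathrm{mlp}}+1)\big)^{L_{\mathrm{mlp}}+2},
\]
so that
\[
A_\ell \le 2B\,(2H)^{\ell-1}\big(B(N_{\mathrm{mlp}}+1)\big)^{(\ell-1)(L_{\mathrm{mlp}}+2)}.
\]
The decoder has $K_{L+2} \le N A_{L+1}$ and input-Lipschitz constant $Q_{L+2} \le NB$.

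\textbf{Lipschitz constants.} For the blocks, \eqref{eq.TB_Lipschitz_x} gives $Q_{\ell+1} \le 6HA_\ell^2 (BN_{\mathrm{mlp}})^{L_{\mathrm{mlp}}+3}$. Likewise \eqref{eq.TB_Lipschitz_theta} gives $K_{\ell+1} \le 30H^2A_\ell^5 \big(B(N_{\mathrm{mlp}}+1)\big)^{2L_{\mathrm{mlp}}+4}$. Plugging into $K_q + \sum_\ell Q_q\cdots Q_{\ell+1}K_\ell$, each product of at most $L+2$ factors involves $A_\ell^{O(1)}$. Since $\log A_\ell = O(L \cdot L_{\mathrm{mlp}} \log(B N_{\mathrm{mlp}} H))$, the total exponent is quadratic in $L$: roughly $\sum_\ell 2(\ell-1)(L_{\mathrm{mlp}}+2) + 5L(L_{\mathrm{mlp}}+2) + \ldots$. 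The main (though purely arithmetic) obstacle is checking that the overall constant fits under $(6H)^{10L^2}\big(B(N_{\mathrm{mlp}}+1)\big)^{38L^2L_{\mathrm{mlp}}}$. I would bound crudely using $L_{\mathrm{mlp}}+4 \le 5L_{\mathrm{mlp}}$ and $\sum_{\ell\le L}\ell \le L^2$, absorb numerical constants like $30$, $6$ and $2$ into the powers of $6H$, and absorb the factors $N$ and $B$ into powers of $B(N_{\mathrm{mlp}}+1)$. The last factor $B$ in the statement comes from Lemma~\ref{lem.gen_metric_entropy_bound} (the $KB/\delta$ term).

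Finally, \eqref{eq.entropy_comp} yields the claimed bound, with the sum of the $r_j$ bounded by the parameter count above.
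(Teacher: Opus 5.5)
Your proposal is correct and follows the paper's proof essentially step by step. It uses the same $(L+2)$-fold decomposition $P+EX$, $\operatorname{TB}_1,\dots,\operatorname{TB}_L$, $D$ fed into \eqref{eq.entropy_comp}, the same bounds $K_1\le 1$, $K_{L+2}\le NA_{L+1}$, $Q_{L+2}\le NB$, the block constants from \eqref{eq.TB_Lipschitz_x} and \eqref{eq.TB_Lipschitz_theta}, and the same parameter count. The only difference is that the paper replaces every $A_\ell$ by $A_{L+1}$ before collecting exponents, which is where the $38L^2L_{\mathrm{mlp}}$ comes from; your per-layer domains $A_\ell$ only add slack to the arithmetic you deferred.
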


\begin{proof}
We aim to apply Lemma \ref{lem.comp_of_entropy}. Recall that any element in this class is of the form
\begin{align*}
    \bbf(X)= \Big( D \cdot \operatorname{TB}_L \circ \cdots \circ \operatorname{TB}_1  (P + E X) \Big)_{:,T}
\end{align*}
with parameter matrices $E, D^
\top\in [-B,B]^{N \times W},$
$P \in [-1,1]^{N \times T}$ a fixed matrix introduced in Definition \ref{def_PE}, and for each $\ell=1,\ldots, L,$
$\operatorname{TB}_\ell \in \mathcal{TB}(N,H,L_{\mathrm{mlp}},N_{\mathrm{mlp}}, B).$ 
We apply Lemma \ref{lem.comp_of_entropy} with $q=L+2,$ by interpreting the structure as a composition of $L+2$ functions, namely $P+EX, \operatorname{TB}_1, \ldots, \operatorname{TB}_L, D.$ 
We ignore the $(\,\cdot\,)_{:,T}$ operator, as it does not increase the covering number.
For all of these layers, we need to bound the Lipschitz constant with respect to the parameters. This will then provide us with bounds for the quantities $K_1,\ldots, K_{L+2}$ defined in Lemma \ref{lem.comp_of_entropy}. For all layers except for the first layer, we also need to bound the Lipschitz constant with respect to the inputs. This will then provide us with bounds for the quantities $Q_2,\ldots,Q_{L+2}$ defined in Lemma \ref{lem.comp_of_entropy}. The result follows then by applying \eqref{eq.entropy_comp}. 

Let $X\in (\mathcal{E}_W)^T \subset [0,1]^{W\times T}.$ Then, $P+EX\in [-B-1,B+1]^{N \times T}$ and by using \eqref{eq.TB_Linfty}, for any $\ell=1,\ldots,L$
\begin{align*}
    \operatorname{TB}_\ell \circ \cdots \circ \operatorname{TB}_1  (P + E X)
    : (\mathcal{E}_W)^T \to [-A_{\ell+1},A_{\ell+1}]^{N \times T}
\end{align*}
with 
\begin{align}
    A_{\ell+1}\coloneq (B+1)\Big(2H\big(B(N_{\mathrm{mlp}}+1)\big)^{L_{\mathrm{mlp}}+2}\Big)^\ell.
\end{align}

For any $X\in (\mathcal{E}_W)^T,$ and any matrices $E,E'\in [-B,B]^{N\times W},$ we have $\|P+EX-(P+E'X)\|_\infty\leq \|E-E'\|_\infty.$ Thus, $K_1\leq 1.$ 

Replacing $A$ in  \eqref{eq.TB_Lipschitz_theta} by $A_{\ell-1},$ we obtain that 
\begin{align*}
    K_{\ell}\leq 
    30 H^2 A_{L+1}^5 \big(B(N_{\mathrm{mlp}}+1)\big)^{6L_{\mathrm{mlp}}}, \quad \ell=2, \ldots, L+1,
\end{align*}
and by \eqref{eq.TB_Lipschitz_x},
\begin{align*}
    Q_{\ell}\leq 6HA_{L+1}^2  \big(BN_{\mathrm{mlp}}\big)^{4L_{\mathrm{mlp}}}, \quad \ell=2, \ldots, L+1.
\end{align*}
Whenever $D,D'\in [-B,B]^{W\times N}$ and $U\in \mathbb{R}^{N \times T},$ we obtain $\|DU-D'U\|_\infty \leq N \|U\|_\infty \|D-D'\|_\infty.$ Thus, $K_{L+2}\leq N A_{L+1}\leq N_{\mathrm{mlp}} A_{L+1}.$ 

For the Lipschitz constant of the output layer with respect to the inputs, observe that for $D\in [-B,B]^{W\times N}$ and $U, U'\in \mathbb{R}^{N \times T},$ $\|DU-DU'\|_\infty\leq N \|D\|_\infty\|U-U'\|_\infty \leq NB \|U-U'\|_\infty.$ Hence $Q_{L+2}\leq BN\leq B N_{\mathrm{mlp}}.$

Using that for any $y\geq 2,$ $\sum_{r=0}^L y^r \leq 2y^L,$ $A_{L+1}=(B+1)(2H(B(N_{\mathrm{mlp}}+1))^{L_{\mathrm{mlp}}+2})^L,$ and $L, L_{\mathrm{mlp}}\geq 1,$
\begin{align*}
    &K_{L+2}+\sum_{\ell=1}^{L+1} Q_{L+2}\ldots Q_{\ell+1}K_\ell \\
    &\leq 2\Big(6HA_{L+1}^2  \big(BN_{\mathrm{mlp}}\big)^{4L_{\mathrm{mlp}}} \Big)^L 30 H^2 A_{L+1}^5 \big(B(N_{\mathrm{mlp}}+1)\big)^{6L_{\mathrm{mlp}}}  \\
    &\leq 2(6H)^{L+2}\big(B(N_{\mathrm{mlp}}+1)\big)^{10L_{\mathrm{mlp}}L} A_{L+1}^{7L} \\
    &\leq (6H)^{10 L^2} \big(B(N_{\mathrm{mlp}}+1)\big)^{38L^2 L_{\mathrm{mlp}}}.
\end{align*}
This is an upper bound for the Lipschitz constant with respect to the parameters. To apply Lemma \ref{lem.gen_metric_entropy_bound}, the number of learnable parameters in the transformer class is shown in \eqref{eq.nr_params_TC} to be
\begin{align*}
    2WN+ L\Big((2H+1)N^2+ 2NN_{\mathrm{mlp}}+(L_{\mathrm{mlp}}-1)N_{\mathrm{mlp}}^2+L_{\mathrm{mlp}}N_{\mathrm{mlp}} + N\Big).
\end{align*}
Using that by assumption $N\leq N_{\mathrm{mlp}},$ the number of parameters can be bounded by
\begin{align*}
    \leq 2WN+ 4HLN^2+3LL_{\mathrm{mlp}}N_{\mathrm{mlp}}^2.
\end{align*}
Applying now Lemma \ref{lem.gen_metric_entropy_bound} yields the claimed bound for the metric entropy.
\end{proof}

\section{Proofs for the Approximation Error}
\label{section_proof_theorem_upper_app}

We now prove Theorem~\ref{theorem_upper_app}.
Unless otherwise specified, the implicit constants in the notation $\lesssim$ and $\tilde{O}$ depend only on $(m_1, m_2, r_1, r_2, \delta_1, \delta_2,
\beta_1, \beta_2,
\| \bg_0 \|_{\mathcal{C}^{\beta_1}}, \sup_{\bh \in \mathcal{H}} \| \bh \|_{\mathcal{C}^{\beta_2}})$.
We recall Theorem \ref{theorem_upper_app} here.

\begin{thm}[Theorem \ref{theorem_upper_app}]
      Suppose Assumptions~\ref{assumption_gh_smoothness} and \ref{assumption_transformer_size_lower} hold. For sufficiently large $W$ and $T$ and for any $\lambda \in [T^{-1},\log^{-1} (W \land T)]$, there exists a transformer $\bbf^* \in \mathcal{T}_M (W,T,L,N,H,L_{\mathrm{mlp}},N_{\mathrm{mlp}},B)$ such that
    \[
    \sup_{\bh \in \mathcal{H}} \E \Big[ \|\bbf^{*}(\bX_{1}, \ldots, \bX_{T}) - \bbf_0(\bh, \bX_{1}, \ldots, \bX_{T})\|_{\infty}^2 \, \Big| \, \bh
    \Big] 
    = \tilde{O} \bigg(N_{\mathrm{mlp}}^{-\frac{4 \beta_1}{m_1 r_1}} +  \lambda^{2 \beta_2} + \frac{M^2}{T \varphi_W(\lambda)^{r_2}} + \kappa_W^2\bigg).
    \]
\end{thm}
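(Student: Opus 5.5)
We assemble $\bbf^*$ from the pieces of Section~\ref{sec_approximation_sketch} and then split the error according to a good event. We take $L=2$ blocks; if $L>2$, the extra blocks are set to the identity, with zero attention heads and an MLP representing the identity, which is possible since ReLU networks of the given width and depth represent $x=\sigma(x)-\sigma(-x)$. The encoder $E$ is as in \eqref{def_E} and the decoder is $D=E^\top$. The maps $\bm{\mu}_1,\bm{\nu}_1,\bm{\mu}_2$ are taken from Lemma~\ref{lemma_approx_firstpart}. The last column of the output of $\bm{\mu}_2$ then has blocks $\bk_1,\bk_2$ with
$\|\bk_1-\bg_0(\bZ_{1,T+1})\|_\infty\vee\|\bk_2-\wh{\bbm}_\lambda(\bZ_{2,T+1})\|_\infty\lesssim \epsilon_1\coloneqq N_{\mathrm{mlp}}^{-2\beta_1/(m_1r_1)}+T^{-1}$.
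The second MLP $\bm{\nu}_2$ (Lemma~\ref{lemma_normalize}) passes $\bk_1$ through, clipped to $[-\delta_1,\delta_1]^{m_1}$, and approximates the normalisation map $\bu\mapsto \delta_2\bu/(\|\bu\|_2\vee c/2)$ on the bounded set of possible $\bk_2$, with $c$ the constant from Lemma~\ref{lemma_key_NW_version2}. This map is Lipschitz with constant $\lesssim 1/c$, so a ReLU network of depth $L_{\mathrm{mlp}}\gtrsim\log^3(N_{\mathrm{mlp}}\vee T)$ approximates it to accuracy $T^{-1}$ with parameters bounded by $B$. After decoding, the output is clipped at $M$. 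Since $\|\bbf_0\|_\infty\le M$ and $\mathrm{clip}_M$ is $1$-Lipschitz, clipping does not increase the error, and the error is always at most $2M$.

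For the error bound, write $\bbf^*_{(w)}-\bbf_{0,(w)}$ as the sum of $\bm{\phi}_1(\be_w)^\top(\tilde\bk_1-\bg_0(\bZ_{1,T+1}))$ and $\bm{\phi}_2(\be_w)^\top(\tilde\bk_2-\bh(\bZ_{2,T+1}))$, where $\tilde\bk_1,\tilde\bk_2$ denote the outputs of $\bm{\nu}_2$. Because $\|\bm{\phi}_1\|_\infty\le1$ and $\|\bm{\phi}_2\|_2=1$, this gives
$\|\bbf^*-\bbf_0\|_\infty\le m_1\|\tilde\bk_1-\bg_0\|_\infty+\|\tilde\bk_2-\bh(\bZ_{2,T+1})\|_2$.
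The first term is $\lesssim\epsilon_1$ deterministically, and its square gives $N_{\mathrm{mlp}}^{-4\beta_1/(m_1r_1)}+T^{-2}$. For the second term, let $G=\{\|\wh{\bbm}_\lambda(\bZ_{2,T+1})\|_2\ge c\}$. On $G$, and for $T$ large enough that $\epsilon_1\ll c$, we have $\|\bk_2\|_2\ge c/2$. On this set the normalisation map is $2/c$-Lipschitz, so
$\|\tilde\bk_2-\bh\|_2\lesssim \epsilon_1+T^{-1}+\|\delta_2\wh{\bbm}_\lambda/\|\wh{\bbm}_\lambda\|_2-\bh(\bZ_{2,T+1})\|_2$.
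Taking squared conditional expectations and applying the first part of Lemma~\ref{lemma_key_NW_version2} bounds this contribution by $\tilde O(\epsilon_1^2+\lambda^{2\beta_2}+1/(T\varphi_W(\lambda)^{r_2})+\kappa_W^2)$. On $G^c$ we only use the crude bound $\|\bbf^*-\bbf_0\|_\infty\le 2M$. By the second part of Lemma~\ref{lemma_key_NW_version2}, this contributes $4M^2\,\mathbb{P}(G^c\mid\bh)\lesssim M^2/(T\varphi_W(\lambda)^{r_2})$, which is exactly the $M^2$ term in the statement. The $T^{-2}$ terms are absorbed because $\lambda\ge T^{-1}$ and $\varphi_W\le 1$, so $T^{-2}\le 1/(T\varphi_W(\lambda)^{r_2})$. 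All constants are uniform over $\bh\in\mathcal{H}$ because the lemmas depend only on $\sup_{\bh}\|\bh\|_{\mathcal{C}^{\beta_2}}$, so the supremum over $\mathcal{H}$ can be taken at the end.

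There are two points of care. First, the bound in Lemma~\ref{lemma_key_NW_version2} is stated with $\bh(\bZ_{2,T})$, whereas $\bbf_0$ involves $\bh(\bZ_{2,T+1})$. I will read this as the same evaluation point, the context of the last $r_2$ tokens, consistent with \eqref{tmp_57}, and index it consistently throughout. Second, and this is the main obstacle, one must check that the normalisation network in Lemma~\ref{lemma_normalize} fits within the resource constraints. This requires approximating $\bu\mapsto\bu/\|\bu\|_2$ away from zero, built from a square-root or reciprocal approximation, within the width $N_{\mathrm{mlp}}$, the depth $L_{\mathrm{mlp}}$ and the weight bound $B\ge 9\log^2T$. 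I would construct it as a composition of Yarotsky-type product and square approximations together with a Newton-type reciprocal on $[c^2/4,C]$. All of these need only depth polylogarithmic in $T$ and constant-size weights.
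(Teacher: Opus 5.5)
Your proposal is correct and is essentially the paper's own proof. It uses the same $E$ and $D=E^\top$, the maps $\bm{\mu}_1,\bm{\nu}_1,\bm{\mu}_2$ from Lemma~\ref{lemma_approx_firstpart}, a normalisation MLP $\bm{\nu}_2$ as in Lemma~\ref{lemma_normalize} (the paper takes accuracy parameter $\lceil\log_2 T\rceil^2$, which fits the width and depth budget), and the split on $\{\|\wh{\bbm}_\lambda(\bZ_{2,T+1})\|_2\ge c\}$, where the clipped bound $2M$ on the complement gives the $M^2/(T\varphi_W(\lambda)^{r_2})$ term. Reading $\bh(\bZ_{2,T})$ as $\bh(\bZ_{2,T+1})$ is right, and your extras (a globally defined normalisation target, identity padding for $L>2$) are harmless; note only that ``the target is Lipschitz'' would not by itself give accuracy $T^{-1}$ within width $N_{\mathrm{mlp}}$ and polylogarithmic depth, so it is the explicit product and reciprocal-square-root construction in your last paragraph that does the work.
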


As prerequisites of the proof, we recall definitions and key lemmas from Section \ref{sec_approximation_sketch}. The random vectors $\bZ_{1,t} \in \mathbb{R}^{r_1 m_1}$, 
$\bZ_{2,t} \in \mathbb{R}^{r_2 m_2}$ and 
$\bY_{t} \in \mathbb{R}^{m_2}$ are defined by  
$$
\bZ_{1,t}
=
\begin{pmatrix}
\bm{\phi}_1(\bX_{t-r_1}) \\
\bm{\phi}_1(\bX_{t-r_1+1}) \\
\vdots \\
\bm{\phi}_1(\bX_{t-1})
\end{pmatrix},
\qquad
\bZ_{2,t}
=
\begin{pmatrix}
\bm{\phi}_2(\bX_{t-r_2}) \\
\bm{\phi}_2(\bX_{t-r_2+1}) \\
\vdots \\
\bm{\phi}_2(\bX_{t-1})
\end{pmatrix},
\qquad
\bY_{t} \coloneqq 
\frac{\bm{\phi}_2(\bX_t)}{\exp[\bg_0(\bZ_{1,t} )^{\top} \bm{\phi}_1(\bX_t)]}.
$$
In addition, 
recall that for $\bz \in \mathbb{R}^{r_2 m_2}$ and $K_\lambda(\bz,\bz')\coloneq \exp(-\Vert \bz - \bz' \Vert^2/(2\lambda^2))$,
\begin{align*}
    \wh{\bbm}_{\lambda}(\bz) \coloneqq
\sum_{s \in [\log^2 T, T - \log^2 T]}
    \frac{K_\lambda\big(\bZ_{2,s}, \bz\big)}{\sum_{i \in [\log^2 T, T - \log^2 T]} K_\lambda\big(\bZ_{2,i}, \bz\big) } \bY_{s}.
\end{align*}
Lemma \ref{lemma_key_NW_version2} states that
\begin{align}
    \mathbb{E}\left[\left\|\frac{\delta_2 }{\|\wh{\bbm}_{\lambda}(\bZ_{2,T+1})\|_2} \wh{\bbm}_{\lambda}(\bZ_{2,T+1})
- \bh(\bZ_{2,T}) \right\|^2_2 \, \bigg| \, \bh \right] = \tilde{O}\Big(\lambda^{2 \beta_2} + \frac{1}{T \varphi_W(\lambda)^{r_2}} + \kappa_W^2\Big),
\label{eq.Lemma6_recall}
\end{align}   
and that there exists a constant $c>0$  such that $\mathbb{P}\big( \| \wh{\bbm}_{\lambda}(\bZ_{2,T+1})  \|_2 < c \, \big| \, \bh \big)
    \lesssim 1/(T \varphi_W(\lambda)^{r_2}).$ All implicit constants and the constant $c$ depend only on $(m_1, m_2, r_1, r_2, \delta_1, \delta_2,
\beta_1, \beta_2,
\| \bg_0 \|_{\mathcal{C}^{\beta_1}}, \sup_{\bh \in \mathcal{H}} \| \bh \|_{\mathcal{C}^{\beta_2}})$.

\begin{proof}[Proof of Theorem~\ref{theorem_upper_app}]
We construct a transformer 
$\bbf^* \in \mathcal{T}_M(W,T,L,N,H,L_{\mathrm{mlp}},N_{\mathrm{mlp}},B)$ defined by  
$$
\bbf^*(X) = \operatorname{clip}_{M} \circ \, \Big(D \cdot \bm{\nu}_2 \circ \bm{\mu}_2 \circ \bm{\nu}_1 \circ \bm{\mu}_1 (P + E X)\Big)_{:,T},
$$
where $E, D^{\top} \in \mathbb{R}^{N \times W}$, $\bm{\mu}_1, \bm{\mu}_2 \in \mathcal{MA}(N,H,B)$ and $\bm{\nu}_1, \bm{\nu}_2 \in \mathcal{MLP}(N, L_{\mathrm{mlp}},N_{\mathrm{mlp}}, B)$.
For the encoding matrix
$E$,
we use
\[
    E \coloneq \left[\begin{matrix}
\bm{\phi}_1(\be_1) & \bm{\phi}_1(\be_2) & \cdots & \bm{\phi}_1(\be_W) \\
\bm{\phi}_2(\be_1) & \bm{\phi}_2(\be_2) & \cdots & \bm{\phi}_2(\be_W) \\
\bm{0}_{N-m_1 - m_2} & \bm{0}_{N-m_1 - m_2} & \cdots & \bm{0}_{N-m_1 - m_2} 
\end{matrix}\right], \quad
D \coloneq E^{\top}.
\]
Also, we use $\bm{\mu}_1, \bm{\mu}_2 \in \mathcal{MA}(N,H,B)$ and
$\bm{\nu}_1 \in \mathcal{MLP}(N, L_{\mathrm{mlp}}, N_{\mathrm{mlp}}, B)$ defined in Lemma~\ref{lemma_approx_firstpart},  such that
    \begin{align*}
        \Big( \bm{\mu}_2 \circ \bm{\nu}_1 \circ \bm{\mu}_1 \big(P + E X \big) \Big)_{:,T}
        =
            \left( \begin{matrix} 
        \bk_1\\
        \bullet_{N-m_1-m_2}\\
        \bk_2
        \end{matrix}\right)
            \begin{array}{l}
        \in \mathbb{R}^{m_1}\\
        \left. \right.\\   
        \in \mathbb{R}^{m_2}
    \end{array}
    \end{align*}
    with
    \begin{align*}
        \big\| \bk_1 - \bg_0(\bZ_{1,T+1}) \big\|_{\infty} \vee \,
        \left\| \bk_2 - \wh{\bbm}_{\lambda}(\bZ_{2,T+1}) \right\|_{\infty} 
    \lesssim (N_{\mathrm{mlp}})^{-\frac{2\beta_1}{m_1 r_1}} + \frac{1}{T}.
    \end{align*}
    Applying Lemma~\ref{lemma_normalize} with $M=\lceil \log_2 T\rceil^2$, there exists a multilayer perceptron $\bm{\nu}_2 \in \mathcal{MLP}(N, L_{\mathrm{mlp}},N_{\mathrm{mlp}}, B)$
    satisfying
    \[
        \left\|\bm{\nu}_2 
        \left( \begin{matrix} 
        \ba_1\\
        \ba_2\\
        \ba_3
        \end{matrix}\right) 
        - 
        \left( \begin{matrix} 
        \ba_1\\
        \delta_2 \ba_3 / \|\ba_3\|_2 \\
        \bm{0}_{N-m_1-m_2}
        \end{matrix}\right)
        \right\|_{\infty} \lesssim \frac{1}{T},
    \]
    for all $\ba_1 \in \mathbb{R}^{m_1}, \ba_2 \in \mathbb{R}^{N - m_1 - m_2}$, and $\ba_3 \in \mathbb{R}^{m_2}$ with $c/2 \leq \|\ba_3\|_2 \leq \exp(\delta_1 m_1) + 2$.
    Since $\|\bY_t\|_2  \leq \exp(\delta_1 m_1)$ for every $t \in [T]$, we have
    $\|\wh{\bbm}_{\lambda}(\bZ_{2,T+1})\|_2 \leq \exp(\delta_1 m_1)$ and hence
    $\|\bk_2\|_2 \leq \exp(\delta_1 m_1)+2$, provided that $N_{mlp}$ and $T$ are sufficiently large. Combining these inequalities with \eqref{eq.Lemma6_recall} and $\varphi_W(\lambda)\leq 1$, we obtain for sufficiently large $N_{\mathrm{mlp}}$ and $T$  
    \begin{align*}        &\mathbb{E}\left[\left\|\Big( \bm{\nu}_2 \circ \bm{\mu}_2 \circ \bm{\nu}_1 \circ \bm{\mu}_1 \big(P + E X \big) \Big)_{:,T} - 
        \left( \begin{matrix}
    \bg_0(\bZ_{1,T+1}) \\
    \bh(\bZ_{2,T+1})  \\
    \bm{0}_{N-m_1 - m_2}\\
    \end{matrix} \right)
    \right\|_{\infty}^2 \mathbb{I}\big( \|\wh{\bbm}_{\lambda}(\bZ_{2,T+1}) \|_{2} \geq c\big) \, \middle| \, \bh
        \right]\\
& \leq \mathbb{E}\left[\left\|\bm{\nu}_2 
        \left( \begin{matrix} 
        \bk_1\\
        \bullet_{N-m_1-m_2}\\
        \bk_2
        \end{matrix}\right) - 
        \left( \begin{matrix}
    \bg_0(\bZ_{1,T+1}) \\
    \bh(\bZ_{2,T+1}) \\
    \bm{0}_{N-m_1 - m_2}\\
    \end{matrix} \right)
    \right\|_{\infty}^2 \mathbb{I}\big( \|\bk_2 \|_{2} \geq c/2\big)
        \, \, \middle| \, \, \bh \right]\\
        &\leq 3\mathbb{E}\left[\left\|\bm{\nu}_2 
        \left( \begin{matrix} 
        \bk_1\\
        \bullet_{N-m_1-m_2}\\
        \bk_2
        \end{matrix}\right) - 
        \left( \begin{matrix} 
        \bk_1\\
        \delta_2 \bk_2 / \|\bk_2\|\\
        \bm{0}_{N-m_1-m_2}
        \end{matrix}\right)
    \right\|_{\infty}^2 \mathbb{I}\big( \|\bk_2 \|_{2} \geq c/2\big)
        \, \, \middle| \, \, \bh \right]\\
        & \quad + 3\mathbb{E}\left[\left\|\left( \begin{matrix} 
        \bk_1\\
        \delta_2 \bk_2 / \|\bk_2\|\\
        \bm{0}_{N-m_1-m_2}
        \end{matrix}\right) - 
        \left( \begin{matrix}
    \bg_0(\bZ_{1,T+1}) \\
    \delta_2 \wh{\bbm}_{\lambda}(\bZ_{2,T+1})/\|\wh{\bbm}_{\lambda}(\bZ_{2,T+1})\|_2  \\
    \bm{0}_{N-m_1 - m_2}\\
    \end{matrix} \right)
    \right\|_{\infty}^2 \mathbb{I}\big( \|\bk_2 \|_{2} \geq c/2\big)
        \, \, \middle| \, \, \bh \right]\\
    & \quad + 3\mathbb{E}\left[\left\|\left( \begin{matrix}
    \bg_0(\bZ_{1,T+1}) \\
    \delta_2 \wh{\bbm}_{\lambda}(\bZ_{2,T+1})/\|\wh{\bbm}_{\lambda}(\bZ_{2,T+1})\|_2  \\
    \bm{0}_{N-m_1 - m_2}\\
    \end{matrix} \right) - 
        \left( \begin{matrix}
    \bg_0(\bZ_{1,T+1}) \\
    \bh(\bZ_{2,T+1})  \\
    \bm{0}_{N-m_1 - m_2}\\
    \end{matrix} \right)
    \right\|_{\infty}^2
       \, \, \middle| \, \, \bh \right]\\
    & = \tilde{O}\Big((N_{\mathrm{mlp}})^{-\frac{2\beta_1}{m_1 r_1}} + \lambda^{2 \beta_2} + \frac{1}{T \varphi_W(\lambda)^{r_2}} + \kappa_W^2\Big).
    \end{align*}
By considering the decoding matrix
\[
    D \coloneq \left[\begin{matrix}
\bm{\phi}_1(\be_1) & \bm{\phi}_1(\be_2) & \cdots & \bm{\phi}_1(\be_W) \\
\bm{\phi}_2(\be_1) & \bm{\phi}_2(\be_2) & \cdots & \bm{\phi}_2(\be_W) \\
\bm{0}_{N-m_1 - m_2} & \bm{0}_{N-m_1 - m_2} & \cdots & \bm{0}_{N-m_1 - m_2} 
\end{matrix}\right]^{\top}
=
\left[\begin{matrix}
\bm{\phi}_1(\be_1)^{\top} & \bm{\phi}_2(\be_1)^{\top}  &  \bm{0}_{N-m_1 - m_2}^{\top}  \\
\bm{\phi}_1(\be_2)^{\top} & \bm{\phi}_2(\be_2)^{\top} &  \bm{0}_{N-m_1 - m_2}^{\top}  \\
\vdots & \vdots & \vdots \\
\bm{\phi}_1(\be_W)^{\top} & \bm{\phi}_2(\be_W)^{\top} &  \bm{0}_{N-m_1 - m_2}^{\top} 
\end{matrix}\right]
\]
and noticing that $M \geq \delta_1 m_1 + \delta_2 \geq \max_{w \in [W]} | \bg_0( \bZ_{1,T})^{\top} \bm{\phi}_1(\be_w)
  + \bh( \bZ_{2,T})^{\top}  \bm{\phi}_2(\be_w)| $, we obtain the assertion of Theorem~\ref{theorem_upper_app} by
\begin{align*}
    &\E \Big[ \|\bbf^{*}(\bX) - \bbf_0(\bh, \bX)\|_{\infty}^2
    \, \Big| \, \bh \Big]\\ 
    &= \E \left[ \left\|\operatorname{clip}_{M} \circ \, D \cdot \Big( \bm{\nu}_2 \circ \bm{\mu}_2 \circ \bm{\nu}_1 \circ \bm{\mu}_1 (P + E X)\Big)_{:,T} - \left( \begin{matrix}
    \bg_0( \bZ_{1,T})^{\top} \bm{\phi}_1(\be_1)
  + \bh( \bZ_{2,T})^{\top}  \bm{\phi}_2(\be_1)\\
\bg_0( \bZ_{1,T})^{\top} \bm{\phi}_1(\be_2)
  + \bh( \bZ_{2,T})^{\top}  \bm{\phi}_2(\be_2)\\
  \vdots\\
\bg_0( \bZ_{1,T})^{\top} \bm{\phi}_1(\be_W)
  + \bh( \bZ_{2,T})^{\top}  \bm{\phi}_2(\be_W)
    \end{matrix} \right)\right\|_{\infty}^2
    \, \, \middle| \, \, \bh
    \right]\\
    &= \E \left[  \left\|
    \operatorname{clip}_{M} \circ \, D \cdot \left(\Big( \bm{\nu}_2 \circ \bm{\mu}_2 \circ \bm{\nu}_1 \circ \bm{\mu}_1 (P + E X)\Big)_{:,T} -         
    \left( \begin{matrix}
    \bg_0(\bZ_{1,T+1}) \\
    \bh(\bZ_{2,T+1})  \\
    \bm{0}_{N-m_1 - m_2}\\
    \end{matrix} \right) \right)\right\|_{\infty}^2
    \, \, \middle| \, \, \bh
    \right] \\
    & \leq \E \left[  \left\|
    D \cdot \left(\Big( \bm{\nu}_2 \circ \bm{\mu}_2 \circ \bm{\nu}_1 \circ \bm{\mu}_1 (P + E X)\Big)_{:,T} -         
    \left( \begin{matrix}
    \bg_0(\bZ_{1,T+1}) \\
    \bh(\bZ_{2,T+1})  \\
    \bm{0}_{N-m_1 - m_2}\\
    \end{matrix} \right) \right)\right\|_{\infty}^2 
    \mathbb{I}\big( \|\wh{\bbm}_{\lambda}(\bZ_{2,T+1}) \|_{2} \geq c\big)
    \, \, \middle| \, \, \bh
    \right]\\
    & \quad + M^2 \P \big( \|\wh{\bbm}_{\lambda}(\bZ_{2,T+1}) \|_{2} < c \, \big| \, \bh \big)\\
    & = \E \left[  \max_w \left|
    \left( \begin{matrix}
    \bm{\phi}_1(\be_w) \\
    \bm{\phi}_2(\be_w)  \\
    \bm{0}_{N-m_1 - m_2}
    \end{matrix} \right)^{\top}
     \left(\Big( \bm{\nu}_2 \circ \bm{\mu}_2 \circ \bm{\nu}_1 \circ \bm{\mu}_1 (P + E X)\Big)_{:,T} -         
    \left( \begin{matrix}
    \bg_0(\bZ_{1,T+1}) \\
    \bh(\bZ_{2,T+1})  \\
    \bm{0}_{N-m_1 - m_2}\\
    \end{matrix} \right) \right)\right|^2 
    \mathbb{I}\big( \|\wh{\bbm}_{\lambda}(\bZ_{2,T+1}) \|_{2} \geq c\big)
    \, \, \middle| \, \, \bh \right]\\
    & \quad + M^2 \P \big( \|\wh{\bbm}_{\lambda}(\bZ_{2,T+1}) \|_{2} < c \, \big| \, \bh \big) \\
    & \leq (m_2 + 1) \E \left[   \left\|
     \left(\Big( \bm{\nu}_2 \circ \bm{\mu}_2 \circ \bm{\nu}_1 \circ \bm{\mu}_1 (P + E X)\Big)_{:,T} -         
    \left( \begin{matrix}
    \bg_0(\bZ_{1,T+1}) \\
    \bh(\bZ_{2,T+1})  \\
    \bm{0}_{N-m_1 - m_2}\\
    \end{matrix} \right) \right)\right\|_2^2 
    \mathbb{I}\big( \|\wh{\bbm}_{\lambda}(\bZ_{2,T+1}) \|_{2} \geq c\big) \, \, \middle| \, \, \bh 
    \right]\\
    & \quad + M^2 \P \big( \|\wh{\bbm}_{\lambda}(\bZ_{2,T+1}) \|_{2} < c \, \big| \, \bh \big) \\
    & = \tilde{O}\Big((N_{\mathrm{mlp}})^{-\frac{2\beta_1}{m_1 r_1}} + \lambda^{2 \beta_2} + \frac{M^2}{T \varphi_W(\lambda)^{r_2}} + \kappa_W^2\Big). 
\end{align*}
\end{proof}

\section{Construction of the Transformer}

In this section, we construct $\bm{\mu}_{1}, \bm{\mu}_{2} \in \mathcal{MA}(N,H,B)$ and $\bm{\nu}_{1}, \bm{\nu}_{2} \in \mathcal{MLP}(N, L_{\mathrm{mlp}}, N_{\mathrm{mlp}}, B)$, which are used in the proof of Theorem~\ref{theorem_upper_app}.
We construct multi-head self-attention layers in Appendix \ref{sec_pos_enc}, and multilayer perceptron layers in Appendix \ref{sec_construct_firstmlp}.
Using these results, we provide a formal proof of Lemma \ref{lemma_approx_firstpart} in Appendix~\ref{prove_lemma_approx_firstpart}.
Unless otherwise specified, the implicit constants in the notation $\lesssim$ depend only on $(m_1, m_2, r_1, r_2, \delta_1, \delta_2, \beta_1, \| \bg_0 \|_{\mathcal{C}^{\beta_1}})$.

\subsection{Construction of multi-head self-attention layers} \label{sec_pos_enc}

Recall that for $A \coloneqq N/4$, the positional encoding matrix $P \in \mathbb{R}^{N \times T}$ is defined as 
    $$P \coloneq \left[\begin{matrix}
        \bm{0}_{N-2A}& \ldots & \bm{0}_{N-2A} \\
        \bq_{1}& \ldots& \bq_{T} 
    \end{matrix}\right], \quad \bq_{t} \coloneq \begin{pmatrix}
        \bq_{1,t}\\
        \bq_{2,t}\\
        \vdots\\
        \bq_{A,t}
    \end{pmatrix},  \quad 
    \bq_{a,t} \coloneq 
    \left(
    \begin{matrix}
        \sin\left(\frac{t}{T^{a/A}}\right) \\ \cos\left(\frac{t}{T^{a/A}}\right)
    \end{matrix}
    \right).
   $$
The following lemma is key to the use of multi-scale sinusoidal positional encoding. 
Although it seems likely that this result has appeared before, we were unable to find it in the literature.
\begin{lemma} \label{lemma_posi_uplo}
    For any $t_1, t_2 \in [T]$,
    \begin{enumerate}
        \item [(i)] If $t_1 = t_2$, then $\bq_{t_1}^{\top} \bq_{t_2} = \sum_{a=1}^A \bq_{a,t_1}^{\top} \bq_{a,t_2} = A$.
        \item [(ii)] If $t_1 \neq t_2$, then $\bq_{t_1}^{\top} \bq_{t_2} = \sum_{a=1}^A \bq_{a,t_1}^{\top} \bq_{a,t_2} \leq A - \frac{1}{3 T^{2/A}}$.        
    \end{enumerate}
\end{lemma}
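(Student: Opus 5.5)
Both parts rest on the identity $\sin u\sin v+\cos u\cos v=\cos(u-v)$. With $\omega_a\coloneqq T^{-a/A}$ and $d\coloneqq t_1-t_2$, it gives
\[
\bq_{a,t_1}^{\top}\bq_{a,t_2}=\cos\big(d\,\omega_a\big),\qquad a\in[A],
\]
so that $\bq_{t_1}^\top\bq_{t_2}=\sum_{a=1}^A\cos(d\,\omega_a)$.

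\textbf{Part (i).} If $t_1=t_2$, then $d=0$ and every summand equals $1$, so the sum is $A$.

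\textbf{Part (ii).} Since $\cos$ is even, we may assume $d=|t_1-t_2|\in[1,T-1]$. Every summand is at most $1$, so it suffices to find a single scale $a\in[A]$ with
\[
1-\cos(d\,\omega_a)\geq \frac{1}{3T^{2/A}} .
\]
The other $A-1$ summands are then simply bounded by $1$.

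To locate this scale, consider $x_a\coloneqq d\,T^{-a/A}$ for $a=0,1,\ldots,A$. This sequence is decreasing, and consecutive ratios equal $T^{1/A}$. Its endpoints satisfy $x_0=d\geq 1$ and $x_A=d/T<1$. Let $a^\ast\in[A]$ be the smallest index with $x_{a^\ast}<1$; it exists because $x_A<1$.

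We claim $x_{a^\ast}\geq T^{-1/A}$.
\begin{itemize}
\item If $a^\ast\geq 2$, minimality gives $x_{a^\ast-1}\geq 1$, hence $x_{a^\ast}=x_{a^\ast-1}T^{-1/A}\geq T^{-1/A}$.
\item If $a^\ast=1$, then $x_1=dT^{-1/A}\geq T^{-1/A}$ directly, since $d\geq1$.
\end{itemize}
Thus $x_{a^\ast}\in[T^{-1/A},1)$.

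Next we need a quadratic lower bound for $1-\cos x$ on $[0,1]$. The alternating Taylor series gives
\[
1-\cos x\geq \frac{x^2}{2}-\frac{x^4}{24}\geq x^2\Big(\frac12-\frac1{24}\Big)\geq \frac{x^2}{3},\qquad x\in[0,1].
\]
Applied at $x=x_{a^\ast}$, this yields
\[
1-\cos(x_{a^\ast})\geq \frac{T^{-2/A}}{3},
\]
which is exactly the required bound for the scale $a^\ast$.

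The only step that needs care is choosing the scale. The multi-scale frequencies form a geometric grid with ratio $T^{1/A}$ covering $[T^{-1},T^{-1/A}]$. So for every integer gap $d\in[1,T-1]$, some product $d\,\omega_a$ falls into $[T^{-1/A},1)$. On that range the cosine is bounded away from $1$ quadratically, and we never need to worry about $d\,\omega_a$ wrapping around near a multiple of $2\pi$.
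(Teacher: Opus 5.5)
Your proof is correct and follows the paper's argument. It uses the same addition-theorem reduction to $\sum_a \cos(d\,T^{-a/A})$, the same bound $\cos x \le 1 - x^2/3$ on $[0,1]$, and the same geometric-grid observation that some $d\,T^{-a/A}$ lies in $[T^{-1/A},1)$. The only difference is presentational: the paper finds that scale by contradiction, via backward induction on the reduced angles $s_a\in[0,2\pi)$, whereas you pick the first index with $x_a<1$ directly, which avoids the mod-$2\pi$ bookkeeping.
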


\begin{proof}[Proof of Lemma~\ref{lemma_posi_uplo}]
    Claim (i) follows from  $\|\bq_{a,t}\|_2 = 1$ for any $a \in [A]$ and $t \in [T]$. 

    To prove (ii), the addition theorem and the fact that $\cos(\cdot)$ is $2\pi$-periodic and an even function gives
    \begin{align*}
        \sum_{a=1}^A \bq_{a,t_1}^{\top} \bq_{a,t_2} 
        = \sum_{a=1}^A \cos\left( \frac{t_1 - t_2}{T^{a/A}} \right) = \sum_{a=1}^A \cos\left( s_a \right)
    \end{align*}
    with 
    $$s_a\coloneq\left( \frac{|t_1 - t_2|}{T^{a/A}} \right) - 2 \pi \left\lfloor \frac{1}{2 \pi} \left( \frac{|t_1 - t_2|}{T^{a/A}} \right) \right\rfloor \in [0, 2\pi), \quad a \in [A].$$
    It remains to show that there exists some $a' \in [A]$ such that $T^{-1/A} \leq s_{a'} \leq 2 \pi- T^{-1/A}$.
    Combining this with the inequality 
    $\cos(x) \leq 1 - x^2/2 + x^4 / 24 \leq 1 - x^2/3$ for $0\leq x \leq 1$,
    we obtain $\cos( s_{a'} ) \leq \cos( T^{-1/A} ) \leq 1 - 1/(3 T^{2/A})$, thereby completing the proof from
    $\sum_{a=1}^A \cos\left( s_a \right) \leq (A-1) + \cos (s_{a'}) \leq A - 1/(3 T^{2/A})$.
    
To derive a contradiction, assume that either $s_a < T^{-1/A}$ or $s_a > 2\pi - T^{-1/A}$ for all $a \in [A]$. From $|t_1 - t_2| < T$, we have 
$$s_A = \frac{|t_1 - t_2|}{T} - 2 \pi \left\lfloor \frac{1}{2 \pi} \left( \frac{|t_1 - t_2|}{T} \right) \right\rfloor = \frac{|t_1 - t_2|}{T} \in (0,1).$$
This excludes the possibility that $s_A>2\pi-T^{-1/A}$ and by assumption, we must have $s_A < T^{-1/A}.$ 
Now, assume that 
$$s_{a+1} = \frac{|t_1 - t_2|}{T^{(a+1)/A}} < T^{-1/A}, \quad \text{for some} \ a \in \{A-1,A-2,\ldots, 1\}.$$
It follows
$$\frac{|t_1 - t_2|}{T^{a/A}} < 1 < 2 \pi,$$
and hence
\begin{align*}
    s_{a} &= \frac{|t_1 - t_2|}{T^{a/A}} - 2 \pi \left\lfloor \frac{1}{2 \pi} \left( \frac{|t_1 - t_2|}{T^{a/A}} \right) \right\rfloor
    = \frac{|t_1 - t_2|}{T^{a/A}}
    < 1.
\end{align*}
From the assumption, it follows that $s_{a} < T^{-1/A}$. 
By backwards induction on $a$ from $A-1$ down to $1$,
we obtain 
$$s_{a} = \frac{|t_1 - t_2|}{T^{a/A}} < T^{-1/A}$$
for every $a \in [A]$. For $a=1,$
$$T^{-1/A}\leq \frac{|t_1 - t_2|}{T^{1/A}} =s_1<T^{-1/A},$$
we arrive at a contradiction. Therefore, the claimed result follows.
\end{proof}

\begin{rem} \label{remark_benefit_multi_sinusodial}
There is a clear benefit in choosing $A\gg 1.$ For single-scale sinusoidal positional encoding (i.e., $A=1$), the difference between two adjacent  positional encoding vectors, $\bq_t$ and $\bq_{t+1}$, scales with $O(1/T)$.
If the self-attention layer is defined using the hardmax function, that is,
$$ \Big(VX \operatorname{hardmax}\big(X^{\top} U X\big)\Big)_{:,t} = V \bx_{s^*} \ \text{ with }  \  s^* \coloneqq \argmax_{s \in [T]} \big(\bx^\top_s U \bx_{t}\big),$$
this small difference is sufficient to distinguish the two columns.  
However, when using the softmax function, this difference needs to be multiplied by extremely large values to become non-negligible.
In contrast, in Theorem~\ref{theorem_upper_app}, we employ the multi-scale sinusoidal positional encoding with $A = N/4 \gtrsim \log T$, which gives $(3T^{2/A})^{-1} \gtrsim 1$.
By applying Lemma~\ref{lemma_posi_uplo}, we therefore do not need to use such large parameters.
\end{rem}

For fixed $r$ and each $a \in [A]$, introducing the $2 \times 2$ rotation matrix
\begin{align}
M_a \coloneq \left[\begin{matrix}
    \cos\left(\frac{r}{T^{a/A}}\right) & -\sin\left(\frac{r}{T^{a/A}}\right)\\
    \sin\left(\frac{r}{T^{a/A}}\right) & \cos\left(\frac{r}{T^{a/A}}\right)    
\end{matrix}\right],
\label{eq.M_def}
\end{align}
and applying the addition theorem yields
\begin{align} \label{eq_sincos_add}
    M_a \bq_{a,t}= 
\left[\begin{matrix}
    \cos\left(\frac{r}{T^{a/A}}\right) & -\sin\left(\frac{r}{T^{a/A}}\right)\\
    \sin\left(\frac{r}{T^{a/A}}\right) & \cos\left(\frac{r}{T^{a/A}}\right)    
\end{matrix}\right]
\left(\begin{matrix}
    \sin\Big(\frac{t}{T^{a/A}}\Big)\\
    \cos\Big(\frac{t}{T^{a/A}}\Big)    
\end{matrix}\right) = \left(\begin{matrix}
    \sin\Big(\frac{t-r}{T^{a/A}}\Big)\\
    \cos\Big(\frac{t-r}{T^{a/A}}\Big)    
\end{matrix}\right)
=\bq_{a,t-r}
.
\end{align}
This is the key argument in the next lemma showing how the self-attention layer can approximately implement a delay operation with respect to its sequential input. 

\begin{lemma} \label{lemma_positional_one}
Under Assumption~\ref{assumption_transformer_size_lower}, and for any $d_1, d_2, d_3, d_4 \in \mathbb{N}_0$ with $d_1 + d_2 + d_3 = N/2$ and $d_2 + d_4 \leq N$, and for any $r \in \{0,1,\ldots,T-1\}$, 
there exists a self-attention $\operatorname{SA}_{U,V} \in \mathcal{SA}(N,B)$ such that
$$
\operatorname{SA}_{U,V} :
\left(\begin{matrix}
\bx_{1,t}  \\
\bx_{2,t}  \\
\bx_{3,t}  \\
\bq_{t} 
\end{matrix}\right)_{t \in [T]}
\begin{array}{l}
            \in [-1,1]^{d_1}\\
            \in [-1,1]^{d_2}\\
            \in [-1,1]^{d_3}\\
            \in [-1,1]^{N/2}
        \end{array}
\longmapsto \quad \left(\begin{matrix}
\bm{0}_{d_4} \\
\by_{t} \\
\bm{0}_{N - d_2 - d_4}
\end{matrix}\right)_{t \in [T]}
$$
with $\by_{t} \in \mathbb{R}^{d_2}$  
satisfying
\begin{align}
    \big\|\by_t - \bx_{2,t-r}\big\|_{\infty} \leq 2 T^{1 - \log T} \quad \text{ for } t \in \{r + 1, \ldots, T\}, \qquad \|\by_{t}\|_{\infty} \leq 1  \quad \text{ for } t \in [T]. \label{tmp_27}
\end{align}
\end{lemma}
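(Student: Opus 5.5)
We build the head so that column $t$ attends almost entirely to column $t-r$, using the rotation identity \eqref{eq_sincos_add} and the separation of Lemma~\ref{lemma_posi_uplo}. The attention score of key $s$ for query $t$ is $\bx_s^\top U\bx_t$. We take $U$ to be zero except on the positional block, where we set it to $\gamma\,\mathrm{diag}(M_1^\top,\ldots,M_A^\top)$ for a scale $\gamma>0$ chosen below. Then the score is
\[
\gamma\,\bq_s^\top \operatorname{blockdiag}(M_a^\top)\,\bq_t=\gamma \sum_{a}\bq_{a,s}^\top M_a^\top \bq_{a,t}.
\]
Since $M_a$ is orthogonal and $M_a\bq_{a,s}=\bq_{a,s-r}$, this equals $\gamma\sum_a \bq_{a,s}^\top\bq_{a,t+r}$. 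If the transpose convention turns out reversed, we use $M_a$ instead of $M_a^\top$. Here the vectors $\bq_{a,\cdot}$ are extended to arbitrary integer indices by the same sinusoidal formula.

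By the computation in the proof of Lemma~\ref{lemma_posi_uplo}, the score equals $\gamma\sum_a\cos((s-t+r)/T^{a/A})$. This is $\gamma A$ when $s=t-r$. For $s\ne t-r$ with $|s-t+r|<T$, it is at most $\gamma(A-\frac{1}{3T^{2/A}})$; the proof of (ii) only used $0<|t_1-t_2|<T$, which holds here because $s,t\in[T]$ and $0\le r\le T-1$, so $|s-(t-r)|\le T-1+r$. This step needs care. When $t\ge r+1$ the target $s=t-r$ lies in $[T]$ and the difference $|s-(t-r)|=|s-t+r|$ can exceed $T-1$. For such $s$ I would redo the backward-induction argument with a bound $|d|<2T$: the top scale then gives $s_A=|d|/T\in(0,2)$, which is still away from $2\pi$. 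I would either note this or handle it by the crude bound $\cos(|d|/T)\le \cos(1)$ for $|d|/T\in[1,2)$, which gives a gap even larger than $1/(3T^{2/A})$. Either way the gap is at least $\frac{1}{3T^{2/A}}\ge \frac{1}{3e^{1/4}}$, since $A=N/4\ge 2\log T$ by Assumption~\ref{assumption_transformer_size_lower}. Thus $T^{2/A}\le e$ and the gap is at least $1/(3e)$.

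Next we choose $V$ to copy the block $\bx_{2,s}$ into rows $d_4+1,\ldots,d_4+d_2$, with zeros elsewhere; all entries of $V$ are in $\{0,1\}\subset[-B,B]$. The output column $t$ is then $\by_t=\sum_s \alpha_{s,t}\bx_{2,s}$ with softmax weights $\alpha_{s,t}$. Because it is a convex combination of vectors in $[-1,1]^{d_2}$, we get $\|\by_t\|_\infty\le1$ for all $t$. For $t\ge r+1$, $\|\by_t-\bx_{2,t-r}\|_\infty\le 2(1-\alpha_{t-r,t})$. Moreover
\[
1-\alpha_{t-r,t}\le \sum_{s\ne t-r}e^{-\gamma\cdot\mathrm{gap}}\le T e^{-\gamma/(3e)}.
\]
To make this at most $T^{1-\log T}=Te^{-\log^2T}$ we need $\gamma\ge 3e\log^2T\approx 8.2\log^2 T$. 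We set $\gamma=9\log^2T$, which satisfies this, and $U$ has entries bounded by $\gamma\le B$ because rotation entries lie in $[-1,1]$ and $B\ge9\log^2T$ by assumption. This yields the bound $2T^{1-\log T}$.

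The main obstacle is the separation step: Lemma~\ref{lemma_posi_uplo} is stated only for indices in $[T]$, whereas the shifted index $t+r$ can reach $2T-1$. I would first check whether the original proof extends verbatim to differences below $2T$, using the top-scale argument. Failing that, I would replace the lemma by the direct estimate above with gap constant $\min\{1/(3e),\,1-\cos 1\}$ and absorb the resulting constant into the choice $\gamma=9\log^2T$.
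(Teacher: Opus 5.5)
\textbf{Verdict.} Your construction is the paper's, and it goes through, but only with $M_a$ in place of $M_a^\top$. The shared ingredients are: $U$ supported on the positional block as a scaled block-diagonal of the rotations; $V$ copying $\bx_{2,s}$ into rows $d_4+1,\ldots,d_4+d_2$; Lemma~\ref{lemma_posi_uplo} with $A\ge 2\log T$, which gives a score gap of at least $\gamma/(3T^{2/A})\ge\gamma/(3e)$; and a scale $\gamma\in[3e\log^2T,B]$. The paper simply takes $\gamma=B$.

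\textbf{The transpose convention.} The score is $\bx_s^\top U\bx_t$, and $M_a\bq_{a,t}=\bq_{a,t-r}$ by \eqref{eq_sincos_add}. So the choice $M_a$ gives $\gamma\sum_a\bq_{a,s}^\top\bq_{a,t-r}=\gamma\sum_a\cos\bigl((s-t+r)/T^{a/A}\bigr)$. That is the formula you actually go on to use. Your stated choice $M_a^\top$ instead gives $\gamma\sum_a\cos\bigl((s-t-r)/T^{a/A}\bigr)$, which peaks at $s=t+r$, so column $t$ would copy the wrong (future) token. Your fallback to $M_a$ is therefore not optional; it is the correct choice.

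\textbf{The ``main obstacle'' is not real.} Once you use $M_a$, for $t\ge r+1$ both $s$ and $t-r$ lie in $[T]$. Hence $0<|s-(t-r)|\le T-1$ for $s\neq t-r$, and Lemma~\ref{lemma_posi_uplo}(ii) applies verbatim. Your claim that this difference can exceed $T-1$ is false, and the extension of the separation argument to differences below $2T$ is unnecessary. The $2T$ range only appears under the wrong $M_a^\top$ convention.

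\textbf{Arithmetic slip.} $T^{2/A}\le e$ does not give a gap of $1/(3e^{1/4})$. Your subsequent gap $1/(3e)$ is the correct one. The choice $\gamma=9\log^2T\le B$ and the resulting bound $2(1-\alpha_{t-r,t})\le 2T^{1-\log T}$ are both correct.
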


\begin{proof}
For $t \in [T]$, we write $\bx_t \coloneq \stack(\bx_{1,t}, \bx_{2,t}, \bx_{3,t}, \bq_t)$.
It is enough to show that there exist matrices $U,V \in [-B,B]^{N \times N}$ such that for any $t \in [T]$,
$$\sum_{s=1}^T \frac{\exp(\bx^{\top}_s U \bx_{t})}{\sum_{j=1}^T \exp(\bx^{\top}_j U \bx_t)} V\bx_s = 
\left(\begin{matrix}
\bm{0}_{d_4}\\
\by_{t}\\
\bm{0}_{N -d_2 - d_4},
\end{matrix}\right)$$ 
and \eqref{tmp_27} holds.
Recall that $A \coloneq N/4$ and that $M_a$ is defined in \eqref{eq.M_def}. For the matrix $U \in \mathbb{R}^{N \times N}$, we choose 
$$
U \coloneq 
B \cdot \left[\begin{matrix}
O_{2A \times 2A} & O_{2A \times 2A}  \\[1em] 
O_{2A \times 2A} & 
\left[\begin{matrix}
M_1 & 0 & \cdots & 0 \\
0 & M_2 & \cdots & 0 \\
\vdots & \vdots & \ddots & \vdots \\
0 & 0 & \cdots & M_A
\end{matrix}\right]
\end{matrix}\right] \, ,
$$
recalling that $O_{p\times q}$ denotes the $p\times q$ zero-matrix. By \eqref{eq_sincos_add}, we have
$$
\bx_s^{\top} U \bx_t
= B \bx_s^{\top} 
\left(\begin{matrix}
\bm{0}_{2A}\\
M_1 \bq_{1,t}\\
M_2 \bq_{2,t}\\
\vdots\\
M_A \bq_{A,t}
\end{matrix}\right)
=
B \bx_s^{\top} 
\left(\begin{matrix}
\bm{0}_{2A}\\
\bq_{1,t-r}\\
\bq_{2,t-r}\\
\vdots\\
\bq_{A,t-r}
\end{matrix}\right)
= B \sum_{a=1}^A \bq_{a,s}^{\top} \bq_{a,t-r}
$$
for all $t,s \in [T]$.
For the matrix $V \in \mathbb{R}^{N \times N}$, we choose 
$$
V \coloneq \left[\begin{matrix}
    O_{d_4 \times d_1} & O_{d_4 \times d_2} & O_{d_4 \times (N-d_1-d_2)} \\
    O_{d_2 \times d_1} & I_{d_2} & O_{d_2 \times (N-d_1-d_2)}  \\ 
    O_{(N-d_2-d_4) \times d_1} & O_{(N-d_2-d_4) \times d_2} & O_{(N-d_2-d_4) \times (N-d_1-d_2)}
\end{matrix}
\right],
$$
which yields
$$V\bx_s = \left(\begin{matrix}
\bm{0}_{d_4} \\
\bx_{2,s}\\
\bm{0}_{N - d_2 - d_4}
\end{matrix}\right).$$
First, observe that
\begin{align*}
    \sum_{s=1}^T \frac{\exp(\bx^{\top}_s U \bx_{t})}{\sum_{j=1}^T \exp(\bx^{\top}_j U \bx_t)} V\bx_s = \sum_{s=1}^T \frac{\exp(\bx^{\top}_s U \bx_{t})}{\sum_{j=1}^T \exp(\bx^{\top}_j U \bx_t)} \left(\begin{matrix}
\bm{0}_{d_4} \\
\bx_{2,s}\\
\bm{0}_{N - d_2 - d_4}
\end{matrix}\right)
= 
\left(\begin{matrix}
\bm{0}_{d_4} \\
\sum_{s=1}^T \frac{\exp(\bx^{\top}_s U \bx_{t})}{\sum_{j=1}^T \exp(\bx^{\top}_j U \bx_t)}   \bx_{2,s}\\
\bm{0}_{N - d_2 - d_4}
\end{matrix}\right).
\end{align*}
For every $t \in [T]$, we have 
\begin{align*}
    \left\| \sum_{s=1}^T \frac{\exp(\bx^{\top}_s U \bx_{t})}{\sum_{j=1}^T \exp(\bx^{\top}_j U \bx_t)}   \bx_{2,s} \right\|_{\infty}
    \leq \sum_{s=1}^T \frac{\exp(\bx^{\top}_s U \bx_{t})}{\sum_{j=1}^T \exp(\bx^{\top}_j U \bx_t)} \|\bx_{2,s}\|_{\infty} \leq 1,
\end{align*}
which yields the second assertion.
Now we assume $t \in \{r+1,\ldots,T\}$ and derive the first assertion.
If $s = t-r$, we have by Lemma~\ref{lemma_posi_uplo},
\begin{align}
    \frac{\exp(\bx^{\top}_s U \bx_{t})}{\sum_{j=1}^T \exp(\bx^{\top}_j U \bx_t)} &= \frac{\exp\big(B\sum_{a=1}^A \bq_{a,s}^{\top} \bq_{a,t-r}\big)}{\sum_{j=1}^T \exp\big(B \sum_{a=1}^A \bq_{a,j}^{\top} \bq_{a,t-r}\big)} \nonumber \\
    & \geq \frac{\exp\big(B A\big)}{\exp\big(B A\big) + (T-1) \exp\Big(B \big(A - \frac{1}{3 T^{2/A}}\big) \Big)} \nonumber\\
    & \geq \frac{1}{1+T \exp\big(-B/(3 T^{2/A})\big) } \nonumber\\
    & \geq 1 - T \exp\Big(- \frac{B}{3 T^{2/A}}\Big)  
    \nonumber\\
    &\geq 1- T \exp\left( - \frac{B}{3T^{1/\log T}} \right) \nonumber\\
    &= 1- T \exp\left( - \frac{B}{3 e} \right) \nonumber\\
    &\geq 1- T \exp\left( - \log^2 T \right) \nonumber\\
    & = 1 - T^{1 - \log T}, \label{eq_lower_po}
\end{align}
provided that $A = N/4 \geq 2 \log T$ and $B \geq 9 \log^2 T > 3e \log^2 T$. Similarly, if $s \neq t-r$, we have
\begin{align}
    \frac{\exp(\bx^{\top}_s U \bx_{t})}{\sum_{j=1}^T \exp(\bx^{\top}_j U \bx_t)} = \frac{\exp\big(B \sum_{a=1}^A \bq_{a,s}^{\top} \bq_{a,t-r}\big)}{\sum_{j=1}^T \exp\big(B \sum_{a=1}^A \bq_{a,j}^{\top} \bq_{a,t-r}\big)} 
    \leq \frac{\exp\Big(B \big(A - \frac{1}{3 T^{2/A}}\big) \Big)}{ \exp(B A)}
    \leq T^{- \log T}.  \label{eq_upper_po}
\end{align}
Together, \eqref{eq_lower_po} and \eqref{eq_upper_po} yield
\begin{align*}
     \big\|\by_t - \bx_{2,t-r}\big\|_{\infty} & = \left\|\sum_{s=1}^T \frac{\exp(\bx^{\top}_s U \bx_{t})}{\sum_{j=1}^T \exp(\bx^{\top}_j U \bx_t)} \bx_{2,s} \, - \,
\bx_{2,t-r}
\right\|_{\infty}\\
&= \left\|\frac{\exp(\bx^{\top}_{t-r} U \bx_{t})}{\sum_{j=1}^T \exp(\bx^{\top}_j U \bx_t)} \bx_{2,t-r} 
\, + \, \sum_{s \neq t-r} 
\frac{\exp(\bx^{\top}_s U \bx_{t})}{\sum_{j=1}^T \exp(\bx^{\top}_j U \bx_t)} \bx_{2,s}
\, - \,
\bx_{2,t-r}
\right\|_{\infty}\\
& =  \left\|\left(\frac{\exp(\bx^{\top}_{t-r} U \bx_{t})}{\sum_{j=1}^T \exp(\bx^{\top}_j U \bx_t)} - 1\right) \bx_{2,t-r} \right\|_{\infty}
+ \sum_{s \neq t-r} \left\|  
\frac{\exp(\bx^{\top}_s U \bx_{t})}{\sum_{j=1}^T \exp(\bx^{\top}_j U \bx_t)} \bx_{2,s}
\right\|_{\infty}\\
& \leq 2 T^{1 - \log T}.
\end{align*}
The bound $\|\by_t\|_{\infty} \leq 1$ for $t \in [T]$ follows since $\by_t$ is a weighted
average of $\{\bx_{2,s}\}_{s \in [T]}$.
\end{proof}

Based on Lemma \ref{lemma_positional_one}, we can construct a multi-head self-attention that approximately implements a delay operation. We apply this lemma by setting 
$\bx_{1,t} = \bm{\phi}_1(\bX_t)$ and $\bx_{2,t} = \bm{\phi}_2(\bX_t)$. 

\begin{lemma} \label{lemma_approx_rgram}
    Under Assumption~\ref{assumption_transformer_size_lower}, there exists $\bm{\mu}_{1} \in \mathcal{MA}(N,H,B)$ such that 
    $$\bm{\mu}_{1} : \left(\begin{matrix}
\bx_{1,t} \\
\bx_{2,t} \\
\bm{0}_{\tfrac{N}{2} -m_1-m_2} \\
\bq_t 
\end{matrix}\right)_{t \in [T]}           \begin{array}{l}
            \in [-1,1]^{m_1}\\
            \in [-1,1]^{m_2}\\
            \left. \right.\\
            \in [-1,1]^{N/2}
        \end{array}
        \longmapsto \quad \left(\begin{matrix}
\bx_{1,t} \\
\bx_{2,t} \\
\by_{1,t} \\
\by_{2,t} \\
\bm{0}_{\tfrac{N}{2}- (r_1+1)m_1 -  (r_2+1)m_2}\\
\bq_t
\end{matrix}\right)_{t \in [T]}
\begin{array}{l}
            \left. \right.\\
            \left. \right.\\
            \in \mathbb{R}^{r_1 m_1}\\
            \in \mathbb{R}^{r_2 m_2}\\
            \left. \right.\\
            \left. \right.
        \end{array}
        $$
where for $a=1,2,$
\begin{align*}
    &\left\|\by_{a,t} - 
    \begin{pmatrix}
        \bx_{a,t-r_a} \\
        \bx_{a,t-r_a+1}
        \\
        \vdots\\
        \bx_{a,t-1}
    \end{pmatrix} \right\|_{\infty} \lesssim T^{1 - \log T}, \ \text{ for all } t \in \{r_a + 1, \ldots, T\}, \ \text{and} \ \  \|\by_{a,t}\|_{\infty} \leq 1,  \ \text{ for all } t \in [T].
\end{align*}
\end{lemma}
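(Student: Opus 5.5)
The plan is to build $\bm{\mu}_1$ from $r_1+r_2$ heads, each obtained from Lemma~\ref{lemma_positional_one} with a different delay $r$ and a different output slot. Set all remaining heads (if $H > r_1+r_2$) to zero by taking $V=O$. The residual connection in the multi-head attention, $\operatorname{MA}(X)=X+\sum_h f_h(X)$, then keeps $\bx_{1,t},\bx_{2,t},\bq_t$ in place. The zero block $\bm{0}_{N/2-m_1-m_2}$ is filled by the sum of the head outputs, provided each head writes only into its own disjoint coordinates within that block and zeros elsewhere.

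The heads are defined as follows. For $a=1$ and $j\in[r_1]$, apply Lemma~\ref{lemma_positional_one} with delay $r=r_1-j+1$, so that the head reads $\bx_{1,t-r_1+j-1}$, the $j$-th block of the target vector. Take $d_1=0$, $d_2=m_1$ (the coordinates of $\bx_{1,t}$), and $d_3=N/2-m_1$ for the rest of the top half. The output offset is $d_4=m_1+m_2+(j-1)m_1$, which places the copy into the $j$-th $m_1$-block of $\by_{1,t}$.

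For $a=2$ and $j\in[r_2]$, use delay $r=r_2-j+1$. Take $d_1=m_1$, $d_2=m_2$, and $d_3=N/2-m_1-m_2$, with offset $d_4=m_1+m_2+r_1m_1+(j-1)m_2$.

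The lemma's constraint $d_2+d_4\le N$ holds because $(r_1+1)m_1+(r_2+1)m_2\le N/2$, which follows from $N\ge 8\log T$ for large $T$. The inputs lie in $[-1,1]$ since $\bm\phi_1\in[-1,1]^{m_1}$ and $\bm\phi_2\in\mathbb{S}^{m_2-1}$. The lemma requires the top half to be split as $(\bx_1,\bx_2,\bx_3)$ with $\bx_2$ contiguous; this holds for both choices above. The zero block simply becomes part of $\bx_3$.

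It remains to check the error bounds. For $t\ge r_a+1$, each delay $r\le r_a$ satisfies $t-r\ge 1$, so Lemma~\ref{lemma_positional_one} gives a per-block error of at most $2T^{1-\log T}$. Taking the maximum over blocks yields the stated $\lesssim T^{1-\log T}$ bound. The bound $\|\by_{a,t}\|_\infty\le1$ for all $t$ follows from the second part of \eqref{tmp_27}.

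The only point I expect needs care is that the heads do not interfere, i.e.\ their output supports are disjoint and avoid the $\bx_{1},\bx_2,\bq$ coordinates. This is ensured by the choice of $d_4$ and by the $V$ matrices, which are zero outside the designated rows. Finally, every entry of $U$ and $V$ lies in $[-B,B]$: $U$ has the form $B$ times rotation entries, and $V$ has entries in $\{0,1\}$ with $B\ge1$. Together with $H\ge r_1+r_2$, this gives $\bm{\mu}_1\in\mathcal{MA}(N,H,B)$.
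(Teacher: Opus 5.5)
Your proposal is correct and follows the paper's own argument: the paper's proof simply applies Lemma~\ref{lemma_positional_one} $(r_1+r_2)$ times, one head per delayed block, and lets the residual connection of the multi-head attention supply the identity. Your choices of delays, offsets $d_4$, zero heads, and the parameter-bound check make that one-line argument explicit without changing it.
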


\begin{proof}
    By applying Lemma \ref{lemma_positional_one} $(r_1 + r_2)$-times and adding the identity, we obtain the assertion.
\end{proof}  

The construction of the next multi-head self-attention layer follows from the definition of the self-attention. Later, we will set 
\begin{align*}
    &\bx_{1,t} = \bg_0(\bZ_{1,t+1}), \quad \bx_{2,t} = \bY_t, \quad 
    \bx_{3,t} = \frac{1}{\lambda} \bm{\phi}_2(\bX_{t- r_2}), \quad 
    \bx_{4,t} = \frac{1}{\lambda} \stack\big(\bm{\phi}_2(\bX_{t- r_2+1}), \ldots, \bm{\phi}_2(\bX_{t- 1}) \big), \\
    &\bx_{5,t} = \frac{1}{\lambda} \bm{\phi}_2(\bX_{t}), \quad x_{6,t} = \frac{\sqrt{r_2}}{\lambda} \quad \quad x_{7,t} =  T^3 \, \mathbb{I}(t \in [\log^2 T, T - \log^2 T]).
\end{align*}

\begin{lemma} \label{lemma_represent_NW_similar}
    There exists a multi-head self-attention $\bm{\mu}_2 \in \mathcal{MA}(N,H,B)$ such that 
\begin{align*} 
    \bm{\mu}_2 : \left(
\begin{matrix}
\bx_{1,t}\\
\bx_{2,t}\\
\bx_{3,t}\\
\bx_{4,t}\\
\bx_{5,t}\\
x_{6,t}\\
x_{7,t}\\
\bm{0}_{N - m_1 - (r_2+2)m_2-2} \\
\end{matrix}\right)_{t \in [T]}
        \begin{array}{l}
            \in \mathbb{R}^{m_1}\\
            \in \mathbb{R}^{m_2}\\
            \in \mathbb{R}^{m_2}\\
            \in \mathbb{R}^{(r_2-1) m_2}\\
            \in \mathbb{R}^{m_2}\\
            \in \mathbb{R}\\
            \in \mathbb{R}\\
            \left. \right.
        \end{array}
        \longmapsto \quad
        \left(
\begin{matrix}
\bx_{1,t}\\
\bx_{2,t}\\
\bx_{3,t}\\
\bx_{4,t}\\
\bx_{5,t}\\
x_{6,t}\\
x_{7,t}\\
\bm{0}_{N - m_1 - (r_2+3)m_2-2} \\
\by_t
\end{matrix}\right)_{t \in [T]}
        \begin{array}{l}
            \left. \right.\\
            \left. \right.\\
            \left. \right.\\
            \left. \right.\\
            \left. \right.\\
            \left. \right.\\
            \left. \right.\\
            \left. \right.\\
            \in \mathbb{R}^{m_2}
        \end{array}
\end{align*}
with
\begin{align}
     \by_t \coloneqq 
     \ol{\sum}_{s \in [T]} 
     \Big(\exp \big( \big\langle \stack(\bx_{3,s}, \bx_{4,s}), \stack(\bx_{4,t}, \bx_{5,t})\big\rangle - x_{6,s} x_{6,t} + \tfrac{\lambda}{\sqrt{r_2}}x_{7,s} x_{6,t}  \big) \, ; \, \bx_{2,s}  \Big),
     \label{eq.38fgew}
\end{align}
where $\ol{\sum}$ is defined in \eqref{def_weighted_average}.
\end{lemma}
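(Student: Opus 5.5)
We construct $\bm{\mu}_2$ with a single nonzero head, $\bm{\mu}_2(X) = X + \operatorname{SA}_{U,V}(X)$, all other heads set to zero (take $U=V=0$). The residual connection reproduces the input columns. Because the bottom $m_2$ coordinates of the input are zero, it then suffices to design $\operatorname{SA}_{U,V}$ so that its output in column $t$ is $\stack(\bm{0}_{N-m_2}, \by_t)$. Write the input column as $\bx_t = \stack(\bx_{1,t},\ldots,x_{7,t},\bm{0})$ and index the blocks in the order given in the statement.

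\emph{Value matrix.} Let $V$ be the $0/1$ matrix that copies the block $\bx_{2,s}$ (rows $m_1+1,\ldots,m_1+m_2$) into the last $m_2$ rows and is zero elsewhere. Then $V\bx_s = \stack(\bm{0}_{N-m_2},\bx_{2,s})$. By \eqref{eq_sa_t}, the $t$-th output column is the softmax-weighted average $\ol{\sum}_{s\in[T]}(\exp(\bx_s^\top U\bx_t);\bx_{2,s})$ placed in the last $m_2$ coordinates.

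\emph{Query--key matrix.} We choose $U$ so that $\bx_s^\top U\bx_t$ equals the exponent in \eqref{eq.38fgew}. This requires three pieces, all bilinear in $(\bx_s,\bx_t)$.

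(a) The term $\langle\stack(\bx_{3,s},\bx_{4,s}),\stack(\bx_{4,t},\bx_{5,t})\rangle$ is obtained by placing an identity block $I_{r_2m_2}$ in $U$. Its rows are indexed by the coordinates of $\stack(\bx_{3},\bx_{4})$, which occupy the contiguous range $m_1+m_2+1,\ldots,m_1+m_2+r_2m_2$. Its columns are indexed by the coordinates of $\stack(\bx_4,\bx_5)$, which occupy the contiguous range $m_1+2m_2+1,\ldots,m_1+(r_2+2)m_2$. This shifted identity is therefore a legitimate submatrix with entries in $\{0,1\}$.

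(b) The term $-x_{6,s}x_{6,t}$ is obtained by setting the diagonal entry at the $x_6$ coordinate equal to $-1$.

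(c) The term $\tfrac{\lambda}{\sqrt{r_2}}x_{7,s}x_{6,t}$ is obtained by setting the entry with row index at the $x_7$ coordinate and column index at the $x_6$ coordinate equal to $\lambda/\sqrt{r_2}$.

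All other entries of $U$ are zero. The three blocks occupy disjoint (row, column) positions, so the bilinear form is exactly the sum of the three pieces. Hence the softmax weights coincide with the normalized weights in \eqref{eq.38fgew}, and the claimed identity holds exactly, with no approximation error.

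\emph{Parameter bounds.} It remains to check $U,V\in[-B,B]^{N\times N}$. The entries used are $0$, $\pm1$ and $\lambda/\sqrt{r_2}$. In the intended application $\lambda\le \log^{-1}(W\wedge T)\le1$, and in any case $\lambda\le B$ since $B\ge 9\log^2T$. We also need the dimension count $m_1+(r_2+3)m_2+2\le N$ so that all blocks fit. This follows from $N\ge 8\log T$ for $T$ large, as already used in the sketch (\eqref{tmp_9}).

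The only point requiring care is the index bookkeeping in (a): the key-side vector and the query-side vector are the same $(r_2+1)m_2$-block window shifted by $m_2$ coordinates, so the required block of $U$ is a shifted identity. No analytic obstacle arises.
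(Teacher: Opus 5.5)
Your proof is correct and uses the same construction as the paper's: a single nonzero head, with $V$ copying $\bx_{2,s}$ into the last $m_2$ coordinates and $U$ realizing the exponent in \eqref{eq.38fgew} as a bilinear form $\bx_s^\top U\bx_t$; the remaining heads are zero and the residual connection supplies the identity. You write out the entries of $U$ and $V$ explicitly, where the paper only asserts that such matrices exist. One small correction: the bound $\lambda/\sqrt{r_2}\le B$ follows from $\lambda\le 1$ (the range of $\lambda$ in Theorem~\ref{theorem_upper_app}) together with $B\ge 1$, not from the lower bound $B\ge 9\log^2 T$ on its own. The paper's proof leaves this point implicit as well.
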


\begin{proof}
    By the definition of the self-attention in \eqref{eq_sa_t}, the $t$-th column of $\operatorname{SA}_{U,V}\big((\bx_t)_{t \in [T]}\big)$ for given matrices $U, V \in \mathbb{R}^{N \times N}$ can be written as 
    $$\sum_{s=1}^T \frac{\exp(\bx^\top_s U \bx_{t})}{\sum_{i=1}^T \exp(\bx^{\top}_i U \bx_t)} V \bx_s = \ol{\sum}_{s \in [T]} \big( \exp(\bx^\top_s U \bx_{t}) \, ; \, V \bx_s \big).$$
    Choosing matrices $U,V \in [-B,B]^N$ such that
    $$\bx_s^{\top} U \bx_t
    = \big\langle \stack(\bx_{3,s}, \bx_{4,s}), \stack(\bx_{4,t}, \bx_{5,t})\big\rangle - x_{6,s} x_{6,t} + \tfrac{\lambda}{\sqrt{r_2}}x_{7,s} x_{6,t}, \qquad V \bx_s = 
    \left( \begin{matrix}
        \bm{0}_{N-m_2}\\
        \bx_{2,s}
    \end{matrix} \right),$$
    we obtain 
    $$\operatorname{SA}_{U,V}\big((\bx_t)_{t \in [T]}\big) = 
    \left( \begin{matrix}
        \bm{0}_{N-m_2}\\
        \by_t
    \end{matrix} \right)_{t \in [T]}.$$
    By considering $f_O \in \mathcal{SA}(N,B)$ with $f_O(\cdot) = O_{N \times T}$ and 
    $\bm{\mu}_2 \coloneqq \operatorname{MA}_{\operatorname{SA}_{U,V}, f_O, f_O, \ldots, f_O} \in \mathcal{MA}(N,H,B)$,
    we get the assertion by adding the identity to $\operatorname{SA}_{U,V}$.
\end{proof}

\subsection{Construction of multilayer perceptron layers} \label{sec_construct_firstmlp}

Given input and output dimensions $d_{\mathrm{in}}, d_{\mathrm{out}} \in \mathbb{N}$ and depth and width $L_{\mathrm{mlp}}, N_{\mathrm{mlp}} \in \mathbb{N}$, let 
$p_0 \coloneqq d_{\mathrm{in}}$, $p_{\ell} \coloneqq N_{\mathrm{mlp}}$ for $\ell \in [L_{\mathrm{mlp}}]$ and $p_{L_{\mathrm{mlp}}+1} \coloneqq d_{\mathrm{out}}$. 
A multilayer perceptron is parameterized by 
$\boldsymbol{\theta} \coloneqq \{W_{\ell}, \bm{b}_{\ell}\}_{\ell \in \{0,1,\ldots,L_{\mathrm{mlp}}\}}$
with the weight matrices $\{W_{\ell} \in \mathbb{R}^{p_{\ell+1} \times p_{\ell}}\}_{\ell \in \{0,1,\ldots,L_{\mathrm{mlp}}\}}$ and the bias vectors $\{\bm{b}_{\ell} \in \mathbb{R}^{p_{\ell+1}}\}_{\ell \in \{0,1,\ldots,L_{\mathrm{mlp}}\}}$.  
For input $\bx  \in \mathbb{R}^{d_{\mathrm{in}}},$ the multilayer perceptron output $\operatorname{MLP}_{\boldsymbol{\theta}}(\bx)$ is given by
\begin{align*} \label{DNN}
\operatorname{MLP}_{\boldsymbol{\theta}}(\bx) \coloneq 
W_{L_{\mathrm{mlp}}} \bm{f}^{(L_{\mathrm{mlp}})}(\bx) - \bm{b}_{L_{\mathrm{mlp}}},
\end{align*}
where $\bm{f}^{(\ell)}$ for $\ell \in [L_{\mathrm{mlp}}]$ is recursively defined via
\begin{align*}
\bm{f}^{(\ell)}(\bx) \coloneq
\sigma\left(W_{\ell-1} \bm{f}^{(\ell-1)}(\bx) - \bm{b}_{\ell-1}\right), 
\end{align*}
and $\bm{f}^{(0)}(\bx) \coloneq \bx.$ 
Here $\sigma$ denotes the componentwise application of the ReLU activation function in the sense that for any positive integer $p,$
\begin{align*}
    \sigma
\left( (x_1, \dots, x_p)^{\top }\right)=
\big((x_{1})_+, \dots, (x_p)_+ \big)^{\top} .
\end{align*}
It is well known that multilayer perceptrons can approximate polynomial and analytic functions at exponential rates, whereas they approximate H\"older-smooth functions at polynomial rates.
The results in this section follow from slight modifications of these known results.

The following two lemmas show that multilayer perceptrons can approximate
multiplication at exponential rates. Similar results can be found in the
literature \cite{kohler2021rate, lu2021deep, 10.1214/19-AOS1875, YAROTSKY2017103}.
We adapt Lemmas A.2 and A.3 of \cite{10.1214/19-AOS1875} to general bounded
domains, while keeping the network parameters bounded.

\begin{lemma} \label{lemma_mlp_multiply}
For any $c \geq 1$ and $M = 5,6,\ldots$, there exists a multilayer perceptron $f: \mathbb{R}^2 \to \mathbb{R}$ with depth $M$, width $7$, and parameters bounded in absolute value by $4c^2$, such that
$$\sup_{x_1, x_2 \in [-c, c]} |f(x_1, x_2)- x_1 x_2| \leq c^2 2^{-M+6}.$$    
\end{lemma}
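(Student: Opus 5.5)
The plan is the Yarotsky / Schmidt-Hieber squaring construction, rescaled to $[-c,c]$ so that all weights stay bounded by $4c^2$. First I reduce multiplication to squaring on $[0,1]$ via the polarization identity
\[
x_1x_2 = 2c^2\Big[\Big(\tfrac{x_1+x_2+2c}{4c}\Big)^2 - \Big(\tfrac{x_1-x_2+2c}{4c}\Big)^2\Big].
\]
Both arguments $u_\pm \coloneqq (x_1\pm x_2+2c)/(4c)$ lie in $[0,1]$ whenever $x_1,x_2\in[-c,c]$.

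\textbf{Approximating the square on $[0,1]$.} I will use the sawtooth construction. Let $g(x)=2x$ on $[0,1/2]$ and $g(x)=2(1-x)$ on $[1/2,1]$; this can be written as $g(x)=2\sigma(x)-4\sigma(x-1/2)$. Define the iterates $g_k=g\circ\cdots\circ g$ ($k$ times). Then the classical identity
\[
f_m(x)\coloneqq x-\sum_{k=1}^m 2^{-2k}g_k(x)
\]
is the piecewise linear interpolant of $x^2$ on the grid $2^{-m}\mathbb{Z}\cap[0,1]$, and it satisfies $|f_m(x)-x^2|\le 2^{-2m-2}$.

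I would realize $f_m$ as a ReLU network of depth about $m$ and constant width. Each hidden layer carries three kinds of channels:
\begin{itemize}
\item the identity channel $\sigma(x)$, which is valid because $x\ge0$;
\item the two ReLU units that produce the next sawtooth iterate $g_k$;
\item a running partial sum $x-\sum_{j<k}2^{-2j}g_j$, carried through ReLU. This sum stays nonnegative because $f_k\ge0$ on $[0,1]$ (it interpolates $x^2$).
\end{itemize}
All weights inside this block are bounded by $4$ in absolute value.

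\textbf{Building the two-input network.} I run two copies of the squaring block in parallel, one on $u_+$ and one on $u_-$. The first layer computes $u_\pm$, which takes weights $1/(4c)$ and biases $1/2$, all bounded by $1$. The final linear layer multiplies the difference $f_m(u_+)-f_m(u_-)$ by $2c^2$, and this is the only place the factor $c^2$ enters. That is where the parameter bound $4c^2$ comes from, with room to spare.

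The width count is as follows. Each block uses 3 units (identity, two sawtooth neurons), and I fold the partial sum into one additional unit if needed. Two blocks give about $6$ or $7$ units, which matches the claimed width $7$ once some units are shared.

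\textbf{Error bound.} The total error is
\[
2c^2\cdot 2\cdot 2^{-2m-2} \le c^2 2^{-2m}.
\]
Taking $m$ of order $M-5$, with a few layers reserved for the input and output maps, gives an error of at most $c^2 2^{-M+6}$. This is generous, since the construction actually gives $2^{-2m}$, and the condition $M\ge5$ ensures $m\ge0$.

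\textbf{Main obstacle.} The hard part is the bookkeeping: fitting the depth exactly to $M$ with width exactly $7$, and arranging the carried channels so that each passes through a ReLU unchanged, which needs nonnegativity. I would handle this following Lemma A.2 of Schmidt-Hieber, where the sawtooth and running sum share neurons, so that each block needs only 3–4 units. Any leftover layers are padded with identity propagation through $\sigma$.
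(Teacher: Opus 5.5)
Your overall plan (Yarotsky's sawtooth squaring, two blocks in parallel, rescaling to $[-c,c]$) is sound. The reduction step, however, fails as written, because the polarization identity you state is false. Expanding,
\[
(x_1+x_2+2c)^2-(x_1-x_2+2c)^2=4x_1x_2+8cx_2 ,
\]
so $2c^2(u_+^2-u_-^2)=\tfrac12\,(x_1x_2+2cx_2)$, not $x_1x_2$. For example, $c=1$, $x_1=0$, $x_2=1$ gives $1\neq 0$. The correct identity is $x_1x_2=4c^2(u_+^2-u_-^2)-2c\,x_2$. The network you describe therefore approximates the wrong function. The dropped linear term is also not free: it has to be carried through every hidden layer. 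One way is to spend the seventh unit on $\sigma(x_2+c)\ge 0$ and output $4c^2\big(f_m(u_+)-f_m(u_-)\big)-2c\,\sigma(x_2+c)+2c^2$. Then the output weight is exactly $4c^2$, so there is no \emph{room to spare}. Each squaring block must also use only three units, since the running sum $f_k(u)$ already carries everything and the separate identity channel is unnecessary. That makes the width exactly $3+3+1=7$.

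A cleaner repair, which is Schmidt-Hieber's trick, avoids the extra channel. Since $x_2=2c(u_+-u_-)$, one has $x_1x_2=4c^2\big[u_-(1-u_-)-u_+(1-u_+)\big]$. Each $u(1-u)$ is approximated by the nonnegative running sum $\sum_{k\le m}4^{-k}g_k(u)$ with the same error $2^{-2m-2}$, so width $6$ suffices and all weights stay at most $4c^2$.

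For comparison, the paper does not rebuild the squaring network. It cites Lemma A.2 of Schmidt-Hieber (depth $m+4$, width $6$, weights bounded by $1$, error $2^{-m}$ on $[0,1]^2$) and transfers it to $[-c,c]^2$ via $\ell(x)=(x+c)/(2c)$ and $x_1x_2=c^2\big(4\ell(x_1)\ell(x_2)-2\ell(x_1)-2\ell(x_2)+1\big)$. Its seventh neuron stores $\ell(x_1)+\ell(x_2)\ge 0$, which is exactly the linear correction your identity dropped. It then takes $m=M-4$ to get $4c^2\,2^{-M+4}=c^2\,2^{-M+6}$. Once repaired, your self-contained route works and gives a much smaller error: depth about $m$ already suffices for $f_m$, so you get $O(c^2 4^{-M})$. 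The paper's route is shorter because it inherits all the depth and width bookkeeping from the cited lemma, and that bookkeeping is where your slip occurred.
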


\begin{proof}
    By Lemma A.2 of \cite{10.1214/19-AOS1875}, for any $m \in \mathbb{N}$, there exists a multilayer perceptron $g : \mathbb{R}^2 \to \mathbb{R}$ with depth $m+4$, width $6$, and all parameters bounded in absolute value by $1$, such that
    $$\sup_{y_1, y_2 \in [0, 1]} |g(y_1, y_2)- y_1 y_2| \leq 2^{-m}.$$
    Let $\ell : x \mapsto (x+c)/(2c)$ and define
    \[
    f(x_1, x_2) = c^2 \Big( 4 g\big(\ell(x_1), \ell(x_2)\big) - 2 \ell(x_1) - 2 \ell(x_2) + 1\Big)
    \]
    Since $x_j=2c \, \ell(x_j)-c$ for $j=1,2$, we have, for all $x_1,x_2 \in [-c,c]$,
    \begin{align*}
        |f(x_1, x_2) - x_1 x_2|
        &=
        \Big|c^2 \Big( 4 g\big(\ell(x_1), \ell(x_2)\big) - 2 \ell(x_1) - 2 \ell(x_2) + 1\Big) - \big(2 c \ell(x_1) - c\big)\big(2 c \ell(x_2) - c\big) \Big|\\
        &= 4 c^2 \big| g\big(\ell(x_1), \ell(x_2)\big) - \ell(x_1) \ell(x_2) \big| \leq 4 c^2 2^{-m}.
    \end{align*}
    Moreover, the affine transformations before and after $g$ can be absorbed into
    the first and last layers.
    Hence $f$ can be represented by a multilayer perceptron with depth $m+4$, width $7$ and all parameters bounded in absolute value by $4c^2$, where one of the seven neurons is used to store $\ell(x_1) + \ell(x_2)$.
    Setting $m = M - 4$ proves the assertion. 
\end{proof}

\begin{lemma}\label{lemma_mlp_multiply_multi}
    For any $d = 2,3, \ldots$ and $M = 5,6, \ldots$, there exists a multilayer perceptron $f: \mathbb{R} \to \mathbb{R}$ with depth $\lceil \log_2 d \rceil M$, width $7d$, and  universally bounded parameters such that
$$\sup_{x \in [-1, 1]} \big|f(x)- x^d \big| \leq  d^2 2^{-M+6}.$$
\begin{proof}
    Let $\tilde g : \mathbb{R}^2 \to \mathbb{R}$ be the multilayer perceptron defined in Lemma~\ref{lemma_mlp_multiply} with depth $M$, width $7$ and all parameters bounded in absolute value by $4$, such that $\sup_{y_1, y_2 \in [-1, 1]} |\tilde g(y_1, y_2)- y_1 y_2| \leq 2^{-M+6}.$
    To ensure that the output lies in the range $[-1,1]$, define 
    \begin{align*}
        g(x_1, x_2) \coloneqq \big( \tilde{g}(x_1, x_2) + 1 \big)_+ - \big( \tilde{g}(x_1, x_2) - 1 \big)_+ - 1 = \begin{cases}
            -1, & \quad \tilde{g}(x_1, x_2) < -1 \\
            \tilde{g}(x_1, x_2), &  \quad \tilde{g}(x_1, x_2) \in [-1,1] \\ 
            1, &  \quad \tilde{g}(x_1, x_2) > 1.
        \end{cases}
    \end{align*}
    Then,     
    $g : \mathbb{R}^2 \to [-1,1]$ is a multilayer perceptron with depth $M+1$, width $7$ and all parameters bounded in absolute value by $4$, satisfying
        \begin{align} 
        \sup_{y_1, y_2 \in [-1, 1]} | g(y_1, y_2)- y_1 y_2| \leq 2^{-M+6}.\label{tmp_62}
    \end{align}    

    We construct the multilayer perceptron $f$ as follows.
    Let $q \coloneqq \lceil \log_2 d \rceil$ and define
    \[
    \bm{\ell} : x \mapsto (\underbrace{x, \ldots, x}_{d}, \underbrace{1, \ldots, 1}_{2^q - d})^{\top}.
    \]  
    Let $\ell(x) = (x_1, \ldots, x_{2^q})^{\top}$.
    Starting from $(\ell(x)$, 
    we apply $g$ to the $2^{q-1}$ pairs $(x_1, x_2), \ldots, (x_{2^q - 1}, x_{2^q})$ to compute  $g(x_1, x_2), \ldots, g(x_{2^q - 1}, x_{2^q})$. 
    Then, we pair neighboring entries and apply $g$ again to the resulting $2^{q-2}$ pairs.
    This procedure is continued until only one entry remains.
    Since affine transformations before and after a multilayer perceptron can
    be absorbed into the first and last layers, the resulting network has depth
    $qM$, width $7\cdot 2^{q-1} \leq 7d$, and all parameters are bounded by a
    constant independent of $d$ and $M$.

    If $y_1,y_2,\tilde y_1,\tilde y_2 \in [-1,1]$, then by \eqref{tmp_62}
    and the triangle inequality,
    $| g(\tilde y_1,\tilde y_2)-y_1y_2 |
    \leq
    2^{-M+6}
    +
    |\tilde y_1-y_1|
    +
    |\tilde y_2-y_2|.$
    By induction on the number of iterated multiplication steps $q$, we obtain
    \[
    \left|
    f(x)- x^d
    \right|
    \leq
    3^{q-1}2^{-M+6}
    \leq
    3^{\log_2 d} 2^{-M+6}
    \leq
    d^2 2^{-M+6}
    \]
    for any $x \in [-1,1]$.
\end{proof}
\end{lemma}

The next lemma states that multilayer perceptrons can approximate analytic functions at exponential rates.
We adapt Theorem 6 of \cite{wang2018exponential} to the case of bounded parameters.
Compared with their result, our result achieves the same error rate using a more shallow but wider architecture.

\begin{lemma} \label{lemma_analytic}
Assume that the power series $f_0(x) = \sum_{k=0}^\infty a_k x^k$ is absolutely convergent in $[-1,1]$ and $A \coloneqq \sum_{k =0}^\infty |a_k| < \infty$.
Given fixed $\delta \in (0,1)$, for any sufficiently large $M \in \mathbb{N}$, there exists a multilayer perceptron $f: \mathbb{R} \to \mathbb{R}$ with depth $\lceil \log_2(M/\delta) \rceil M$, width $7 \lceil M/\delta \rceil^2$, and parameters bounded in absolute value by a constant depending only on $A$, such that
$$\sup_{x \in [-1+\delta, 1-\delta]}\, \big|f(x)-f_0(x)\big| \leq \frac{A (M+1)^2 2^{-M+6}}{\delta^2}.$$
\end{lemma}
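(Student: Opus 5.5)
The plan is to truncate the power series and approximate each monomial with Lemma~\ref{lemma_mlp_multiply_multi}, placing all monomial subnetworks in parallel. Fix $\delta\in(0,1)$ and a large $M$, and set $K\coloneqq\lceil M/\delta\rceil$.

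\textbf{Step 1: truncation.} Split $f_0 = \sum_{k\le K} a_k x^k + R_K(x)$. For $|x|\le 1-\delta$ the tail satisfies
\[
|R_K(x)|\le \sum_{k>K}|a_k|(1-\delta)^k\le A(1-\delta)^{K}\le A e^{-\delta K}\le A e^{-M}.
\]
This is at most $A2^{-M}$, which is dominated by the target bound.

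\textbf{Step 2: monomials in parallel.} For each $2\le k\le K$, take the network $f_k$ of Lemma~\ref{lemma_mlp_multiply_multi} with degree $k$ and the given $M$. It has depth $\lceil\log_2 k\rceil M\le\lceil\log_2(M/\delta)\rceil M$, width $7k$, universally bounded parameters, and error $k^2 2^{-M+6}$ on $[-1,1]$. The monomials $k=0,1$ are exact: the constant is absorbed into the output bias, and $x=(x)_+-(-x)_+$ is carried through the layers by two ReLU units. To align depths, pad each shallower subnetwork with identity layers, using the pair $(y)_+,(-y)_+$ for signed values; this keeps the parameters bounded by $1$.

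Stacking the subnetworks gives total width at most $\sum_{k\le K}7k+2\le 7K^2$, so $7\lceil M/\delta\rceil^2$ suffices for large $M$. The first layer reads $x$ into all blocks. The output layer forms $\sum_k a_k f_k$, whose weights satisfy $|a_k|\le A$, so the parameter bound depends only on $A$. The error on $[-1+\delta,1-\delta]$ is then
\[
\sum_{k\le K}|a_k|\,k^2 2^{-M+6}+Ae^{-M}\le A K^2 2^{-M+6}+A2^{-M}.
\]

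\textbf{Main obstacle: bookkeeping the stated bound.} The expression $AK^2 2^{-M+6}$ is about $A(M+1)^2 2^{-M+6}/\delta^2$ only up to rounding in $K$. I expect this bookkeeping to be the delicate step, and there are two ways to handle it. One option is to choose $K=\lceil M/\delta\rceil$ but bound $\sum_k|a_k|k^2$ more carefully, using $(1-\delta)^k$ weighting is unavailable since the monomial errors are uniform on $[-1,1]$. The other, which I would try first, is to observe that $K^2\le (M/\delta+1)^2\le (M+1)^2/\delta^2$ because $\delta<1$. The tail term $A2^{-M}$ is then absorbed by enlarging the constant $2^6$ slightly, or by using $e^{-M}\le 2^{-M}$ and the slack in $(M+1)^2$ versus $M^2$.

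I would also check that padding with identity layers keeps the depth exactly $\lceil\log_2(M/\delta)\rceil M$. If it does not, I would insert extra layers that compute $(y)_+-(-y)_+$.
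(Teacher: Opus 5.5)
Your proposal is correct and is essentially the paper's proof. The paper also truncates the series at order $\lceil M/\delta\rceil$, approximates the monomials in parallel with Lemma~\ref{lemma_mlp_multiply_multi} (with exact channels for $1$ and $x$), and combines them in the output layer with weights $a_k$.

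On the bookkeeping you flag: the paper keeps only degrees $k\le q-1$ with $q=\lceil M/\delta\rceil$. The monomial error is then at most $A(q-1)^2 2^{-M+6}\le AM^2 2^{-M+6}/\delta^2$, and the tail $A(1-\delta)^q\le Ae^{-M}$ fits into the gap between $M^2$ and $(M+1)^2$. If you keep degree $K$, the gap $(M+1)^2/\delta^2-(M/\delta+1)^2=(1-\delta)(2M+1+\delta)/\delta^2$ also absorbs $Ae^{-M}$ once $M$ is large. Enlarging the constant $2^6$ would not prove the stated bound.
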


\begin{proof}
    For given $q \in \mathbb{N}$, define
    \[
    P_q(x) \coloneqq \sum_{k=0}^{q-1} a_k x^k, \quad 
    R_q(x) \coloneqq \sum_{k=q}^{\infty} a_k x^k.
    \]
    For $x \in [-1+\delta, 1-\delta]$, we have
    $|R_q(x)| \leq (1-\delta)^q \sum_{k =q}^\infty |a_k| \leq A(1-\delta)^q$ and hence
    \begin{align}
        \sup_{x \in [-1+\delta, 1-\delta]} |P_q(x) - f_0(x)| \leq A(1-\delta)^q. \label{tmp_63}
    \end{align}
    In the following proof, we approximate $P_q(x)$, with $q$ to be chosen later.

For each $d=2,\ldots,q-1$, we apply Lemma~\ref{lemma_mlp_multiply_multi}
to approximate $x^d$.
By running these subnetworks in parallel, together with the outputs $1$ and $x$,
we construct a multilayer perceptron $\bg : \mathbb{R} \to \mathbb{R}^q$ with depth $\lceil \log_2(q-1) \rceil M$, width $1 + \sum_{d=2}^{q-1} 7d \leq 7q^2$ and bounded parameters such that    
    \[
        \sup_{x \in [-1, 1]} \Big\|\bg(x)- (x^{\alpha})_{\alpha \in \{0,1,\ldots,q-1\}}\Big\|_{\infty} \leq (q-1)^2 2^{-M+6}.
    \]
Define $f: \mathbb{R} \to \mathbb{R}$ by
\begin{align*}
    f(x) \coloneqq \big\langle (a_0, a_1, \ldots, a_{q-1})^{\top},  \bg(x) \big\rangle.
\end{align*}
Then $f$ can be represented by a multilayer perceptron with depth $\lceil \log_2(q-1) \rceil M$, width $7q^2$ and parameters bounded in absolute value by a constant depending only on $A$.
Moreover, for any $x \in [-1, 1]$,
\begin{align*}
    \big|f(x)- P_q(x)\big| \leq \sum_{k=0}^{q-1} |a_k| \big| \big(\bg \circ \bm{\ell}(x)\big)_{(k+1)} - x^k  \big|  \leq A (q-1)^2 2^{-M+6}.
\end{align*}
Combining with \eqref{tmp_63}, we have
\begin{align*}
    \sup_{x \in [-1+\delta, 1-\delta]}\, \big|f(x)-f_0(x)\big| \leq A (1 - \delta)^q + A (q-1)^2 2^{-M+6}. 
\end{align*}
Choosing $q = \lceil M/\delta \rceil$,
we obtain
\[
A(1-\delta)^q + A (q-1)^2 2^{-M+6} 
\leq A e^{-M} + \frac{A M^2 2^{-M+6}}{\delta^2} 
\leq \frac{A (M+1)^2 2^{-M+6}}{\delta^2}.
\]    
This proves the assertion.
\end{proof}

\begin{lemma}[Theorem 1 of \cite{kong2025posterior}]
\label{lemma_approx_smooth}
Given a $\beta$-H\"older smooth function $\bbf_0 : [-c,c]^{d_1} \to \mathbb{R}^{d_2}$, for any sufficiently large $M \in \mathbb{N}$, there exists a multilayer perceptron $\bbf: \mathbb{R}^{d_1} \to \mathbb{R}^{d_2}$
with depth $\lceil \log_2 M \rceil^2$, width $M$, and parameters bounded in absolute value by a constant depending only on $(c, d_1, \beta, \|\bbf_0\|_{\mathcal{C}^{\beta}})$ such that
$$\sup_{\bx \in [-c, c]^d} \|\bbf(\bx)-\bbf_0(\bx)\|_{\infty} \lesssim M^{-\frac{2 \beta}{d_1}},$$
where the implicit constant depends only on $(c, d_1, \beta, \|\bbf_0\|_{\mathcal{C}^{\beta}})$.
\end{lemma}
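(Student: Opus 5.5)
The approximation rate $M^{-2\beta/d_1}$ with width $M$ and depth $\lceil\log_2 M\rceil^2$ is the known "super-convergence" rate for deep ReLU networks with width and depth both counted (cf.\ Lu--Shen--Yang--Zhang): a network of width $W$ and depth $L$ attains error $(WL)^{-2\beta/d}$ up to logs. Here $L\asymp \log^2 M$, so the target is $M^{-2\beta/d_1}$ up to logarithmic factors that the $\lesssim$ must absorb. The plan is to follow the bit-extraction construction, adapted so that all weights stay bounded by a constant depending on $(c,d_1,\beta,\|\bbf_0\|_{\mathcal{C}^\beta})$. Doing this is essentially the content of Theorem 1 of \cite{kong2025posterior}, which we would cite; here is how the proof would go.

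\emph{Step 1 (reduction).} Rescale $[-c,c]^{d_1}$ to $[0,1]^{d_1}$ by an affine map absorbed into the first layer; this changes the Hölder norm only by a factor depending on $c$. Treat each output coordinate separately and stack the $d_2$ networks in parallel, which costs a constant factor in width. Choose a grid size $K\asymp M^{2/d_1}$, so that the grid has $K^{d_1}\asymp M^2$ cells of side $1/K$.

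\emph{Step 2 (local Taylor approximation).} On each cell, approximate $f_0$ by its Taylor polynomial of degree $q<\beta$ at the cell corner, which gives error $\lesssim K^{-\beta}=M^{-2\beta/d_1}$. Build a ReLU subnetwork that maps $\bx$ to the index of its cell, using the usual trick of approximating step functions while discarding a small "trifling region" near cell boundaries. Then use the multiplication networks of Lemmas~\ref{lemma_mlp_multiply} and \ref{lemma_mlp_multiply_multi} to form the monomials $(\bx-\bx_{\text{corner}})^{\bm\alpha}$ at exponential accuracy with depth $O(\log M)$.

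\emph{Step 3 (coefficient lookup).} The $\asymp M^2$ Taylor coefficients, quantized to precision $K^{-\beta}$, must be retrieved with only width $M$ and depth $\log^2 M$. Encode the coefficients of $M$ groups of $M$ cells as binary expansions of $M$ real numbers. Retrieve them by a bit-extraction network of depth $O(\log^2 M)$, whose bounded-weight version uses repeated doubling, $x\mapsto 2x \bmod 1$, realized with weights of size $2$.

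\emph{Step 4 (boundary and assembly).} Remove the trifling region by the standard median or shift trick: take $2^{d_1}$ shifted grids and combine them with a ReLU-implementable middle-value operation. Finally, sum the errors.

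\textbf{Main obstacle.} The main difficulty is Step 3 under the bounded-parameter constraint. Bit extraction naturally uses weights of size $2^{\text{bits}}$, and these must be replaced by iterated bounded-weight doubling layers. This is exactly what pushes the depth to $\log^2 M$ rather than $\log M$, and is why we rely on \cite{kong2025posterior} for this step rather than reproving it.
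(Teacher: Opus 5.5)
Your proposal is correct and matches the paper, which gives no separate argument and imports the statement directly as Theorem~1 of \cite{kong2025posterior}, the same result you ultimately rely on. If you expand the sketch, note that Step~3 as written exceeds the depth budget: packing $M$ cells into each of $M$ reals puts $\Theta(M\log M)$ bits into each number, and since bounded-weight doubling releases one bit per layer, reaching the last cell of a group costs depth $\Theta(M\log M)$ rather than $O(\log^2 M)$, so the groups have to be rebalanced to polylogarithmic size.
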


\begin{lemma} \label{lemma_mlp_approx_exp}
    Given a fixed $c > 0$, for any sufficiently large $M$, there exists a multilayer perceptron $f: \mathbb{R} \to \mathbb{R}$
with depth $M^2$, width $33 M^2$, and parameters bounded in absolute value by a constant
depending only on $c$ such that     
    $$\sup_{x \in [-c, c]}|f(x)-\exp(x)| \lesssim M^2 2^{-M},$$
where the implicit constant depends only on $c$.
\end{lemma}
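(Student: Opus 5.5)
The plan is to approximate $\exp$ on $[-c,c]$ through its power series and invoke Lemma~\ref{lemma_analytic}. That lemma requires the target to be a power series that converges absolutely on $[-1,1]$, with the approximation holding only on $[-1+\delta,1-\delta]$. I will therefore first rescale the domain.

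\textbf{Step 1 (rescaling).} Set $\delta\coloneqq 1/2$ and $\tilde c \coloneqq 2c$. Define
\[
f_0(y)\coloneqq \exp(\tilde c\, y)=\sum_{k\ge 0}\frac{\tilde c^k}{k!}y^k .
\]
This series converges absolutely on $[-1,1]$, with $A=\sum_k \tilde c^k/k! = e^{2c}<\infty$, a constant depending only on $c$. For $x\in[-c,c]$ the point $y=x/\tilde c$ lies in $[-1/2,1/2]=[-1+\delta,1-\delta]$, and $\exp(x)=f_0(x/\tilde c)$.

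\textbf{Step 2 (apply Lemma~\ref{lemma_analytic}).} With $\delta=1/2$ and $M$ sufficiently large, Lemma~\ref{lemma_analytic} gives a multilayer perceptron $g$ satisfying
\[
\sup_{|y|\le 1/2}|g(y)-f_0(y)|\le 4A(M+1)^2 2^{-M+6}\lesssim M^2 2^{-M},
\]
with implicit constant $256\,e^{2c}\cdot 4$, which depends only on $c$. Its depth is $\lceil \log_2(2M)\rceil M\le M^2$ for $M\ge 4$. Its width is $7\lceil 2M\rceil^2=28M^2\le 33M^2$. Its parameters are bounded by a constant depending only on $A$, hence only on $c$.

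\textbf{Step 3 (absorb the input scaling).} Set $f(x)\coloneqq g(x/\tilde c)$. The map $x\mapsto x/\tilde c$ is linear, so it can be absorbed into the first weight matrix. This multiplies those weights by $1/(2c)$, which keeps them bounded by a constant depending on $c$.

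\textbf{Step 4 (pad depth and width).} The statement fixes the depth at exactly $M^2$ and the width at exactly $33M^2$. To match these, I will pad the network in two ways. For depth, I append identity layers using $x=(x)_+-(-x)_+$. Each such layer needs two neurons, which fit within the slack of $5M^2$ between $28M^2$ and $33M^2$, and uses weights $\pm1$. For width, I add inactive neurons with zero weights. Neither operation changes the function computed.

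\textbf{Main obstacle.} The only delicate point is this padding. One has to check the paper's MLP convention, in which the output layer is affine and has no ReLU. Under that convention, the sign-splitting trick appends exact identity layers without enlarging the parameter bound. Everything else is a direct application of Lemma~\ref{lemma_analytic}.
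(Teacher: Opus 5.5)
Your proof is correct, but it takes a different route from the paper's. The paper first rounds $c$ up to an integer. It then applies Lemma~\ref{lemma_analytic} only to the fixed series of $\exp$ on $[-1/2,1/2]$, so $A=e$. Finally it recovers $\exp(x)=e^{2c}\bigl(\exp(x/(2c))/e\bigr)^{2c}$ by composing with the power network of Lemma~\ref{lemma_mlp_multiply_multi}, that is, $f(x)=e^{2c}f_3\bigl(f_2(x/(2c))/e\bigr)$. This costs an extra error-propagation step through $y\mapsto y^{2c}$ and an additional depth of $\lceil\log_2(2c)\rceil M$.

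You instead feed the rescaled series $\exp(2cy)=\sum_k (2c)^k y^k/k!$ directly into Lemma~\ref{lemma_analytic} with $A=e^{2c}$ and $\delta=1/2$. This needs no integer rounding of $c$, no second subnetwork and no composition estimate. The sufficiently-large threshold on $M$ in that lemma depends only on $\delta$, so fixing $\delta=1/2$ is harmless.

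The trade-off is where the $c$-dependence sits. In your version it is in the polynomial coefficients $(2c)^k/k!$ and in the constant $A$. The paper keeps the analytic subnetwork universal and pushes all $c$-dependence into a fixed-size power network and the output factor $e^{2c}$. Since the statement allows both the parameter bound and the implicit constant to depend on $c$, this makes no difference here, and your version is the shorter one.

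Your depth and width padding via $x=(x)_+-(-x)_+$ and zero neurons is a harmless extra step. The paper simply reads ``depth $M^2$, width $33M^2$'' as upper bounds.
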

\begin{proof}
Without loss of generality, assume that $c \in \mathbb{N}$. Otherwise, we replace $c$ by $\lceil c \rceil$ and apply the same argument.
    Define 
    $$f_1 : x \mapsto \frac{x}{2c}.$$
    By Lemma~\ref{lemma_analytic}, there exists a multilayer perceptron $f_2:\mathbb{R} \to \mathbb{R}$ with depth $\lceil \log_2(2M) \rceil M$, width $32 M^2$, and bounded parameters such that   
    $$\sup_{x \in [-1/2, 1/2]}|f_2(x)-\exp(x)| \lesssim M^2 2^{-M}.$$
    Also, by Lemma \ref{lemma_mlp_multiply_multi}, there exists a multilayer perceptron $f_3:\mathbb{R} \to \mathbb{R}$ with depth $\lceil \log_2(2c) \rceil M$, width $16c$ and bounded parameters such that   
    $$\sup_{x \in [0, 1]} |f_3(x) - x^{2c}| \lesssim 2^{-M}.$$  
    We obtain the assertion from the fact that 
    \[
        f(x) \coloneq e^{2c} \, f_3 \Big(\frac{f_2 \circ f_1(x)}{e}\Big)
    \]
    can be represented as a multilayer perceptron with depth $\lceil \log_2(2M) \rceil M + \lceil \log_2(2c) \rceil M  \leq M^2$, width $32 M^2 + 16c \leq 33 M^2$, and parameters bounded in absolute value by a constant depending only on $c$, and 
    \begin{align*}
        \sup_{x \in [-c,c]} \big|f(x) - \exp(x)\big|
        &= \sup_{x \in [-1/2, 1/2]} \Big|e^{2c} \, f_3 \Big(\frac{f_2(x)}{e}\Big)-  \exp(x)^{2c}\Big|\\
        &\leq e^{2c} \sup_{x \in [-1/2, 1/2]} \Big| f_3 \Big(\frac{f_2(x)}{e}\Big)- \Big(\frac{f_2(x)}{e}\Big)^{2c}\Big| 
        + \sup_{x \in [-1/2, 1/2]} \big|  f_2(x)^{2c} - \exp(x)^{2c}\big|\\ &\lesssim M^2 2^{-M}.
    \end{align*}
\end{proof}

\begin{lemma} \label{lemma_1oversqrtx}
    Given $0<a<b$, there exist constants $c=c(a,b), c'=c'(a,b),$ such that for any sufficiently large $M$, there exists a multilayer perceptron $f: \mathbb{R} \to \mathbb{R}$
with depth $\lceil \log_2(Mb/a) \rceil M$, width $7 \lceil Mb/a \rceil^2$, and parameters bounded in absolute value by $c,$ satisfying
    $$\sup_{x \in [a, b]}\left|f(x)-\frac{1}{\sqrt{x}}\right| \leq c' M^2 2^{-M}.$$
\end{lemma}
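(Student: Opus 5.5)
The plan mirrors the proofs of Lemma~\ref{lemma_analytic} and Lemma~\ref{lemma_mlp_approx_exp}: rescale $[a,b]$ to a symmetric interval around the origin, expand $1/\sqrt{x}$ there as an absolutely convergent power series, and invoke Lemma~\ref{lemma_analytic}.

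\textbf{Step 1: rescaling.} Set $m\coloneqq (a+b)/2$. Then
\[
\frac{1}{\sqrt{x}}=\frac{1}{\sqrt{m}}\,(1+y)^{-1/2}, \qquad y\coloneqq \frac{x-m}{m}.
\]
As $x$ ranges over $[a,b]$, the variable $y$ ranges over $[-(b-a)/(b+a),\,(b-a)/(b+a)]$. This is contained in $[-1+\delta,1-\delta]$ with
\[
\delta\coloneqq \frac{2a}{a+b}\in(0,1),
\]
which satisfies $\delta\geq a/b$.

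\textbf{Step 2: power series and its constant.} The binomial series $(1+y)^{-1/2}=\sum_k \binom{-1/2}{k}y^k$ has coefficients of modulus $\binom{2k}{k}4^{-k}\asymp k^{-1/2}$. Hence $\sum_k|a_k|$ diverges at $|y|=1$, and the hypothesis $A<\infty$ of Lemma~\ref{lemma_analytic} fails for this series directly.

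To repair this, I would shrink the interval before applying the lemma. Put $\rho\coloneqq 1-\delta/2$ and $z\coloneqq y/\rho$, and expand $f_0(z)\coloneqq(1+\rho z)^{-1/2}=\sum_k a_k\rho^k z^k$. Then
\[
\sum_k |a_k|\rho^k \leq \sum_k \rho^k \leq \frac{2}{\delta}=:A<\infty.
\]
Moreover, $y\in[-1+\delta,1-\delta]$ gives $|z|\leq (1-\delta)/(1-\delta/2)\leq 1-\delta/2$. So Lemma~\ref{lemma_analytic} applies with $\delta'=\delta/2\geq a/(2b)$, which depends only on $a,b$.

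\textbf{Step 3: invoking Lemma~\ref{lemma_analytic}.} The lemma yields a multilayer perceptron $g$ with depth $\lceil\log_2(M/\delta')\rceil M$, width $7\lceil M/\delta'\rceil^2$, and parameters bounded by a constant depending only on $A$, such that
\[
\sup_{|z|\le 1-\delta'}|g(z)-f_0(z)|\leq \frac{A(M+1)^2 2^{-M+6}}{\delta'^2}\lesssim_{a,b} M^2 2^{-M}.
\]
Define
\[
f(x)\coloneqq m^{-1/2}\, g\big((x-m)/(m\rho)\big).
\]
The inner affine map and the outer scaling by $m^{-1/2}$ are absorbed into the first and last layers. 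Their coefficients are bounded in terms of $a,b$, so the parameter bound $c(a,b)$ is preserved. The error bound transfers with the factor $m^{-1/2}$, which is absorbed into $c'$.

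\textbf{Main obstacle: architecture bookkeeping.} Lemma~\ref{lemma_analytic} uses $\delta'$ rather than $a/b$, so the stated depth $\lceil\log_2(Mb/a)\rceil M$ and width $7\lceil Mb/a\rceil^2$ do not match exactly. If $\delta'\geq a/b$ holds (i.e.\ $b\geq 2a$ or a sharper choice of $\rho$), the network from Step~3 fits within these sizes directly. Otherwise, I would rerun the proof of Lemma~\ref{lemma_analytic} with truncation level $q=\lceil Mb/a\rceil$. That choice still gives $(1-\delta')^q\leq e^{-cM}$, and any residual constant factor is absorbed into $c'$ by adjusting $M$ by a constant multiple. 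Unused neurons are padded with zeros, so the larger architecture causes no difficulty.
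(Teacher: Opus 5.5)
Your construction is the same as the paper's. With $m=(a+b)/2$ and $\rho=1-\delta/2=b/(a+b)$, your variable is $z=(x-m)/(m\rho)=(2x-a-b)/b$, which is the paper's $g_0(x)$. Likewise $m^{-1/2}(1+\rho z)^{-1/2}$ is the paper's $h_0(z)$, whose coefficients have absolute sum $m^{-1/2}(1-\rho)^{-1/2}=\sqrt{2/(a+b)}\,\sqrt{1+b/a}$.

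The step that goes wrong is the last one. The architecture mismatch comes only from your loose estimate $|z|\le (1-\delta)/(1-\delta/2)\le 1-\delta/2$. In fact $(1-\delta)/(1-\delta/2)=\frac{(b-a)/(a+b)}{b/(a+b)}=1-a/b$ exactly, so $z\in[-1+a/b,\,1-a/b]$. Lemma~\ref{lemma_analytic} then applies with $\delta=a/b$ and gives exactly the depth $\lceil\log_2(Mb/a)\rceil M$ and width $7\lceil Mb/a\rceil^2$ in the statement. This is what the paper does.

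The two repairs you propose do not work as written.
\begin{itemize}
\item The case ``$\delta'\ge a/b$, i.e.\ $b\ge 2a$'' never occurs, since $\delta'=a/(a+b)<a/b$ for every $0<a<b$.
\item In the fallback, a truncation error of order $e^{-cM}$ can be absorbed into $c'M^2 2^{-M}$ only if $c\ge \ln 2$. You also cannot rescale $M$ by a constant, because $M$ fixes the depth, the width and the target rate simultaneously.
\end{itemize}
The fallback can in fact be saved. For $q\ge Mb/a$ one has $(1-\delta')^{q}=(1+a/b)^{-q}\le 2^{-(Mb/a)\log_2(1+a/b)}\le 2^{-M}$, using $\log_2(1+t)\ge t$ on $[0,1]$. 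You did not give this argument, and computing the range of $z$ exactly makes it unnecessary.
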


\begin{proof} 
    We rewrite
    $$\frac{1}{\sqrt{x}} = h_0 \circ g_0(x), \text{ with } g_0(x)\coloneq \frac{2x - a - b}{b} \text{ and } h_0(z) \coloneq \frac{\sqrt 2}{\sqrt{a+b}} \, \frac{1}{\sqrt{1+\frac{z}{1+(a/b)}}},$$  
    where $g_0$ maps $[a,b]$ to $[-1 + (a/b), 1 - (a/b)]$.    
    By the generalized binomial theorem, 
    $$h_0(z) = \frac{\sqrt 2}{\sqrt{a+b}} \sum_{k \in \mathbb{N}_0} \binom{-\frac{1}{2}}{k} \left(\frac{z}{1+(a/b)}\right)^k \quad \text{for all} \  z \in \big( -1 - (a/b), 1 + (a/b)\big)$$
with generalized binomial coefficients
\[
\binom{-\frac{1}{2}}{k}
= \frac{(-\frac{1}{2}) (-\frac{1}{2} -1) \ldots (-\frac{1}{2} - k + 1) }{k!} =
(-1)^k \frac{(2k)!}{4^k (k!)^2}.
\]
Observe that
    \begin{align*}
        \sum_{k \in \mathbb{N}_0} \left|\binom{-\frac{1}{2}}{k} \left(\frac{1}{1+(a/b)}\right)^k\right| 
        = \sum_{k \in \mathbb{N}_0} \binom{-\frac{1}{2}}{k} \left(-\frac{1}{1+(a/b)}\right)^k  
        = \frac{1}{\sqrt{1-\frac{1}{1+(a/b)}}} = \sqrt{1+(b/a)}.
    \end{align*}
    By Lemma \ref{lemma_analytic}, there exists a multilayer perceptron $h : \mathbb{R} \to \mathbb{R}$ with depth
    $\lceil \log_2(Mb/a) \rceil M$, width $7 \lceil Mb/a \rceil^2$, and parameters bounded in absolute value by a constant depending only on $a$ and $b$ such that
    $$\sup_{z \in [-1+(a/b), 1-(a/b)]}|h(z)-h_0(z)| \lesssim M^2 2^{-M}.$$
    It follows that there exists a multilayer perceptron $f = h \circ g_0 : \mathbb{R} \to \mathbb{R}$ with the same architecture such that 
    $$\sup_{x \in [a, b]}\left|f(x)-\frac{1}{\sqrt{x}}\right| \lesssim M^2 2^{-M}.$$
\end{proof}

\begin{lemma}\label{lemma_mlp_scaleup}
    For any $d, c, L, M \in \mathbb{N}$ with $c \leq (MB)^L$, there exists $\bbf \in \mathcal{MLP}(d, d, L, 2dM, B)$ such that  
    $\bbf(\bx) = c \bx$
    for every $\bx \in \mathbb{R}^d$.
\end{lemma}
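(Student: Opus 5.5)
The plan is a direct construction: build a ReLU network that multiplies its input by $c$ through repeated scaling, with all weights bounded by $B$. The identity on $\mathbb{R}^d$ can be written with ReLUs via $x = \sigma(x) - \sigma(-x)$, which uses $2d$ neurons per layer, one pair per coordinate. Each hidden layer has width $2dM$, so I split it into $M$ identical copies of the pair $(\sigma(x_i),\sigma(-x_i))$.

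\textbf{Step 1 (scaling per layer).} Write $c$ as a product of $L$ positive integer factors. The natural choice is $c_\ell \in [1, MB]$ with $\prod_{\ell} c_\ell = c$, but since $c$ need not factor into integers $\le MB$, I would instead pick real factors $c_\ell \in [1,MB]$ with product exactly $c$. For instance, take $c_\ell = c^{1/L} \le MB$; this is allowed because $c \le (MB)^L$ and the weights are real.

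\textbf{Step 2 (one layer gives factor $c_\ell$).} Suppose the current hidden layer stores $M$ copies of $(\sigma(x_i), \sigma(-x_i))$ scaled by $a$. The next layer computes, for each copy,
\[
\sigma\Big(\tfrac{c_\ell}{M}\sum_{k=1}^M (\text{copy}_k^{+} - \text{copy}_k^{-})\Big), \qquad \sigma\Big(-\tfrac{c_\ell}{M}\sum_{k=1}^M (\text{copy}_k^{+} - \text{copy}_k^{-})\Big).
\]
Each weight is $c_\ell/M \le B$, and the sum reconstructs $M \cdot a x_i$, so the result equals $\sigma(\pm c_\ell a x_i)$, replicated $M$ times. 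The first layer $W_0$ maps $x$ to the $M$ copies of $(\sigma(c_1 x_i), \sigma(-c_1 x_i))$; this needs weights $c_1 \le MB$, which exceeds $B$. To fix this, I would let the first layer use weight $\min(c_1, B)$ and fold the leftover factor into the second layer. Alternatively, I can make the input layer apply weight $1$ and distribute the full product $c$ over the $L$ transitions $W_1, \dots, W_L$ available after the first hidden layer.

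\textbf{Step 3 (output layer).} The output layer $W_L$ has each weight at most $B$ and sums the $M$ copies with weight $c_L/M$, producing $c_L \cdot(\text{previous})$. Biases are zero throughout.

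\textbf{Main obstacle.} The bookkeeping in Steps 2 and 3 is the main difficulty. Counting exactly, there are $L+1$ weight matrices, so once the input layer is fixed to have weight $1$, exactly $L$ matrices ($W_1, \dots, W_L$) each aggregate $M$ copies and contribute a factor at most $MB$. This gives $(MB)^L \ge c$, which is the claimed condition. The product must still equal exactly $c$, which the real choice $c^{1/L}$ per aggregating matrix ensures. The case $M B < 1$ needs no separate handling, since it forces $c \le (MB)^L < 1$, which is impossible for $c \in \mathbb{N}$.
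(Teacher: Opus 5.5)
Your final construction (input weight $1$, then each of the $L$ matrices $W_1,\dots,W_L$ summing the $M$ copies of $\sigma(\pm x_i)$ with weights $\pm c^{1/L}/M$) is correct. It is essentially the paper's proof: both replicate the positive and negative parts $M$ times and gain a factor $MB$ per layer. The paper differs only in bookkeeping, using weight $B$ in the first and hidden layers, keeping the $\pm$ channels separate, and absorbing all slack into the output weight $c/(MB)^L$.

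Your input weight $1$, like the paper's output weight $c/(MB)^L\le 1$, tacitly needs $B\ge 1$. This is harmless in the paper's setting, where $B\ge 9\log^2 T$, and it is unavoidable, since for $d=L=1$ the attainable slope is at most $MB^2$. You should also drop the ``$\min(c_1,B)$, fold the rest into the next layer'' variant, because it can exceed that layer's $MB$ budget.
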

\begin{proof}
    It suffices to consider the case $d=1,$ as then, for arbitrary $d$, one can construct $d$ networks computing $x \mapsto c x$ and concatenate them into one large network computing $\bx \mapsto \bbf(\bx) = c \bx$.
        
    The first hidden layer consists of $2M$ neurons, with $M$ neurons computing
    $(Bx)_+ = Bx \, \mathbb{I}(x \ge 0)$ and the remaining $M$ neurons computing
    $(-Bx)_+ = -Bx \, \mathbb{I}(x \le 0)$.
    For each $\ell \in [L-1]$, suppose that the $\ell$-th hidden layer consists of
    $M$ neurons computing
    $M^{\ell-1} B^{\ell} x \, \mathbb{I}(x \ge 0)$ and the remaining $M$ neurons
    computing
    $-M^{\ell-1} B^{\ell} x \, \mathbb{I}(x \le 0)$.
    Then, in the $(\ell+1)$-st hidden layer, we construct $M$ neurons computing
    $$
    \bigl(MB \cdot M^{\ell-1} B^{\ell} x \, \mathbb{I}(x \ge 0)\bigr)_+
    = M^{\ell} B^{\ell+1} x \, \mathbb{I}(x \ge 0),
    $$
    and the remaining $M$ neurons computing
    $$
    \bigl(MB \cdot (-M^{\ell-1} B^{\ell} x \, \mathbb{I}(x \le 0))\bigr)_+
    = -M^{\ell} B^{\ell+1} x \, \mathbb{I}(x \le 0).
    $$
    By induction on $\ell$ from $1$ to $L-1$, the last hidden layer consists of
    $2M$ neurons, with $M$ neurons computing
    $M^{L-1} B^{L} x \, \mathbb{I}(x \ge 0)$ and the remaining $M$ neurons
    computing
    $-M^{L-1} B^{L} x \, \mathbb{I}(x \le 0)$.
    We obtain the assertion from 
    $$M \cdot \frac{c}{(MB)^L} \cdot M^{L-1} B^{L} x \, \mathbb{I}(x \ge 0) + M \cdot \frac{-c}{(MB)^L} \cdot \big( -M^{L-1} B^{L} x \, \mathbb{I}(x \le 0) \big) = c x.$$    
\end{proof}

\begin{lemma}\label{lemma_pos_mlp}  
    Under Assumption~\ref{assumption_transformer_size_lower} and any $M \geq 1 + 2\log^2 T$, $J \geq 2$ and $c \leq (MB)^{J-2}$,
    there exists $f \in \mathcal{MLP}(\tfrac{N}{2},1,J,M,B)$ such that for every $t \in [T]$,
    $$f(\bq_t) = c \, \mathbb{I}\big(t \in [\log^2 T, T-\log^2 T]\big).$$
\end{lemma}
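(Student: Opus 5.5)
We build $f$ from the first sinusoidal pair $\bq_{A,t}$, the slowest-rotating coordinate, using Lemma~\ref{lemma_mlp_scaleup} for the amplification. Since $|t|/T^{A/A}=t/T\in(0,1]$, the angle $\theta_t\coloneq t/T$ lies in $(0,1]$. On that interval $t\mapsto\sin(t/T)$ is strictly increasing, so the first coordinate of $\bq_{A,t}$, namely $u_t\coloneq \sin(t/T)$, is an injective and increasing function of $t$. The indicator $\mathbb{I}(t\in[\log^2T,T-\log^2T])$ is therefore $\mathbb{I}(u_t\in[\sin(\lceil\log^2 T\rceil/T),\sin(\lfloor T-\log^2T\rfloor/T)])$. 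This is an interval indicator in one real variable evaluated on the finite grid $\{u_t\}$.

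The plan has three steps.
\begin{enumerate}
\item The first linear layer extracts $u_t$ from $\bq_t\in\mathbb{R}^{N/2}$ by reading off one coordinate with weight $1\le B$.
\item A ReLU trapezoid equals $1$ on the grid points inside the interval and $0$ on those outside. Let $t_0=\lceil\log^2T\rceil$ and $t_1=\lfloor T-\log^2T\rfloor$. With $\gamma$ the minimal gap between consecutive grid points, the function
\[
g(u)=\tfrac1\gamma\big[(u-u_{t_0-1})_+-(u-u_{t_0})_+-(u-u_{t_1})_++(u-u_{t_1+1})_+\big]
\]
works on the grid.
\item Lemma~\ref{lemma_mlp_scaleup} multiplies by $c$ over the remaining $J-2$ layers, using width $2M$ per sign and the condition $c\le (MB)^{J-2}$. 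Because $g\ge0$ can be kept as a single nonnegative channel, only the positive branch is needed, and the width stays within $M$.
\end{enumerate}

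The main obstacle is that the parameters must stay bounded by $B\ge 9\log^2T$. The slope $1/\gamma$ is of order $T$, since $\gamma\gtrsim \cos(1)/T$, which far exceeds $B$, and the breakpoints must be encoded with bounded biases. I would handle this by replacing the single large slope with the width-for-magnitude trade used in Lemma~\ref{lemma_mlp_scaleup}. In the first hidden layer I would use $M$ neurons, with the number tied to the hypothesis $M\ge 1+2\log^2T$, to form scaled copies of $u$ and push the gain into the later layers. Alternatively, and more simply, I would avoid needing resolution $1/T$ at all. Fix an interior index and consider the two ends of the window separately, since these lie within $\log^2 T$ steps of $t=0$ and $t=T$. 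There I would use higher-frequency coordinates $\bq_{a,t}$ with $a$ chosen so that $T^{a/A}\approx \log^2 T$. Because $A\ge 2\log T$, such an $a$ exists, and at that scale consecutive positions are separated by order $1/\log^2T$. Combining this with the coarse coordinate $u_t$ to decide which end one is near, the needed slopes are $O(\log^2T)\le B$.

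So the construction would use $M\ge 2\log^2T+1$ neurons, which is enough room for about $\log^2T$ small bumps that each fire on one excluded position near an end, plus a constant neuron. Its output is $g(\bq_t)=1-\sum(\text{bumps})$, which is exactly $0/1$ on the grid. Finally, Lemma~\ref{lemma_mlp_scaleup} scales the output to $c$.
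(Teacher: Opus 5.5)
Both of your workarounds for the slope problem still read the position from only one or two sinusoidal pairs, and that cannot work with parameters bounded by $B$.

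\emph{First workaround (scaled copies of $u$, gain pushed into later layers).} This is not a matter of reallocating width and depth. Any $f\in\mathcal{MLP}(\tfrac N2,1,J,M,B)$ whose first layer only reads $u_t=\sin(t/T)$ is Lipschitz in $u$ with constant at most $B(MB)^J$: a factor $B$ comes from the first layer and a factor $MB$ from each later affine map. The target, however, jumps by $c$ between $t_0-1$ and $t_0$, where $u$ moves by at most $1/T$. Hence $cT\le B(MB)^J$. For the admissible choice $c=(MB)^{J-2}$ this forces $T\le M^2B^3$, which fails when $M$ and $B$ sit at their polylogarithmic lower bounds $1+2\log^2T$ and $9\log^2T$.

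\emph{Second workaround (a fine pair gated by $u_t$).} This does not escape the obstruction. The pair $\bq_{a,t}$ with $T^{a/A}\approx\log^2T$ has period about $2\pi\log^2T$ in $t$. So an excluded $s<\log^2T$ and included positions $s+n$, with $n$ close to a multiple of $2\pi T^{a/A}$, have nearly identical fine coordinates. In your scheme only $u$ separates them, and it differs by about $n/T$. The gate therefore needs slope of order $T/\log^2T$, not $O(\log^2T)$. Quantitatively, Dirichlet's theorem gives some $n\le\sqrt T$ for which all four coordinates you use differ by $O(T^{-1/2})$. Such an $n$ is necessarily $\gtrsim 2\pi\log^2T$, so $s+n$ is included. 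The same Lipschitz count then gives $\sqrt T\lesssim M^2B^3$, again false.

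Separately, normalizing both ramps of your trapezoid by the minimal gap $\gamma$ does not give a $0/1$ function on the grid. The gaps of $\sin(t/T)$ are not equal, so each ramp needs its own gap.

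The missing idea is Lemma~\ref{lemma_posi_uplo}: use all $A$ scales at once through the inner product $\bq_s^\top\bq_t$. It equals $A$ at $t=s$ and is at most $A-1/(3T^{2/A})\le A-1/(3e)$ for \emph{every} $t\neq s$, because $A=N/4\ge 2\log T$. This uniform constant margin lets a single ReLU neuron with input weights $\bq_s\in[-1,1]^{N/2}$ isolate position $s$ exactly: since $B\ge 3e$, we get $(\bq_s^\top\bq_t-N/4+1/B)_+=B^{-1}\mathbb{I}(t=s)$.

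The paper builds the network in three steps.
\begin{enumerate}
\item The first hidden layer has one such neuron for each of the at most $1+2\log^2T$ excluded positions. This is exactly what the hypothesis on $M$ is tailored to.
\item A single second-layer neuron $\big(1-B\sum_s(\bq_s^\top\bq_t-N/4+1/B)_+\big)_+$ equals $\mathbb{I}(t\in[\log^2T,T-\log^2T])$ exactly.
\item Lemma~\ref{lemma_mlp_scaleup} over the remaining $J-2$ layers multiplies by $c$.
\end{enumerate}
Your observation that this last input is nonnegative is correct. Only the positive branch of the scale-up is then needed, so width $M$ suffices rather than the $2M$ stated in Lemma~\ref{lemma_mlp_scaleup}. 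This tightens a step the paper passes over.
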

\begin{proof}  
    By Assumption~\ref{assumption_transformer_size_lower}, we have
    $
    3T^{8/N} \leq 3T^{1/\log T} \leq B.
    $
    By Lemma~\ref{lemma_posi_uplo} with $A=N/4$ as in
    Definition~\ref{def_PE}, for every $t_1,t_2 \in [T]$,
    \begin{align*}
        \left(\bq_{t_1}^{\top} \bq_{t_2} - \frac{N}{4} + \frac{1}{B}\right)_+ = 
        \frac{1}{B} \, \mathbb{I}(t_1 = t_2),
    \end{align*}
    where we used $(- (3 T^{1/\log T})^{-1} + B^{-1})_+ = 0$ when $t_1 \neq t_2$.
    Summing up, we get
    \begin{align*}
        \sum_{s \in [1, \log^2 T)} \left(\bq_{s}^{\top} \bq_{t} - \frac{N}{4} + \frac{1}{B}\right)_+ +
        \sum_{s \in (T - \log^2 T, T]}  \left(\bq_{s}^{\top} \bq_{t} - \frac{N}{4} + \frac{1}{B}\right)_+
        = \frac{1}{B} \, \mathbb{I}(t \notin [\log^2 T, T-\log^2 T]).
    \end{align*}
    In the first hidden layer, 
    we construct $ \leq 1 + 2 \log^2 T$ neurons computing
    $$\bq_{t}\mapsto \left\{ \left(\bq_{s}^{\top} \bq_{t} - \frac{N}{4} + \frac{1}{B}\right)_+ \right\}_{s \in [T] \setminus [\log^2 T, T - \log^2 T]}.$$
    In the second hidden layer, we construct one neuron computing
    \begin{align*}
        \bq_{t}\mapsto &\left(1 
        - \sum_{s \in [1, \log^2 T)} B\left(\bq_{s}^{\top} \bq_{t} - \frac{N}{4} + \frac{1}{B}\right)_+ -
        \sum_{s \in (T - \log^2 T, T]}  B \left(\bq_{s}^{\top} \bq_{t} - \frac{N}{4} + \frac{1}{B}\right)_+\right)_+\\
        &= \mathbb{I}(t \in [\log^2 T, T-\log^2 T]).
    \end{align*}
    By applying Lemma~\ref{lemma_mlp_scaleup} to the remaining $J-2$ hidden layers, we obtain the desired network output.
\end{proof}

Recall that, given vectors $\bv_1 \in \mathbb{R}^{d_1}$ and
$\bv_2 \in \mathbb{R}^{d_2}$ and a function
$\bbf : \mathbb{R}^{d_1+d_2} \to \mathbb{R}^m$, we write
$\bbf(\bv_1,\bv_2)$ for $\bbf(\operatorname{stack}(\bv_1,\bv_2))$.

\begin{lemma} \label{lemma_construct_firstmlp}
    Suppose Assumption~\ref{assumption_transformer_size_lower} holds.
    For sufficiently large $T$ and for any $\lambda \geq T^{-1}$,
    there exists a multi-head self-attention $\bm{\nu}_1 \in \mathcal{MLP}(N, N, L_{\mathrm{mlp}},N_{\mathrm{mlp}}, B)$ such that for any $t \in [T]$,
\begin{align*}
\bm{\nu}_1 : \left( \begin{matrix}
    \bx_1 \\
    \bx_2 \\
    \bx_3 \\
    \bx_4 \\
    \bx_5\\
    \bm{0}_{\tfrac{N}{2}- (r_1+1)m_1 -  (r_2+1)m_2}\\
    \bq_t
    \end{matrix}\right)
    \begin{array}{l}
            \in [-1,1]^{m_1}\\
            \in [-1,1]^{m_2}\\
            \in [-1,1]^{m_1}\\
            \in [-1,1]^{(r_1 - 1)m_1}\\
            \in [-1,1]^{r_2 m_2}\\
            \left. \right. \\        
            \in [-1,1]^{N/2}
        \end{array}
        \longmapsto \quad  
        \left(\begin{matrix}
            \by_1 \\
            \by_2 \\
            \frac{1}{\lambda} \bx_{5} \\
            \frac{1}{\lambda} \bx_{2} \\
            \frac{\sqrt{r_2}}{\lambda} \\
            T^3 \, \mathbb{I}(t \in [\log^2 T, T - \log^2 T])\\
        \bm{0}_{N - m_1 - (r_2+2)m_2-2}
        \end{matrix} \right)
    \begin{array}{l}
            \in \mathbb{R}^{m_1}\\
            \in \mathbb{R}^{m_2}\\
            \left. \right.\\
            \left. \right.\\
            \in \mathbb{R}\\
            \in \mathbb{R}\\
            \left. \right. 
        \end{array}
        ,
\end{align*}
with
\begin{align*}
    \big\|\by_{1} - \bg_0(\bx_4, \bx_1) \big\|_{\infty} \vee 
    \left\|\by_{2} - \exp\big( -  \bx_1^{\top} \, \bg_0(\bx_3, \bx_4)    \big) \bx_2 \right\|_{\infty} \lesssim (N_{\mathrm{mlp}})^{-\frac{2\beta_1}{m_1 r_1}}.
\end{align*}
\end{lemma}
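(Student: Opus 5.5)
We build $\bm{\nu}_1$ by running several sub-networks in parallel. Each sub-network reads only the coordinates it needs, and all of them are padded to the common depth $L_{\mathrm{mlp}}$ by identity layers. An identity layer on a possibly negative input is realised by $x=(x)_+-(-x)_+$, which costs two neurons per coordinate and keeps the parameters bounded. The total width is then the sum of the sub-widths. This stays below $N_{\mathrm{mlp}}$ once each sub-width is a constant fraction of $N_{\mathrm{mlp}}$ or polylogarithmic in $T$, because Assumption~\ref{assumption_transformer_size_lower} gives $N_{\mathrm{mlp}}\ge N\vee T^{\alpha}$ and $L_{\mathrm{mlp}}\ge \log^3(N_{\mathrm{mlp}}\vee T)$. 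The output blocks are produced as follows.

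\emph{First block ($\by_1$).} Apply Lemma~\ref{lemma_approx_smooth} to $\bg_0$ on $[-1,1]^{m_1r_1}$, evaluated at $\stack(\bx_4,\bx_1)$, with width a constant fraction of $N_{\mathrm{mlp}}$. This gives depth $\lceil\log_2 N_{\mathrm{mlp}}\rceil^2\le L_{\mathrm{mlp}}$ and error $\lesssim N_{\mathrm{mlp}}^{-2\beta_1/(m_1r_1)}$.

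\emph{Second block ($\by_2$).} Compose several pieces:
\begin{enumerate}[(a)]
\item a second copy of the Lemma~\ref{lemma_approx_smooth} network, now evaluated at $\stack(\bx_3,\bx_4)$ and clipped to $[-\delta_1,\delta_1]^{m_1}$ with two ReLUs per coordinate;
\item the $m_1$ products with the entries of $\bx_1$, approximated by Lemma~\ref{lemma_mlp_multiply};
\item the map $u\mapsto e^{-u}$ on $[-m_1\delta_1,m_1\delta_1]$, approximated by Lemma~\ref{lemma_mlp_approx_exp};
\item the $m_2$ products with the entries of $\bx_2$, again by Lemma~\ref{lemma_mlp_multiply}.
\end{enumerate}
For the pieces (b)--(d) choose depth $\asymp\log N_{\mathrm{mlp}}$, so that their errors are $\lesssim N_{\mathrm{mlp}}^{-1}\log^2 N_{\mathrm{mlp}}$ and their widths are polylogarithmic.

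\emph{Error propagation for $\by_2$.} The product maps are Lipschitz on bounded sets, and $e^{-u}$ is Lipschitz with a constant bounded by $e^{m_1\delta_1}$ on its domain. Hence the $\bg_0$-approximation error passes through with only a constant factor, and the second bound also follows. The clipping in (a) keeps the intermediate values inside the domains where Lemmas~\ref{lemma_mlp_multiply} and \ref{lemma_mlp_approx_exp} apply.

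\emph{Scaling blocks.} The outputs $\frac1\lambda\bx_5$ and $\frac1\lambda\bx_2$ are exact linear maps of the input. Write $1/\lambda=c/k$ with integers, or, more simply, pass $\bx$ through the identity and place the factor $1/\lambda\le T$ in the last weight. Weights are bounded by $B$, so an exact scaling by a factor up to $T$ is obtained from Lemma~\ref{lemma_mlp_scaleup}: $(MB)^L\ge T$ already for a few layers. Since $1/\lambda$ need not be an integer, we scale by $\lceil 1/\lambda\rceil$ and multiply the final layer by $\lambda^{-1}/\lceil\lambda^{-1}\rceil\in(0,1]$. The constant $\sqrt{r_2}/\lambda$ is produced the same way from the constant $1$, which a bias supplies. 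The block $T^3\,\mathbb{I}(t\in[\log^2T,T-\log^2T])$ comes exactly from Lemma~\ref{lemma_pos_mlp} applied to $\bq_t$, with $c=T^3\le(MB)^{J-2}$ and width $M\ge 1+2\log^2T$.

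\emph{Main obstacle.} The main difficulty is bookkeeping: checking that all parameters stay bounded by $B$, and that the depth and width budgets are met simultaneously. In particular Lemma~\ref{lemma_approx_smooth} only guarantees parameters bounded by a constant. This is handled by taking $T$ large, since then $B\ge 9\log^2T$ exceeds every such constant. Finally, the remaining output coordinates are zero, which is achieved by setting the corresponding rows of the last weight matrix and bias to zero.
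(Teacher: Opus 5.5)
Your construction follows the paper's proof step by step. You use two copies of the Lemma~\ref{lemma_approx_smooth} network for $\bg_0$, the product--exponential--product chain for $\by_2$ via Lemmas~\ref{lemma_mlp_multiply} and \ref{lemma_mlp_approx_exp}, Lemma~\ref{lemma_mlp_scaleup} for the $1/\lambda$ blocks and Lemma~\ref{lemma_pos_mlp} for the indicator, all stacked in parallel. Your explicit identity padding and the $\lceil 1/\lambda\rceil$ device are sensible additions. Clipping in (a) is equivalent to the paper's a priori bound $1+\delta_1$ on the $\bg_0$-network output. One step, however, fails as written: the accuracy you allot to pieces (b)--(d).

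Errors of order $N_{\mathrm{mlp}}^{-1}\log^2 N_{\mathrm{mlp}}$ are not $\lesssim N_{\mathrm{mlp}}^{-2\beta_1/(m_1r_1)}$ once $2\beta_1\ge m_1r_1$. Assumption~\ref{assumption_gh_smoothness}(i) allows any $\beta_1>0$, and in that regime your $\by_2$ bound is only $N_{\mathrm{mlp}}^{-1}\log^2 N_{\mathrm{mlp}}$, not the claimed one. Also, ``depth $\asymp\log N_{\mathrm{mlp}}$'' does not match Lemma~\ref{lemma_mlp_approx_exp}. That network has depth $M^2$ and error $M^2 2^{-M}$, so your stated error corresponds to $M\asymp\log N_{\mathrm{mlp}}$, i.e.\ depth $\asymp\log^2 N_{\mathrm{mlp}}$.

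The fix is to take $M=\lceil C\log_2 N_{\mathrm{mlp}}\rceil$ with $C>2\beta_1/(m_1r_1)$ in both lemmas. The product networks then have depth $O(\log N_{\mathrm{mlp}})$ and the exponential network has depth and width $O(\log^2 N_{\mathrm{mlp}})$, all within $L_{\mathrm{mlp}}\ge\log^3(N_{\mathrm{mlp}}\vee T)$ and $N_{\mathrm{mlp}}$. The errors become $O(N_{\mathrm{mlp}}^{-C}\log^2 N_{\mathrm{mlp}})$, which is $o(N_{\mathrm{mlp}}^{-2\beta_1/(m_1r_1)})$. The paper instead runs the product networks at depth $\lceil\log_2 N_{\mathrm{mlp}}\rceil^2$. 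This makes their error super-polynomially small ($N_{\mathrm{mlp}}^{-\log_2 N_{\mathrm{mlp}}}$) and removes any dependence on $\beta_1$.

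A smaller point: the clip in (a) controls only the input of (b), not later inputs. The output of (b) can leave $[-m_1\delta_1,m_1\delta_1]$ by the product error, and the output of (c) can leave the range of $e^{-u}$. So apply Lemma~\ref{lemma_mlp_approx_exp} on $[-m_1\delta_1-1,\,m_1\delta_1+1]$, and Lemma~\ref{lemma_mlp_multiply} in (d) with $c=1+e^{m_1\delta_1+1}$; this is analogous to the paper's constants $c_2,c_3$. Alternatively, insert further clips. With these enlarged domains your Lipschitz error propagation goes through unchanged.
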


\begin{proof}
    To construct $\by_1, \by_2$, we construct  $\bm{\Phi}_{1}, \bm{\Phi}_{2,1} \in \mathcal{MLP}(m_1 r_1, m_1, \lceil \log_2 N_{\mathrm{mlp}} \rceil^2, \lceil N_{\mathrm{mlp}}/4 \rceil, B)$ using Lemma~\ref{lemma_approx_smooth} such that
    for any $\bx_4 \in [-1,1]^{(r_1 - 1)m_1}$, any $\bx_1 \in [-1,1]^{m_1}$, and any $\bx_3 \in [-1,1]^{m_1}$,   
    \begin{align}
        \big\| \bm{\Phi}_{1}(\bx_4, \bx_1) - \bg_0(\bx_4, \bx_1) \big\|_{\infty} \lesssim (N_{\mathrm{mlp}})^{-\frac{2\beta_1}{m_1 r_1}},  \label{eq_nu_1}
    \end{align}
    and    $$\big\|\bm{\Phi}_{2,1}(\bx_3, \bx_4) - \bg_0(\bx_3, \bx_4) \big\|_{\infty} \lesssim (N_{\mathrm{mlp}})^{-\frac{2\beta_1}{m_1 r_1}}.$$
    Since $\|\bg_0(\bx_3, \bx_4)\|_{\infty} \leq \delta_1$, we have 
    $\|\bm{\Phi}_{2,1}(\bx_3, \bx_4)\|_{\infty} \leq c_1 \coloneq 1+\delta_1$.   
    By applying Lemma \ref{lemma_mlp_multiply} $m_1$-times and summing the results, we construct $\bm{\Phi}_{2,2} \in \mathcal{MLP}(2 m_1 , 1, \lceil \log_2 N_{\mathrm{mlp}} \rceil^2, 7 m_1, B)$ such that for any $\bx_1 \in [-1,1]^{m_1}$ and $\by \in [-c_1, c_1]^{m_1}$,
    $$
    \big|\bm{\Phi}_{2,2}(\bx_1, \by) - \bx_1^{\top} \by \big| \lesssim (N_{\mathrm{mlp}})^{-\log_2  N_{\mathrm{mlp}}}.$$
    From $|\bx_1^{\top} \by| \leq m_1 c_1$,
    we have $|\bm{\Phi}_{2,2}\big(\bx_1, \bm{\Phi}_{2,1}(\bx_3, \bx_4)\big)| \leq c_2 \coloneq  1 + m_1 c_1$.
    By applying Lemma \ref{lemma_mlp_approx_exp},
    we construct 
    $\bm{\Phi}_{2,3} \in \mathcal{MLP}(1, 1, \lceil 2 \log_2 N_{\mathrm{mlp}}\rceil^2, 33 \lceil 2 \log_2 N_{\mathrm{mlp}}\rceil^2, B)$   
    such that for any $y \in [-c_2, c_2]$,
    $$\Big|\bm{\Phi}_{2,3}(y) - \exp(-y) \Big| \lesssim 
    \lceil 2 \log_2 N_{\mathrm{mlp}}\rceil^4
    2^{- \lceil 2 \log_2 N_{\mathrm{mlp}}\rceil^2}
    \lesssim
    \lceil 2 \log_2 N_{\mathrm{mlp}}\rceil^4 (N_{\mathrm{mlp}})^{- 4 \log_2  N_{\mathrm{mlp}}}
    \lesssim (N_{\mathrm{mlp}})^{- \log_2  N_{\mathrm{mlp}}}.$$
    We have $|\bm{\Phi}_{2,3} \circ \bm{\Phi}_{2,2}\big(\bx_1, \bm{\Phi}_{2,1}(\bx_3, \bx_4)\big)| \leq c_3 \coloneq  1 + \exp(c_2)$.
    By applying Lemma \ref{lemma_mlp_multiply} $m_2$-times and parallelizing the results,
    we construct $\bm{\Phi}_{2,4} \in \mathcal{MLP}(m_2+1 , m_2, \lceil \log_2 N_{\mathrm{mlp}}\rceil^2, 7 m_2, B)$ such that for any $y \in [-c_3, c_3]$ and $\bx_2 \in [-1,1]^{m_2}$,
    $$\big\|\bm{\Phi}_{2,4}(y, \bx_2) - y \bx_2 \big\|_{\infty} \lesssim (N_{\mathrm{mlp}})^{-\log_2  N_{\mathrm{mlp}}}.$$
    To sum up, we can construct 
    $\bm{\Phi}_{2} \in \mathcal{MLP}((r_1+1)m_1 + m_2, m_2, 7 \lceil \log_2 N_{\mathrm{mlp}}\rceil^2, \lceil N_{\mathrm{mlp}}/2 \rceil, B)$ with
    $$\bm{\Phi}_{2}(\bx_1, \bx_2, \bx_3, \bx_4) \coloneq \bm{\Phi}_{2,4}\Big(  \bm{\Phi}_{2,3} \circ \bm{\Phi}_{2,2}\big( \bx_1, \bm{\Phi}_{2,1}(\bx_3, \bx_4)  \big), \bx_2 \Big),$$
    such that for any $\bx_1, \bx_3 \in [-1,1]^{m_1}$, $\bx_2 \in [-1,1]^{m_2}$, and $\bx_4 \in [-1,1]^{(r_1 - 1)m_1}$,
    \begin{align}
        \Big| \bm{\Phi}_{2}(\bx_1, \bx_2, \bx_3, \bx_4) - \exp\big( -  \bx_1^{\top} \, \bg_0(\bx_3, \bx_4)
      \big) \bx_2 \Big| \lesssim (N_{\mathrm{mlp}})^{-\frac{2\beta_1}{m_1 r_1}}.
     \label{eq_nu_2}
    \end{align}

 By Assumption~\ref{assumption_transformer_size_lower}, we have 
    $$B^{L_{\mathrm{mlp}}-1} \geq (\log^2 T)^{\log^3 T} \geq T \cdot T \geq \frac{\sqrt{r_2}}{\lambda} \geq \frac{1}{\lambda}.$$
Using Lemma \ref{lemma_mlp_scaleup}, 
    we can construct  
    $\bm{\Phi}_{3} \in \mathcal{MLP}((r_2+1)m_2+1, (r_2+1)m_2+1, L_{\mathrm{mlp}}, 2(r_2+1)m_2+2, B)$
    such that
    \begin{align}
        \bm{\Phi}_{3}(\bx_5, \bx_2, 1) = 
            \stack\Big(\frac{\bx_{5}}{\lambda},
            \frac{\bx_{2}}{\lambda},
            \frac{\sqrt{r_2}}{\lambda}\Big). \label{eq_nu_35}
    \end{align}
    Similarly, notice that 
    $(\lceil \tfrac{N_{\mathrm{mlp}}}{\log T}\rceil B)^{(\log N_{\mathrm{mlp}})} \geq N_{\mathrm{mlp}}^{(\log N_{\mathrm{mlp}})} \geq T^3$ since $N_{\mathrm{mlp}}\geq T^{\alpha}$ by Assumption~\ref{assumption_transformer_size_lower}.
    Using Lemma~\ref{lemma_pos_mlp},
    we can construct $\bm{\Phi}_{4} \in \mathcal{MLP}(\tfrac{N}{2},1,2+\lceil \log(N_{\mathrm{mlp}})\rceil,\lceil \tfrac{N_{\mathrm{mlp}}}{\log T}\rceil ,B)$
    such that
    \begin{align}
        \bm{\Phi}_{4}(\bq_t) =  T^3 \, \mathbb{I}\big(t \in [\log^2 T, T - \log^2 T]\big).
        \label{eq_nu_6}
    \end{align}
    By stacking the MLPs $(\bm{\Phi}_{1}, \bm{\Phi}_{2}, \bm{\Phi}_{3}, \bm{\Phi}_{4})$ and using \eqref{eq_nu_1}, \eqref{eq_nu_2}, \eqref{eq_nu_35} and \eqref{eq_nu_6}, we obtain the assertion. 
\end{proof}

Later, we apply Lemma \ref{lemma_construct_firstmlp} by setting
\[
\bx_1 = \bm{\phi}_1(\bX_t), \quad
\bx_2 = \bm{\phi}_2(\bX_t), \quad
\bx_3 = \bm{\phi}_1(\bX_{t-r_1}), \quad
\bx_4 = \stack(\bm{\phi}_1(\bX_{t-r_1+1}), \ldots, \bm{\phi}_1(\bX_{t-1})), \quad
\bx_5 = \bZ_{2,t}.
\]

\begin{lemma} \label{lemma_normalize}
    Given  $0<a<b$, $\delta>0,$ and input dimension $d$, there exist constants $c=c(a,b,\delta,d),$ $c'=c'(a,b,\delta,d)$ such that  for any sufficiently large $M$, there exists a multilayer perceptron $\bbf: \mathbb{R}^d \to \mathbb{R}^d$
with depth $\lceil \log_2 (4b^2 M / a^2) \rceil M + 2M$, width $M^3$, and parameters bounded in absolute value by $c,$ satisfying
    $$\sup_{\bx \in \mathbb{R}^d :  a \leq \|\bx\|_2 \leq b}\left\|\bbf(\bx)-\frac{\delta}{\|\bx\|_2} \bx \right\|_{\infty}\leq c'  M^2 2^{-M}.$$
\end{lemma}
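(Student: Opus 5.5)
The plan is to write the map as a composition
\[
\bx \longmapsto \big(\bx, \|\bx\|_2^2\big) \longmapsto \Big(\bx, \tfrac{1}{\sqrt{\|\bx\|_2^2}}\Big) \longmapsto \frac{\delta}{\|\bx\|_2}\bx ,
\]
and to realize each step with the multiplication and analytic-function networks of Lemmas~\ref{lemma_mlp_multiply} and \ref{lemma_1oversqrtx}. Since the input set satisfies $\|\bx\|_\infty\le b$, every coordinate lies in $[-b,b]$.

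\textbf{Step 1 (squared norm).} I apply Lemma~\ref{lemma_mlp_multiply} with $c=b$ and depth $M$ to each of the $d$ products $x_i\cdot x_i$. The resulting $d$ subnetworks run in parallel, and their outputs are summed in the last affine layer. This gives $s(\bx)$ with $|s(\bx)-\|\bx\|_2^2|\le d\,b^2 2^{-M+6}$. In parallel, I carry $\bx$ forward through the identity $x=(x)_+-(-x)_+$, which uses width $2d$. To keep the next input inside a compact interval, I clip $s$ to $[a^2/2,\,2b^2]$ with two ReLUs. For $M$ large the clipping is inactive on the relevant set, because $\|\bx\|_2^2\in[a^2,b^2]$.

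\textbf{Step 2 (inverse square root).} I apply Lemma~\ref{lemma_1oversqrtx} on the interval $[a^2/2,\,2b^2]$. Its depth is $\lceil\log_2(4b^2M/a^2)\rceil M$ and its width is $7\lceil 4b^2M/a^2\rceil^2\le M^3$ for large $M$. The output $u$ satisfies $|u-s^{-1/2}|\lesssim M^22^{-M}$. Since $y\mapsto y^{-1/2}$ is Lipschitz on $[a^2/2,2b^2]$ with a constant depending only on $a$, the error from Step 1 propagates linearly. Hence $|u-\|\bx\|_2^{-1}|\lesssim M^22^{-M}$, and $u\in[0,\,2/a]$ after a further clipping.

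\textbf{Step 3 (rescaling).} I use $d$ more copies of Lemma~\ref{lemma_mlp_multiply} with $c=\max(b,2/a)$ and depth $M$ to compute $u\cdot x_i$, and then multiply by $\delta$ in the final linear layer. The triangle inequality gives
\[
\big|\delta u x_i-\delta x_i/\|\bx\|_2\big|\le \delta b\,\big|u-\|\bx\|_2^{-1}\big| ,
\]
and the product error contributes $\delta c^2 2^{-M+6}$. Both are $\lesssim M^22^{-M}$.

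\textbf{Parameter bookkeeping.} The total depth is $M+\lceil\log_2(4b^2M/a^2)\rceil M+M$ plus a few clipping layers, and these extra layers can be absorbed by the slack in the second $M$. The width is at most $M^3$ for large $M$. All parameters are bounded by constants in $(a,b,\delta,d)$. The step that needs the most care is this bookkeeping: the identity channels must be padded to a common depth through ReLU pass-throughs, and the clipping must keep the intermediate values in the domains where Lemmas~\ref{lemma_mlp_multiply} and \ref{lemma_1oversqrtx} apply. The approximation arguments themselves are routine.
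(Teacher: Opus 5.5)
Your proposal is correct and matches the paper's proof essentially step for step: the paper computes $\|\bx\|_2^2$ with $d$ parallel copies of Lemma~\ref{lemma_mlp_multiply}, applies Lemma~\ref{lemma_1oversqrtx} for the inverse square root, and multiplies back (scaled by $\delta$) with $d$ more copies of Lemma~\ref{lemma_mlp_multiply} while carrying $\bx$ forward through ReLU pass-throughs; it then concludes with the same three-term triangle inequality. Your clipping layers are an optional safeguard that the paper omits, and your interval $[a^2/2,2b^2]$ is the correct one to use, since the paper's stated interval $[da^2/2,2db^2]$ has left endpoint above $a^2$ once $d\ge 3$ and so misses $\|\bx\|_2^2$ when $\|\bx\|_2$ is near $a$.
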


\begin{proof}
    First, applying Lemma \ref{lemma_mlp_multiply} $d$ times and summing the results shows the existence of 
    a multilayer perceptron 
    $g_1 : \mathbb{R}^d \to \mathbb{R}$ with depth $M$, width $7d$, and parameters bounded in absolute value by a constant
depending only on $b$ such that
    $$\sup_{\bx \in [-b,b]^d} 
    \big|g_1(\bx)-\|\bx\|_2^2 \big| \lesssim 2^{-M}.$$    
    Second, by Lemma~\ref{lemma_1oversqrtx}, there exists 
    a multilayer perceptron 
    $g_2 : \mathbb{R} \to \mathbb{R}$ with depth $\lceil \log_2 (4b^2 M / a^2) \rceil M$, width $7 \lceil 4 b^2 M / a^2 \rceil^2$, and parameters bounded in absolute value by a constant
depending only on $(a, b, d)$ such that
    $$\sup_{x \in [d a^2/2, 2db^2]}\left|g_2(x)-\frac{1}{\sqrt{x}}\right| \lesssim M^2 2^{-M}.$$
    Third, applying Lemma \ref{lemma_mlp_multiply} $d$ times, multiplying by $\delta$ at each step, and parallelizing the results, shows the existence of 
    a multilayer perceptron 
    $g_3 : \mathbb{R}^{d+1} \to \mathbb{R}^d$ with depth $M$, width $7d$, and parameters bounded in absolute value by a constant
depending only on $(a, b, \delta, d)$ such that
    $$\sup_{\bx \in [-b,b]^d, y \in [(3db^2)^{-1/2}, (d a^2/3)^{-1/2}] } 
    \big\|g_3(\bx,y)- (\delta y) \cdot \bx \big\|_{\infty} \lesssim 2^{-M}.$$ 
    Finally, we consider a multilayer perceptron $\bbf :\mathbb{R}^d \to \mathbb{R}^d$ defined by
    $$\bbf( \bx ) \coloneq g_3\big( \bx, g_2 \circ g_1(\bx)\big),$$
    which has depth $\lceil \log_2 (4b^2 M / a^2) \rceil M + 2M$, width $\max(7d, 7 \lceil 4 b^2 M / a^2 \rceil^2)+d \leq M^3$, and parameters bounded in absolute value by a constant
depending only on $(a, b,  \delta, d)$. 
    Here, the additional $d$ neurons in each layer are used to preserve the input $\bx \in [-b,b]^d$ via the equality $(x+b)_+ - b = x$ for $x \in [-b,b]$.

    For sufficiently large $M$, if $\bx \in \mathbb{B}^d(b) \setminus \mathbb{B}^d(a)$, then we have $g_1(\bx) \in [da^2/2, 2db^2]$ and $g_2 \circ g_1(\bx) \in [(3db^2)^{-1/2}, (da^2/3)^{-1/2}]$.  
    Thus, the upper bound of the approximation error is given by
    \begin{align*}
        \sup_{\bx \in \mathbb{B}^d(b) \setminus \mathbb{B}^d(a)}\left\|\bbf(\bx)-\frac{\delta}{\|\bx\|_2} \bx \right\|_{\infty}  
        &=
        \sup_{\bx \in \mathbb{B}^d(b) \setminus \mathbb{B}^d(a)}\left\|\bg_3\Big( \stack \big( \bx,  g_2 \circ g_1(\bx)\big)\Big)-\frac{\delta}{\|\bx\|_2} \bx \right\|_{\infty} \\
        &\leq  \sup_{\bx \in \mathbb{B}^d(b) \setminus \mathbb{B}^d(a)}\left\|\bg_3\Big( \stack \big( \bx, g_2 \circ g_1(\bx)\big)\Big) -  \delta \big(g_2 \circ g_1(\bx)\big) \bx \right\|_{\infty}\\
        & \quad +
        \sup_{\bx \in \mathbb{B}^d(b) \setminus \mathbb{B}^d(a)}\left\| \delta \big(g_2 \circ g_1(\bx)\big) \bx - \delta \big(g_1(\bx)\big)^{-1/2}  \bx \right\|_{\infty}\\
        & \quad +
        \sup_{\bx \in \mathbb{B}^d(b) \setminus \mathbb{B}^d(a)}\left\|\delta \big(g_1(\bx)\big)^{-1/2}\bx - \delta \big(\|\bx\|_2^2\big)^{-1/2} \cdot \bx  \right\|_{\infty}\\
        &\lesssim 2^{-M} + M^2 2^{-M} + 2^{-M} \lesssim M^2 2^{-M}.
    \end{align*}
\end{proof}

\subsection{Proof of Lemma \ref{lemma_approx_firstpart}}
\label{prove_lemma_approx_firstpart}

Let
$$
M \coloneq  \left[\begin{matrix}
\bm{\phi}_1(\bX_1) & \bm{\phi}_1(\bX_2) & \cdots & \bm{\phi}_1(\bX_T) \\
\bm{\phi}_2(\bX_1) & \bm{\phi}_2(\bX_2) & \cdots & \bm{\phi}_2(\bX_T) \\
\bm{0}_{\tfrac{N}{2} -m_1-m_2} & \bm{0}_{\tfrac{N}{2} -m_1-m_2} & \cdots & \bm{0}_{\tfrac{N}{2} -m_1-m_2} \\
\bq_1 & \bq_2 & \cdots & \bq_T 
\end{matrix}\right] = \left( 
\begin{matrix}
    \bm{\phi}_1(\bX_t)\\
    \bm{\phi}_2(\bX_t)\\
    \bm{0}_{\tfrac{N}{2} - m_1 - m_2}\\
    \bq_t
\end{matrix} \right)_{t \in [T]}
\begin{array}{l}
            \in \mathbb{R}^{m_1}\\
            \in \mathbb{R}^{m_2}\\
            \left. \right.\\
            \in \mathbb{R}^{N/2}
        \end{array}
        $$
and introduce
$\bm{\mu}_{1,0}: \mathbb{R}^{N \times T} \to \mathbb{R}^{N \times T}$, 
    $$\bm{\mu}_{1,0} : \left(\begin{matrix}
\bx_{1,t} \\
\bx_{2,t} \\
\bm{0}_{\tfrac{N}{2} -m_1-m_2} \\
\bq_t 
\end{matrix}\right)_{t \in [T]} 
        \begin{array}{l}
            \in \mathbb{R}^{m_1}\\
            \in \mathbb{R}^{m_2}\\
            \left. \right.\\
            \in \mathbb{R}^{N/2}
        \end{array}
        \longmapsto \quad \left(\begin{matrix}
    \bx_{1,t} \\
    \bx_{2,t} \\
    \bx_{1,t-r_1} \\
    \bx_{1,t-r_1+1} \\
    \vdots \\
    \bx_{1,t-1} \\
    \bx_{2,t-r_2} \\
    \bx_{2,t-r_2+1} \\
    \vdots \\
    \bx_{2,t-1} \\
    \bm{0}_{\tfrac{N}{2}- (r_1+1)m_1 -  (r_2+1)m_2}  \\
    \bq_t 
\end{matrix}\right)_{t \in [T]}
        \begin{array}{l}
            \in \mathbb{R}^{m_1}\\
            \in \mathbb{R}^{m_2}\\
            \in \mathbb{R}^{m_1}\\
            \in \mathbb{R}^{m_1}\\
            \vdots\\
            \in \mathbb{R}^{m_1}\\
            \in \mathbb{R}^{m_2}\\
            \in \mathbb{R}^{m_2}\\
            \vdots\\
            \in \mathbb{R}^{m_2}\\
            \left. \right.\\
            \in \mathbb{R}^{N/2}\\          
        \end{array}.
        $$
Using the definition of $\bZ_{1,t}$ and $\bZ_{2,t}$ in \eqref{def_Z}, the map $\bm{\mu}_{1,0}$ applied to $M$ has the form
\begin{align}
    \bm{\mu}_{1,0} (M) = \left(
    \begin{matrix}
        \bm{\phi}_1(\bX_t)\\
        \bm{\phi}_2(\bX_t)\\ 
        \bZ_{1,t}\\
        \bZ_{2,t}\\
        \bm{0}_{\tfrac{N}{2}- (r_1+1)m_1 -  (r_2+1)m_2}\\
        \bq_t
    \end{matrix}
    \right)_{t \in [T]}
    =
    \left(
    \begin{matrix}
        \bm{\phi}_1(\bX_t)\\
        \bm{\phi}_2(\bX_t)\\ 
        \bm{\phi}_1(\bX_{t-r_1})\\
        \stack\big( \bm{\phi}_1(\bX_{t-r_1+1}), \ldots, \bm{\phi}_1(\bX_{t-1}) \big)\\
        \bZ_{2,t}\\
        \bm{0}_{\tfrac{N}{2}- (r_1+1)m_1 -  (r_2+1)m_2}\\
        \bq_t
    \end{matrix}
    \right)_{t \in [T]} 
     \begin{array}{l}
            \in \mathbb{R}^{m_1}\\
            \in \mathbb{R}^{m_2}\\
            \in \mathbb{R}^{m_1}\\
            \in \mathbb{R}^{(r_1 - 1)m_1}\\
            \in \mathbb{R}^{r_2 m_2}\\
            \left. \right. \\        
            \in \mathbb{R}^{N/2}
    \end{array}
    .
    \label{eq.bcidsvd}
\end{align}
Introducing moreover the map $\bm{\nu}_{1,0} : \mathbb{R}^{N \times T} \to \mathbb{R}^{N \times T},$
$$
\bm{\nu}_{1,0} : \left( \begin{matrix}
    \bx_{1,t} \\
    \bx_{2,t} \\
    \bx_{3,t} \\
    \bx_{4,t} \\
    \bx_{5,t}\\
    \bm{0}_{\tfrac{N}{2}- (r_1+1)m_1 -  (r_2+1)m_2}\\
    \bq_t
    \end{matrix}\right)_{t \in [T]} 
            \begin{array}{l}
            \in \mathbb{R}^{m_1}\\
            \in \mathbb{R}^{m_2}\\
            \in \mathbb{R}^{m_1}\\
            \in \mathbb{R}^{(r_1 - 1)m_1}\\
            \in \mathbb{R}^{r_2 m_2}\\
            \left. \right. \\        
            \in \mathbb{R}^{N/2}
        \end{array}
        \longmapsto \quad  
    \left(\begin{matrix}
        \bg_0(\bx_{4,t}, \bx_{1,t}) \\
        \exp\big(-\bx_{1,t}^{\top}  \bg_0(\bx_{3,t}, \bx_{4,t}) \big) \bx_{2,t} \\
        \frac{1}{\lambda} \bx_{5,t} \\
        \frac{1}{\lambda} \bx_{2,t} \\
         \frac{\sqrt{r_2}}{\lambda}\\
        T^3 \, \mathbb{I}(t \in [\log^2 T, T - \log^2 T])\\
        \bm{0}_{N - m_1 - (r_2+2)m_2-2} \end{matrix}\right)_{t \in [T]}
        \begin{array}{l}
            \in \mathbb{R}^{m_1}\\
            \in \mathbb{R}^{m_2}\\
            \in \mathbb{R}^{r_2 m_2}\\
            \in \mathbb{R}^{m_2}\\
            \in \mathbb{R}\\
            \in \mathbb{R}\\
            \left. \right.
        \end{array}
$$
yields
\begin{align*}
\bm{\nu}_{1,0} \circ \bm{\mu}_{1,0} \left(M\right) &= \left(\begin{matrix}
    \bg_0(\bZ_{1,t+1}) \\
    \exp\big[-\bg_0(\bZ_{1,t} )^{\top} \bm{\phi}_1(\bX_t)   \big] \bm{\phi}_2(\bX_t)\\
    \frac{1}{\lambda} \bZ_{2, t}\\
    \frac{1}{\lambda} \bm{\phi}_2(\bX_t) \\
    \frac{\sqrt{r_2}}{\lambda} \\
    T^3 \, \mathbb{I}(t \in [\log^2 T, T - \log^2 T])\\
    \bm{0}_{N - m_1 - (r_2+1)m_2-2}
    \end{matrix}\right)_{t \in [T]}\\
    &=
    \left(\begin{matrix}
    \bg_0(\bZ_{1,t+1}) \\
    \exp\big[-\bg_0(\bZ_{1,t} )^{\top} \bm{\phi}_1(\bX_t)   \big] \bm{\phi}_2(\bX_t)\\
    \frac{1}{\lambda} \bm{\phi}_2(\bX_{t-r_2})\\
    \frac{1}{\lambda} \stack\big(\bm{\phi}_2(\bX_{t-r_2+1}), \ldots, \bm{\phi}_2(\bX_{t-1}) \big)\\
    \frac{1}{\lambda} \bm{\phi}_2(\bX_t) \\
    \frac{\sqrt{r_2}}{\lambda} \\
    T^3 \, \mathbb{I}(t \in [\log^2 T, T - \log^2 T])\\
    \bm{0}_{N - m_1 - (r_2+1)m_2-2}
    \end{matrix}\right)_{t \in [T]}
    \begin{array}{l}
            \in \mathbb{R}^{m_1}\\
            \in \mathbb{R}^{m_2}\\
            \in \mathbb{R}^{m_2}\\
            \in \mathbb{R}^{(r_2-1) m_2}\\
            \in \mathbb{R}^{m_2}\\
            \in \mathbb{R}\\
            \in \mathbb{R}\\
            \left. \right.
        \end{array}
\end{align*}
Finally, we define the map $\bm{\mu}_{2,0}: \mathbb{R}^{N \times T} \to \mathbb{R}^{N}$,
$$
\bm{\mu}_{2,0} : \left(
\begin{matrix}
\bx_{1,t}\\
\bx_{2,t}\\
\bx_{3,t}\\
\bx_{4,t}\\
\bx_{5,t}\\
x_{6,t}\\
x_{7,t}\\
\bm{0}_{N - m_1 - (r_2+2)m_2-2} \\
\end{matrix}\right)_{t \in [T]}
        \begin{array}{l}
            \in \mathbb{R}^{m_1}\\
            \in \mathbb{R}^{m_2}\\
            \in \mathbb{R}^{m_2}\\
            \in \mathbb{R}^{(r_2-1) m_2}\\
            \in \mathbb{R}^{m_2}\\
            \in \mathbb{R}\\
            \in \mathbb{R}\\
            \left. \right.
        \end{array}
        \longmapsto \quad
        \left(
\begin{matrix}
\bx_{1,T}\\
\bx_{2,T}\\
\bx_{3,T}\\
\bx_{4,T}\\
\bx_{5,T}\\
x_{6,T}\\
x_{7,T}\\
\bm{0}_{N - m_1 - (r_2+3)m_2-2} \\
\by
\end{matrix}\right)
        \begin{array}{l}
            \left. \right.\\
            \left. \right.\\
            \left. \right.\\
            \left. \right.\\
            \left. \right.\\
            \left. \right.\\
            \left. \right.\\
            \left. \right.\\
            \in \mathbb{R}^{m_2}
        \end{array}
$$
with
$$
     \by \coloneqq    
     \ol{\sum}_{s \in [\log^2 T, T-\log^2 T]} \Big( \exp \big( \big\langle \stack(\bx_{3,s}, \bx_{4,s}),  \stack(\bx_{4,T}, \bx_{5,T})\big\rangle - x_{6,s} x_{6,T}\big) \,;\, 
\bx_{2,s}\Big).
$$
Then we have
\begin{align*}
    \bm{\mu}_{2,0} \circ \bm{\nu}_{1,0} \circ \bm{\mu}_{1,0} \left(M\right) 
    &= 
    \left(\begin{matrix}
    \bg_0(\bZ_{1,t+1}) \\
   \bullet_{N-m_1-m_2}\\
        \ol{\sum}_{s \in [\log^2 T, T-\log^2 T]}
        \Big( \exp \big(\frac{-r_2+\bZ_{2,s}^{\top} \bZ_{2,T+1}}{\lambda^2}\big) \, ; \, \exp\big(-\bg_0(\bZ_{1,s})^{\top} \bm{\phi}_1(\bX_{s})\big) \, \bm{\phi}_2(\bX_{s}) \Big)
    \end{matrix}\right)\\
    &= \left( \begin{matrix}
    \bg_0(\bZ_{1,t+1}) \\
    \bullet_{N-m_1-m_2}\\
    \ol{\sum}_{s \in [\log^2 T, T-\log^2 T]}
        \Big( K_\lambda(\bZ_{2,s}, \bZ_{2,T+1}) \, ; \, \exp\big(-\bg_0(\bZ_{1,s})^{\top} \bm{\phi}_1(\bX_{s})\big) \, \bm{\phi}_2(\bX_{s}) \Big)
    \end{matrix}\right)
\end{align*}
with $K_\lambda(\bz,\bz')\coloneq \exp(-\Vert \bz - \bz' \Vert^2/(2\lambda^2))$,
provided that $\|\bZ_{2,t}\|_2 = 1$ for $t \in \{r_2+1,\ldots,T+1\}$.
See \eqref{def_weighted_average} for the definition of $\ol{\sum}$.
With these new definitions, we can restate Lemma~\ref{lemma_approx_firstpart}.

\begin{lemma}[Restatement of Lemma \ref{lemma_approx_firstpart}] \label{lemma_approx_firstpart_rewrite}
    Suppose Assumptions \ref{assumption_gh_smoothness} and \ref{assumption_transformer_size_lower} hold. 
    For sufficiently large $T$ and 
    for any $\lambda \geq T^{-1}$, there exist multi-head self-attentions $\bm{\mu}_{1}, \bm{\mu}_2 \in \mathcal{MA}(N,H,B)$ and
    a multilayer perceptron $\bm{\nu}_1 \in \mathcal{MLP}(N, L_{\mathrm{mlp}}, N_{\mathrm{mlp}}, B)$ such that
    $$\left\| \big( \bm{\mu}_2 \circ \bm{\nu}_1 \circ \bm{\mu}_{1}(M) \big)_{:,T} 
    -
    \bm{\mu}_{2,0} \circ \bm{\nu}_{1,0} \circ \bm{\mu}_{1,0}(M)
    \right\|_{\infty, m_1, m_2} \lesssim (N_{\mathrm{mlp}})^{-\frac{2\beta_1}{m_1 r_1}} + \frac{1}{T},$$
    where $\| (x_1,x_2,\ldots,x_N)^{\top} \|_{\infty, m_1, m_2} \coloneqq \max_{i \in ([N] \setminus \{m_1+1, \ldots, N-m_2\})} |x_i|$. 
\end{lemma}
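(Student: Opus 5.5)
We take $\bm{\mu}_1$ from Lemma~\ref{lemma_approx_rgram}, $\bm{\nu}_1$ from Lemma~\ref{lemma_construct_firstmlp}, and $\bm{\mu}_2$ from Lemma~\ref{lemma_represent_NW_similar}. We then propagate the errors layer by layer against the idealized maps $\bm{\mu}_{1,0},\bm{\nu}_{1,0},\bm{\mu}_{2,0}$.

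\textbf{First attention layer.} By Lemma~\ref{lemma_approx_rgram}, $\bm{\mu}_1(M)$ differs from $\bm{\mu}_{1,0}(M)$ entrywise by $\lesssim T^{1-\log T}$ for all columns $t\geq (r_1\vee r_2)+1$. All entries of both outputs stay in $[-1,1]$, so they lie in the domain of the MLP construction.

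\textbf{First MLP layer.} Apply $\bm{\nu}_1$ from Lemma~\ref{lemma_construct_firstmlp}. The output of $\bm{\nu}_1\circ\bm{\mu}_1(M)$ differs from $\bm{\nu}_{1,0}\circ\bm{\mu}_{1,0}(M)$ in two ways:
\begin{itemize}
\item[(i)] the MLP approximation error $\lesssim N_{\mathrm{mlp}}^{-2\beta_1/(m_1r_1)}$ in the first two blocks;
\item[(ii)] the propagated input perturbation of size $T^{1-\log T}$.
\end{itemize}
For (ii), the blocks computing $\bg_0$ and $\exp(\cdot)$ are Lipschitz on bounded sets. The Lipschitz constant of the network approximants can be as large as $(BN_{\mathrm{mlp}})^{L_{\mathrm{mlp}}+1}$ by \eqref{eq.MLP_Lipschitz_x}. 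It is therefore cleaner to argue by uniform continuity of the targets together with the sup-norm MLP bound on a slightly enlarged cube; alternatively, the superpolynomial smallness $T^{1-\log T}$ may dominate the Lipschitz constant for fixed architecture sizes, but I would avoid relying on this. The scaled blocks $\bx/\lambda$ are exact linear maps with factor $\lambda^{-1}\le T$, so their error is $\le T^{2-\log T}$. The indicator block is computed exactly from $\bq_t$, which is untouched by $\bm{\mu}_1$.

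\textbf{Second attention layer.} The attention output of Lemma~\ref{lemma_represent_NW_similar} is a weighted average. Its scores are
\begin{align*}
\lambda^{-2}\big(\bZ_{2,s}^\top\bZ_{2,T+1}-r_2\big)+T^3\,\mathbb{I}\big(s\in[\log^2T,T-\log^2T]\big),
\end{align*}
and they are perturbed by at most $\lambda^{-2}\cdot T^{2-\log T}\lesssim T^{4-\log T}$. By Lemma~\ref{lemma_softmax_lip}, the softmax weights then move by $\lesssim T^{5-\log T}$ in $\ell^1$. The values $\bY_s$ are bounded by $e^{\delta_1m_1}$ and carry error $\lesssim N_{\mathrm{mlp}}^{-2\beta_1/(m_1r_1)}$.

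The main step is removing the indicator. The $T^3$ bias reduces the softmax over $[T]$ to one over $[\log^2T,T-\log^2T]$. The first-block terms satisfy
\begin{align*}
\lambda^{-2}\big|\bZ_{2,s}^\top\bZ_{2,T+1}-r_2\big|\le 2r_2\lambda^{-2}\le 2r_2T^2\ll T^3,
\end{align*}
so each excluded index has relative weight at most $e^{-T^3+4r_2T^2}$, and the total excluded mass is $\le Te^{-T^3/2}$. This is negligible. It also disposes of the early columns $s\le r_2$, where $\|\bZ_{2,s}\|_2^2\neq r_2$ and the delay approximation is invalid. We use here that $\log^2T\ge (r_1\vee r_2)+1$ for large $T$.

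Within the retained range, the identity
\begin{align*}
\exp\big(-x_{6,s}x_{6,T}+\lambda^{-2}\bZ_{2,s}^\top\bZ_{2,T+1}\big)=K_\lambda(\bZ_{2,s},\bZ_{2,T+1})
\end{align*}
holds, using $\|\bZ_{2,s}\|^2=\|\bZ_{2,T+1}\|^2=r_2$. Hence the last $m_2$ entries equal the Nadaraya--Watson weights. The first $m_1$ entries are copied through the skip connection, so the error from the MLP step carries over.

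Summing the contributions gives
\begin{align*}
N_{\mathrm{mlp}}^{-2\beta_1/(m_1r_1)}+T^{5-\log T}+Te^{-T^3/2}\lesssim N_{\mathrm{mlp}}^{-2\beta_1/(m_1r_1)}+T^{-1}.
\end{align*}
The hardest point is bookkeeping the $\lambda^{-2}$ amplification of the attention-score errors, which the superpolynomial delay accuracy absorbs.
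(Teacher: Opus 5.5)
Your proposal is correct and follows essentially the paper's argument. The paper uses the same three constructions and the same error sources, organized as a triangle inequality in a specific order:
\begin{itemize}
\item[(a)] first it swaps $\bm{\mu}_2$ for $\bm{\mu}_{2,0}$ on the actual input, removing the indicator via Lemma~\ref{lemma_ignorefirstlast};
\item[(b)] then it swaps $\bm{\nu}_1$ for $\bm{\nu}_{1,0}$, which costs only the sup-norm MLP error;
\item[(c)] finally it swaps $\bm{\mu}_1$ for $\bm{\mu}_{1,0}$, propagating the delay error only through the ideal maps (H\"older continuity of $\bg_0$ plus the softmax Lipschitz bound with the $\lambda^{-2}$ factor).
\end{itemize}
Step (c) is exactly your add-and-subtract device for avoiding the network's Lipschitz constant. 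One ordering fix: your score-perturbation bound $\lambda^{-2}T^{2-\log T}$ fails for the early key columns $s\le r_1\vee r_2$, where Lemma~\ref{lemma_approx_rgram} gives no accuracy, so discard the excluded indices before comparing softmax weights, as the paper does.
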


The proof invokes the following lemma, whose proof is deferred to the end of this section.

\begin{lemma} \label{lemma_ignorefirstlast}
    For any $\alpha_- < \alpha_+$, $a_1,\ldots, a_T \in [\alpha_-, \alpha_+]$, $\beta>0$,  $\by_1,\ldots,\by_T \in [-\beta,\beta]^{m_2}$ and $c>0$, 
    \begin{align*}
    \left\|
    \ol{\sum}_{s \in [T]} \Big( \exp \big(a_s + c \, \mathbb{I}(s \in [\log^2 T, T - \log^2 T])\big) \, ; \,  \by_s \Big) 
    -
    \ol{\sum}_{s \in [\log^2 T, T - \log^2 T]} \Big( \exp \big(a_s\big) \, ; \,  \by_s \Big) 
    \right\|_{\infty} \\
    \leq \frac{8 \beta \log^2 T}{ T \exp(c + \alpha_- - \alpha_+)}.
    \end{align*}
\end{lemma}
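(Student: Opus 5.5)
Write $I := [\log^2 T, T-\log^2 T]\cap\mathbb{Z}$ and $I^c := [T]\setminus I$. Note that $|I^c| \le 2\log^2 T + 2$, so $|I^c| \le 4\log^2 T$ for $T$ large. (For small $T$, with $I$ empty, the second average is undefined and the claim is vacuous, so we assume $I \neq \emptyset$.) Set $u_s := \exp(a_s)$ and $\gamma := e^{c}$, and write
\[
S_I := \sum_{s\in I} u_s, \qquad S_{I^c} := \sum_{s\in I^c} u_s .
\]
The first weighted average equals
\[
\bar\by_1 = \frac{\gamma\sum_{s\in I} u_s\by_s + \sum_{s\in I^c}u_s\by_s}{\gamma S_I + S_{I^c}},
\]
and the second average is $\bar\by_2 = S_I^{-1}\sum_{s\in I}u_s\by_s$.

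The plan is an exact convex-combination identity. Let
\[
\theta := \frac{S_{I^c}}{\gamma S_I + S_{I^c}}\in[0,1], \qquad \bar\by_{I^c} := S_{I^c}^{-1}\sum_{s\in I^c}u_s\by_s .
\]
Then $\bar\by_1 = (1-\theta)\bar\by_2 + \theta\,\bar\by_{I^c}$, and therefore
\[
\bar\by_1 - \bar\by_2 = \theta\,(\bar\by_{I^c}-\bar\by_2).
\]
Both $\bar\by_{I^c}$ and $\bar\by_2$ are weighted averages of vectors in $[-\beta,\beta]^{m_2}$. Hence $\|\bar\by_{I^c}-\bar\by_2\|_\infty\le 2\beta$, and it remains to bound $\theta$.

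For $\theta$, use $u_s\in[e^{\alpha_-},e^{\alpha_+}]$. This gives $S_{I^c}\le |I^c|\,e^{\alpha_+}$ and $S_I\ge |I|\,e^{\alpha_-}$. Dropping $S_{I^c}$ from the denominator,
\[
\theta\le \frac{|I^c|\,e^{\alpha_+}}{e^{c}\,|I|\,e^{\alpha_-}} .
\]
For large $T$ we have $|I|\ge T-2\log^2T-1\ge T/2$ and $|I^c|\le 4\log^2T$, so $\theta\le 8\log^2 T\,/\,(T e^{c+\alpha_--\alpha_+})$. Combining with the factor $2\beta$ gives a bound of $16\beta\log^2T/(Te^{c+\alpha_--\alpha_+})$.

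This is a factor $2$ worse than the stated bound, so the constants need tightening. The main (and essentially only) obstacle is this bookkeeping. To fix it I would bound $\|\bar\by_{I^c}-\bar\by_2\|_\infty$ by $2\beta$ but count more carefully: $|I^c|\le 2\log^2T+2\le 2\log^2 T\cdot(1+o(1))$ and $|I|\ge T(1-o(1))$. This gives $\theta\le (2+o(1))\log^2T/(Te^{c+\alpha_--\alpha_+})$, and hence a total of at most $(4+o(1))\beta\log^2T/(\cdot)\le 8\beta\log^2T/(\cdot)$ for $T$ sufficiently large, which suffices in the paper's regime.
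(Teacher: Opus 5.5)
Your argument is correct and is essentially the paper's proof. The paper's ratio $(P-Q)/P$ is exactly your $\theta$, and its two sums of weight differences (over $I$ and over $I^c$) each equal $\theta$. So it bounds the same quantity $2\beta\theta$, only with the cruder counts $|I^c|\le 2\log^2 T$ and $|I|\ge T/2$; that is where its constant $8$ comes from, rather than your asymptotic $4$.

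Your restriction to sufficiently large $T$ is not a defect. The paper's proof tacitly needs it too, and the inequality as literally stated fails for small $T$. For $T=50$ one has $|I|=19$, $|I^c|=31$ and $31/19>4\log^2 T/T$. Taking $a_s=\alpha_+$, $\by_s=\beta\bm{1}_{m_2}$ on $I^c$, $a_s=\alpha_-$, $\by_s=-\beta\bm{1}_{m_2}$ on $I$, and $c$ large then violates the stated bound.
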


\begin{proof}[Proof of Lemma~\ref{lemma_approx_firstpart_rewrite}]
Choosing $\bm{\mu}_{1}$, $\bm{\nu}_1$ and $\bm{\mu}_2$ as in Lemmas \ref{lemma_approx_rgram}, \ref{lemma_construct_firstmlp} and \ref{lemma_represent_NW_similar}, the error can be decomposed via the triangle inequality into three terms,
\begin{align}    &\left\|\big(\bm{\mu}_2 \circ \bm{\nu}_1 \circ \bm{\mu}_{1} ( M )\big)_{:,T} - \bm{\mu}_{2,0} \circ \bm{\nu}_{1,0} \circ \bm{\mu}_{1,0} (M) \right\|_{\infty, m_1, m_2}\notag \\
    & \leq \left\|\big(\bm{\mu}_2 \circ \bm{\nu}_1 \circ \bm{\mu}_{1} (M)\big)_{:,T} - \bm{\mu}_{2,0} \circ \bm{\nu}_1 \circ \bm{\mu}_{1} (M) \right\|_{\infty, m_1, m_2}\label{tmp_15} \\ 
    &\quad +
    \left\|\bm{\mu}_{2,0} \circ \bm{\nu}_1 \circ \bm{\mu}_{1} \left(M\right)
    -
    \bm{\mu}_{2,0} \circ \bm{\nu}_{1,0} \circ \bm{\mu}_{1} (M)
    \right\|_{\infty, m_1, m_2} \label{tmp_16}\\
    &\quad + \left\|\bm{\mu}_{2,0} \circ \bm{\nu}_{1,0} \circ \bm{\mu}_{1} (M)
    -
    \bm{\mu}_{2,0} \circ \bm{\nu}_{1,0} \circ \bm{\mu}_{1,0} (M)
    \right\|_{\infty, m_1, m_2}. \label{tmp_17}
\end{align}
To derive some properties of the vectors $\bm{\mu}_{1}(M)$ and
$\bm{\nu}_1 \circ \bm{\mu}_{1}(M)$, set
\begin{align} \label{tmp_18}
\bm{\mu}_{1} (M)
\eqcolon 
\left(
\begin{matrix}
\ba_{1,t}\\
\ba_{2,t}\\
\ba_{3,t}\\
\ba_{4,t}\\
\ba_{5,t} \\
\ba_{6,t}\\
\ba_{7,t}\\
\end{matrix}\right)_{t \in [T]}
\begin{array}{l}
     \in \mathbb{R}^{m_1}  \\
      \in \mathbb{R}^{m_2}  \\
     \in \mathbb{R}^{m_1}  \\
      \in \mathbb{R}^{(r_1-1)m_1}  \\
     \in \mathbb{R}^{r_2 m_2}  \\
      \in \mathbb{R}^{N/2 - (r_1+1)m_1 - (r_2 + 1)m_2}  \\
     \in \mathbb{R}^{N/2}
\end{array},
\,
\bm{\nu}_1 \circ \bm{\mu}_{1} (M)
\eqcolon
\left(
\begin{matrix}
\bb_{1,t}\\
\bb_{2,t}\\
\bb_{3,t}\\
\bb_{4,t}\\
\bb_{5,t}\\
b_{6,t} \\
b_{7,t}\\
\bb_{8,t}\\
\end{matrix}\right)_{t \in [T]}
\begin{array}{l}
     \in \mathbb{R}^{m_1}  \\
      \in \mathbb{R}^{m_2}  \\
     \in \mathbb{R}^{m_2}  \\
     \in \mathbb{R}^{(r_2-1) m_2}  \\
      \in \mathbb{R}^{m_2}  \\
     \in \mathbb{R}  \\
      \in \mathbb{R}\\
     \in \mathbb{R}^{N - m_1 - (r_2+2)m_2 - 2}
\end{array}.
\end{align}
By Lemma~\ref{lemma_approx_rgram} and the definition of $M$, we have 
\begin{align}
    \| \stack(\ba_{3,t}, \ba_{4,t}) - \bZ_{1,t}\|_{\infty} \lesssim T^{1 - \log T}, \qquad \|\ba_{5,t} - \bZ_{2,t}\|_{\infty} \lesssim T^{1 - \log T} \label{tmp_22}
\end{align}
for all $t \in \{(r_1 \vee r_2) + 1, \ldots, T\}$. Moreover, using that $\|\by_{a,t}\|_\infty\leq 1$ in Lemma~\ref{lemma_approx_rgram},
\begin{align}
    \ba_{1,t} = \bm{\phi}_1(X_t), \,
    \ba_{2,t} = \bm{\phi}_2(X_t), \,
    \| \stack(\ba_{3,t}, \ba_{4,t}, \ba_{5,t} )\|_{\infty} \leq 1, \, 
    \ba_{6,t} = \bm{0}_{\tfrac{N}{2}- (r_1+1)m_1 -  (r_2+1)m_2}, \,
    \ba_{7,t}  = \bq_t \label{tmp_4}
\end{align}
for all $t \in [T]$.
It follows from  Lemma~\ref{lemma_construct_firstmlp} and  $\|\bg_0\|_{\infty} \leq \delta_1$ that for all sufficiently large $N_{\mathrm{mlp}}$ and any $t\in [T],$
\begin{align}
    \|\bb_{1,t}\|_{\infty} &\leq 1 + \delta_1, \quad \|\bb_{2,t}\|_{\infty} \leq 1+\exp(\delta_1 m_1), \quad \|\bb_{3,t}\|_{\infty} \leq \frac{1}{\lambda}, \quad \|\bb_{4,t}\|_{\infty} \leq \frac{1}{\lambda}, \quad \|\bb_{5,t}\|_{\infty} \leq \frac{1}{\lambda},
    \label{tmp_20} \\
b_{6,t} &= \frac{\sqrt{r_2}}{\lambda}, \quad
b_{7,t} = T^3 \, \mathbb{I}(t \in [\log^2 T, T - \log^2 T]), \quad
\bb_{8,t} = \bm{0}_{N - m_1 - (r_2+2)m_2-2}.
\label{tmp_21}
\end{align}

We now derive an upper bound for \eqref{tmp_15}. The vector semi-norm $\|\cdot\|_{\infty, m_1, m_2}$ only depends on the first $m_1$ components and on the last $m_2$ components. By the definitions of $\bm{\mu}_2$ and $\bm{\mu}_{2,0}$, the first $m_1$ components of the last column of $\bm{\mu}_2$ and the first $m_1$ components of $\bm{\mu}_{2,0}$ are both  $\bx_{1,T}$ such that an error can occur only in the last $m_2$ components. By \eqref{tmp_18} and the expression for the last $m_s$ components in \eqref{eq.38fgew}, we find
\begin{align*}
\eqref{tmp_15} = &\Bigg\| 
\ol{\sum}_{s \in [T]} \Big( \exp \big( \big\langle \stack(\bb_{3,s}, \bb_{4,s}),  \stack(\bb_{4,T}, \bb_{5,T})\big\rangle - b_{6,s} b_{6,T} + \frac{\lambda}{\sqrt{r_2}} b_{7,s} b_{6,T} \big) \, ; \, \bb_{2,s} \Big)
\\
& \quad -
\ol{\sum}_{s \in [\log^2 T, T-\log^2 T]} \Big( \exp \big( \big\langle \stack(\bb_{3,s}, \bb_{4,s}),  \stack(\bb_{4,T}, \bb_{5,T})\big\rangle - b_{6,s} b_{6,T} + \frac{\lambda}{\sqrt{r_2}} b_{7,s} b_{6,T} \big) \, ; \, \bb_{2,s} \Big)
\Bigg\|_{\infty}.    
\end{align*} 
By assumption, $\lambda \geq T^{-1},$ which is equivalent to $-T^2 \leq -1/\lambda^2.$ Combining this with \eqref{tmp_20} and \eqref{tmp_21}, for any $t \in [T],$
\[
-(m_2 + 1) r_2 T^2 \leq \frac{-r_2 m_2 - r_2}{\lambda^2} \leq \big\langle \stack(\bb_{3,t}, \bb_{4,t}),  \stack(\bb_{4,T}, \bb_{5,T})\big\rangle - b_{6,t} b_{6,T} \leq \frac{r_2 m_2 - r_2}{\lambda^2} \leq (m_2 - 1) r_2 T^2
\] 
and $\frac{\lambda}{\sqrt{r_2}} b_{7,t} b_{6,T} = T^3 \, \mathbb{I}(t \in [\log^2 T, T - \log^2 T])$.
By \eqref{tmp_20} and \eqref{tmp_21}, we can apply Lemma \ref{lemma_ignorefirstlast} with $(\alpha_-, \alpha_+, \beta, c)=(-(m_2+1) r_2 T^2, (m_2-1) r_2 T^2, 1 + \exp(\delta_1 m_1), T^3)$, which yields
\begin{align}
    \eqref{tmp_15} \leq \frac{8 \big(1 + \exp(\delta_1 m_1) \big) \log^2 T}{T \exp(T^3 - 2 m_2 r_2 T^2)} \lesssim \frac{1}{T}. \label{tmp_23}
\end{align} 
To derive now an upper bound of \eqref{tmp_16}, we analyze $\bm{\nu}_{1,0} \circ \bm{\mu}_{1} (M) - \bm{\nu}_{1} \circ \bm{\mu}_{1} (M)$. 
Set
$$\bm{\nu}_{1,0} \circ \bm{\mu}_{1} (M)
\eqcolon
\left(
\begin{matrix}
\tilde \bb_{1,t}\\
\tilde \bb_{2,t}\\
\tilde \bb_{3,t}\\
\tilde \bb_{4,t}\\
\tilde \bb_{5,t}\\
\tilde b_{6,t} \\
\tilde b_{7,t}\\
\tilde \bb_{8,t}\\
\end{matrix}\right)_{t \in [T]}
\begin{array}{l}
     \in \mathbb{R}^{m_1}  \\
      \in \mathbb{R}^{m_2}  \\
     \in \mathbb{R}^{m_2}  \\
     \in \mathbb{R}^{(r_2-1) m_2}  \\
      \in \mathbb{R}^{m_2}  \\
     \in \mathbb{R}  \\
      \in \mathbb{R}\\
     \in \mathbb{R}^{N - m_1 - (r_2+2)m_2 - 2}
\end{array}.$$
By \eqref{tmp_4}, every component of $\bm{\mu}_{1}(M)$ is bounded in absolute value by $1$.
By the definition of $\bm{\nu}_{1,0}$ and the definition of $\bm{\nu}_{1}$ in Lemma~\ref{lemma_construct_firstmlp}, only the first $m_1+m_2$ entries are mapped to different values and 
$$\stack\big(\bb_{3,t}, \bb_{4,t}, \bb_{5,t}, b_{6,t}, b_{7,t}, \bb_{8,t}\big) = \stack\big(\tilde \bb_{3,t}, \tilde \bb_{4,t}, \tilde \bb_{5,t}, \tilde b_{6,t}, \tilde b_{7,t}, \tilde \bb_{8,t}\big).$$ Moreover, Lemma~\ref{lemma_construct_firstmlp} yields the bounds
\begin{align*}
    &\|\bb_{1,t} - \tilde{\bb}_{1,t}\|_{\infty} \vee 
    \|\bb_{2,t} - \tilde{\bb}_{2,t} \|_{\infty} \lesssim (N_{\mathrm{mlp}})^{-\frac{2\beta_1}{m_1 r_1}}.
\end{align*}
Hence, we get
\begin{align}
    \eqref{tmp_16} & = \left\| \left(\begin{matrix}
        \bb_{1,T}\\
          \bullet_{N -m_1 - m_2}\\
    \ol{\sum}_{s \in [\log^2 T, T-\log^2 T]} 
    \Big( \exp \big( \big\langle \stack(\bb_{3,s}, \bb_{4,s}),  \stack(\bb_{4,T}, \bb_{5,T})\big\rangle - b_{6,s} b_{6,T} \big) \, ; \, \bb_{2,s} \Big)    
     \end{matrix}\right)\right. \quad \,  \nonumber\\ 
      & \qquad - 
     \left.\left(\begin{matrix}
        \tilde \bb_{1,T}\\
          \bullet_{N -m_1 - m_2}\\
    \ol{\sum}_{s \in [\log^2 T, T-\log^2 T]}
    \Big( \exp \big( \big\langle \stack(\tilde \bb_{3,s}, \tilde \bb_{4,s}),  \stack(\tilde \bb_{4,T}, \tilde \bb_{5,T})\big\rangle - \tilde b_{6,s} \tilde b_{6,T} \big) \, ; \,  \tilde \bb_{2,s} \Big)
     \end{matrix}\right) 
     \right\|_{\infty, m_1, m_2}  \nonumber \\
     &\lesssim (N_{\mathrm{mlp}})^{-\frac{2\beta_1}{m_1 r_1}}. \label{tmp_24}
\end{align}

Finally, we derive an upper bound of \eqref{tmp_17}.
For any $N \times T$ matrix of the form
$$
(\bx_t)_{t \in [T]} 
\coloneqq
\left( \begin{matrix}
    \bx_{1,t} \\
    \bx_{2,t} \\
    \bx_{3,t} \\
    \bx_{4,t} \\
    \bx_{5,t}\\
    \bx_{6,t}\\
    \bm{0}_{\tfrac{N}{2}- (r_1+1)m_1 -  (r_2+1)m_2}\\
    \bq_t
    \end{matrix}\right)_{t \in [T]}
    \begin{array}{l}
     \in [-1,1]^{m_1}  \\
      \in [-1,1]^{m_2}  \\
     \in [-1,1]^{m_1}  \\
     \in [-1,1]^{(r_1-1) m_1}  \\
     \in [-1,1]^{m_2}  \\
     \in [-1,1]^{(r_2-1) m_2}  \\
        \left. \right. \\
     \in [-1,1]^{N/2}
\end{array},
$$
recall that 
\[
    \bm{\nu}_{1,0}\big((\bx_t)_{t \in [T]}\big) 
    = 
    \left(\begin{matrix}
        \bg_0(\bx_{4,t}, \bx_{1,t}) \\
        \exp\big(-\bx_{1,t}^{\top}  \bg_0(\bx_{3,t}, \bx_{4,t}) \big) \bx_{2,t} \\
        \frac{1}{\lambda} \bx_{5,t} \\
        \frac{1}{\lambda} \bx_{6,t} \\
        \frac{1}{\lambda} \bx_{2,t} \\
         \frac{\sqrt{r_2}}{\lambda}\\
        T^3 \, \mathbb{I}(t \in [\log^2 T, T - \log^2 T])\\
        \bm{0}_{N - m_1 - (r_2+2)m_2-2} \end{matrix}\right)_{t \in [T]}
        \begin{array}{l}
            \in \mathbb{R}^{m_1}\\
            \in \mathbb{R}^{m_2}\\
            \in \mathbb{R}^{m_2}\\
            \in \mathbb{R}^{(r_2-1) m_2}\\
            \in \mathbb{R}^{m_2}\\
            \in \mathbb{R}\\
            \in \mathbb{R}\\
            \left. \right.
        \end{array}.
\]
and $\bm{\mu}_{2,0} \circ \bm{\nu}_{1,0}((\bx_t)_{t \in [T]}) = \stack(\bg_0(\bx_{4,T}, \bx_{1,T}), \bullet_{N - m_1 - m_2}, \bv( (\bx_t)_{t \in [T]}) )$ with
\begin{align*}
    &\bv\big( (\bx_t)_{t \in [T]} \big)\\ 
    &\coloneqq \ol{\sum}_{s \in [\log^2 T, T-\log^2 T]}
    \Bigg( \exp\bigg(\frac{\langle \stack(\bx_{5,s}, \bx_{6,s}), \stack(\bx_{6,T}, \bx_{2,T})\rangle -  r_2}{\lambda^2}\bigg) \, ; \,  \frac{\bx_{2,s}}{\exp\big(\bx_{1,s}^{\top}  \bg_0(\bx_{3,s}, \bx_{4,s})\big)}
    \Bigg).
\end{align*}
Considering another $N \times T$ matrix
$$
(\bx'_t)_{t \in [T]} 
\coloneqq
\left( \begin{matrix}
    \bx'_{1,t} \\
    \bx'_{2,t} \\
    \bx'_{3,t} \\
    \bx'_{4,t} \\
    \bx'_{5,t}\\
    \bx'_{6,t}\\
    \bm{0}_{\tfrac{N}{2}- (r_1+1)m_1 -  (r_2+1)m_2}\\
    \bq_t
    \end{matrix}\right)_{t \in [T]}
    \begin{array}{l}
     \in [-1,1]^{m_1}  \\
      \in [-1,1]^{m_2}  \\
     \in [-1,1]^{m_1}  \\
     \in [-1,1]^{(r_1-1) m_1}  \\
     \in [-1,1]^{m_2}  \\
     \in [-1,1]^{(r_2-1) m_2}  \\
        \left. \right. \\
     \in [-1,1]^{N/2}
\end{array},
$$
we have
\begin{align*}
    &\left\| \bm{\mu}_{2,0} \circ \bm{\nu}_{1,0}\big( (\bx_t)_{t \in [T]} \big) - \bm{\mu}_{2,0} \circ \bm{\nu}_{1,0}\big( (\bx'_t)_{t \in [T]} \big) \right\|_{\infty, m_1, m_2}\\
&= \big\| \bg_0(\bx_{4,T}, \bx_{1,T}) - \bg_0(\bx'_{4,T}, \bx'_{1,T}) \big\|_{\infty} 
\vee \big\| \bv\big( (\bx_t)_{t \in [T]} \big) - \bv\big( (\bx'_t)_{t \in [T]} \big) \big\|_{\infty}  \\
&\lesssim
\big\|\stack(\bx_{4,T}, \bx_{1,T}) - \stack(\bx_{4,T}', \bx_{1,T}') \big\|_{\infty}^{\beta_1 \land 1}\\
& \quad + \frac{1}{\lambda^2} \max_{t \in [\log^2 T, T-\log^2 T] \cup \{T\}} \big\|\stack(\bx_{2,t}, \bx_{5,t}, \bx_{6,t}) - \stack(\bx'_{2,t}, \bx'_{5,t}, \bx'_{6,t})  \big\|_{\infty}\\
& \quad +
\max_{t \in [\log^2 T, T-\log^2 T]} \big\| \stack(\bx_{1,t}, \bx_{2,t}) - \stack(\bx'_{1,t}, \bx'_{2,t}) \big\|_{\infty}\\
& \quad +
\Big( \max_{t \in [\log^2 T, T-\log^2 T]} \big\|\stack(\bx_{3,t}, \bx_{4,t}) - \stack(\bx'_{3,t}, \bx'_{4,t}) \big\|_{\infty} \Big)^{\beta_1 \land 1},
\end{align*}
using the $\beta_1$-H\"older smoothness of $\bg_0$ and the Lipschitz property
$\|\operatorname{softmax}(\bz)-\operatorname{softmax}(\bz')\|_{\infty}
\leq 2\|\bz-\bz'\|_{\infty}$ that holds for any real-valued vectors $\bz$ and $\bz'$ of the same dimension (see Lemma \ref{lemma_softmax_lip}).
Combining this with \eqref{eq.bcidsvd}, \eqref{tmp_22}, and \eqref{tmp_4}, we obtain
\begin{align}
    \eqref{tmp_17} \lesssim T^{(\beta_1 \land 1)(1 - \log T)} + \frac{T^{1 - \log T}}{\lambda^2} \lesssim \frac{1}{T} \label{tmp_25}
\end{align}
for all sufficiently large $T$.
From \eqref{tmp_23}, \eqref{tmp_24} and \eqref{tmp_25}, we get the assertion. 
\end{proof}

\begin{proof}[Proof of Lemma~\ref{lemma_ignorefirstlast}]
    With 
    \begin{align*}
        P &\coloneq \sum_{i=1}^T \exp \big(a_i + c \, \mathbb{I}(i \in [\log^2 T, T - \log^2 T])\big)\\
        Q &\coloneq \sum_{i \in [\log^2 T,T - \log^2 T ]} \exp \big(a_i + c\big),
    \end{align*}
    we get
    $$\frac{P-Q}{P} = \frac{\sum_{i \in ([T] \setminus [\log^2 T, T - \log^2 T])} \exp (a_i)}{\sum_{i=1}^T \exp \big(a_i + c \, \mathbb{I}(i \in [\log^2 T, T - \log^2 T])\big)} \leq \frac{\exp(\alpha_+) \cdot 2\log^2 T}{\exp(\alpha_- + c) \cdot T/2},$$
    and
    $$\left|\frac{1}{Q} - \frac{1}{P}\right| \leq \frac{4\log^2 T}{T \exp(c + \alpha_- - \alpha_+) Q}.$$
    Hence, for every $s \in [\log^2 T, T - \log^2 T]$,
    we have  
    \begin{align*}
        \left|\frac{   \exp \big(a_s + c\big) }{\sum_{i=1}^T \exp \big(a_i + c \, \mathbb{I}(i \in [\log^2 T, T - \log^2 T])\big)} - 
        \frac{ \exp (a_s)}{\sum_{i \in [\log^2 T, T - \log^2 T]} \exp (a_i)}\right|
        =  \exp \big(a_s + c\big) \left|\left(\frac{1}{P} - \frac{1}{Q}\right)\right|\\
        \leq \frac{4 \log^2 T }{T \exp(c + \alpha_- - \alpha_+)} \frac{\exp \big(a_s + c\big)}{Q},
    \end{align*}
    and summing over $s$,
    \begin{align}
        \sum_{s \in [\log^2 T, T - \log^2 T]} \left|\frac{   \exp \big(a_s + c \, \mathbb{I}(s \in [\log^2 T, T - \log^2 T])\big) }{\sum_{i=1}^T \exp \big(a_i + c \, \mathbb{I}(i \in [\log^2 T, T - \log^2 T])\big)} - 
        \frac{ \exp (a_s)}{\sum_{i \in [\log^2 T, T - \log^2 T]} \exp (a_i)}\right| \notag \\
        \leq \frac{4 \log^2 T }{T \exp(c + \alpha_- - \alpha_+) }. \label{tmp_10}
    \end{align}
    On the other hand, for every $s \notin [\log^2 T, T - \log^2 T]$, we have
    \begin{align*}
        \frac{   \exp (a_s) }{\sum_{i=1}^T \exp \big(a_i + c \, \mathbb{I}(i \in [\log^2 T, T - \log^2 T])\big)}
    & \leq \frac{\exp(\alpha_+)}{ \exp(\alpha_-+c) \cdot T/2},
    \end{align*}
    and hence
    \begin{align}
        \sum_{s \in ([T] \setminus [\log^2 T, T - \log^2 T])} \frac{   \exp \big(a_s + c \, \mathbb{I}(s \in [\log^2 T, T - \log^2 T])\big) }{\sum_{i=1}^T \exp \big(a_i + c \, \mathbb{I}(i \in [\log^2 T, T - \log^2 T])\big)}
    \leq \frac{4 \log^2 T}{ T \exp(c + \alpha_- - \alpha_+)}. \label{tmp_11}
    \end{align}
    Using \eqref{tmp_10}, \eqref{tmp_11} and $\|\by_s\|_{\infty} \leq \beta$, we obtain
\begin{align*}
& \left\|
    \ol{\sum}_{s \in [T]} \Big( \exp \big(a_s + c \, \mathbb{I}(s \in [\log^2 T, T - \log^2 T])\big) \, ; \,  \by_s \Big) 
    -
    \ol{\sum}_{s \in [\log^2 T, T - \log^2 T]} \Big( \exp \big(a_s\big) \, ; \,  \by_s \Big) 
    \right\|_{\infty}\\
&= \left\|\sum_{s=1}^T \frac{   \exp \big(a_s + c \, \mathbb{I}(s \in [\log^2 T, T - \log^2 T])\big) }{\sum_{i=1}^T \exp \big(a_i + c \, \mathbb{I}(i \in [\log^2 T, T - \log^2 T])\big)} \by_s   
    -
    \sum_{s \in [\log^2 T, T - \log^2 T]} \frac{ \exp (a_s)}{\sum_{i \in [\log^2 T, T - \log^2 T]} \exp (a_i)}  \by_s \right\|_{\infty} \\
    &\leq 
    \sum_{s \in [\log^2 T, T - \log^2 T]}   \left\|\frac{   \exp \big(a_s + c\big) }{\sum_{i=1}^T \exp \big(a_i + c \, \mathbb{I}(i \in [\log^2 T, T - \log^2 T])\big)} \by_s   
    -
    \frac{ \exp (a_s)}{\sum_{i \in [\log^2 T, T - \log^2 T]} \exp (a_i)}  \by_s \right\|_{\infty}\\
    & \quad +
    \sum_{s \in ([T] \setminus [\log^2 T, T - \log^2 T])}  \left\|\frac{   \exp \big(a_s + c \, \mathbb{I}(s \in [\log^2 T, T - \log^2 T])\big) }{\sum_{i=1}^T \exp \big(a_i + c \, \mathbb{I}(i \in [\log^2 T, T - \log^2 T])\big)} \by_s   \right\|_{\infty}\\
    & \leq \frac{8 \beta \log^2 T}{ T \exp(c + \alpha_- - \alpha_+)}.
    \end{align*}    
\end{proof}

\section{Directional Convergence of the Nadaraya--Watson Estimator}
\label{section_proof_lemma_key_NW}

In this section, we prove Lemma~\ref{lemma_key_NW_version2}. 
Unless  specified otherwise, the implicit constants in the notation $\lesssim$ and $\tilde{O}$ depend only on $(m_1, m_2, r_1, r_2, \delta_1, \delta_2, \beta_2, \sup_{\bh \in \mathcal{H}} \| \bh \|_{\mathcal{C}^{\beta_2}})$.

\subsection{Proof of Lemma~\ref{lemma_key_NW_version2}}

Recall that $\mathcal{E}_W \coloneqq \{\be_1, \ldots, \be_W\}$ denotes the set of tokens. The following lemma characterizes the Markovian dynamics of the token generation process.
Its proof is deferred to Appendix~\ref{appendix_proof_markovian}.

\begin{lemma}\label{lemma: markov}
For $r\in\mathbb{N}$, let $\mathbf{Z}=(\bZ_n)_{n\in\mathbb{N}_0}$ be a homogeneous $\mathcal{E}_W^r$-valued Markov chain with transition probability kernel
\begin{align*}
P(\bx,\bz)&\coloneq \mathbb{P}(\mathbf{Z}_{n+1}=\bz\vert \mathbf{Z}_n= \bx)
\\
&\,=c(\bx) \exp(g(\bx,\bz))\prod_{i=1}^{r-1} \mathbb{I}(\bz_i=\bx_{i+1}),\quad \bx = (\bx_1, \ldots, \bx_r) \in \mathcal{E}_W^r, \quad \bz = (\bz_1, \ldots, \bz_r) \in \mathcal{E}_W^r,
\end{align*}
where $g\colon \mathcal{E}_W^{r}\times \mathcal{E}_W^{r} \to \mathbb{R} $ is a bounded function and $c(\cdot)$ is the normalizing constant. Then $\bZ$ admits a unique stationary distribution $\rho$ satisfying
    \begin{align*}
 (e^{2\Vert g\Vert_\infty} W)^{-r}\leq    \rho(\bz)\leq (e^{-2\Vert g\Vert_\infty} W)^{-r},\quad \text{for all} \ \bz \in \mathcal{E}_W^r.
\end{align*}
Furthermore, for any $n=1,2,\ldots,$
\begin{align*}
    \sup_{\bx \in \mathcal{E}_W^r}\Vert P^n(\bx,\cdot)-\rho\Vert_{\operatorname{TV}}&\leq \Big(1-e^{-2\Vert g\Vert_\infty r}\Big)^{n/r}.
\end{align*}
\end{lemma}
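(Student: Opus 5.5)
The plan is to use a Doeblin (minorization) argument for the $r$-step kernel $P^r$, together with a direct comparison of stationary probabilities.

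\textbf{Step 1: bounds on the one-step kernel.} For fixed $\bx$, only states $\bz$ with $\bz_i=\bx_{i+1}$ for $i\le r-1$ are reachable, and such a $\bz$ is determined by $\bz_r$, which ranges over $W$ tokens. Hence
\[
c(\bx)^{-1}=\sum_{w} \exp\big(g(\bx,(\bx_2,\ldots,\bx_r,\be_w))\big)\in \big[We^{-\Vert g\Vert_\infty},We^{\Vert g\Vert_\infty}\big].
\]
Every allowed transition therefore has probability in $\big[e^{-2\Vert g\Vert_\infty}/W,\; e^{2\Vert g\Vert_\infty}/W\big]$.

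\textbf{Step 2: bounds on the $r$-step kernel.} After $r$ steps the whole window has been refreshed. For any $\bx,\bz\in\mathcal{E}_W^r$ there is exactly one path of length $r$ from $\bx$ to $\bz$, namely appending $\bz_1,\ldots,\bz_r$ successively. Multiplying the one-step bounds gives
\[
(e^{2\Vert g\Vert_\infty}W)^{-r}\le P^r(\bx,\bz)\le (e^{-2\Vert g\Vert_\infty}W)^{-r}.
\]

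\textbf{Step 3: existence, uniqueness and bounds for $\rho$.} Because $P^r$ has all entries positive, the chain is irreducible and aperiodic on a finite space, so a unique stationary $\rho$ exists. Stationarity for $P^r$ gives $\rho(\bz)=\sum_\bx\rho(\bx)P^r(\bx,\bz)$, which is a convex combination of values lying in the interval from Step 2. This yields the stated two-sided bounds on $\rho$.

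\textbf{Step 4: mixing.} The minorization reads $P^r(\bx,\cdot)\ge \epsilon\,\nu(\cdot)$, with $\nu$ uniform on $\mathcal{E}_W^r$ and $\epsilon=W^r(e^{2\Vert g\Vert_\infty}W)^{-r}=e^{-2\Vert g\Vert_\infty r}$. The standard Doeblin coupling argument (equivalently, a Dobrushin contraction coefficient of at most $1-\epsilon$) gives
\[
\sup_\bx\Vert P^{kr}(\bx,\cdot)-\rho\Vert_{\TV}\le (1-\epsilon)^k .
\]

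\textbf{Step 5: general $n$.} Write $n=kr+j$ with $0\le j<r$. Total variation distance to $\rho$ is non-increasing under application of $P$, so
\[
\Vert P^n(\bx,\cdot)-\rho\Vert_{\TV}\le \sup_{\by}\Vert P^{kr}(\by,\cdot)-\rho\Vert_{\TV}\le (1-\epsilon)^{\lfloor n/r\rfloor}.
\]

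The main obstacle is that this argument yields the exponent $\lfloor n/r\rfloor$, whereas the claim has $n/r$. I expect the intended exponent to absorb the rounding, but a finer argument is needed to get $n/r$ exactly. I would first try to show that the Dobrushin coefficient of each one-step kernel $P$ is at most $(1-\epsilon)^{1/r}$ on the relevant coordinates. If that fails, I would interpret the bound as holding up to the floor, which is all that is needed downstream.
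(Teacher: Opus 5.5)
Steps 1--5 are correct and follow the paper's route. The paper bounds $c(\bx)$ exactly as in your Step 1, gets $P^r(\bx,\bz)\ge (e^{2\Vert g\Vert_\infty}W)^{-r}$ by Chapman--Kolmogorov along the unique $r$-step path, and derives the bounds on $\rho$ from $\rho=\rho P^r$. Where you run the Doeblin coupling by hand, the paper instead cites Meyn--Tweedie (Theorems 16.0.2 and 16.2.4) for existence, uniqueness and the rate.

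The exponent discrepancy you flag is not a defect of your argument. The bound with exponent $n/r$ is false whenever $r\ge 2$ and $r\nmid n$, so no finer argument can give it. Take $g\equiv 0$ and $W\ge 2$. The right-hand side is $0^{n/r}=0$ for every $n\ge 1$. But $\rho$ is uniform on $\mathcal{E}_W^r$, while $P(\bx,\cdot)$ is uniform on the $W$ states $(\bx_2,\ldots,\bx_r,\be_w)$, so $\Vert P(\bx,\cdot)-\rho\Vert_{\operatorname{TV}}=1-W^{1-r}>0$.

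This is not an artefact of $\Vert g\Vert_\infty=0$. The upper bound on $\rho$ gives $\Vert P(\bx,\cdot)-\rho\Vert_{\operatorname{TV}}\ge 1-e^{2\Vert g\Vert_\infty r}W^{1-r}$, whereas $(1-e^{-2\Vert g\Vert_\infty r})^{1/r}\to 0$ as $\Vert g\Vert_\infty\to 0$.

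Your first idea, bounding the one-step Dobrushin coefficient by $(1-\epsilon)^{1/r}$, cannot work either. If $(\bx_2,\ldots,\bx_r)\neq(\bx_2',\ldots,\bx_r')$, then $P(\bx,\cdot)$ and $P(\bx',\cdot)$ have disjoint supports, so that coefficient equals $1$.

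The correct statement is your $(1-\epsilon)^{\lfloor n/r\rfloor}$, which is all a minorization of $P^r$ can give. For $\Vert g\Vert_\infty>0$ it implies $\sup_{\bx}\Vert P^n(\bx,\cdot)-\rho\Vert_{\operatorname{TV}}\le (1-\epsilon)^{-1}c_{\mathbf M}^{\,n}$ with $c_{\mathbf M}=(1-\epsilon)^{1/r}$. The constant prefactor depends only on $r$ and $\Vert g\Vert_\infty$, not on $W$. It is harmless in Lemma~\ref{lemma_key_NW}, whose hypothesis should be weakened accordingly, since uniform ergodicity enters there only through $c_{\mathbf M}^{\min(\mathcal T)}$, the $\beta$-mixing bound and the pseudo-spectral gap. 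So keep your fallback and drop the first idea.
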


The next lemma shows that, for a uniformly exponentially ergodic Markov chain on a countable state space, if the direction of the conditional expectation of a vector-valued function satisfies a smoothness condition, then the direction of the Nadaraya--Watson estimator based on a truncated trajectory converges to that of the corresponding conditional expectation.  
For a unit vector $\bm{\theta} \in \mathbb{S}^{d-1}$, define the projection matrices $P_{\bm{\theta}} \coloneq \bm{\theta} \bm{\theta}^{\top}$
and
$P^{\perp}_{\bm{\theta}} \coloneq I_{d} - \bm{\theta} \bm{\theta}^{\top}$, where
$P_{\bm{\theta}}$ projects onto the direction of $\bm{\theta}$ and $P^{\perp}_{\bm{\theta}}$ projects onto the subspace orthogonal to $\bm{\theta}$.
See \eqref{def_weighted_average} for the definition of $\ol{\sum}$.

\begin{lemma} \label{lemma_key_NW} 
Let $\mathbf{M}\coloneq 
(\bm{\xi}_t)_{t\in\mathbb{N}_0}$ be a homogeneous Markov chain on a countable set $\mathcal{D},$ which admits a unique stationary distribution $\rho.$ Assume that $\mathbf{M}$ is uniformly exponentially ergodic, i.e.\ there exists a constant $c_{\mathbf{M}}\in(0,1)$ such that for any $n=0,1,\ldots$
\[\sup_{\bx \in\mathcal{D}}\Vert P^n(\bx,\cdot)-\rho\Vert_{\TV}\leq c_{\mathbf{M}}^n,\]
 where $P^n$ denotes the $n$-step transition probability of $\mathbf{M}$. 
 We write $\mathbb{P}^{\bx}$ and $\mathbb{E}^{\bx}$ for the probability and expectation of the Markov chain $\mathbf{M}$ started at $\bm{\xi}_0 = \bx$.
 For a bounded function $\bbf\colon \mathcal{D}\to \mathbb{R}^d$ define 
\[
\bbm(\bx) \coloneqq \mathbb{E}^{\bx}[\bbf(\bm{\xi}_1)], \qquad \bm{\theta}(\bx) \coloneq \frac{\bbm(\bx)}{\big\|\bbm(\bx)\big\|_2}, \quad \bx\in\mathcal{D},
\]
where we assume that $\inf_{\bx \in \mathcal{D}}\|\bbm(\bx)\|_{2} >0.$ In addition, assume that for some bounded function $\bbs : \mathcal{D} \to \mathbb{R}^{m}$,  $\beta \in (0,1]$ and $c_{\btheta}>0$
  it holds
  \begin{align}
      \Vert \btheta(\bx)-\btheta(\bx')\Vert_2\leq c_{\btheta}\Vert \bbs(\bx)-\bbs(\bx')\Vert_2^\beta,\quad \quad \text{for all} \ \bx,\bx'\in\mathcal{D}.
      \label{cond_lip_theta_s}
  \end{align}
Define 
    $$\wh{\bbm}_{\lambda}( \cdot ) \coloneqq
    \ol{\sum}_{t \in [\log^2 T, T - \log^2 T]} \Big( K_\lambda \big(\bbs(\bm{\xi}_t), \bbs(\cdot)\big) \, ; \, \bbf(\bm{\xi}_{t+1}) \Big)   
    $$
    with $K_\lambda(\bz,\bz')\coloneq \exp(-\Vert \bz - \bz'\Vert_2^2/(2\lambda^2))$.
    Then, for any $\bx\in\mathcal{D}$,
    \begin{align}
        \mathbb{E}^{\bx}\left[ \big\| P^{\perp}_{\bm{\theta}(\bm{\xi}_{T})} \wh{\bbm}_{\lambda}(\bm{\xi}_{T})  \big\|^2_2 \right] = \tilde{O} \Big( \lambda^{2 \beta} 
    + \frac{1}{T \inf_{\bx \in \mathcal{D}} \mathbb{E}_{\rho}[K_{\lambda}\big(\bbs(\bm{\xi}),\bbs(\bx)\big)]} \Big) \label{lemma_key_first_argument}
    \end{align}
    and
    \begin{align}
    &\mathbb{P}^{\bx}\bigg( \big\| P_{\bm{\theta}(\bm{\xi}_{T})} \wh{\bbm}_{\lambda}(\bm{\xi}_{T})  \big\|_2 < \inf_{\bx \in \mathcal{D}} \frac{\big\Vert\E_{\rho}\left[ K_\lambda \big(\bbs(\bm{\xi}), \bbs(\bx)\big) P_{\bm{\theta}(\bx)} \bbm(\bm{\xi})\right]\big\Vert_2}{4\E_{\rho}\big[ K_\lambda \big(\bbs(\bm{\xi}), \bbs(\bx)\big) \big]} \bigg) \notag \\
    & \lesssim \frac{1}{T \inf_{\bx \in \mathcal{D}} \mathbb{E}_{\rho}[K_{\lambda}\big(\bbs(\bm{\xi}),\bbs(\bx)\big)]} + 
     \sup_{\bx \in \mathcal{D}} \frac{\E_{\rho}\big[ K_\lambda \big(\bbs(\bm{\xi}), \bbs(\bx)\big) \big]}{T \big\Vert\E_{\rho}\left[ K_\lambda \big(\bbs(\bm{\xi}), \bbs(\bx)\big) P_{\bm{\theta}(\bx)} \bbm(\bm{\xi})\right]\big\Vert_2^2},    \label{lemma_key_second_argument}
    \end{align}    
    where the implicit constants depend only on $c_{\mathbf{M}}$, $c_{\btheta}$, $\beta$, $\Vert \bbf \Vert_\infty \coloneqq \max_{\bx \in \mathcal{D}} \|\bbf(\bx) \|_2$ and $\Vert \bbs \Vert_\infty \coloneqq \max_{\bx \in \mathcal{D}} \|\bbs(\bx) \|_2$.     
    \end{lemma}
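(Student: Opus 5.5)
The plan is to write $\wh{\bbm}_{\lambda}(\bx) = \wh{\bm{\nu}}(\bx)/\wh{p}(\bx)$, where
\[
\wh{p}(\bx)\coloneqq \frac1{T'}\sum_{t} K_\lambda(\bbs(\bxi_t),\bbs(\bx)), \qquad
\wh{\bm{\nu}}(\bx)\coloneqq \frac1{T'}\sum_t K_\lambda(\bbs(\bxi_t),\bbs(\bx))\,\bbf(\bxi_{t+1}),
\]
with sums over $t\in[\log^2 T, T-\log^2 T]$ and $T'$ the number of such $t$. Their population versions are $p(\bx)\coloneqq\E_\rho[K_\lambda(\bbs(\bxi),\bbs(\bx))]$ and $\bm{\nu}(\bx)\coloneqq\E_\rho[K_\lambda(\bbs(\bxi),\bbs(\bx))\bbm(\bxi)]$, where we use $\E[\bbf(\bxi_{t+1})\mid\bxi_t]=\bbm(\bxi_t)$. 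The key decomposition is the martingale/bias split
\[
\wh{\bm{\nu}}(\bx)=\frac1{T'}\sum_t K_\lambda(\cdot)\,\bbm(\bxi_t)+\frac1{T'}\sum_t K_\lambda(\cdot)\big(\bbf(\bxi_{t+1})-\bbm(\bxi_t)\big).
\]
The second sum is a martingale difference sum. Its conditional second moment is bounded by $\|\bbf\|_\infty^2 T'^{-1}\cdot\frac1{T'}\sum_t K_\lambda^2\le \|\bbf\|_\infty^2 T'^{-1}\wh p(\bx)$, since $K_\lambda\le1$.

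For \eqref{lemma_key_first_argument} we project with $P^\perp_{\btheta(\bx)}$. Since $P^\perp_{\btheta(\bx)}\bbm(\bxi_t)=\|\bbm(\bxi_t)\|_2 P^\perp_{\btheta(\bx)}\btheta(\bxi_t)$ and $\|P^\perp_{\btheta(\bx)}\btheta(\bxi_t)\|_2\le\|\btheta(\bxi_t)-\btheta(\bx)\|_2\le c_{\btheta}\|\bbs(\bxi_t)-\bbs(\bx)\|_2^\beta$ by \eqref{cond_lip_theta_s}, the kernel-weighted average of the first term contributes at most $c_{\btheta}\|\bbf\|_\infty\sup_u u^\beta e^{-u^2/(2\lambda^2)}$-type localization. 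This is $\lesssim\lambda^\beta\sqrt{\log T}$: split the indices into those with $\|\bbs(\bxi_t)-\bbs(\bx)\|\le\lambda\sqrt{2\log T}$, and bound the rest by weights $\le T^{-1}$, which are negligible relative to $\wh p\gtrsim p/2$ on a good event. Dividing by $\wh p$ makes this bias term deterministic up to logs. The noise term after division gives $\|\cdot\|^2\lesssim(T'\wh p)^{-1}$ in conditional expectation.

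It remains to show $\wh p(\bx)\ge p(\bx)/2$ with probability $1-O(1/(Tp))$, uniformly in the random evaluation point $\bx=\bxi_T$. Here I use the gap of $\log^2T$ steps: condition on $\bxi_{\lfloor T-\log^2T/2\rfloor}$ and the past. By uniform ergodicity, $\bxi_T$ is within TV $c_{\mathbf M}^{\log^2T/2}=o(T^{-k})$ of $\rho$ independently of the sample, so one may treat $\bxi_T$ as independent of the sample at negligible cost; the same trick at the start removes the initial state. For fixed $\bx$, the variance of $\wh p(\bx)$ under a uniformly ergodic chain is $\lesssim \frac{1}{T'}\sum_k c_{\mathbf M}^{k}\,\E_\rho[K_\lambda^2]\lesssim p(\bx)/T$, via the covariance bound $|\mathrm{Cov}(g(\bxi_0),g(\bxi_k))|\le 2c_{\mathbf M}^k\|g\|_\infty\E_\rho|g|$. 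Chebyshev then gives $\P(\wh p<p/2)\lesssim 1/(Tp)$. On the bad event the integrand is bounded by $\|\bbf\|_\infty^2$, which yields \eqref{lemma_key_first_argument}.

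For \eqref{lemma_key_second_argument}, I write $P_{\btheta(\bx)}\wh{\bbm}_\lambda=\big(P_{\btheta(\bx)}\wh{\bm\nu}\big)/\wh p$ and compare its numerator with $P_{\btheta(\bx)}\bm\nu(\bx)$. The same covariance/Chebyshev argument for vector averages gives
\[
\P\Big(\|P_{\btheta(\bx)}(\wh{\bm\nu}-\bm\nu)\|_2>\tfrac12\|P_{\btheta(\bx)}\bm\nu\|_2\Big)\lesssim\frac{p(\bx)}{T\|P_{\btheta(\bx)}\bm\nu(\bx)\|_2^2},
\]
where the martingale part is handled as above. Likewise $\P(\wh p>2p)\lesssim1/(Tp)$. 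On the complement, $\|P\wh{\bbm}_\lambda\|_2\ge\frac{\|P\bm\nu\|_2/2}{2p}$, which is the stated threshold. Taking $\sup/\inf$ over $\bx$ and transferring to $\bx=\bxi_T$ via the decoupling step gives the bound.

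The main obstacle is this decoupling of the evaluation point $\bxi_T$ from the sample together with the dependence of the chain. I would rely on the $\log^2T$ truncation and the TV bound $c_{\mathbf M}^{\log^2 T}$, which is superpolynomially small, and I would check carefully that the constants stay uniform in $\bx$.
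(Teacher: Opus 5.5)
Your proposal is correct, but it reaches the result by a different and more elementary route than the paper.

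The paper proceeds in three steps. It first reduces to the stationary chain via the TV bound at time $\min\mathcal{T}$. It then replaces $\bxi_T$ by an independent copy using Berbee's coupling lemma. Finally, for a fixed evaluation point, it controls both the denominator and the whole numerator $\sum_t U_t(\bx)\bbf(\bxi_{t+1})$ with Paulin's variance bound (pseudo-spectral gap). The bias is handled in expectation, by conditioning on $\bxi_0$ and using the adaptive split radius $\epsilon=\lambda\sqrt{2\log((\E[U_0(\bx)]\lambda^\beta)^{-1})}$.

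You differ in three places:
\begin{itemize}
\item You decouple $\bxi_T$ simply by conditioning on $\mathcal{F}_s$ for an $s$ after the last sample index and replacing $P^{T-s}(\bxi_s,\cdot)$ by $\rho$. This suffices because the evaluation point is a single future state.
\item You split the numerator into the predictable part $K_t\bbm(\bxi_t)$, bounded pathwise after projection via the H\"older condition, and the martingale $\bm{\eta}\coloneqq\sum_t K_t(\bbf(\bxi_{t+1})-\bbm(\bxi_t))$. The second moment of $\bm{\eta}$ satisfies $\E\|\bm{\eta}\|_2^2\le\|\bbf\|_\infty^2T'p(\bx)$ with no mixing input at all.
\item Your only mixing input is the elementary covariance bound $|\mathrm{Cov}(g(\bxi_0),g(\bxi_k))|\le 2c_{\mathbf M}^k\|g\|_\infty\E_\rho|g|$, used for $\wh p$ and for the predictable part.
\end{itemize}

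What this buys: the argument is self-contained, and it never has to control a functional of the pair $(\bxi_t,\bxi_{t+1})$ through a spectral-gap bound, whereas the paper applies Paulin's bound directly to $U_t\bbf(\bxi_{t+1})$. Your boundary errors $c_{\mathbf M}^{\log^2 T}\lesssim 1/T\le 1/(T\inf_{\bx}p(\bx))$ also do not need $\lambda\ge T^{-1}$. The paper's proof invokes that condition even though the lemma does not state it.

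The price is a cruder log factor. The fixed split at $\lambda\sqrt{2\log T}$ yields $\lambda^{2\beta}\log^{\beta}T$ plus a $(Tp)^{-2}$ term. The latter is harmless because the integrand is at most $\|\bbf\|_\infty^2$. Both are within the loose $\tilde O$ used in the paper, and you can adopt the paper's adaptive split if you want a sharper log.

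One presentational fix: do not phrase the noise bound as a conditional expectation given $\wh p$, since conditioning on $\wh p$ destroys the martingale structure. State it instead as
$\E\big[\|\bm{\eta}\|_2^2(T'\wh p)^{-2}\,\mathbb{I}(\wh p\ge p/2)\big]\le 4\,\E\|\bm{\eta}\|_2^2/(T'p)^2\le 4\|\bbf\|_\infty^2/(T'p)$.
This is exactly what your good-event argument delivers.
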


The proof is deferred to Appendix~\ref{appendix_proof_direction_conv_NW}.
The first assertion controls the component of $\wh{\bbm}_{\lambda}(\bm{\xi}_T)$ that is orthogonal to $\bbm(\bm{\xi}_T)$, while the second assertion ensures that the component along $\bbm(\bm{\xi}_T)$ is sufficiently large with high probability.   
The estimator $\wh{\bbm}_{\lambda}$ in the lemma excludes indices $t$ near the boundaries $t < \log^2 T$ and $t > T - \log^2 T$. 
This ensures that the estimator only uses samples which are similarly distributed as the invariant distribution and are approximately independent of $\bm{\xi}_{T}$.

\begin{proof}[Proof of Lemma~\ref{lemma_key_NW_version2}]
    
Define $$r \coloneqq (r_1+1) \lor r_2.$$
Moreover, let
\begin{align*}
    \bm{\xi}_t \coloneqq (\bX_{t+1}, \ldots, \bX_{t+r})
\end{align*}
and, for $\bx = (\bx_1, \ldots, \bx_r) \in \mathcal{E}_W^{r}$,
\begin{align*}
    \bbf\big(\bx\big) \coloneqq \frac{\bm{\phi}_2(\bx_{r})}{\exp\Big(\bg_0\big( \bm{\phi}_1(\bx_{r-r_1}), \ldots, \bm{\phi}_1(\bx_{r-1}) \big)^{\top} \bm{\phi}_1(\bx_{r})\Big)},
     \qquad
     \bbs(\bx)  \coloneqq \stack\big(\bm{\phi}_2(\bx_{r-r_2 + 1}), \ldots, \bm{\phi}_2(\bx_r)\big).
\end{align*}
By Lemma \ref{lemma: markov}, for given $\bh$,
$\{\bm{\xi}_t\}_{t \in \mathbb{N}_0}$
is a homogeneous Markov chain on $\mathcal{E}_W^{r}$,
which admits a unique
stationary distribution $\rho$, satisfying
 \begin{align}
 \Big(\frac{\exp(-2 m_1 \delta_1 - 2 \delta_2)}{W}\Big)^{r}
 \leq    \rho(\bx)\leq \Big(\frac{\exp(2 m_1 \delta_1 + 2 \delta_2)}{W}\Big)^{r} ,\qquad \bx \in \mathcal{E}_W^{r} \label{rho_maximum_minimum}
\end{align}
and
\begin{align*}
    \sup_{\bx \in \mathcal{E}_W^{r}}\Vert P^n(\bx,\cdot)-\rho\Vert_{\operatorname{TV}}&\leq \Big(1-e^{-2 r (m_1 \delta_1 + \delta_2)}\Big)^{n/r},\quad n\in\mathbb{N},
\end{align*}
where $P^n$ denotes the $n$-step transition probability of the Markov chain.
This implies that $\{\bm{\xi}_t\}_{t \in \mathbb{N}_0}$ is uniformly exponentially ergodic.
We further define 
\[
\bbm(\bx) \coloneqq \mathbb{E}[\bbf(\bm{\xi}_1)|\bh, \bm{\xi}_0 = \bx], \qquad
\bm{\theta}(\bx) \coloneq \frac{\bbm(\bx)}{\big\|\bbm(\bx)\big\|_2}, \qquad \bx \in \mathcal{E}_W^{r}.
\]
Then we have the following results,  whose proof is deferred to Appendix~\ref{proof_lemma_application_keylemma_properties}.
See \eqref{def_weighted_average} for the definition of $\ol{\sum}$.
\begin{lemma} \label{lemma_application_keylemma_properties}
    Under the assumptions of Lemma~\ref{lemma_key_NW_version2} and the above notation,
    \begin{enumerate}
        \item [(i)] for any $\bx \in \mathcal{E}_W^{r}$,
        the directions of $\bbm(\bx)$ and 
        \[
        \ol{\sum}_{w \in [W]}
        \Big( \exp\big( \bh(\bbs(\bx))^{\top} \bm{\phi}_2(\be_w) \big) \, ; \, \bm{\phi}_2(\be_{w}) \Big)
        \]
        coincide,
        \item [(ii)] there exists $c_1>0$  depending only on $(m_1, m_2, \delta_1, \delta_2)$ such that for any $\bx \in \mathcal{E}_W^{r},$
        \[
        \| \bbm(\bx) \|_2 \geq c_1,
        \]
        \item [(iii)] there exists $c_{\btheta} >0$ depending only on $(m_2, \delta_2, \|\bh\|_{\mathcal{C}^{\beta_2}})$ such that for any $\bx, \bx' \in \mathcal{E}_W^{r}$ 
        \[
            \| \bm{\theta}(\bx) - \bm{\theta}(\bx')  \|_2 \leq  c_{\btheta} \|\bbs(\bx) - \bbs(\bx')\|_{\infty}^{\beta_2},
        \]       
        \item[(iv)] there exists $c_2$ depending only on $(m_1, \delta_1, \delta_2, r_2)$, 
                \[
        \inf_{\bx \in \mathcal{E}_W^{r}} \mathbb{E}_{\rho}[K_{\lambda}\big(\bbs(\bm{\xi}),\bbs(\bx)\big)] \geq  c_2 \varphi_W(\lambda)^{r_2},
        \]
        \item[(v)] with $c_1 > 0$ as defined in (ii), 
        \[
            \inf_{\bx \in \mathcal{E}_W^{r}} \frac{\big\Vert\E_{\rho}\left[ K_\lambda \big(\bbs(\bm{\xi}), \bbs(\bx)\big) P_{\bm{\theta}(\bx)} \bbm(\bm{\xi})\right]\big\Vert_2}{\E_{\rho}\big[ K_\lambda \big(\bbs(\bm{\xi}), \bbs(\bx)\big) \big]} \geq \frac{c_1}{4}.
        \]
    \end{enumerate}
\end{lemma}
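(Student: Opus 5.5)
The plan is to prove the five items of Lemma~\ref{lemma_application_keylemma_properties} separately. Each one follows from the explicit form of the transition probabilities \eqref{tmp_57} together with the factorization \eqref{tmp_33}.

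\textbf{Items (i) and (ii).} Fix $\bx$ and write $\bv\coloneqq\bh(\bbs(\bx))\in\delta_2\mathbb{S}^{m_2-1}$. By construction, $\bm{\xi}_1$ shifts the window and appends $\bX_{r+2}$, so $\bbf(\bm{\xi}_1)=\bm{\phi}_2(\bX_{r+2})\exp(-\bg_0(\bZ)^\top\bm{\phi}_1(\bX_{r+2}))$. Here $\bZ$ is deterministic given $\bm{\xi}_0=\bx$, and the context argument of $\bh$ is exactly $\bbs(\bx)$. Repeating the computation in \eqref{tmp_33}, the factor $e^{\bg_0^\top\bm{\phi}_1}$ cancels and we obtain
\[
\bbm(\bx)=\frac{\sum_w e^{\bv^\top\bm{\phi}_2(\be_w)}}{\sum_w e^{\bg_0^\top\bm{\phi}_1(\be_w)+\bv^\top\bm{\phi}_2(\be_w)}}\cdot\ol{\sum}_{w\in[W]}\big(e^{\bv^\top\bm{\phi}_2(\be_w)};\bm{\phi}_2(\be_w)\big).
\]
The scalar prefactor is positive and lies in $[e^{-m_1\delta_1},e^{m_1\delta_1}]$, which gives (i). For (ii) it then suffices to bound the weighted average below. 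By the definition of $\kappa_W$, it is within $\kappa_W$ of the von Mises--Fisher mean $\bm{\mu}(\bv)\coloneqq\int e^{\bv^\top\bz}\bz\,d\bz/\int e^{\bv^\top\bz}d\bz=A_{m_2}(\delta_2)\bv/\delta_2$. Here $A_{m}(\kappa)=I_{m/2}(\kappa)/I_{m/2-1}(\kappa)\ge \kappa/(\kappa+m)$ by a standard Bessel-ratio bound, so $\|\bm{\mu}(\bv)\|_2\ge (1+m_2/\delta_2)^{-1}$. The assumption $\kappa_W\le(1+m_2/\delta_2)^{-1}/2$ then yields the lower bound $c_1=e^{-m_1\delta_1}(1+m_2/\delta_2)^{-1}/2$. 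If that assumption is not available inside Lemma~\ref{lemma_key_NW_version2}, I would instead carry $\kappa_W$ into the error terms, or take $W$ large under Assumption~\ref{assumption_word_embedding}.

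\textbf{Item (iii).} This is the expected main obstacle, because the direction of the discrete weighted average is not itself a smooth function of $\bv$. The approach is to write $\btheta(\bx)=\Psi(\bh(\bbs(\bx)))$, where $\Psi(\bv)$ is the normalized discrete average. On $\delta_2\mathbb{S}^{m_2-1}$ the map $\bv\mapsto\sum_w e^{\bv^\top\bm{\phi}_2(\be_w)}\bm{\phi}_2(\be_w)/\sum_w e^{\bv^\top\bm{\phi}_2(\be_w)}$ is Lipschitz with constant $\le 2$: its Jacobian is a covariance matrix of unit vectors, with no dependence on $W$. Normalization $\bu\mapsto\bu/\|\bu\|_2$ is $2/c$-Lipschitz on $\{\|\bu\|_2\ge c\}$, and the lower bound from (ii) (without the prefactor) applies there. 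Composing with $\bh$, which is $\beta_2$-H\"older, gives $\|\btheta(\bx)-\btheta(\bx')\|_2\lesssim\|\bbs(\bx)-\bbs(\bx')\|_\infty^{\beta_2}$.

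\textbf{Item (iv).} By \eqref{rho_maximum_minimum}, the marginal of $\rho$ on the last $r_2$ coordinates is at least $(e^{-2(m_1\delta_1+\delta_2)}/W)^{r_2}$ pointwise, after summing out the other coordinates. Therefore
\[
\E_\rho K_\lambda \ge e^{-c}W^{-r_2}\,\#\{\bz:\|\bz-\bbs(\bx)\|_2\le\lambda\}\,e^{-1/2}.
\]
The set of tokenwise $\lambda/\sqrt{r_2}$-neighbours has cardinality at least $(W\varphi_W(\lambda/\sqrt{r_2}))^{r_2}$. To replace $\lambda/\sqrt{r_2}$ by $\lambda$, I would either define the window per coordinate, using $\|\bz-\bz'\|^2\le r_2\max_i\|\cdot\|^2$ and absorbing $\sqrt{r_2}$ into the exponent, or accept a constant change in the kernel weight $e^{-r_2/2}$. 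This is the route I will take.

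\textbf{Item (v).} Since $P_{\btheta(\bx)}\bbm(\bm{\xi})=\btheta(\bx)\,\btheta(\bx)^\top\bbm(\bm{\xi})$, and $\btheta(\bx)^\top\bbm(\bm{\xi})\ge\|\bbm(\bm{\xi})\|_2(1-\|\btheta(\bx)-\btheta(\bm{\xi})\|_2^2/2)$, I split the expectation into the region $\|\bbs(\bm{\xi})-\bbs(\bx)\|$ small, where by (iii) the cosine is at least $1/2$, and its complement. The complement is controlled by the kernel tail relative to $\E_\rho K_\lambda$, using (iv) and $\lambda\le\log^{-1}(W\wedge T)$ so that $\lambda^{\beta_2}c_{\btheta}$ is small. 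On the good region the contribution is at least $(c_1/2)\E_\rho K$. On the bad region the cosine is still at least $-1$, but the region carries relative weight $o(1)$, which yields the constant $c_1/4$.
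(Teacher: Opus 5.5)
Your proposal is correct and follows the paper's proof item by item: the factorization of $\bbm(\bx)$ into a positive scalar times the discrete tilted mean for (i), the von Mises--Fisher/Amos bound together with $\kappa_W\le(1+m_2/\delta_2)^{-1}/2$ for (ii), Lipschitz continuity of the softmax-weighted mean plus normalization and H\"older continuity of $\bh$ for (iii), and the $\sqrt{r_2}\lambda$-ball with kernel weight $e^{-r_2/2}$ for (iv). You are right that the $\kappa_W$ hypothesis is imposed only in Theorem~\ref{theorem_main} and is used implicitly here; in (v) you split according to $\|\bbs(\bm{\xi})-\bbs(\bx)\|$ rather than by the sign of $\langle\btheta(\bx),\btheta(\bm{\xi})\rangle$ as the paper does, which is a harmless variant, and both versions need the kernel tail $e^{-c/\lambda^2}$ to beat $\varphi_W(\lambda)^{-r_2}\le W^{r_2}$, i.e.\ $\lambda\sqrt{\log W}\to 0$, which the paper obtains by invoking $\lambda\le 1/\log W$.
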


Define random vectors $\bZ_{1,t} \in \mathbb{R}^{r_1 m_1}$, 
$\bZ_{2,t} \in \mathbb{R}^{r_2 m_2}$, and 
$\bY_{t} \in \mathbb{R}^{m_2}$ as  
$$
\bZ_{1,t}
\coloneqq
\begin{pmatrix}
\bm{\phi}_1(\bX_{t-r_1}) \\
\bm{\phi}_1(\bX_{t-r_1+1}) \\
\vdots \\
\bm{\phi}_1(\bX_{t-1})
\end{pmatrix},
\qquad
\bZ_{2,t}
\coloneqq
\begin{pmatrix}
\bm{\phi}_2(\bX_{t-r_2}) \\
\bm{\phi}_2(\bX_{t-r_2+1}) \\
\vdots \\
\bm{\phi}_2(\bX_{t-1})
\end{pmatrix},
\qquad
\bY_{t} \coloneqq 
\frac{\bm{\phi}_2(\bX_t)}{\exp[\bg_0(\bZ_{1,t} )^{\top} \bm{\phi}_1(\bX_t)]}.
$$
Moreover, define
\begin{align*}
    \wh{\bbm}_{\lambda}(\bZ_{2,T+1}) &\coloneqq
\ol{\sum}_{s \in [\log^2 T, T - \log^2 T]} \Big( K_\lambda\big(\bZ_{2,s}, \bZ_{2,T+1} \big) \, ; \, \bY_{s} \Big)
\end{align*}
and
\[
    \bm{\alpha} \coloneqq \bm{\theta}(\bX_{T-r+1}, \ldots, \bX_{T}) \in \mathbb{S}^{m_2 - 1}, \quad P_{\bm{\alpha}} \coloneq \bm{\alpha} \bm{\alpha}^{\top}, \quad P^{\perp}_{\bm{\alpha}} \coloneq I_{m_2} - \bm{\alpha} \bm{\alpha}^{\top}.
\]
Both $P_{\bm{\alpha}}$ and $P^{\perp}_{\bm{\alpha}}$ are projection matrices, where $P_{\bm{\alpha}}$ denotes the projection onto the one-dimensional subspace $\mathbb{R}\bm{\alpha}$ and $P^{\perp}_{\bm{\alpha}}$ denotes the projection onto its orthogonal complement.
By (ii) and (iii) of Lemma \ref{lemma_application_keylemma_properties}, 
we can apply 
Lemma \ref{lemma_key_NW}.
Moreover, using (iv) and (v) of Lemma \ref{lemma_application_keylemma_properties}
we obtain
\begin{align}
    \mathbb{E}\left[ \| P^{\perp}_{\bm{\alpha}} \wh{\bbm}_{\lambda}(\bZ_{2,T+1})  \|^2_2\vert \bh \right] &= \tilde{O} \Big( \lambda^{2 \beta} + \frac{1}{T \varphi_W(\lambda)^{r_2}} \Big) \label{tmp_34}
\end{align}
and for some $c>0$
\begin{align}
    \mathbb{P}\big[ \| P_{\bm{\alpha}} \wh{\bbm}_{\lambda}(\bZ_{2,T+1})  \|_2 < c \vert \bh\big] 
    \lesssim \frac{1}{T \varphi_W(\lambda)^{r_2}}. \label{tmp_35}
\end{align}
Here, the implicit constants and $c$ depend only on $(m_1, m_2, \delta_1, \delta_2, r_1, r_2, \|\bh\|_{\mathcal{C}^{\beta_2}})$.
Since the Euclidean distance between two unit vectors is upper bounded by their angle, it holds
\[
\left\|\frac{\wh{\bbm}_{\lambda}(\bZ_{2,T+1})}{\|\wh{\bbm}_{\lambda}(\bZ_{2,T+1})\|_2}
-
\bm{\alpha}\right\|_2 
\leq 
\tan^{-1}\left(\frac{\| P^{\perp}_{\bm{\alpha}} \wh{\bbm}_{\lambda}(\bZ_{2,T+1})  \|_2}{\| P_{\bm{\alpha}} \wh{\bbm}_{\lambda}(\bZ_{2,T+1})  \|_2}\right)
\leq
\frac{\| P^{\perp}_{\bm{\alpha}} \wh{\bbm}_{\lambda}(\bZ_{2,T+1})  \|_2}{\| P_{\bm{\alpha}} \wh{\bbm}_{\lambda}(\bZ_{2,T+1})  \|_2}.
\]
Combining with \eqref{tmp_34} and \eqref{tmp_35}, 
\begin{align}
    \mathbb{E}\left[\left\|\frac{\wh{\bbm}_{\lambda}(\bZ_{2,T+1})}{\|\wh{\bbm}_{\lambda}(\bZ_{2,T+1})\|_2}
-
\bm{\alpha}\right\|^2_2\Big\vert\bh\right]
& = 
\mathbb{E}\left[\left\|\frac{\wh{\bbm}_{\lambda}(\bZ_{2,T+1})}{\|\wh{\bbm}_{\lambda}(\bZ_{2,T+1})\|_2}
-
\bm{\alpha}\right\|^2_2 \mathbb{I}(\| P_{\bm{\alpha}} \wh{\bbm}_{\lambda}(\bZ_{2,T+1})  \|_2 \geq c)\Big\vert\bh\right] \notag \\
& \quad +
\mathbb{E}\left[\left\|\frac{\wh{\bbm}_{\lambda}(\bZ_{2,T+1})}{\|\wh{\bbm}_{\lambda}(\bZ_{2,T+1})\|_2}
-
\bm{\alpha}\right\|^2_2 \mathbb{I}(\| P_{\bm{\alpha}} \wh{\bbm}_{\lambda}(\bZ_{2,T+1})  \|_2 < c)\Big\vert\bh\right] \notag \\
&\lesssim 
\mathbb{E}\left[\frac{\| P^{\perp}_{\bm{\alpha}} \wh{\bbm}_{\lambda}(\bZ_{2,T+1})  \|^2_2}{\| P_{\bm{\alpha}} \wh{\bbm}_{\lambda}(\bZ_{2,T+1})  \|^2_2} \mathbb{I}(\| P_{\bm{\alpha}} \wh{\bbm}_{\lambda}(\bZ_{2,T+1})  \|_2 \geq c) \Big\vert\bh\right]
\notag \\
& \quad + \mathbb{P}\big[ \| P_{\bm{\alpha}} \wh{\bbm}_{\lambda}(\bZ_{2,T+1})  \|_2 < c \big\vert\bh\big]
\notag \\
&\lesssim 
\mathbb{E}\big[ \| P^{\perp}_{\bm{\alpha}} \wh{\bbm}_{\lambda}(\bZ_{2,T+1})  \|^2_2 \big\vert\bh\big]
+ \mathbb{P}\big[ \| P_{\bm{\alpha}} \wh{\bbm}_{\lambda}(\bZ_{2,T+1})  \|_2 < c\big\vert\bh \big] \notag \\
& = 
\tilde{O}\Big(\lambda^{2 \beta_2} + \frac{1}{T \varphi_W(\lambda)^{r_2}}\Big). \label{tmp_48}
\end{align} 
In addition, by the definition of $\kappa_W$ (Definition \ref{def_pos_assumption}), 
\[
\bigg\| \underbrace{
\ol{\sum}_{w \in [W]} \Big( \exp\big( \bh(\bZ_{2,T})^{\top} \bm{\phi}_2(\be_w) \big) \, ; \, \bm{\phi}_2(\be_{w}) \Big)
}_{\eqcolon \bm{\chi}_1}  - \underbrace{\frac{\int_{\bu \in \mathbb{S}^{m_2 - 1}} \exp(\bh(\bZ_{2,T})^{\top} \bu) \bu \, d\bu }{\int_{\bu \in \mathbb{S}^{m_2 - 1}} \exp(\bh(\bZ_{2,T})^{\top} \bu) \, d\bu  }}_{\eqcolon \bm{\chi}_2}\bigg\|_{2} \leq \kappa_W.\]
Moreover, we have 
$\bm{\chi}_1 / \| \bm{\chi}_1 \|_2 = \bm{\alpha}$
by (i) of Lemma \ref{lemma_application_keylemma_properties} and
$\bm{\chi}_2 / \| \bm{\chi}_2 \|_2 = \bh(\bZ_{2,T}) / \delta_2 $
by the first argument of Lemma \ref{lemma_VMF_property}. 
Hence, 
\begin{align*}
        \left\| \bm{\alpha} - \frac{\bh(\bZ_{2,T})}{\delta_2}\right\|_2
    = \left\| \frac{\bm{\chi}_1}{\|\bm{\chi}_1\|_2} - \frac{\bm{\chi}_2}{\|\bm{\chi}_2\|_2} \right\|_2
     \leq \left\| \frac{\bm{\chi}_1-\bm{\chi}_2}{\|\bm{\chi}_2\|_2} \right\|_2
    +
    \left\| \left(\frac{1}{\|\bm{\chi}_1\|_2} - \frac{1}{\|\bm{\chi}_2\|_2}\right) \bm{\chi}_1  \right\|_2
     \lesssim \kappa_W,     
\end{align*}
where we use the second argument of Lemma \ref{lemma_VMF_property} for the last inequality.
Combining with \eqref{tmp_48}, we obtain
\begin{align}
    \mathbb{E}\left[\left\|\frac{\delta_2 \wh{\bbm}_{\lambda}(\bZ_{2,T+1})}{\|\wh{\bbm}_{\lambda}(\bZ_{2,T+1})\|_2}
- \bh(\bZ_{2,T}) \right\|^2_2\Big\vert\bh\right] = \tilde{O}\Big(\lambda^{2 \beta_2} + \frac{1}{T \varphi_W(\lambda)^{r_2}} + \kappa_W^2\Big). \label{tmp_50} 
\end{align}

\end{proof}

\subsection{Proof of Lemma~\ref{lemma: markov}}
\label{appendix_proof_markovian}

\begin{proof}
The normalization function $c(\bx)$ satisfies for any $\bx \in \mathcal{E}_W^r,$
\[c(\bx)\coloneq \left(\sum_{\bz \in\mathcal{E}_W^r}\exp(g(\bx,\bz))\prod_{i=1}^{r-1} \mathbb{I}(\bz_i=\bx_{i+1})\right)^{-1} \in \bigg[\frac {\exp(-\Vert g\Vert_\infty)}W,\frac {\exp(\Vert g\Vert_\infty)}W\bigg]. \] Additionally, the Chapman--Kolmogorov equation gives for the $r$-step transition probability
\begin{align}
   P^{r}\big((\bx_1,\ldots,\bx_r),(\bz_1,\ldots,\bz_r)\big)    
&\geq \prod_{t=1}^{r}
c\big((\bx_t,\ldots,\bx_r,\bz_1,\ldots, \bz_{t-1})\big)\exp(-\Vert g\Vert_\infty) \notag
\\
&\geq (e^{2\Vert g\Vert_\infty} W)^{-r}. \label{eq:meyn}
\end{align}
Thus by Theorem 16.0.2 in \cite{meyntweedie} there exists a stationary distribution $\rho$ and $\mathbf{Y}$ is uniformly ergodic. Now applying \eqref{eq:meyn} together with Theorem 16.2.4 in \cite{meyntweedie} gives
\begin{align*}
    \sup_{\bx \in \mathcal{E}_W^r}\Vert P^n(\bx,\cdot)-\rho\Vert_{\operatorname{TV}}&\leq \Big(1-e^{-2\Vert g\Vert_\infty r}\Big)^{n/r}.
\end{align*}  
Lastly, arguing similarly as in \eqref{eq:meyn} gives 
\[P^{r}\big((\bx_1,\ldots,\bx_r),(\bz_1,\ldots,\bz_r)\big)    
\leq (e^{-2\Vert g\Vert_\infty} W)^{-r}, \]
and since
\begin{align*}
   \rho\big( (\bz_1,\ldots,\bz_r)\big)&=\sum P^{r}\big((\bx_1,\ldots,\bx_r),(\bz_1,\ldots,\bz_r)\big)    \rho\big((\bx_1,\ldots,\bx_r)\big)
\end{align*}
this implies together with \eqref{eq:meyn}
\begin{align*}
 (e^{2\Vert g\Vert_\infty} W)^{-r}\leq    \rho(\bz)\leq (e^{-2\Vert g\Vert_\infty} W)^{-r}.
\end{align*}
\end{proof}

\subsection{Proof of Lemma~\ref{lemma_key_NW}}
\label{appendix_proof_direction_conv_NW}

  \begin{proof} 
For simplicity we write $\mathbb{P}= \mathbb{P}_\rho$ and $\mathbb{E}=\mathbb{E}_{\rho}$. 
Furthermore, for $T\in\mathbb{N}$, we introduce
\[\mathcal{T}=\mathcal{T}(T)\coloneq \{n\in\mathbb{N}_0\colon \log^2T\leq n\leq T-\log^2T\}, \quad T^*\coloneq \vert \mathcal{T}\vert+T-\max(\mathcal{T}).\]
We then obtain that for any $\bx\in\mathcal{D}$,
\begin{align}
     &\Bigg\vert\mathbb{E}^{\bx}\left[ \| P^{\perp}_{\bm{\theta}(\bm{\xi}_T)} \wh{\bbm}_{\lambda}(\bm{\xi}_{T})  \|^2_2 \right]-\mathbb{E}\left[ \| P^{\perp}_{\bm{\theta}(\bm{\xi}_T)} \wh{\bbm}_{\lambda}(\bm{\xi}_{T})  \|^2_2 \right]\Bigg\vert \notag \\ 
&=\Bigg\vert\mathbb{E}^{\bx}\left[ \Big\| P^{\perp}_{\bm{\theta}(\bm{\xi}_T)} 
\ol{\sum}_{t \in \mathcal{T}} \Big( K_\lambda \big(\bbs(\bm{\xi}_t), \bbs(\bm{\xi}_{T})\big) \, ; \, \bbf(\bm{\xi}_{t+1}) \Big)
\Big\|^2_2 \right]
\notag \\
&\qquad
-\mathbb{E}\left[ \Big\| P^{\perp}_{\bm{\theta}(\bm{\xi}_T)} 
\ol{\sum}_{t \in \mathcal{T}} \Big( K_\lambda \big(\bbs(\bm{\xi}_t), \bbs(\bm{\xi}_{T})\big) \, ; \, \bbf(\bm{\xi}_{t+1}) \Big) \Big\|^2_2 \right]\Bigg\vert 
\notag    \\
&=\Bigg\vert\mathbb{E}^{\bx}\left[\mathbb{E}^{\bm{\xi}_{\min(\mathcal
T)}}\left[ \Big\| P^{\perp}_{\bm{\theta}(\bm{\xi}_{T^*})} 
\ol{\sum}_{t \in \{0,1,\ldots,\vert \mathcal{T}\vert-1\}} \Big( K_\lambda \big(\bbs(\bm{\xi}_t), \bbs(\bm{\xi}_{T^*})\big) \, ; \, \bbf(\bm{\xi}_{t+1}) \Big)
\Big\|^2_2 \right] \right]
\notag \\
&\qquad 
-\mathbb{E}\left[ \mathbb{E}^{\bm{\xi}_{\min(\mathcal
T)}}\left[ \Big\| P^{\perp}_{\bm{\theta}(\bm{\xi}_{T^*})} \ol{\sum}_{t \in \{0,1,\ldots,\vert \mathcal{T}\vert-1\}} \Big( K_\lambda \big(\bbs(\bm{\xi}_t), \bbs(\bm{\xi}_{T^*})\big) \, ; \, \bbf(\bm{\xi}_{t+1}) \Big)
\Big\|^2_2 \right]\right]\Bigg\vert 
\notag \\
&\leq \Vert \bbf \Vert_\infty^2\sup_{\bx\in\mathcal{D}}\Vert P^{\min(\mathcal{T})}(\bx,\cdot)-\rho\Vert_{\TV}
\notag \\
&\leq \Vert \bbf \Vert_\infty^2c_{\mathbf{M}}^{\min(\mathcal
T)}, \label{tmp_40}
\end{align}
where we applied the Markov property in the second equality. In the first inequality we used that \[\mathbb{E}^{\bm{\xi}_{\min(\mathcal
T)}}\left[ \Big\| P^{\perp}_{\bm{\theta}(\bm{\xi}_{T^*})} \ol{\sum}_{t \in \{0,1,\ldots,\vert \mathcal{T}\vert-1\}} \Big( K_\lambda \big(\bbs(\bm{\xi}_t), \bbs(\bm{\xi}_{T^*})\big) \, ; \, \bbf(\bm{\xi}_{t+1}) \Big)
\Big\|^2_2 \right]\] is a measurable function of $\bm{\xi}_{\min(\mathcal
T)},$ which is bounded by $\Vert \bbf\Vert_\infty^2$ together with the fact that $\bm{\xi}_{\min(\mathcal
T)}\sim P^{\min(\mathcal{T})}$ under $\mathbb{P}^{\bx}$ and  $\bm{\xi}_{\min(\mathcal
T)} \sim \rho$ under $\mathbb{P}$. We now apply Berbee's coupling lemma, see Lemma 5.1 in \cite{viennet97} and \cite{berbee79}. This states for two random variables $X,Y,$ that there exists a random variable $Y^*,$ which is independent of $X,$ identically distributed as $Y,$ and satisfies \[\P(Y\neq Y^*)=\beta(X,Y),\] 
where $\beta(X,Y)$ is the $\beta$-mixing coefficient of $X$ and $Y$.
Since the investigated Markov chain is stationary under $\mathbb{P},$ and uniformly ergodic, we can argue as in the proof of Theorem 3.1 in \cite{dexheimer22} to obtain
\begin{align*} \beta\Big(\big(\bm{\xi}_{\min(\mathcal{T})},
\bm{\xi}_{\min(\mathcal{T})+1},
\ldots, \bm{\xi}_{\max(\mathcal{T})}\big),\, \bm{\xi}_T\Big)
 &\leq c_{\mathbf{M}}^{T-\max(\mathcal{T})}.
\end{align*}
Hence, there exists a random variable $\tilde{\bm{\xi}}$ which is independent of $(\bm{\xi}_{\min(\mathcal{T})}, \bm{\xi}_{\min(\mathcal{T})+1},
\ldots,\bm{\xi}_{\max(\mathcal{T})}),$ such that
\begin{align}
  &\Bigg\vert \mathbb{E}\left[ \Big\| P^{\perp}_{\bm{\theta}(\bm{\xi}_T)} 
  \ol{\sum}_{t \in \mathcal{T}} \Big( K_\lambda \big(\bbs(\bm{\xi}_t), \bbs(\bm{\xi}_{T})\big) \, ; \, \bbf(\bm{\xi}_{t+1}) \Big)  
  \Big\|^2_2 \right]  
  -\mathbb{E}\left[ \Big\| P^{\perp}_{\bm{\theta}(\tilde{\bm{\xi}})}
  \ol{\sum}_{t \in \mathcal{T}} \Big( K_\lambda \big(\bbs(\bm{\xi}_t), \bbs(\tilde{\bm{\xi}})\big) \, ; \, \bbf(\bm{\xi}_{t+1}) \Big)  
  \Big\|^2_2 \right]\Bigg\vert \notag 
  \\ 
&\leq  2\Vert \bbf \Vert^2_\infty\mathbb{P}(\bxi_T\neq \tilde{\bxi}) \notag 
\\
 &\leq 2\Vert \bbf \Vert_\infty^2c_{\mathbf{M}}^{T-\max(\mathcal{T})}, \label{tmp_41}   
\end{align} where we again used that the weighted sum is bounded by $\Vert \bbf\Vert_\infty$.
By construction, $\tilde{\bxi}$ is identically distributed as $\bxi_T\sim \rho$ under $\mathbb{P},$ which implies
\begin{align}
   \mathbb{E}\left[ \Big\| P^{\perp}_{\bm{\theta}(\tilde{\bm{\xi}})}
   \ol{\sum}_{t \in \mathcal{T}} \Big( K_\lambda \big(\bbs(\bm{\xi}_t), \bbs(\tilde{\bm{\xi}})\big) \, ; \, \bbf(\bm{\xi}_{t+1}) \Big)  
   \Big\|^2_2 \right]
  =\sum_{\bx\in \mathcal{D}}\mathbb{E}\left[ \Big\| P^{\perp}_{\bm{\theta}(\bx)}
  \ol{\sum}_{t \in \mathcal{T}} \Big( K_\lambda \big(\bbs(\bm{\xi}_t), \bbs(\bx)\big) \, ; \, \bbf(\bm{\xi}_{t+1}) \Big)   
  \Big\|^2_2 \right]\rho(\bx). \label{tmp_42} 
\end{align}
To simplify notation we introduce
\[ U_t(\bx)\coloneq K_\lambda (\bbs(\bm{\xi}_t), \bbs(\bx)).\]
For any $\bx\in\mathcal{D}$, it now holds
\begin{align}
  &\mathbb{E}\left[ \Big\| P^{\perp}_{\bm{\theta}(\bx)}
  \ol{\sum}_{t \in \mathcal{T}} \Big( U_t(\bx) \, ; \, \bbf(\bm{\xi}_{t+1}) \Big)   
  \Big\|^2_2 \right]
  \notag \\
  &\leq \mathbb{E}\left[ \bigg\| P^{\perp}_{\bm{\theta}(\bx)}\frac{\sum_{t \in \mathcal{T}} U_t(\bx)\bbf(\bm{\xi}_{t+1})}{\sum_{t\in\mathcal{T}} U_t(\bx)} \bigg\|^2_2\mathbb{I}\Big(\sum_{t\in\mathcal{T}} U_t(\bx)\geq \frac{\vert\mathcal{T}\vert}{2}\E\big[U_0(\bx)\big] \Big) \right]
  +\Vert \bbf\Vert_\infty^2\P\Big(\sum_{t\in\mathcal{T}}U_t(\bx)<  \frac{\vert\mathcal{T}\vert}{2}\E\big[U_0(\bx)\big]\Big)
  \notag  \\
  &\leq\frac{4}{\vert \mathcal{T}\vert^2\E\big[U_0(\bx)\big]^2} \mathbb{E}\left[ \Big\| P^{\perp}_{\bm{\theta}(\bx)}\sum_{t \in \mathcal{T}} U_t(\bx) \bbf(\bm{\xi}_{t+1}) \Big\|^2_2 \right]
  +\Vert \bbf\Vert_\infty^2\P\Big(\Big\vert\sum_{t\in\mathcal{T}} \big(U_t(\bx)-\E\big[U_0(\bx)\big]\big)\Big\vert> \frac{\vert \mathcal{T}\vert}2 \E\big[U_0(\bx)\big]\Big). \label{tmp_43} 
\end{align}
We bound the probability using Markov's inequality together with Theorem 3.2 in \cite{paulin15}. This result provides a similar variance bound for stationary Markov chains as for sums of independent random variables. For a random variable $U$ with $|U|\leq 1$ a.s., one has $\Var(U)=\mathbb{E}[U^2]-\mathbb{E}^2[U]\leq \mathbb{E}[U^2]\leq \mathbb{E}[U].$ Using that $\Vert K_{\lambda}\Vert_\infty$ maps to $[0,1]$ and applying the previous inequality to $U_0(\bx)=K_\lambda \big(\bbs(\bm{\xi}_0), \bbs(\bx)\big),$ then gives
\begin{align}
   \P\Big(\Big\vert\sum_{t\in\mathcal{T}} U_t(\bx)-\E\big[U_0(\bx)\big]\Big\vert> \frac{\vert \mathcal{T}\vert}2 \E\big[U_0(\bx)\big]\Big)
   &\leq \frac{16}{\gamma_{\mathrm{ps}}(\mathbf{M})\vert\mathcal{T}\vert\E\big[U_0(\bx)\big]^2}\E\Big[ \big\vert U_0(\bx)-\E\big[U_0(\bx)\big]\big\vert^2\Big]
   \notag \\
   &\leq \frac{16}{\gamma_{\mathrm{ps}}(\mathbf{M})\vert\mathcal{T}\vert\E\big[U_0(\bx)\big]}, 
   \label{tmp_44}
\end{align}
where $\gamma_{\mathrm{ps}}(\mathbf{M})$ denotes the pseudo spectral gap of $\mathbf{M},$ which depends only on $c_{\mathbf{M}}$ (see Definition 1.3 and Proposition 3.4 in \cite{paulin15}). Now, applying the bias-variance decomposition yields 
\begin{align*}
\mathbb{E}\left[ \Big\| P^{\perp}_{\bm{\theta}(\bx)}\sum_{t \in \mathcal{T}} U_t(\bx) \bbf(\bm{\xi}_{t+1}) \Big\|^2_2 \right]
&=\underbrace{\mathbb{E}\left[ \Big\| P^{\perp}_{\bm{\theta}(\bx)}\sum_{t \in \mathcal{T}} U_t(\bx) \bbf(\bm{\xi}_{t+1})
- \vert\mathcal{T}\vert \E\Big[ P^{\perp}_{\bm{\theta}(\bx)} U_0(\bx) \bbf(\bm{\xi}_{1})\Big] \Big\|^2_2 \right]}_{\text{variance}}
\\&\quad+ \underbrace{\vert\mathcal{T}\vert^2
\Big\|  \E\Big[ P^{\perp}_{\bm{\theta}(\bx)} U_0(\bx) \bbf(\bm{\xi}_{1})\Big] \Big\|^2_2}_{\text{squared bias}},
\end{align*}
where we again used $\Vert K_{\lambda}\Vert_\infty\leq 1$ in the last step.
For the variance term we again apply the variance bound for Markov chains provided in Theorem 3.2 in \cite{paulin15} componentwise and obtain
\begin{align}
    \mathbb{E}\left[ \Big\| P^{\perp}_{\bm{\theta}(\bx)}\sum_{t \in \mathcal{T}} U_t(\bx) \bbf(\bm{\xi}_{t+1})
- \vert\mathcal{T}\vert \E\Big[ P^{\perp}_{\bm{\theta}(\bx)} U_0(\bx) \bbf(\bm{\xi}_{1})\Big] \Big\|^2_2 \right]
    & \leq
\mathbb{E}\left[ \Big\| \sum_{t \in \mathcal{T}} U_t(\bx) \bbf(\bm{\xi}_{t+1})
- \vert\mathcal{T}\vert \E\Big[  U_0(\bx) \bbf(\bm{\xi}_{1})\Big] \Big\|^2_2 \right] \notag\\
    &\leq \frac{4\vert\mathcal{T}\vert}{\gamma_{\mathrm{ps}}(\mathbf{M})}\mathbb{E}\left[ \Big\|  U_0(\bx) \bbf(\bm{\xi}_{1})-\E\Big[  U_0(\bx) \bbf(\bm{\xi}_{1})\Big] \Big\|^2_2 \right]
      \notag \\
    &\leq \frac{4\Vert \bbf \Vert_\infty^2\vert\mathcal{T}\vert}{\gamma_{\mathrm{ps}}(\mathbf{M})}\mathbb{E}\left[  U_0(\bx)  \right]. \label{tmp_45}
\end{align}
For the bias term we deduce by conditioning on $\bxi_0$ and by applying the definitions of $\bbm$ and $\btheta,$
\begin{align}
    \vert\mathcal{T}\vert^2
\Big\|  \E\Big[ P^{\perp}_{\bm{\theta}(\bx)} U_0(\bx) \bbf(\bm{\xi}_{1})\Big] \Big\|^2_2
    &=\vert \mathcal{T}\vert^2\Big\|\E\Big[  U_0(\bx) P^{\perp}_{\bm{\theta}(\bx)}\bbm(\bm{\xi}_{0})\Big] \Big\|^2_2
    \notag \\
    &=\vert \mathcal{T}\vert^2\Big\|\E\Big[\Vert\bbm(\bm{\xi}_{0})\Vert_2  U_0(\bx) P^{\perp}_{\bm{\theta}(\bx)}\btheta(\bm{\xi}_{0})\Big] \Big\|^2_2
      \notag  \\
    &=\vert \mathcal{T}\vert^2\Big\|\E\Big[\Vert\bbm(\bm{\xi}_{0})\Vert_2  U_0(\bx) P^{\perp}_{\bm{\theta}(\bx)}(\btheta(\bm{\xi}_{0})-\btheta(\bx))\Big] \Big\|^2_2
    \notag \\
    &\leq \vert \mathcal{T}\vert^2\Vert \bbf \Vert_\infty^2\E\Big[  U_0(\bx) \Vert\btheta(\bm{\xi}_{0})-\btheta(\bx)\Vert_2\Big] ^2, \label{tmp_46}
\end{align}
where we use $P^{\perp}_{\bm{\theta}(\bx)} \bm{\theta}(\bx) = \bm{0}_{d}$ for the last equality.
Furthermore, by the Hölder continuity of $\btheta,$ we deduce for any $\epsilon>0,$
\begin{align*}
   \E\Big[  U_0(\bx) \Vert\btheta(\bm{\xi}_{0})-\btheta(\bx)\Vert_2\Big]
   &= \sum_{\bx'\in\mathcal{D}}K_\lambda \big( \bbs(\bx'), \bbs(\bx) \big) \Vert\btheta(\bx')-\btheta(\bx)\Vert_2\rho(\bx')
   \\
   &\leq c_{\btheta} \sum_{\bx'\in\mathcal{D}}K_\lambda \big( \bbs(\bx'), \bbs(\bx) \big) \Vert\bbs(\bx') - \bbs(\bx)\Vert^\beta_2\rho(\bx')
   \\
   &\leq  c_{\btheta}\bigg(\epsilon^\beta \sum_{\bx'\in\mathcal{D}:\Vert \bbs(\bx') - \bbs(\bx) \Vert_2\leq \epsilon}
   \exp\Big(- \frac{\Vert \bbs(\bx') - \bbs(\bx) \Vert_2^2}{2\lambda^2}\Big) \rho(\bx')
   \\&\qquad \quad + \exp\Big(-\frac{\epsilon^2}{2\lambda^2}\Big)\sum_{\bx'\in\mathcal{D}:\Vert \bbs(\bx') - \bbs(\bx) \Vert_2>\epsilon} \Vert\bbs(\bx') - \bbs(\bx)\Vert^\beta_2\rho(\bx')\bigg)
     \\
   &\leq  c_{\btheta}\bigg(\epsilon^\beta \E\big[  U_0(\bx) \big]
+ 2^{\beta} \Vert \bbs \Vert_\infty^{\beta}
\exp\Big(-\frac{\epsilon^2}{2\lambda^2}\Big)\bigg),
\end{align*}
where we used that $K_\lambda$ is nonnegative in the last bound.
Choosing
\[ \epsilon=\lambda\sqrt{2\log\Big((\E[  U_0(\bx) ]\lambda^\beta)^{-1}\Big)},\]
   we get 
   \begin{align}
   &\E\Big[  U_0(\bx) \Vert\btheta(\bm{\xi}_{0})-\btheta(\bx)\Vert_2\Big] \leq  c_{\btheta}\lambda^\beta \E[ U_0(\bx)]\bigg(\Big(2\log\big((\E[  U_0(\bx) ]\lambda^\beta)^{-1}\big)\Big)^{\beta/2}
+ 2^{\beta} \Vert \bbs \Vert_\infty^{\beta} \bigg). \label{tmp_51}
\end{align}
Combining this inequality with \eqref{tmp_40}, \eqref{tmp_41}, \eqref{tmp_42}, \eqref{tmp_43}, \eqref{tmp_44},
\eqref{tmp_45} and \eqref{tmp_46} yields, for any $\bx\in\mathcal{D}$
\begin{align*}
    \mathbb{E}^{\bx}\left[ \| P^{\perp}_{\bm{\theta}(\bm{\xi}_T)} \wh{\bbm}_{\lambda}(\bm{\xi}_{T})  \|^2_2 \right]
    &\leq 
    3\Vert \bbf \Vert_\infty^2c_{\mathbf{M}}^{\log^2 T-1}+\frac{32\Vert \bbf \Vert_\infty^2}{\gamma_{\mathrm{ps}}(\mathbf{M})\vert\mathcal{T}\vert\inf_{\bx\in\mathcal{D}}\E\big[U_0(\bx)\big]}
    \\
    &\quad+4\Vert \bbf \Vert_\infty^2c^2_{\btheta}\lambda^{2\beta} \Big(\Big(2\log\Big((\inf_{\bx\in\mathcal{D}}\E[  U_0(\bx) ]\lambda^\beta)^{-1}\Big)\Big)^{\beta/2}
+ 2^{\beta} \Vert \bbs \Vert_\infty^{\beta} \Big)^2,
\end{align*}
which together with the assumption $\lambda\geq T^{-1}$ and $c_{\mathbf{M}}^{\log^2 T-1}=O(T^{-2\beta})=O(\lambda^{2\beta})$ proves the first assertion  \eqref{lemma_key_first_argument}. 

\medskip \medskip

To prove the second assertion, define
\[
\nu(\lambda, \rho, \bbm) \coloneqq
\inf_{\bx \in \mathcal{D}}  \frac{\big\Vert\E\left[ U_0(\bx) P_{\bm{\theta}(\bx)} \bbm(\bm{\xi}_{0})\right]\big\Vert_2}{\E\big[ U_0(\bx) \big]}.
\]
Consider the event 
\[
    A(\bu):=\bigg\{\big\| P_{\bm{\theta}(\bu)} \wh{\bbm}_{\lambda}(\bu)  \big\|_2 < \frac{\nu(\lambda, \rho, \bbm)}{4}\bigg\}.
\]
Using the closeness to the invariant distribution $\mathbb{P}$ and arguing similarly as in \eqref{tmp_40}, \eqref{tmp_41}, and \eqref{tmp_42}, we  obtain for $\tilde{\bxi}$ as in \eqref{tmp_41} and  any $\bx \in \mathcal{D}$,
\begin{align}
    \mathbb{P}^{\bx}\big(A(\bm{\xi}_{T})\big)
    &\leq \Vert P^{\min(\mathcal{T})}(x,\cdot)-\rho\Vert_{\operatorname{TV}}+\mathbb{P}\big(A(\bm{\xi}_{T})\big)\notag
    \\
    &\leq c_{\mathbf{M}}^{\min(\mathcal{T})}+\mathbb{P}(\bxi_T\neq \tilde{\bxi})+\mathbb{P}\big(A(\tilde{\bxi})\big)\notag
    \\
    &\leq c_{\mathbf{M}}^{\min(\mathcal{T})} + 2 c_{\mathbf{M}}^{T-\max(\mathcal{T})} + \sum_{\bx\in\mathcal{D}} \mathbb{P}\big(A(\bx)\big) \rho(\bx). \label{tmp_54}
\end{align}
Moreover, 
\begin{align}
    c_{\mathbf{M}}^{\min(\mathcal{T})} + 2 c_{\mathbf{M}}^{T-\max(\mathcal{T})}
    \leq 3c_{\mathbf{M}}^{\log^2 T}
    \lesssim \frac 1 T.
    \label{eq.temp_54b}
\end{align}
It therefore suffices to investigate the stationary case.
Additionally it holds for any $\bx \in \mathcal{D}$,
\begin{align}
    \mathbb{P}\big(A(\bx)\big) 
    &=\mathbb{P} \left( \frac{\Vert\sum_{t \in \mathcal{T}} U_t(\bx) P_{\bm{\theta}(\bx)} \bbf(\bm{\xi}_{t+1})\Vert_2}{\sum_{t \in \mathcal{T}} U_t(\bx)}< \frac{\nu(\lambda, \rho, \bbm)}{4}  \right) 
    \notag \\
    &\leq 
    \mathbb{P}\left( \frac{1}{\vert\mathcal{T}\vert}\sum_{t \in \mathcal{T}} U_t(\bx) \geq 2 \E\big[ U_0(\bx) \big]     \right)
    + \mathbb{P}\left( \frac{1}{\vert\mathcal{T}\vert}\Big\Vert\sum_{t \in \mathcal{T}} U_t(\bx) P_{\bm{\theta}(\bx)} \bbf(\bm{\xi}_{t+1})\Big\Vert_2< \frac{\nu(\lambda, \rho, \bbm)}{2}  \E\big[ U_0(\bx) \big]     \right)
    \notag \\
    &\leq 
    \mathbb{P}\left( \frac{1}{\vert\mathcal{T}\vert}\sum_{t \in \mathcal{T}} U_t(\bx) \geq 2 \E\big[ U_0(\bx) \big]     \right)
    \notag \\
    & \quad + \mathbb{P}\left( \frac{1}{\vert\mathcal{T}\vert}\Big\Vert\sum_{t \in \mathcal{T}} U_t(\bx) P_{\bm{\theta}(\bx)} \bbf(\bm{\xi}_{t+1})\Big\Vert_2< \frac{1}{2}\Big\Vert\E\left[ U_0(\bx) P_{\bm{\theta}(\bx)} \bbm(\bm{\xi}_{0})\right]\Big\Vert_2       \right),\label{tmp_55}
\end{align}
where we used the definition of $\nu(\lambda, \rho, \bbm)$ in the last step.
The first term can be bounded similarly to \eqref{tmp_44} by
\begin{align}
    \mathbb{P}\left( \frac{1}{\vert\mathcal{T}\vert}\sum_{t \in \mathcal{T}} U_t(\bx) \geq 2 \E\big[ U_0(\bx) \big]     \right)
    \leq \frac{4}{\gamma_{\mathrm{ps}}(\mathbf{M})\vert\mathcal{T}\vert\E\big[U_0(\bx)\big]}. \label{tmp_56}
\end{align}
Furthermore, since $\|\bu\|_2 < \tfrac 12 \|\bv\|_2$ implies that $\|\bu-\bv\|_2 > \tfrac 12 \|\bv\|_2$, we obtain for the second term
\begin{align*}
    &\mathbb{P}\bigg( \frac{1}{\vert\mathcal{T}\vert}\Big\Vert\sum_{t \in \mathcal{T}} U_t(\bx) P_{\bm{\theta}(\bx)} \bbf(\bm{\xi}_{t+1})\Big\Vert_2
    < 
    \frac{1}{2}\Big\Vert\E\left[ U_0(\bx) P_{\bm{\theta}(\bx)} \bbm(\bm{\xi}_{0})\right]\Big\Vert_2 \bigg)\\
    & \leq \mathbb{P}\bigg( \bigg\Vert \sum_{t \in \mathcal{T}} U_t(\bx) P_{\bm{\theta}(\bx)} \bbf(\bm{\xi}_{t+1}) - \vert\mathcal{T}\vert \E\left[ U_0(\bx) P_{\bm{\theta}(\bx)} \bbm(\bm{\xi}_{0})\right] \bigg\Vert_2  >  
    \frac{\vert\mathcal{T}\vert}{2}\Big\Vert\E\left[ U_0(\bx) P_{\bm{\theta}(\bx)} \bbm(\bm{\xi}_{0})\right]\Big\Vert_2 \bigg)\\
    & \leq \frac{4}{\big\Vert\E\left[ U_0(\bx) P_{\bm{\theta}(\bx)} \bbm(\bm{\xi}_{0})\right]\big\Vert_2^2 |\mathcal{T}|^2} \E \left[ \Big\Vert\sum_{t \in \mathcal{T}} U_t(\bx) \bbf(\bm{\xi}_{t+1}) - \vert\mathcal{T}\vert \E\left[ U_0(\bx) \bbf(\bm{\xi}_{1})\right] \bigg\Vert_2^2 \right]\\
    & \leq 
    \frac{4}{\big\Vert\E\left[ U_0(\bx) P_{\bm{\theta}(\bx)} \bbm(\bm{\xi}_{0})\right]\big\Vert_2^2 |\mathcal{T}|^2}
    \,
    \frac{4\Vert \bbf \Vert_\infty^2\vert\mathcal{T}\vert}{\gamma_{\mathrm{ps}}(\mathbf{M})}\mathbb{E}\left[  U_0(\bx)  \right]\\
    & = \frac{16 \Vert \bbf \Vert_\infty^2}{\gamma_{\mathrm{ps}}(\mathbf{M}) \vert\mathcal{T}\vert} 
    \,
    \frac{\E\big[ U_0(\bx) \big]}{\big\Vert\E\left[ U_0(\bx) P_{\bm{\theta}(\bx)} \bbm(\bm{\xi}_0)\right]\big\Vert_2^2} 
\end{align*}
where we use \eqref{tmp_45} for the last inequality.
Combining this inequality with \eqref{tmp_54}, \eqref{eq.temp_54b}, \eqref{tmp_55}, and \eqref{tmp_56} yields the second assertion \eqref{lemma_key_second_argument}.
\end{proof}

\subsection{Proof of Lemma~\ref{lemma_application_keylemma_properties}}
\label{proof_lemma_application_keylemma_properties}

\begin{proof}
\textbf{(i)}
Recall that for any $\bx = (\bx_1, \ldots, \bx_r) \in \mathcal{E}_W^{r}$, $\bz_{1} \coloneqq \stack(\bm{\phi}_1(\bx_{r-r_1 + 1}), \ldots, \bm{\phi}_1(\bx_r)) \in \mathbb{R}^{m_1 r_1}$ and
$\bz_{2} \coloneqq \stack(\bm{\phi}_2(\bx_{r-r_2 + 1}), \ldots, \bm{\phi}_2(\bx_r)) \in \mathbb{R}^{m_2 r_2}$,
\begin{align*}    \mathbb{P}\big(\bX_{r+1} = \be_w \big|\bh, (\bX_1, \ldots, \bX_r) = (\bx_1, \ldots, \bx_r) \big) 
\propto
\exp\Big(\bg_0(\bz_{1})^{\top} \bm{\phi}_1(\be_w) + \bh(\bz_{2})^{\top} \bm{\phi}_2(\be_w)\Big), \quad w \in [W].
\end{align*}
Hence,
    \begin{align}
    \bbm(\bx)
    & = \mathbb{E}\left[ \frac{\bm{\phi}_2(\bX_{r+1})}{\exp\big(\bg_0(\bz_{1})^{\top} \bm{\phi}_1(\bX_{r+1})\big) } \middle|\bh, (\bX_1, \ldots, \bX_r) = (\bx_1, \ldots, \bx_r) \right] \quad \nonumber \\
    & = \frac{\sum_{w=1}^W \exp\Big(\bg_0(\bz_{1})^{\top} \bm{\phi}_1(\be_w) + \bh(\bz_2)^{\top} \bm{\phi}_2(\be_w)\Big) \frac{\bm{\phi}_2(\be_{w})}{\exp\big(\bg_0(\bz_{1})^{\top} \bm{\phi}_1(\be_w)\big)}}{\sum_{w=1}^W \exp\Big(\bg_0(\bz_{1})^{\top} \bm{\phi}_1(\be_w) + \bh(\bz_2)^{\top} \bm{\phi}_2(\be_w)\Big)}  \nonumber \\
    & = \frac{\sum_{w=1}^W \exp\Big( \bh(\bz_2)^{\top} \bm{\phi}_2(\be_w)\Big)
    \bm{\phi}_2(\be_{w})}{\sum_{w=1}^W \exp\Big(\bg_0(\bz_{1})^{\top} \bm{\phi}_1(\be_w) + \bh(\bz_2)^{\top} \bm{\phi}_2(\be_w)\Big)}  \nonumber \\
    & = \underbrace{\frac{\sum_{w=1}^W \exp\big( \bh(\bz_2)^{\top} \bm{\phi}_2(\be_w) \big) \bm{\phi}_2(\be_{w})}{\sum_{w=1}^W \exp\big( \bh(\bz_2)^{\top} \bm{\phi}_2(\be_w) \big)}}_{\displaystyle \eqcolon \, \tilde{\bm{\theta}}(\bz_2) \, \in \, \mathbb{R}^{m_2}} \, \underbrace{\frac{\sum_{w=1}^W \exp\big(\bh(\bz_2)^{\top} \bm{\phi}_2(\be_w)\big)}{\sum_{w=1}^W \exp\big(\bg_0(\bz_{1})^{\top} \bm{\phi}_1(\be_w) + \bh(\bz_2)^{\top} \bm{\phi}_2(\be_w)\big)}}_{\displaystyle \in \, \mathbb{R}}, \label{tmp_47}
    \end{align}  
which gives us the assertion.

\medskip

\textbf{(ii)}
Applying the reverse triangle inequality, the second argument of Lemma \ref{lemma_VMF_property}, the definition of $\kappa_W$ (Definition \ref{def_pos_assumption}), and the assumption
$\kappa_W \leq (1 + m_2 / \delta_2)^{-1}/2$,
\begin{align}
    \big\| \tilde{\bm{\theta}}(\bz_2) \big\|_2 &\geq 
\left\| \frac{\int_{\bu \in \mathbb{S}^{m_2 - 1}} \exp( \bh(\bz_2)^{\top} \bu) \bu d\bu }{\int_{\bu \in \mathbb{S}^{m_2 - 1}} \exp(\bh(\bz_2)^{\top} \bu) d\bu  }\right\|_{2}
- \left\| \tilde{\bm{\theta}}(\bz_2)  - \frac{\int_{\bu \in \mathbb{S}^{m_2 - 1}} \exp(\bh(\bz_2)^{\top} \bu) \bu d\bu }{\int_{\bu \in \mathbb{S}^{m_2 - 1}} \exp(\bh(\bz_2)^{\top} \bu) d\bu  }\right\|_{2} \notag \\ 
&\geq \frac{1}{1 + m_2 / \delta_2} - \kappa_W
\geq 
\frac{1}{2 + 2 m_2 / \delta_2}
\coloneqq \varrho > 0. \label{tmp_37}
\end{align}
On the other hand, we have 
\[
\frac{\sum_{w=1}^W \exp\big(\bh(\bz_2)^{\top} \bm{\phi}_2(\be_w)\big)}{\sum_{w=1}^W \exp\big(\bg_0(\bz_{1})^{\top} \bm{\phi}_1(\be_w) + \bh(\bz_2)^{\top} \bm{\phi}_2(\be_w)\big)} 
\geq
\frac{W \exp(-\delta_2)}{W \exp(\delta_1 m_1 + \delta_2)} = \exp(- \delta_1 m_1 - 2 \delta_2).
\]
Combining these inequalities with $\eqref{tmp_47}$ gives us the assertion.

\medskip

\textbf{(iii)} 
By Lemma~\ref{lemma_softmax_lipschitz} and the smoothness of $\bh$,
\begin{align*}
    \big\|\tilde{\bm{\theta}}\big( \bbs(\bx) \big) - \tilde{\bm{\theta}}\big( \bbs(\bx')\big)\big\|_{\infty} 
    \leq  2 \big\|\bh\big( \bbs(\bx) \big) - \bh\big( \bbs(\bx')\big)\big\|_{2} 
&\leq 2 \sqrt{m_2} \big\|\bh\big( \bbs(\bx) \big) - \bh\big( \bbs(\bx')\big)\big\|_{\infty} \\
&\leq 2 \sqrt{m_2} \big\|\bh\|_{\mathcal{C}^{\beta_2}} \|\bbs(\bx) - \bbs(\bx')\big\|_{\infty}^{\beta_2}.
\end{align*}
Using $\bm{\theta}(\bx) = \bbm(\bx) / \|\bbm(\bx)\|_2 = \tilde{\bm{\theta}}(\bbs(\bx)) / \|\tilde{\bm{\theta}}(\bbs(\bx))\|_2$ and \eqref{tmp_37}, we get the assertion by
\begin{align*}
    \| \bm{\theta}(\bx) - \bm{\theta}(\bx')  \|_2 
&= \left\|\frac{\tilde{\bm{\theta}}\big( \bbs(\bx) \big)}{\|\tilde{\bm{\theta}}\big( \bbs(\bx) \big)\|_2}
-
\frac{\tilde{\bm{\theta}}\big( \bbs(\bx')\big)}{\|\tilde{\bm{\theta}}\big( \bbs(\bx')\big)\|_2}\right\|_2\\
&\leq \left\|\frac{\tilde{\bm{\theta}}\big( \bbs(\bx) \big) - \tilde{\bm{\theta}}\big( \bbs(\bx')\big)}{\|\tilde{\bm{\theta}}\big( \bbs(\bx) \big)\|_2}\right\|_2
+
\left\| \left(\frac{1}{\|\tilde{\bm{\theta}}\big( \bbs(\bx)\big)\|_2}
-
\frac{1}{\|\tilde{\bm{\theta}}\big( \bbs(\bx')\big)\|_2}\right) \tilde{\bm{\theta}}\big( \bbs(\bx')\big)\right\|_2\\
& \leq \frac{4 m_2 \|\bh\|_{\mathcal{C}^{\beta_2}}}{\varrho} \|\bbs(\bx) - \bbs(\bx')\|_{\infty}^{\beta_2}. 
\end{align*}

\medskip

\textbf{(iv)} 
By \eqref{rho_maximum_minimum},
$\max_{\bx \in \mathcal{E}_W^{r}} \rho(\bx) \big/ \min_{\bx \in \mathcal{E}_W^{r}} \rho(\bx)$ is bounded above by a constant that depends only on $(m_1, \delta_1, \delta_2)$. 
For any $\bx \in \mathcal{E}_W^{r}$,
it follows using the definition $K_\lambda(\bz,\bz')=\exp(-\Vert \bz - \bz' \Vert^2/(2\lambda^2)),$
\begin{align}    \mathbb{E}_{\rho}\big[K_{\lambda}\big( \bbs(\bm{\xi}), \bbs(\bx)\big)\big]
&\geq     \mathbb{E}_{\rho}\big[K_{\lambda}\big( \bbs(\bm{\xi}), \bbs(\bx)\big) \mathbb{I}\big(\|\bbs(\bm{\xi}) - \bbs(\bx)\|_2 \leq \sqrt{r_2} \lambda\big) \big] \notag \\
&\geq
\exp(- r_2/2) \rho\big( \bm{\xi}: \|\bbs(\bm{\xi}) - \bbs(\bx)\|_2 \leq \sqrt{r_2} \lambda\big) \notag \\
&\gtrsim 
\frac{1}{W^{r}}  \Big|\Big\{(\bx'_1, \ldots, \bx'_r) \in \mathcal{E}_W^{r} : \max_{1 \leq i \leq r_2} \| \bm{\phi}_2(\bx'_i) - \bm{\phi}_2(\bx_i)  \|_2 \leq \lambda \Big\}\Big| \notag \\
&= 
\frac{1}{W^{r_2}}  \Big|\Big\{(\bx'_1, \ldots, \bx'_{r_2}) \in \mathcal{E}_W^{r_2} : \max_{1 \leq i \leq r_2} \| \bm{\phi}_2(\bx'_i) - \bm{\phi}_2(\bx_i)  \|_2 \leq \lambda \Big\}\Big| \notag \\
& \geq \left(\inf_{\bx \in \mathcal{E}_W} \frac{1}{W}  \Big|\Big\{\bx' \in \mathcal{E}_W : \| \bm{\phi}_2(\bx') - \bm{\phi}_2(\bx)  \|_2 \leq \lambda \Big\}\Big|\right)^{r_2} \notag \\
& = \varphi_W(\lambda)^{r_2}, \notag
\end{align}
where the implicit constant of $\gtrsim$ depends only on $(m_1, \delta_1, \delta_2, r_2)$.

\medskip

\textbf{(v)}
By definition, $P_{\bm{\theta}(\bx)} \bv = \langle \bm{\theta}(\bx), \bv \rangle \bm{\theta}(\bx)$ for any $\bv \in \mathbb{R}^m$. Hence for any $\bx \in \mathcal{E}_W^r$, 
\begin{align}
    \Big\|\E_{\rho}\big[K_\lambda \big(\bbs(\bm{\xi}), \bbs(\bx)\big) P_{\bm{\theta}(\bx)} \bbm(\bm{\xi})\big]\Big\|_2
    & = 
    \Big|\sum_{\bx' \in\mathcal{D}}K_\lambda (\bbs\big(\bx'), \bbs(\bx)\big) \rho(\bx') \langle \bm{\theta}(\bx), \bbm(\bx') \rangle \Big| \notag \\
    & \geq 
    \sum_{\bx' \in\mathcal{D}}K_\lambda (\bbs\big(\bx'), \bbs(\bx)\big) \rho(\bx') \langle \bm{\theta}(\bx), \bbm(\bx') \rangle \notag \\
    & =
    \sum_{\bx' \in\mathcal{D}}K_\lambda (\bbs\big(\bx'), \bbs(\bx)\big) \rho(\bx') \big|\langle \bm{\theta}(\bx), \bbm(\bx') \rangle\big| \notag \\
    & \quad -2 \sum_{\bx' \in\mathcal{D}: \langle \bm{\theta}(\bx), \bm{\theta}(\bx') \rangle < 0}K_\lambda (\bbs\big(\bx'), \bbs(\bx)\big) \rho(\bx') \big| \langle \bm{\theta}(\bx), \bbm(\bx') \rangle\big|.
    \label{tmp_53}
\end{align}
Since $\bm{\theta}(\bx) =\bbm(\bx)/\Vert\bbm(\bx)\Vert_2$ for any $\bx\in\mathcal{D}$ it follows for any $\bx' \in\mathcal{D}$,  that $|\langle \bm{\theta}(\bx), \bbm(\bx') \rangle| \geq c_1 \langle \bm{\theta}(\bx), \bm{\theta}(\bx') \rangle = 
\tfrac{c_1}{2} ( 2 - \|\bm{\theta}(\bx)- \bm{\theta}(\bx')\|_2^2)$. 
Hence, 
\begin{align}
    \sum_{\bx' \in\mathcal{D}}K_\lambda (\bbs\big(\bx'), \bbs(\bx)\big) \rho(\bx') \big|\langle \bm{\theta}(\bx), \bbm(\bx') \rangle\big|
&\geq
\frac{c_1}{2} \sum_{\bx' \in\mathcal{D}}K_\lambda (\bbs\big(\bx'), \bbs(\bx)\big) \rho(\bx') \big( 2 - \|\bm{\theta}(\bx)- \bm{\theta}(\bx')\|_2^2\big) \notag \\
& = c_1 \E_{\rho} \big[K_\lambda \big(\bbs(\bm{\xi}), \bbs(\bx)\big)\big]
- 
\frac{c_1}{2} \E_{\rho} \big[K_\lambda \big(\bbs(\bm{\xi}), \bbs(\bx)\big) \Vert\btheta(\bm{\xi})-\btheta(\bx)\Vert_2\big] \notag \\
&\geq c_1 \E_{\rho} \big[K_\lambda \big(\bbs(\bm{\xi}), \bbs(\bx)\big)\big], \label{tmp_52}
\end{align}
where the second inequality follows from $\|\bm{\theta}(\bx)- \bm{\theta}(\bx')\|_2^2 \leq 2 \|\bm{\theta}(\bx)- \bm{\theta}(\bx')\|_2$, and the last inequality follows from \eqref{tmp_51} by  taking $\lambda$ sufficiently small.
On the other hand, since $\langle \bm{\theta}(\bx), \bm{\theta}(\bx') \rangle < 0$ implies $\|\bm{\theta}(\bx) - \bm{\theta}(\bx') \|_2 \geq \sqrt{2}$ and hence $\|\bbs(\bx) - \bbs(\bx') \|_2 \geq (\sqrt{2}/c_{\bm{\theta}})^{1/\beta_2}$, we have by the definition of $K_{\lambda}$ and the Cauchy--Schwarz inequality
\begin{align*}
    \sum_{\bx' \in\mathcal{D}: \langle \bm{\theta}(\bx), \bm{\theta}(\bx') \rangle < 0}K_\lambda (\bbs\big(\bx'), \bbs(\bx)\big) \rho(\bx') \big| \langle \bm{\theta}(\bx), \bbm(\bx') \rangle\big|&\leq \exp \Big(- \frac{(\sqrt{2}/c_{\bm{\theta}})^{2/\beta_2}}{2 \lambda^2}\Big) \sup_{\bx' \in \mathcal{E}_W^r} \| \bbm(\bx') \|_2 \\
    & \leq \exp \Big(- \frac{(\sqrt{2}/c_{\bm{\theta}})^{2/\beta_2}}{2 \lambda^2} + \delta_1 m_1 \Big).
\end{align*}
Combining this inequality with \eqref{tmp_53}, \eqref{tmp_52}, and (iv), we obtain
\begin{align*}
    \frac{\big\Vert\E_{\rho}\left[ K_\lambda \big(\bbs(\bm{\xi}), \bbs(\bx)\big) P_{\bm{\theta}(\bx)} \bbm(\bm{\xi})\right]\big\Vert_2}{\E_{\rho}\big[ K_\lambda \big(\bbs(\bm{\xi}), \bbs(\bx)\big) \big]}
    & \geq \frac{c_1}{2} - 
    2\frac{\exp \big(- \frac{(\sqrt{2}/c_{\bm{\theta}})^{2/\beta}}{2 \lambda^2}+\delta_1m_1\big)}{\E\big[U_0(\bx)\big]}\\
    & \geq  \frac{c_1}{2} - 2
    \frac{\exp \big(- \frac{(\sqrt{2}/c_{\bm{\theta}})^{2/\beta}}{2 \lambda^2} + \delta_1 m_1 \big)}{ \varphi_W(\lambda)^{r_2}}\\
    & \geq  \frac{c_1}{2} - 2
    W^{r_2} \exp \bigg(- \frac{(\sqrt{2}/c_{\bm{\theta}})^{2/\beta} \log^2 W}{2} + \delta_1 m_1 \bigg) \\
    & \geq  \frac{c_1}{4},
\end{align*}
provided that $W$ is sufficiently large.
We used that by definition, $\varphi_W(\lambda) \geq 1/W$ for the third inequality.
\end{proof}

\section{Technical Lemmas}

\begin{lemma}[Corollary A.7 of \cite{pmlr-v162-edelman22a}]
\label{lemma_softmax_lip}
For any real-valued vectors $\bx$ and $\bx'$ of the same dimension,  
\[
\|\operatorname{softmax}(\bx)-\operatorname{softmax}(\bx')\|_{\infty}
\leq
\|\operatorname{softmax}(\bx)-\operatorname{softmax}(\bx')\|_{1}
\leq 2\|\bx-\bx'\|_{\infty}.
\]
\end{lemma}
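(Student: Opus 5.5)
The plan is to handle the two inequalities separately, writing $p=\operatorname{softmax}(\bx)$ and $p'=\operatorname{softmax}(\bx')$. The first inequality, $\|p-p'\|_\infty\le\|p-p'\|_1$, is immediate. The maximum absolute entry of any vector is bounded by the sum of the absolute entries.

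For the second inequality, the idea is to integrate the Jacobian of the softmax map along a line segment. Set $\bx(u)\coloneqq \bx'+u(\bx-\bx')$ for $u\in[0,1]$ and $p(u)\coloneqq\operatorname{softmax}(\bx(u))$. Softmax is smooth, so the fundamental theorem of calculus gives
\[
p-p'=\int_0^1 J(u)(\bx-\bx')\,du,
\]
where $J(u)=\operatorname{diag}(p(u))-p(u)p(u)^\top$ is the softmax Jacobian evaluated at $\bx(u)$.

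The key step is to bound $\|J\bv\|_1$ for a fixed probability vector $p$ and an arbitrary vector $\bv$. Writing $\bar v\coloneqq\sum_j p_j v_j$, the entries are $(J\bv)_i=p_i v_i-p_i\bar v=p_i(v_i-\bar v)$. Summing absolute values and using $|\bar v|\le\|\bv\|_\infty$ gives
\[
\|J\bv\|_1=\sum_i p_i|v_i-\bar v|\le \sum_i p_i\big(|v_i|+|\bar v|\big)\le 2\|\bv\|_\infty .
\]
This is where the constant $2$ arises.

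Combining the two steps, Minkowski's inequality for integrals yields
\[
\|p-p'\|_1\le\int_0^1\|J(u)(\bx-\bx')\|_1\,du\le 2\|\bx-\bx'\|_\infty ,
\]
which is the second inequality. All steps are elementary, so I expect no real obstacle. The only point requiring care is the Jacobian formula itself, which follows from $\partial p_i/\partial x_j=p_i(\delta_{ij}-p_j)$. A sharper constant (for instance $1$, by using $\sum_i p_i|v_i-\bar v|\le \max_i v_i-\min_i v_i$ and a finer argument) is not needed for the stated bound.
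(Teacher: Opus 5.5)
Your proof is correct and complete. The first inequality is trivial. For the second, you show that the softmax Jacobian $J=\operatorname{diag}(p)-pp^\top$ satisfies $\|J\bv\|_1=\sum_i p_i|v_i-\bar v|\le 2\|\bv\|_\infty$, and then integrate along the segment from $\bx'$ to $\bx$.

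The paper gives no proof of its own; it imports the statement from Edelman et al., and your Jacobian-plus-fundamental-theorem-of-calculus argument is the standard way to obtain it, so your write-up simply makes the cited fact self-contained. Your side remark about the constant $1$ is also right: a mean absolute deviation is at most half the range, so $\sum_i p_i|v_i-\bar v|\le\tfrac12(\max_i v_i-\min_i v_i)\le\|\bv\|_\infty$. It is not needed for the stated bound.
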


\begin{lemma} \label{lemma_KL_MSE}
If $u_1,\ldots,u_m, v_1,\ldots, v_m$ are real numbers, $U\coloneq\sum_{k=1}^m e^{u_k}$, and $V\coloneq\sum_{k=1}^m e^{v_k},$ then, 
\[
    \sum_{j=1}^m \frac{e^{u_j}}{U}\log\Big(e^{u_j-v_j}\frac{V}{U}\Big)
    \leq 4\max_j |u_j-v_j|^2.
\]
\end{lemma}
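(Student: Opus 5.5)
Write $p_j \coloneq e^{u_j}/U$ and $q_j \coloneq e^{v_j}/V$. The left-hand side equals $\sum_j p_j \log(p_j/q_j) = \mathrm{KL}(p\,\|\,q)$, since $\log(e^{u_j-v_j}V/U) = \log p_j - \log q_j$. Setting $\delta \coloneq \max_j|u_j-v_j|$, the claim is therefore that this Kullback--Leibler divergence between two softmax vectors is at most $4\delta^2$. I would prove this with the standard chi-square bound, using the fact that both log-ratios and likelihood ratios are controlled by $\delta$.

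\emph{Step 1 (uniform ratio bound).} First show
\[
\log(p_j/q_j) = (u_j-v_j) - \log(U/V),
\]
and note that $e^{-\delta} \le U/V \le e^{\delta}$, because each term $e^{u_k}$ lies between $e^{v_k-\delta}$ and $e^{v_k+\delta}$. Consequently $|\log(p_j/q_j)| \le 2\delta$ for every $j$.

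\emph{Step 2 (KL $\le \chi^2$).} By concavity of the logarithm (Jensen),
\[
\mathrm{KL}(p\,\|\,q) = \sum_j p_j\log(p_j/q_j) \le \log\sum_j p_j^2/q_j = \log\big(1+\chi^2(p\,\|\,q)\big) \le \chi^2(p\,\|\,q),
\]
where $\chi^2(p\,\|\,q) = \sum_j q_j (p_j/q_j - 1)^2$.

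\emph{Step 3 (bounding $\chi^2$).} Step 1 gives $|p_j/q_j - 1| \le e^{2\delta} - 1$, so $\chi^2 \le (e^{2\delta}-1)^2$. For $\delta \le 1/2$ this yields
\[
(e^{2\delta}-1)^2 \le (2\delta e^{2\delta})^2 \le 4e^{2}\delta^2,
\]
which has the right order $\delta^2$ but not the constant $4$. I expect obtaining the sharp constant to be the main obstacle, so I would tighten the argument in two ways.

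For small $\delta$, centre the exponent instead of bounding it crudely. Put $X_j \coloneq u_j - v_j$ and let $\bar X \coloneq \sum_j p_j X_j$. Then
\[
\mathrm{KL}(p\,\|\,q) = \bar X - \log \sum_j p_j e^{-X_j}.
\]
Since $X_j - \bar X$ is centred under $p$ and lies in an interval of length at most $2\delta$, Hoeffding's lemma gives $\log \sum_j p_j e^{-(X_j-\bar X)} \ge 0$ by Jensen in one direction, and more precisely
\[
\log \mathbb{E}_p e^{-(X-\bar X)} \le \frac{(2\delta)^2}{8} = \frac{\delta^2}{2}.
\]
Because $\mathrm{KL} = -\log \mathbb{E}_p e^{-(X-\bar X)}$ has the wrong sign for that upper bound, I would instead apply Hoeffding under $q$ after rewriting $\mathrm{KL}(p\,\|\,q) = \mathbb{E}_p[X] - \log \mathbb{E}_q e^{X}$, working through the identity $\log(V/U) = -\log\mathbb{E}_q e^{X}$. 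Together with $\mathbb{E}_p[X] - \mathbb{E}_q[X] \le 2\delta \cdot \mathrm{TV}$ and $\mathrm{TV} \le e^{2\delta}-1$, this gives $\mathrm{KL} \le 2\delta\cdot\mathrm{TV}$ plus a Hoeffding term of order $\delta^2/2$, which stays below $4\delta^2$ for small $\delta$.

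For large $\delta$ (say $\delta > 1/2$), Step 1 alone gives $\mathrm{KL} \le \max_j|\log(p_j/q_j)| \le 2\delta \le 4\delta^2$ whenever $\delta \ge 1/2$. The remaining work is to check the case split, adjusting the threshold if needed so the two regimes overlap.
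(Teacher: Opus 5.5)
Your reduction to $\mathrm{KL}(p\,\|\,q)$ is correct. So are the ratio bound $|\log(p_j/q_j)|\le 2\delta$ and the large-$\delta$ case ($\mathrm{KL}\le 2\delta\le 4\delta^2$ for $\delta\ge 1/2$). The small-$\delta$ regime, which is where the constant is decided, does not go through as written. The root cause is a sign error in your centred identity. Since $V/U=\sum_j e^{v_j}/U=\sum_j p_j e^{-X_j}$,
\[
\mathrm{KL}(p\,\|\,q)=\bar X+\log\frac{V}{U}=\bar X+\log\sum_j p_j e^{-X_j}=\log\mathbb{E}_p e^{-(X-\bar X)} .
\]
So the normaliser enters with a plus sign, and $\mathrm{KL}=+\log\mathbb{E}_p e^{-(X-\bar X)}$, not $-\log\mathbb{E}_p e^{-(X-\bar X)}$. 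The latter is $\le 0$ by Jensen, which should have been a warning sign. With the correct sign, the Hoeffding bound you already wrote down finishes the proof at once. Because $X$ takes values in $[-\delta,\delta]$, you get $\mathrm{KL}(p\,\|\,q)\le (2\delta)^2/8=\delta^2/2\le 4\delta^2$ for every $\delta$. This needs no case split and gives a constant eight times better than the lemma's.

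The fallback you actually propose does not reach the constant $4$. Hoeffding's lemma under $q$ bounds $\log\mathbb{E}_q e^{X}$ from above, which gives a \emph{lower} bound on $\mathrm{KL}=\mathbb{E}_pX-\log\mathbb{E}_q e^{X}$. What you need is Jensen, $\log\mathbb{E}_q e^{X}\ge \mathbb{E}_qX$. It yields $\mathrm{KL}\le\sum_j(p_j-q_j)(u_j-v_j)$ with no extra $\delta^2/2$ term, and this symmetrised bound is exactly the paper's starting point (its log-sum-inequality step). With your estimate $\mathrm{TV}\le e^{2\delta}-1$, however, you only get $2\delta(e^{2\delta}-1)$. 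Since $e^{2\delta}-1>2\delta$, this exceeds $4\delta^2$ for every $\delta>0$, so the claim that it ``stays below $4\delta^2$ for small $\delta$'' is false. You can repair the route with the sharper bound $\mathrm{TV}=\tfrac12\sum_j q_j|p_j/q_j-1|\le (e^{2\delta}-1)/2$. This gives $\mathrm{KL}\le \delta(e^{2\delta}-1)\le 4\delta^2$ on $[0,1/2]$, because $4\delta-e^{2\delta}+1$ is concave, vanishes at $0$ and is positive at $1/2$. That range then meets your large-$\delta$ case at $\delta=1/2$. The paper instead handles both regimes from the same symmetrised expression. It bounds $\sum_j(p_j-q_j)(u_j-v_j)$ both by $2\delta$ and by $2\delta(e^{\delta}-1)$, the latter via $|e^{u}-e^{v}|\le (e^{|u-v|}-1)e^{u}$. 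It then takes the minimum, using $e^x-1\le 2x$ for $x\le\log 2$.
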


\begin{proof}
Let $\Delta\coloneq\max_j |u_j-v_j|.$ Using the log sum inequality to bound $\log(V/U)$,
    \[
    \sum_{j=1}^m \frac{e^{u_j}}{U}\log\Big(e^{u_j-v_j}\frac{V}{U}\Big)
    = 
    \sum_{j=1}^m \frac{e^{u_j}}{U}\big(u_j-v_j\big) +\log\Big(\frac{V}{U}\Big)
    \leq 
    \sum_{j=1}^m \frac{e^{u_j}}{U}\big(u_j-v_j\big) +\frac{e^{v_j}}{V}\big(v_j-u_j\big).
\]
The right hand side can be upper bounded by $\leq 2\Delta.$ The right hand side can also be upper bounded by
\begin{align*}
\sum_{j=1}^m \big(u_j-v_j\big)\Big(\frac{e^{u_j}}{U}-\frac{e^{v_j}}{V}\Big)
&= \sum_{j=1}^m \big(u_j-v_j\big)\Big(\frac{e^{u_j}-e^{v_j}}{U}+\frac{e^{v_j}}{V}\cdot \frac{V-U}{U}\Big)\\
&\leq \Delta \big(e^\Delta-1\big)
+\Delta \frac{|V-U|}{U} \\
&\leq 2\Delta \big(e^\Delta-1\big),
\end{align*}
using that for any two real numbers $u,v,$ $|e^u-e^v|\leq (e^{|u-v|}-1)e^u.$ Combining both upper bounds and using that for any $x\leq \log(2),$ $e^x-1=x+\sum_{k= 2}^\infty x^k/k!\leq x+\tfrac 12 x\sum_{\ell=1}^\infty x^\ell/\ell!\leq x+\tfrac 12 xe^x\leq 2x$ yields the upper bound
\[
    \leq 2\Delta\min\Big(1,e^\Delta-1\Big)\leq 2\Delta \min(1,2\Delta)\leq 4\Delta^2.
\]
\end{proof}

\begin{lemma} \label{lemma_softmax_lipschitz}
For any $\delta>0,$ any $\bv_1, \bv_2 \in \delta \mathbb{S}^{m-1},$ and any $\bu_1, \ldots, \bu_W \in \mathbb{S}^{m-1}$, 
    \[
    \left\|
    \frac{\sum_{w=1}^W \exp\big( \bv_1^{\top} \bu_w \big) \bu_w }{\sum_{w=1}^W \exp\big( \bv_1^{\top} \bu_w \big)}
    -
    \frac{\sum_{w=1}^W \exp\big( \bv_2^{\top} \bu_w \big) \bu_w }{\sum_{w=1}^W \exp\big( \bv_2^{\top} \bu_w \big)}
    \right\|_{\infty} \leq 2 \|\bv_1 - \bv_2 \|_{2}.
    \]
\end{lemma}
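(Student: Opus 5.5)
Write $p_j(\bv)\coloneq \exp(\bv^\top\bu_j)/\sum_{w=1}^W\exp(\bv^\top\bu_w)$ for $\bv\in\mathbb{R}^m$. The left-hand side is then $\|\sum_w (p_w(\bv_1)-p_w(\bv_2))\bu_w\|_\infty$. I would prove the lemma by a mean-value argument along the segment $\bv(s)\coloneq \bv_2+s(\bv_1-\bv_2)$, $s\in[0,1]$. The softmax weights are defined for every $\bv\in\mathbb{R}^m$, so it does not matter that the segment leaves the sphere $\delta\mathbb{S}^{m-1}$. The radius $\delta$ plays no role in the argument; only $\|\bu_w\|_2=1$ is used.

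\textbf{Derivative.} Set $\bm{\mu}(\bv)\coloneq \sum_w p_w(\bv)\bu_w$ and $\bd\coloneq \bv_1-\bv_2$. A direct computation gives
\[
\frac{d}{ds}\bm{\mu}(\bv(s))=\sum_w p_w(\bv(s))\big(\bd^\top\bu_w-\bd^\top\bm{\mu}(\bv(s))\big)\bu_w ,
\]
which is the covariance $\operatorname{Cov}_{p}(\bu,\bd^\top\bu)$ of the random vector $\bu$ drawn from $\{\bu_w\}$ with weights $p(\bv(s))$.

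\textbf{Bounding each coordinate.} Fix a coordinate $i$. Then $|\operatorname{Cov}_p(u_i,\bd^\top\bu)|\le \sqrt{\Var_p(u_i)\Var_p(\bd^\top\bu)}$. Since $|u_{w,i}|\le 1$, we have $\Var_p(u_i)\le 1$. Since $|\bd^\top\bu_w|\le\|\bd\|_2$ by Cauchy--Schwarz, we have $\Var_p(\bd^\top\bu)\le \|\bd\|_2^2$. Hence each coordinate of the derivative is bounded in absolute value by $\|\bd\|_2$. Integrating over $s\in[0,1]$ yields the bound $\|\bv_1-\bv_2\|_2$, which is even better than the claimed $2\|\bv_1-\bv_2\|_2$.

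\textbf{Fallback.} Alternatively, one can bound the same quantity by $\|p(\bv_1)-p(\bv_2)\|_1\max_w\|\bu_w\|_\infty$ and apply Lemma~\ref{lemma_softmax_lip} to the logit vectors $(\bv_k^\top\bu_w)_w$, $k=1,2$. Their sup-norm difference is $\max_w|(\bv_1-\bv_2)^\top\bu_w|\le\|\bv_1-\bv_2\|_2$, so this gives exactly the constant $2$. This two-line route is probably what the authors intend, and I would present it as the main proof, keeping the covariance computation as a remark. There is no real obstacle; the only point needing care is that the factor $\max_w\|\bu_w\|_\infty\le 1$ comes from the $\bu_w$ being unit vectors.
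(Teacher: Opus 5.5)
Your fallback route is exactly the paper's proof: you bound each coordinate by $\|\bp(\bv_1)-\bp(\bv_2)\|_1$ using $|(\bu_w)_{(j)}|\le 1$, apply Lemma~\ref{lemma_softmax_lip} to the logit vectors $(\bv_k^\top\bu_w)_{w\in[W]}$, and finish with Cauchy--Schwarz, $\max_w|(\bv_1-\bv_2)^\top\bu_w|\le\|\bv_1-\bv_2\|_2$. Your covariance/mean-value computation is also correct and improves the constant from $2$ to $1$, but the paper does not need the sharper constant.
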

\begin{proof}
    Define for $\bv \in \delta \mathbb{S}^{m-1}$ 
    \[ 
    \bp(\bv) \coloneqq \Big(\frac{\exp\big( \bv^{\top} \bu_w \big)}{\sum_{i=1}^W \exp\big( \bv^{\top} \bu_i \big)}\Big)_{w \in [W]} = \operatorname{softmax}\big(\by(\bv)\big), \qquad \by(\bv) \coloneqq (\bv^{\top} \bu_1, \ldots, \bv^{\top} \bu_W)^{\top}.  
    \]
    Since for each coordinate $j \in [m]$,
    \begin{align*}
&\left|
    \frac{\sum_{w=1}^W \exp\big( \bv_1^{\top} \bu_w  \big) (\bu_w)_{(j)}}{\sum_{w=1}^W \exp\big( \bv_1^{\top} \bu_w \big)}
    -
    \frac{\sum_{w=1}^W \exp\big( \bv_2^{\top} \bu_w \big) (\bu_w)_{(j)} }{\sum_{w=1}^W \exp\big( \bv_2^{\top} \bu_w \big)}
    \right|\\
    &=
    \left| \sum_{w=1}^W \bp(\bv_1)_{(w)} (\bu_w)_{(j)}
    -
    \sum_{w=1}^W \bp(\bv_2)_{(w)} (\bu_w)_{(j)}\right|\\
    &\leq  \sum_{w=1}^W \big|\bp(\bv_1)_{(w)} - \bp(\bv_2)_{(w)} \big|
    \big|(\bu_w)_{(j)}\big|\\
    & \leq \big\| \bp(\bv_1) - \bp(\bv_2) \big\|_1
    \end{align*}    
    and $\|\operatorname{softmax}(\bx) - \operatorname{softmax}(\bx')\|_1 \leq 2 \| \bx - \bx'\|_{\infty}$ (see Lemma~\ref{lemma_softmax_lip}), we get the assertion by
    \begin{align*}
        \left\|
    \frac{\sum_{w=1}^W \exp\big( \bv_1^{\top} \bu_w \big) \bu_w }{\sum_{w=1}^W \exp\big( \bv_1^{\top} \bu_w \big)}
    -
    \frac{\sum_{w=1}^W \exp\big( \bv_2^{\top} \bu_w \big) \bu_w }{\sum_{w=1}^W \exp\big( \bv_2^{\top} \bu_w \big)}
    \right\|_{\infty}
    & \leq  \|\bp(\bv_1) - \bp(\bv_2)\|_{1}\\
    & \leq 2 \| \by(\bv_1) - \by(\bv_2)  \|_\infty\\
    &= 2 \max_{w \in [W]} \big|(\bv_1 - \bv_2)^{\top} \bu_w\big|\\
    & \leq 2 \|\bv_1 - \bv_2 \|_{2}.
    \end{align*}
\end{proof}

\begin{lemma} \label{lemma_existence_constant}
    Let $\bZ$ be uniformly distributed on $\mathbb{S}^{m-1}$. 
    Then there exists a constant $c_m>0$, depending only on $m$, such that for any $\epsilon \in [0,1]$ and any
$\bz \in \mathbb{S}^{m-1}$,
    \begin{align*}
    \mathbb{P}( \| \bZ - \bz \|_2  \leq \epsilon/2 ) \geq c_m \epsilon^{m-1}.
\end{align*}  
\end{lemma}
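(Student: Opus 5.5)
We prove the lower bound by a direct computation in spherical coordinates around $\bz$. By rotation invariance of the uniform distribution on $\mathbb{S}^{m-1}$, we may take $\bz = \be_1$ (the first standard basis vector of $\mathbb{R}^m$). On the unit sphere,
\[
\|\bZ-\be_1\|_2^2 = 2 - 2\bZ_{(1)},
\]
so the event $\|\bZ - \be_1\|_2 \leq \epsilon/2$ is the spherical cap $\{\bZ_{(1)} \geq 1 - \epsilon^2/8\}$. Writing $\bZ_{(1)} = \cos\vartheta$ with polar angle $\vartheta\in[0,\pi]$, the cap is $\{\vartheta \leq \vartheta_\epsilon\}$ with $\cos\vartheta_\epsilon = 1-\epsilon^2/8$. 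The density of $\vartheta$ under the uniform measure is
\[
\frac{\sin^{m-2}\vartheta}{\int_0^\pi \sin^{m-2} u \, du}.
\]

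The key steps, in order, are as follows.

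\emph{Lower bound on the cap angle.} Since $1-\cos\vartheta \leq \vartheta^2/2$, the cap condition $1-\cos\vartheta\le\epsilon^2/8$ holds whenever $\vartheta^2/2 \le \epsilon^2/8$, i.e.\ whenever $\vartheta \leq \epsilon/2$. Hence $\vartheta_\epsilon \geq \epsilon/2$.

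\emph{Lower bound on the density.} On $[0,1/2]$ we have $\sin u \geq 2u/\pi$ by concavity of $\sin$ on $[0,\pi/2]$.

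\emph{Integration.} Since $\epsilon \le 1$, the interval $[0,\epsilon/2]$ lies in $[0,1/2]$, so
\[
\mathbb{P}\big(\|\bZ-\bz\|_2\le \epsilon/2\big) \;\geq\; \frac{\int_0^{\epsilon/2}(2u/\pi)^{m-2}\,du}{\int_0^\pi \sin^{m-2}u\,du}
\;=\; \frac{(2/\pi)^{m-2}(\epsilon/2)^{m-1}}{(m-1)\int_0^\pi \sin^{m-2}u\,du}.
\]
Setting
\[
c_m \coloneqq \frac{(2/\pi)^{m-2}\,2^{-(m-1)}}{(m-1)\,\pi}
\]
and using $\int_0^\pi \sin^{m-2}u\,du \leq \pi$ gives the claim $\mathbb{P}(\|\bZ-\bz\|_2 \le \epsilon/2) \ge c_m \epsilon^{m-1}$. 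The case $\epsilon = 0$ is trivial, since the right-hand side is then $0$.

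The only point needing care is the polar-angle representation of the uniform measure on $\mathbb{S}^{m-1}$. For $m\ge 3$ this is the standard fact that $\bZ_{(1)}$ has density proportional to $(1-x^2)^{(m-3)/2}$ on $[-1,1]$, which transforms into the angular density above. For $m=2$ the angle is itself uniform, the exponent $m-2$ vanishes, and the same bound holds directly.
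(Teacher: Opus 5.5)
Your proof is correct, but it takes a different route from the paper's. Both arguments rotate to $\bz=\be_1$. After that, the paper avoids polar coordinates. It uses the triangle inequality to show that on $\{Z_1\ge 0\}$ one has $\|\bZ-\be_1\|_2\le \|\bZ_{(-1)}\|_2^2+\|\bZ_{(-1)}\|_2$. Hence the event contains $\{Z_1>0,\ \|\bZ_{(-1)}\|_2\le\epsilon/4\}$, and the paper then invokes the volume of an $(m-1)$-dimensional ball of radius $\epsilon/4$ for the projection $\bZ_{(-1)}$.

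You instead identify the event exactly as a spherical cap and reduce to the one-dimensional law of the polar angle, with density proportional to $\sin^{m-2}\vartheta$. You then combine $1-\cos\vartheta\le\vartheta^2/2$ with Jordan's inequality $\sin u\ge 2u/\pi$.

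The paper's route is shorter and purely geometric. However, its last step is written as an equality, and it tacitly uses that the projected law of $\bZ_{(-1)}$ has a density bounded below on the unit ball. That density is proportional to $(1-\|\by\|_2^2)^{-1/2}$, so the step is really only an inequality, though a valid one. Your route makes the relevant density explicit; the only input is the standard marginal law of $\bZ_{(1)}$. It also produces a fully explicit constant $c_m=(2/\pi)^{m-2}2^{-(m-1)}/((m-1)\pi)$ and handles the case $m=2$ uniformly.
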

\begin{proof}
By the uniformity, it is sufficient to consider $\bz=\be_1 = (1,0,\ldots,0)^{\top} \in \mathbb{R}^m$. Write $\bZ=(Z_1,\ldots,Z_m)^\top\eqcolon(Z_1,\bZ_{(-1)})^\top\in \mathbb{R}\times \mathbb{R}^{m-1}.$ On the event $Z_{1} \geq 0$, triangle inequality, $\|\bZ\|_2=1$, and the formula $\sqrt{a}-\sqrt{b}=(a-b)/(\sqrt{a}+\sqrt{b})$ that holds for all positive $a,b,$ yield
\[
\| \bZ - \be_1 \|_2 \leq | Z_1 - 1  | + \| \bZ_{(-1)} \|_2 = \Big|1 - \sqrt{1 - \|\bZ_{(-1)} \|_2^2} \Big| + \|\bZ_{(-1)} \|_2 
\leq \|\bZ_{(-1)} \|_2^2 + \|\bZ_{(-1)} \|_2.
\]
Hence,
\begin{align*}
    \mathbb{P}( \| \bZ - \bz \|_2  \leq \epsilon/2 ) = \mathbb{P}( \| \bZ - \be_1 \|_2  \leq \epsilon/2 ) 
    & \geq \mathbb{P}( Z_1>0, \, \| \bZ_{(-1)} \|_2 
    \leq \epsilon/4 ) \\ 
    &= \frac 12  \mathbb{P}(\| \bZ_{(-1)} \|_2 
    \leq \epsilon/4 )
    = C_m (\epsilon/4)^{m-1}
\end{align*}
for some constant $C_m>0$ depending only on $m$,
where the last equality follows from the volume formula of an $(m-1)$-dimensional Euclidean ball with radius $\epsilon/4.$ 
\end{proof}

\begin{lemma}
\label{lemma_VMF_property}
For any $m = 2,3,\ldots$ and $\bv \in \mathbb{R}^m$, define
\[
\bbf(\bv) \coloneqq \frac{\int_{\bz \in \mathbb{S}^{m - 1}} \exp(\bv^{\top} \bz) \bz \,  d\bz }{\int_{\bz \in \mathbb{S}^{m - 1}} \exp(\bv^{\top} \bz) \, d\bz  }.
\]
Then,
   \[
   \frac{\bbf(\bv)}{\|\bbf(\bv)\|_2} = \frac{\bv}{\|\bv\|_2} \quad \text{ and } \quad
    \|\bbf(\bv)\|_2 >  \frac{1}{1 + m / \|\bv\|_2}
   \]
\end{lemma}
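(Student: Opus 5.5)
The plan is to reduce everything to the one-dimensional profile along the direction of $\bv$, using rotation invariance of the uniform measure on $\mathbb{S}^{m-1}$. The case $\bv=\bm{0}$ is excluded implicitly, since both claims divide by $\|\bv\|_2$. So fix $\bv\neq\bm{0}$, write $\kappa\coloneqq\|\bv\|_2$ and $\bmu\coloneqq\bv/\kappa$.

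For the directional claim, decompose each $\bz\in\mathbb{S}^{m-1}$ as $\bz=t\bmu+\bz_\perp$, where $t=\bmu^\top\bz$ and $\bz_\perp\perp\bmu$. Let $R$ be the reflection that fixes $\bmu$ and maps $\bz_\perp\mapsto-\bz_\perp$; it preserves both the surface measure and the weight $\exp(\bv^\top\bz)$. Hence the numerator integral satisfies $\int\exp(\bv^\top\bz)\bz_\perp\,d\bz=\bm{0}$, so that
\[
\bbf(\bv)=A(\kappa)\bmu, \qquad A(\kappa)\coloneqq\frac{\int_{-1}^1 t e^{\kappa t}(1-t^2)^{(m-3)/2}\,dt}{\int_{-1}^1 e^{\kappa t}(1-t^2)^{(m-3)/2}\,dt}.
\]
Here I use the standard marginal density of $t$ under the uniform law, which is proportional to $(1-t^2)^{(m-3)/2}$ and integrable for $m\ge2$. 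The quantity $A(\kappa)$ is the mean of $t$ under the tilted density $p_\kappa(t)\propto e^{\kappa t}(1-t^2)^{(m-3)/2}$. Its derivative is $A'(\kappa)=\Var_{p_\kappa}(t)>0$, and $A(0)=0$ by symmetry, so $A(\kappa)>0$. This yields $\bbf(\bv)/\|\bbf(\bv)\|_2=\bmu$.

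For the norm bound I need $A(\kappa)>\kappa/(\kappa+m)$. The idea is to integrate by parts in the denominator. Let $\nu\coloneqq(m-1)/2$ and $w(t)\coloneqq(1-t^2)^{\nu}$, so that $w'(t)=-(m-1)t(1-t^2)^{\nu-1}$ and $w(\pm1)=0$. Then
\[
\int_{-1}^1 e^{\kappa t}(m-1)t(1-t^2)^{\nu-1}\,dt=\kappa\int_{-1}^1 e^{\kappa t}(1-t^2)^{\nu}\,dt,
\]
which in the notation of $p_\kappa$ reads
\[
(m-1)A(\kappa)=\kappa\,\E_{p_\kappa}\big[1-t^2\big]=\kappa\big(1-\E_{p_\kappa}[t^2]\big).
\]
By Jensen, $\E_{p_\kappa}[t^2]\ge A(\kappa)^2$, which gives $(m-1)A\le\kappa(1-A^2)$. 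That inequality points the wrong way for a lower bound, so a different ingredient is needed.

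Instead, bound $\E[t^2]$ from above using $t^2\le|t|$. Plausibly the route is to show $\E_{p_\kappa}[t^2]\le A(\kappa)$ plus a controlled error. Then $(m-1)A\ge\kappa(1-A)-\kappa\cdot\text{error}$, which rearranges to $A\ge\kappa/(\kappa+m-1)-\text{error}$. Since the target has $m$ rather than $m-1$ in the denominator, there is slack to absorb the error term.

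I expect this step to be the main obstacle: controlling the negative-$t$ contribution to $\E[t^2-t]$. This contribution is nonnegative, equal to $\E[(t^2-t)\1(t<0)]\le 2\,\P_{p_\kappa}(t<0)$. I would try to show $2\kappa\,\P(t<0)$ fits within the slack between $1/(\kappa+m-1)$ and $1/(\kappa+m)$. Concretely, this means comparing $\P(t<0)$, which involves the mass $\int_{-1}^0 e^{\kappa t}w$, with the positive part. I would do this by pairing $t\leftrightarrow -t$, using $e^{-\kappa|t|}\le e^{\kappa|t|}$, or via the known Amos-type Bessel ratio bound $I_{\nu}(\kappa)/I_{\nu-1}(\kappa)\ge\kappa/(\nu-\tfrac12+\sqrt{\kappa^2+(\nu+\tfrac12)^2})$, since $A(\kappa)=I_{m/2}(\kappa)/I_{m/2-1}(\kappa)$. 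The fallback is that Amos bound: because $\sqrt{\kappa^2+(m/2+1/2)^2}\le\kappa+m/2+1/2$, it gives $A>\kappa/(\kappa+m)$ directly.
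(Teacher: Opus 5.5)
Your directional argument is correct and differs from the paper's. You get $\bbf(\bv)=A(\kappa)\,\bv/\kappa$ from the reflection symmetry about the axis $\bv$, and $A(\kappa)>0$ from $A'=\Var_{p_\kappa}(t)>0$ together with $A(0)=0$. The paper instead reads both facts off the von Mises--Fisher mean formula $\bbf(\bv)=\frac{I_{m/2}(\kappa)}{I_{m/2-1}(\kappa)}\,\frac{\bv}{\kappa}$. Your route avoids special functions, but that only pays off if the norm bound is also done elementarily (see below).

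For the norm bound, the route you lead with cannot work; drop it rather than flag it as an obstacle. By your own identity $(m-1)A(\kappa)=\kappa\,(1-\E_{p_\kappa}[t^2])$, the target $A(\kappa)>\kappa/(\kappa+m)$ is equivalent to $\E_{p_\kappa}[t^2]<(\kappa+1)/(\kappa+m)$. At $\kappa=0$ this holds with equality, since $\E_{p_0}[t^2]=1/m$. So any upper bound on $\E_{p_\kappa}[t^2]$ that is lossy at $\kappa=0$ fails for every sufficiently small $\kappa$.

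Your step $t^2\le|t|$ (whether used everywhere or only on $\{t\ge0\}$) is lossy by a fixed amount there: for $m=3$, $\E_{p_0}|t|=1/2>1/3$. Concretely, your requirement $2\P_{p_\kappa}(t<0)<1/(\kappa+m)$ fails as $\kappa\to0$, because $\P_{p_\kappa}(t<0)\to1/2$ while $1/(\kappa+m)\to1/m\le 1/2$. The lemma is applied with $\kappa=\delta_2$, an arbitrary constant, so small $\kappa$ cannot be excluded.

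Your fallback is exactly the paper's proof. Use the Amos-type bound with order $m/2$, not the $\nu=(m-1)/2$ you introduced just before: $A(\kappa)\ge\kappa/\big(\tfrac{m-1}{2}+\sqrt{\kappa^2+(\tfrac{m+1}{2})^2}\big)$. Then apply the strict inequality $\sqrt{a^2+b^2}<a+b$ for $a,b>0$; your $\le$ only yields $\ge$. Present this as the argument, not a fallback.

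If you want to avoid citing the Bessel-ratio bound, you already have both ingredients. Combining $A'=\E_{p_\kappa}[t^2]-A^2$ with $\E_{p_\kappa}[t^2]=1-(m-1)A/\kappa$ gives the Riccati equation $A'=1-A^2-(m-1)A/\kappa$. For $g(\kappa)\coloneqq\kappa/(\kappa+m)$ one computes $1-g^2-(m-1)g/\kappa-g'=(m+1)\kappa/(\kappa+m)^2>0$. Hence $D\coloneqq A-g$ satisfies $D'+cD=r$ with $c=A+g+(m-1)/\kappa$ and $r>0$. Take the integrating factor $\mu(\kappa)=\kappa^{m-1}\exp\big(\int_1^\kappa (A+g)\big)$. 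Then $(\mu D)'=\mu r>0$, and $\mu D\to0$ as $\kappa\downarrow0$ because $m\ge2$ and $D$ is bounded. Therefore $D>0$ for every $\kappa>0$, which gives a fully elementary proof of both claims.
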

\begin{proof}
     We denote by $\operatorname{vMF}_{\bv}$ the von Mises--Fisher
distribution on $\mathbb{S}^{m-1}$ with mean direction $\bv/\|\bv\|_2$ and concentration parameter
$\|\bv\|_2$, whose probability density function is given by 
\[\bz \mapsto c_{\bv}  \exp(\bv^{\top} \bz),\quad \bz\in\mathbb{S}^{m-1},\]
with $c_{\bv}$ the normalizing constant. The expectation is given by
\begin{align*}
\mathbb{E}_{\bZ \sim \operatorname{vMF}(\bv)}[\bZ] 
=  \bbf(\bv)
= \frac{I_{m/2}(\|\bv\|_2)}{I_{m/2-1}(\|\bv\|_2)} \frac{\bv}{\|\bv\|_2},
\end{align*}
with $I_{v}$ the modified Bessel function of the first kind (see e.g.\ \cite{zbMATH01375577}, Equation (9.3.8)). This establishes the first assertion.

Moreover, classical Amos-type bounds for Bessel function ratios (e.g., see Theorem 3 of \cite{hornik2013amos}) yield that
\begin{align*}
    \frac{I_{m/2}(\|\bv\|_2)}{I_{m/2-1}(\|\bv\|_2)}
    \geq
    \frac{\|\bv\|_2}{\frac{m-1}{2}+\sqrt{(\frac{m+1}{2})^2 + \|\bv\|_2^2}}.  
\end{align*}
We obtain the second assertion by using that $\sqrt{a^2+b^2}<a+b$ for positive $a,b,$ and therefore
\begin{align*}
    \frac{\|\bv\|_2}{\frac{m-1}{2}+\sqrt{(\frac{m+1}{2})^2 + \|\bv\|_2^2}} 
    >     \frac{\|\bv\|_2}{\frac{m-1}{2}+\frac{m+1}{2} + \|\bv\|_2} 
    = \frac{1}{1 + m / \|\bv\|_2}. 
\end{align*}
\end{proof}

\end{document}